\titleformat{\section}{\Large\bfseries}{\thesection.}{4pt}{}
\titleformat{\subsection}{\large\bfseries}{\thesection.\arabic{subsection}.}{4pt}{}
\titleformat{\subsubsection}{\bfseries}{\thesection.\arabic{subsection}.\arabic{subsubsection}.}{4pt}{}
\titleformat*{\paragraph}{\bfseries}
\titleformat*{\subparagraph}{\bfseries}
\newtheorem{theorem}{Theorem}[section]
\newtheorem{corollary}[theorem]{Corollary}
\newtheorem{lemma}[theorem]{Lemma}
\newtheorem{proposition}[theorem]{Proposition}
\theoremstyle{definition}
\newtheorem{definition}[theorem]{Definition}
\newtheorem{remark}[theorem]{Remark}
\numberwithin{equation}{section}
\title{ A blowup solution  of  a complex semi-linear  Heat
 equation  with an irrational power}
\author[G. K. Duong]{}
\subjclass{Primary: 35K50, 35B40; Secondary: 35K55, 35K57.}
 \keywords{Blowup solution, Blowup profile, Stability, Semilinear complex  heat equation, non variation heat equation}
\thanks{\today}
\begin{document}
\maketitle

\centerline{Giao Ky Duong \footnote{ G. K. Duong  is supported   by the project INSPIRE. This project has received funding from the European Union’s Horizon 2020 research and innovation programme under the Marie Sk\l odowska-Curie grant agreement No 665850.}  }
\medskip
{\footnotesize
  \centerline{ Universit\'e Paris 13, Sorbonne Paris Cit\'e, LAGA,  CNRS (UMR 7539), F-93430, Villetaneuse, France.} 
}

\bigskip
\begin{center}\thanks{\today}\end{center}

\begin{abstract} 
 In  this  paper, we   consider  the following semi-linear complex  heat equation
\begin{eqnarray*}
\partial_t u =  \Delta u + u^p, u \in \mathbb{C}
\end{eqnarray*}
in $\mathbb{R}^n,$ with an arbitrary   power $p,$  $ p  > 1$. In particular, $p$  can be non integer and  even irrational, unlike  our previous work \cite{DU2017},   dedicated to the integer case. 
We  construct for this equation a complex solution $u = u_1 + i u_2$, which blows up in finite 
  time $T$ and only at one blowup point $a.$  Moreover,  we    also  describe the asymptotics  of the solution    by   the following  final profiles: 
\begin{eqnarray*}
u(x,T)  &\sim &  \left[ \frac{(p-1)^2 |x-a|^2}{ 8 p |\ln|x-a||}\right]^{-\frac{1}{p-1}},\\
u_2(x,T)  &\sim &   \frac{2 p}{(p-1)^2}  \left[ \frac{ (p-1)^2|x-a|^2}{ 8p |\ln|x-a||}\right]^{-\frac{1}{p-1}}\frac{1}{ |\ln|x-a||}  > 0 , \text{ as } x \to a.
\end{eqnarray*}
 In addition to that,  since  we also   have    $u_1 (0,t)  \to  + \infty$ and  $u_2(0,t) \to - \infty$ as  $t \to T,$ the blowup  in the  imaginary part shows a new phenomenon unkown for the  standard heat equation in the real case: a non constant sign near the  singularity, with the existence  of  a vanishing  surface  for  the    imaginary part, shrinking  to the origin.  In our  work,    we    have succeeded to extend   for   any   power $p$ where the non linear term $u^p$ is not continuous  if   $  p$  is not integer. In particular, the   solution which  we have      constructed  has  a  positive real part.
We  study   our equation    as   a  system  of the real part and the imaginary part  $u_1$ and  $u_2$.   Our  work relies on  two main    arguments:  the reduction of  the  problem  to a  finite  dimensional one  and a topological argument based
   on the index theory to get the conclusion.  
 \end{abstract}

\maketitle
\section{Introduction}
\subsection{Ealier work}

In this work,   we are interested in the following complex-valued semilinear  heat equation
\begin{equation}\label{equ:problem}
\left\{
\begin{array}{rcl}
\partial_t u &=& \Delta u +  F(u), t \in [0,T), \\[0.2cm]
u(0) &=& u_0 \in L^\infty \text{ with Re}(u_0) \geq \lambda > 0, 
\end{array}
\right.
\end{equation}
where  $F(u) = u^p$ and $u(t): \mathbb{R}^n \to \mathbb{C}$,  $L^\infty := L^\infty(\mathbb{R}^n, \mathbb{C})$,  $p   > 1$.  

\noindent
Typically,   when  $p =2$,    model \eqref{equ:problem}    becomes the following
\begin{equation}\label{equation-problem-p=2}
\left\{
\begin{array}{rcl}
\partial_t u &=& \Delta u +  u^2, t \in [0,T), \\[0.2cm]
u(0) &=& u_0 \in L^\infty.
\end{array}
\right.
\end{equation}
This  model    is connected   to the viscous Constantin-Lax-Majda equation with a viscosity term,
 which is a one dimensional model for the  vorticity  equation in fluids.  For more details, 
 the readers  are addressed to  the following  works:  Constantin, Lax, Majda \cite{CLMCPAM1985}, Guo, Ninomiya and  Yanagida 
 in \cite{GNYTAMS2013},   Okamoto, Sakajo and Wunsch  \cite{OSWnon2008}, Sakajo in  \cite{SMAthScitokyo2003} and \cite{Snon2003}, Schochet \cite{SCPAM1986}.  In \cite{DU2017}, we treated   the case   $ p \in \mathbb{N}$. Indeed,  handling   the nonlinear term  in this case is much  easier.  In the present paper,  we do better, and give a proof  which   holds  also   in the case  $p\notin  \mathbb{N}$.  The  local  Cauchy problem   for model \eqref{equ:problem}  can be solved   in $L^{\infty}(\mathbb{R}^n,\mathbb{C})$  when     $p$ is integer,   thanks to a  fixed-point argument. However,   if   $p $ is not an  integer number,   then, the  local  Cauchy  problem has not   been  solved yet, up to our knowledge. In my point of view,  this  probably  comes from   the discontinuity of    $F(u) $  on $\{u   \in \mathbb{R}^*_-\}$ and this challenge is also one of the  main difficulties of  the paper. As  a matter  of fact,  we  solve the Cauchy  problem  in Appendix \ref{appendix-Cauchy-problem}  for data  $u_0 \in L^\infty,$ with $\text{Re}(u_0) \geq  \lambda,$ for some   $\lambda > 0$.   Accordingly, a maximal  solution may be global  in time  or may  exist only  for $t \in [0, T),$  for some  $T > 0.$  In that case, we   have to options: 
 
 \begin{itemize}
\item[$(i)$]   Either  $\| u(t)\|_{L^\infty(\mathbb{R}^n)}   \to + \infty   $  as  $t \to  T$.  
\item[$(ii)$]  Or  $\min_{x \in \mathbb{R}^n} \text{Re} (u(x,t))    \to  0$ as  $t \to T$. 
 \end{itemize}
In this paper, we are    interested   in the  case  $(i),$ which   is  referred  to as   \textit{finite-time blow-up} in the  sequel.   
  
  \noindent
  A blowup  solution $u$    is called   \textit{Type I}  if 
$$ \limsup_{t \to T}  (T-t)^{\frac{1}{p-1}} \| u(.,t)\|_{L^\infty}     < + \infty. $$
Otherwise,  the solution $ u$  is called  \textit{Type II}.

In addtion to that,   $T$  is called the bolwup time of $u$  and   a point $a \in \mathbb{R}^n$ is called a  blowup point if and only if 
there exists a sequence $\{(a_j, t_j)\} \to (a,T)$ as $j \to +\infty$ such that 
$$ |u_1(a_j,t_j)| + |u_2(a_j,t_j)| \to +\infty \text{ as } j \to +\infty.$$
 In our work,  we are interested    in      constructing   a blowup solution  of \eqref{equ:problem} which  is   \textit{Type I}.  Let us quickly     mention   some  typical works  for this situation  (for more   details,   see  the introduction  of \cite{DU2017}, treated   the integer case).
 
\bigskip
\textbf{ $(i)$ For the   real case:}     Bricmont and
 Kupiainen \cite{BKnon94}  constructed a real positive solution  to the following equation
 \begin{equation}\label{equa-semi-|u|-p-1-u}
 \partial_t u =  \Delta u +  |u|^{p-1 } u, p > 1,
 \end{equation}
  which blows up in finite time $T$, only at the origin and they  have  derived    the  profile of  the solution such that
 $$  \left\|  (T-t)^{\frac{1}{p-1}}  u (.,t)  - f_0 \left( \frac{.}{\sqrt{(T-t)|\ln (T - t)|} } \right) \right\|_{L^{\infty}(\mathbb{R}^n)} \leq \frac{C}{ 1 + \sqrt{|\ln (T-t)|}},$$
 where the profile $f_0 $ is defined  as follows
 \begin{equation}\label{defini-f-0}
 f_0 (x) = \left( p-1 + \frac{(p-1)^2 |x|^2}{4p}\right)^{-\frac{1}{p-1}}.
 \end{equation}
 
 \bigskip
 \noindent In addition to  that,  in \cite{HVcpde92},    Herrero and  Vel\'azquez    derived    the   same  result  with a  different method.  Particularly,   in \cite{MZdm97},   Merle and Zaag gave   a   proof  which is   simpler  than the one in  \cite{BKnon94} and    proposed the following two-step method (see also the note   \cite{MZasp96}):
    \begin{itemize}
    \item[-] Reduction of the  infinite dimensional problem to a finite dimensional one.
    \item[-]   Solution of the finite   dimensional problem thanks to a topological  argument based  on  Index theory.
    \end{itemize}
Moreover,  they  also  proved  the stability of the blowup profile  for   \eqref{equa-semi-|u|-p-1-u}. In addition to  that,   we would like to mention that this   method has  been   successful  in  various  situations  such as the work of Ebde and Zaag  \cite{EZsema11},  Tayachi and Zaag \cite{TZpre15}, 	and also  the works of Ghoul, Nguyen and Zaag in \cite{GNZsysparabolic2016},     \cite{GNZpre16a} (with a gradient term) and  \cite{GNZpre16b}.  We would like to  mention  also  the work of     Nguyen and Zaag in \cite{NZens16}, who considered   the following quasi-critical double  source   equation  
$$ \partial_t u  = \Delta u  + | u|^{p-1} u + \frac{\mu  |u|^{p - 1} u }{ \ln ^a (2 + u^2)} ,$$  
and also  the work  of  Duong, Nguyen and Zaag  in \cite{DNZtunisian-2017}, who considered  the following   non scale  invariant  equation 
$$ \partial_t u =  \Delta u  + |u|^{p - 1} u \ln ^\alpha ( 2 + u^2).$$


\medskip
\textbf{    $(ii)$  For the complex case:}     	The blowup    problem for the complex-valued parabolic equations has been  studied  intensively by many authors, in particular  for  the Complex    Ginzburg Landau (CGL) equation 
\begin{equation}\label{ginzburg-landau}
 \partial_t u =   (1 + i \beta ) \Delta u   + (1  + i \delta) | u|^{p-1} u  + \gamma u. 
\end{equation}
 This is the case of an   ealier work of  Zaag  in \cite{ZAAihn98}    for  equation \eqref{ginzburg-landau} when     $\beta = 0$ and $ \delta$ small enough. Later,  Masmoudi and Zaag  in  \cite{MZjfa08}  generalized the result of  \cite{ZAAihn98} and  constructed  a blowup solution  for \eqref{ginzburg-landau}  with a  super critical condition   $ p - \delta^2 -  \beta \delta   -  \beta  \delta p   >0.$
Recently,   Nouaili and Zaag  in \cite{NZ2017}  has constructed  a blowup solution   for \eqref{ginzburg-landau},       in   the critical   case  where  $\beta = 0$ and $ p = \delta^2 $).

\bigskip
\noindent
 In addtiion to that,  there are many works  for  equation \eqref{equ:problem}   or  \eqref{equation-problem-p=2},  such as the work  of Nouaili and Zaag  in \cite{NZCPDE2015} for equation  \eqref{equation-problem-p=2},  who    constructed       a complex solution $ u = u_1+ i u_2 ,$  which blows up in finite time $T$  only at the  origin.  Note that the real and the imaginary parts  blow up simultaneously.  Note also that \cite{NZCPDE2015} leaves unanswered the question of the derivation of  the profile of the imaginary  part, and this is precisely  our aim in this paper, not only for equation \eqref{equation-problem-p=2}, but also for equation  \eqref{equ:problem} with  $ p  > 1$. We  would   like to mention also some  classification results, proven  by Harada in \cite{HaradaJFA2016},  for blowup solutions of \eqref{equation-problem-p=2}  which satisfy    some reasonable assumptions. In   particular,   in that works,   we are able to derive  a sharp blowup profile for  the imaginary part of the solution. In 2018,  in \cite{DU2017},    we   handled equation \eqref{equ:problem} when  $p$ is  an integer.
 
 \subsection{Statement of  the  result  }
 
  Although, in \cite{DU2017},  we believe   we  made  an important   achievement, we acknowledge that  we   left  unanswered  the case where  $p >1$ and  $p \notin \mathbb{N}$.   From the  limitation   of the above works, it   motivates us  to  study model \eqref{equ:problem} in general even for   irrational  $p$.  The following theorem is considered as  a  generalization of \cite{DU2017}  for all  $p > 1$. 

\begin{theorem}[Existence of  a  blowup solution for \eqref{equ:problem} and a sharp discription of its profile]\label{Theorem-profile-complex}    For each  $p  > 1$ and $  p_1 \in  \left(  0, \min\left( \frac{p-1}{4}, \frac{1}{2}\right)\right) $,  there exists $T_1 (p,p_1)> 0 $ such that for all $T \leq T_1,$ there exist   initial data  $u(0)  = u_1(0)  + i u_2(0),$   such that  equation \eqref{equ:problem} has  a  unique solution $u $ on  $\mathbb{R}^n \times [0,T)$  satisfying  the following: 
\begin{itemize}
\item[$i)$] The solution $u$ blows up in finite time $T$   only at the  origin and  $\text{Re} (u) > 0$ on  $\mathbb{R}^n \times [0,T)$.  Moreover, it  satisfies  the following 
\begin{equation}\label{esttima-theorem-profile-complex}
\left\| (T- t)^{\frac{1}{p-1}} u (.,t) - f_0 \left( \frac{.}{\sqrt{(T -t) |\ln(T -t)|}}  \right)  \right\|_{L^{\infty}(\mathbb{R}^n)}  \leq \frac{C}{1 +   \sqrt{|\ln (T -t)|}},
\end{equation}
and
\begin{equation}\label{estima-the-imginary-part}
\left\| (T- t)^{\frac{1}{p-1}} |\ln (T-t)|  u_2(.,t) - g_0 \left( \frac{.}{\sqrt{(T -t) |\ln(T -t)|}}  \right)  \right\|_{L^{\infty}(\mathbb{R}^n)}  \leq \frac{C}{1 +  |\ln (T -t)|^{\frac{p_1}{2}}},
\end{equation}
where $f_0 $ is defined in \eqref{defini-f-0} and  $g_0(x)$ is defined as follows
\begin{eqnarray}
\displaystyle g_0(x) &=& \frac{|x|^2}{\left(p-1  + \frac{(p-1)^2}{4p} |x|^2 \right)^{\frac{p}{p-1}}}\label{defini-g-0-z}.
\end{eqnarray} 
\item[$ii)$] There exists  a complex   function 
 $u^*$      in  $  C^{2}(\mathbb{R}^n \backslash \{0\})$    such that
  $u(t) \to u^* = u_1^* + i u_2^*$  as  $t \to T,$ uniformly on compact sets of 
  $\mathbb{R}^n \backslash \{0\},$ and  we have the following asymptotic expansions:
\begin{equation}\label{asymp-u-start-near-0-profile-complex}
u^*(x) \sim \left[ \frac{(p-1)^2 |x|^2}{ 8 p |\ln|x||}\right]^{-\frac{1}{p-1}}, \text{ as } x \to 0,
\end{equation}
and  
\begin{equation}\label{asymp-u-start-near-0-profile-complex-imaginary-part}
 u^*_2(x)  \sim \frac{2 p}{(p-1)^2}  \left[ \frac{ (p-1)^2|x|^2}{ 8p |\ln|x||}\right]^{-\frac{1}{p-1}}\frac{1}{ |\ln|x||} , \text{ as } x \to 0.
\end{equation}
\end{itemize}
\end{theorem}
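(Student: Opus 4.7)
The plan is to adapt the Merle--Zaag strategy in self-similar variables. Setting
$$ w(y,s) = (T-t)^{\frac{1}{p-1}} u(x,t), \quad y = \frac{x}{\sqrt{T-t}}, \quad s = -\ln(T-t), $$
blow-up at $(0,T)$ corresponds to $w$ staying bounded as $s \to +\infty$. Writing $w = w_1 + i w_2$, the pair solves a coupled parabolic system with drift; I would look for a solution of the form $w = \varphi + q$ where
$$ \varphi(y,s) = f_0\!\left(\frac{y}{\sqrt{s}}\right) + \frac{i}{s}\, g_0\!\left(\frac{y}{\sqrt{s}}\right), $$
with $f_0, g_0$ as in \eqref{defini-f-0} and \eqref{defini-g-0-z}, and $q = q_1 + i q_2$ is a perturbation I must show can be kept small.

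Next I would linearize the system around $\varphi$. The linearized operator on each component is essentially $\mathcal{L}_0 = \Delta - \tfrac12 y\cdot\nabla + 1$ plus a zero-order potential of order $1/s$ coming from the profile. Its spectrum on the Gaussian weighted $L^2$ is $\{1 - m/2 : m \in \mathbb{N}\}$, eigenfunctions being (scaled) Hermite polynomials, so only finitely many eigenvalues are non-negative. The system exhibits a triangular coupling — the $q_2$ equation feeds on $q_1$ at an order that effectively shifts the spectrum by $-1$ once one quotients out the $g_0/s$ piece — leaving finitely many genuinely unstable directions, those indexed by $m=0,1$ in each component.

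I would then introduce a shrinking set $V_A(s)$ encoding the desired pointwise bounds \eqref{esttima-theorem-profile-complex} and \eqref{estima-the-imginary-part}: on the inner region $|y|\le K_0\sqrt{s}$ I impose quantitative smallness on each spectral projection (sharper bounds on the stable modes $m\ge 3$), while on the outer region I require $\|q_1\|_{L^\infty}$ to be $O(s^{-1})$ and $\|q_2\|_{L^\infty}$ to be $O(s^{-1-p_1/2})$, matching the target estimates. I then parametrize the initial data by a finite-dimensional family $(d_0,d_1)\in\mathbb{R}^K$ that prescribes the values of the unstable modes at the initial rescaled time $s_0=-\ln T$. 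Using semigroup estimates for $\mathcal{L}_0$, a bootstrap argument shows that as long as $q(s)\in V_A(s)$ the stable components remain strictly inside their required bounds; hence the solution can leave $V_A(s)$ only through the finitely many unstable components. A topological argument based on Brouwer's degree — the exit map from $(d_0,d_1)$ to the unstable modes at the exit time is continuous and nontrivial on the boundary of the parameter ball — then yields a choice of initial data for which the solution is trapped in $V_A(s)$ for all $s\ge s_0$, which in original variables gives the bounds \eqref{esttima-theorem-profile-complex}--\eqref{estima-the-imginary-part}.

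The main obstacle, specific to the irrational case, is handling $w^p$ when $p\notin\mathbb{N}$. Because $\varphi$ has real part bounded below by $f_0(y/\sqrt{s})$, which is strictly positive, I would tighten $V_A(s)$ so that $\text{Re}(w)\ge \lambda/2>0$ along the entire evolution; then $w^p=\exp(p\log w)$ (principal branch) is smooth, and a first-order Taylor expansion $w^p = \varphi^p + p\varphi^{p-1}q + R(q)$ with controlled quadratic remainder $R(q)$ becomes legitimate, allowing the same linear analysis as in the integer case of \cite{DU2017}. Once the global-in-time (rescaled) solution is constructed, the final profiles \eqref{asymp-u-start-near-0-profile-complex} and \eqref{asymp-u-start-near-0-profile-complex-imaginary-part} follow by passing $t\to T$ along the curves $|x|=K\sqrt{(T-t)|\ln(T-t)|}$ and combining this with a no-blow-up-away-from-the-origin argument as in \cite{MZdm97}; the Cauchy theory from the appendix guarantees that the construction remains in the admissible class where $F(u)=u^p$ is well defined.
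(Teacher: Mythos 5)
Your overall skeleton (similarity variables, linearization around $f_0 + i g_0/s$, spectral decomposition of $\mathcal{L}$, shrinking set, reduction to finitely many modes, degree argument, then Merle-type passage to the final profile) is indeed the backbone of the paper. However, there are two genuine gaps. The first, and most serious, concerns exactly the point that is new in the non-integer case: the positivity of $\mathrm{Re}(u)$. You propose to ``tighten $V_A(s)$ so that $\mathrm{Re}(w)\ge \lambda/2$ along the entire evolution'', but this cannot work: a uniform lower bound $\mathrm{Re}(w)\ge \lambda/2$ on all of $\mathbb{R}^n$ would mean $\mathrm{Re}(u(x,t))\ge \frac{\lambda}{2}(T-t)^{-\frac{1}{p-1}}$ everywhere, contradicting single-point blowup and $u(t)\to u^*$ away from the origin; and the inner/outer bounds of a $V_A$-type set (profile plus errors of size $A^2/\sqrt s$ in the outer region, where $\Phi_1$ is only of size $1/s$) give no sign information at all for $|y|\gtrsim K_0\sqrt s$, i.e.\ precisely where the discontinuity of $u^p$ on $\{\mathrm{Re}(u)\le 0\}$ could be hit. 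This is why the paper does not use the shrinking set of \cite{MZdm97}/\cite{DU2017} alone, but a three-region set $S(t)$ in the spirit of \cite{MZnon97}: in $P_1$ positivity comes from closeness to $\Phi_1$, in the intermediate region $P_2$ from comparing the rescaled solution $U(x,\xi,\tau)$ to the explicit positive ODE solution $\hat U_{K_0}(\tau)$, and in the regular region $P_3$ from closeness to specially prepared initial data containing the tail $U^*$ and the ``$+1$'' shift that forces $\mathrm{Re}(u(0))\ge 1$ far out (Lemma \ref{lemma-positive-real-part}, Lemmas \ref{lema-priori-estima-P-2}--\ref{lemma-implies-Propo-esti-P-3}). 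Without this global-in-space control your Taylor expansion of $w^p$ and even the Cauchy theory are not legitimate, since the flow could leave the half-plane where $u\mapsto u^p$ is continuous.

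The second gap is the count of unstable directions. You take only the modes $m=0,1$ of each component as the finite-dimensional part. For $q_1$ the null modes $q_{1,j,k}$ can indeed be handled by the a priori ODE $q'\simeq -\tfrac{2}{s}q$, but for the imaginary part the required bound is $|q_{2,j,k}|\lesssim A^5\ln s/s^{p_1+2}$ with $p_1>0$, which decays \emph{faster} than the generic $1/s^2$ behavior produced by that ODE; hence the $n(n+1)/2$ null modes of $q_2$ cannot be absorbed by the a priori estimates and must be included in the topological argument. This is why the paper's initial data carries the extra matrix parameter $d_{2,2}$ and why $\hat V_A(s)$ in \eqref{defini-of-hat-V-A} has the additional factor $\left[-\frac{A^5\ln s}{s^{p_1+2}},\frac{A^5\ln s}{s^{p_1+2}}\right]^{\frac{n(n+1)}{2}}$. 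With only $2(1+n)$ parameters, your exit map would not be surjective onto the boundary of the relevant finite-dimensional box and the degree argument would fail.
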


\begin{remark}
We remark    that the    condition  on the   parameter  $p_1  < \min \left(  \frac{p-1}{4}, \frac{1}{2} \right)$ comes  from  the  definition of the set $V_A (s)$ (see in  item  $(i)$ of Definition \ref{defini-shrinking-set}),  Proposition \ref{prop-dynamic-q-1-2-alpha-beta} and Lemma \ref{lemma-quadratic-term-B-1-2}. Indeed,   this condition  ensures  that   the projections of   the quadratic term $B_2(q_2,q_2)$     on the  negative   and  outer parts  are smaller than  the  conditions  in $V_A(s)$. Then, we can conclude    \eqref{Estimat-q-2--} and \eqref{outer-Phi-e} by using Lemma  \ref{lemma-quadratic-term-B-1-2} and definition of   $V_A(s)$.
\end{remark}
\begin{remark}    We can show   that the constructed  solution in the above Theorem    satisfies  the following asymptotics:
\begin{eqnarray}  
 u(0,t)  & \sim &  \kappa (T -t)^{-\frac{1}{p-1}},\\
 u_2(0,t) &\sim  &     - \frac{2n \kappa}{ (p-1)  }  \frac{(T-t)^{-\frac{1}{p-1}}}{ |\ln(T-t)|^2},\label{limit-u-2-0-t-t-to-T}
 \end{eqnarray}
 as  $t \to T$, (see  \eqref{u-0-t-sim} and  \eqref{u-2-0-t-sim}).  Therefore,  we  deduce  that $u$  blows up at time    $T$    only at $0$.  Note that,   the real and imaginary  parts simultaneously blow up.  Moreover,  from item  $(ii)$   of  Theorem  \ref{Theorem-profile-complex},   the blowup speed of $u_2$ is   softer  than $u_1$ because of  the quantity  $ \frac{1}{|\ln |x||}$ (see \eqref{asymp-u-start-near-0-profile-complex}  and  \eqref{asymp-u-start-near-0-profile-complex-imaginary-part}).  
\end{remark}
\begin{remark}[A strong singularity  of the imaginary part]  We observe from \eqref{asymp-u-start-near-0-profile-complex-imaginary-part} and      \eqref{limit-u-2-0-t-t-to-T} that there is a strong sigularity at   the  neighborhood of  $a$  as $t \to T;$  when  $x$  close  to $0,$  we have  $u_2(x,t) $ which  becomes  large and   positive  as $t \to T$,    however,   we  always  have $u_2(0,t) \to - \infty$ as $t \to T.$  Thus  the imaginary part has  no constant    sign  near   the singularity.   In particular,    if   $t$ is near   $T$,     there exists $b(t) > 0$ in  $\mathbb{R}^n$ and $b(t) \to 0$ as $t \to T$ such that  at time $t,$  $u_2(.,t)$ vanishes on some  surface close to the sphere   of center  $0$  and radius  $b(t)$.  Therefore, we don't have    $|u_2(x,t) | \to + \infty  $ as  $(x,t) \to (0,T)$. This  non constant property  for the imaginary  part  is  very surprising to us. In the frame work of  semilinear heat equation, such  a property  can  be encountered  for  phase  invariant  complex  equations,  such as    the Complex   Ginzburg-Landau (CGL) equation (see    Zaag  in \cite{ZAAihn98},  Masmoudi and Zaag  in \cite{MZjfa08},  Nouaili-Zaag \cite{NZ2017}). As for complex   parabolic  equation  with no phase  invariance, this  is the first time   such a sign change   in available, up to our knowledge. We would like to mention that such a  sign change near  the singularity  was already  observed  for the   semilinear wave equation non  characteristic blowup point  (see Merle and Zaag   in \cite{MZajm12}, \cite{MZdm12}) and   C\^ote and Zaag  in \cite{CZcpam13}.
\end{remark}

\begin{remark}
For each      $a \in \mathbb{R}^n,$   by using   the translation $u_a(.,t)  = u (. - a,t),$ we can prove  that   $u_a$ also  satisfies    equation   \eqref{equ:problem}  and   the solution    blows up  at time   $T$   only   at the  point $a$. We can derive that    $u_a$  satisfies  all   estimates    
\eqref{esttima-theorem-profile-complex} -  \eqref{asymp-u-start-near-0-profile-complex-imaginary-part}   by  replacing   $x$ by   $x- a$.
\end{remark}

\begin{remark}\label{remark-initial-data} In Theorem  \eqref{Theorem-profile-complex}, the initial data  $u (0) $ is given exactly as follows 
$$ u (0)  = u_1 (0)+ i u_2(0) ,$$
where 
\begin{eqnarray*}
& &u_1 (x, 0)  = T^{ - \frac{1}{p -1}} \left\{  \left( p-1 + \frac{(p-1)^2|x|^2}{ 4 p T |\ln T|}   \right)^{-\frac{1}{p-1}}     + \frac{n \kappa }{2 p |\ln T|} \right.\\
& +& \left. \frac{A}{|\ln T|^2} \left(  d_{1,0} + d_{1,1} \cdot \frac{x}{\sqrt T}  \right) \chi_0 \left(\frac{16|x|}{K_0 \sqrt{T |\ln T| }} \right) \right\} \chi_0\left(\frac{|x|}{\sqrt{T} |\ln T|} \right)    +  U^*(x) \left(1 - \chi_0\left(\frac{|x|}{\sqrt{T} |\ln T|} \right)   \right),\\
&+&1,\\
& & u_2 (x, 0) =  T^{ - \frac{1}{p -1}} \chi_0\left(\frac{|x|}{\sqrt{T} |\ln T|} \right)  \left\{ \frac{|x|^2}{ T |\ln T|^2} \left( p-1 + \frac{(p-1)^2|x|^2}{ 4 p T |\ln T|}   \right)^{-\frac{p}{p-1}}     - \frac{2 n \kappa }{(p-1) |\ln T|^2} \right.\\
& +& \left. \left[  \frac{A^2}{|\ln T|^{p_1 +2}} \left(  d_{2,0} + d_{2,1} \cdot \frac{x}{\sqrt T}  \right)   + \frac{A^5 \ln( |\ln(T)|)}{ |\ln T|^{p_1 + 2}} \left( \frac{1}{2} \frac{x^{\mathcal {T} }}{\sqrt T}\cdot  d_{2,2} \cdot 
\frac{x}{\sqrt T} - \text{Tr} (d_{2,2})  \right)  \right]\chi_0\left(\frac{2x}{K_0 \sqrt{T |\ln T| }} \right) \right\}.
\end{eqnarray*}
with $\kappa = (p-1)^{- \frac{1}{p-1}}$, $K_0, A$ are positive constants fixed   large enough, $d_1 = (d_{1,0}, d_{1,1}), d_2 = (d_{2,0}, d_{2,1}, d_{2,2}) $   are parametes we fine tune in our proof, and  $\chi_0 \in C^{\infty}_{0}[0,+\infty), \|\chi_0\|_{L^{\infty}} \leq 1,  \text{ supp} \chi_0 \subset [0,2]$ and   $\chi_0 (x) = 1$ for all  $|x| \leq  1,$ and  $U^*$ is given in  \eqref{defini-U^*} and  related  to the final profile given in  item $(ii)$ of Theorem  \ref{Theorem-profile-complex}. 
Note that when  $p \in    \mathbb{N},$   we took in \cite{DU2017}    a simpler expression  for initial 
 data,  not in   involving   the  final profile   $U^*,$  nor   the    $(+1)$  term  in $u_1 (0)$.  In particular,          adding  this   $(+1)$  term  in our idea  to ensure   that  the real part  of the  solution straps   positive. 
\end{remark}
\begin{remark}
We see in \eqref{equation-satisfied-u_1-u_2}   that  the equation satisfied  by  of $u_2$  is almost 'linear' in $u_2$. Hence,   given an arbitrary   $c_0  \neq   0,$    we can  change a little in our proof to construct   a solution $u_{c_0} = u_{1,c_0} + i u_{2,c_0}$ in  $t \in [0,T) $, which  blows up in finite time $T$  only at the  origin such that  \eqref{esttima-theorem-profile-complex} and  \eqref{asymp-u-start-near-0-profile-complex}     hold    and  the following  holds
\begin{equation}
\left\| (T- t)^{\frac{1}{p-1}} |\ln (T-t)|  u_{2,c_0}(.,t) - c_0g_{0}\left( \frac{.}{\sqrt{(T -t) |\ln(T -t)|}}  \right)  \right\|_{L^{\infty}(\mathbb{R}^n)}  \leq \frac{C}{ |\ln (T -t)|^{\frac{p_1}{2}}},
\end{equation}
and 
\begin{equation}
 u^*_2(x)  \sim \frac{2 p c_0}{(p-1)^2}  \left[ \frac{ (p-1)^2|x|^2}{ 8p |\ln|x||}\right]^{-\frac{1}{p-1}}\frac{1}{ |\ln|x||} , \text{ as } x \to 0,
\end{equation}
\end{remark}
\begin{remark}
As in the case $p=2$ treated  by  Nouaili and Zaag \cite{NZCPDE2015}, and we  also  mentioned   we suspect the  behavior  in Theorem  \ref{Theorem-profile-complex}  to be unstable. This is due to the fact that the number of parameters in the initial data  we  consider below  in Definition \ref{initial-data-bar-u} (see also Remark \ref{remark-initial-data} above) is higher than the dimension of
the blowup parameters which is $n+1$ ($n$ for the blowup points and $1$ for the blowup time).    
\end{remark}

Besides that, we can use the technique of Merle \cite{Mercpam92} 
to construct a solution which blows up at arbitrary given points.
 More precisely, we have  the following Corollary: 
\begin{corollary}[Blowing up at $k$ distinct points]
For any  given points, $x_1,...,x_k$, there exists a solution of \eqref{equ:problem} which blows up exactly at $x_1,...,x_k$. Moreover, the local behavior at each 
blowup point $x_j$ is also given by  \eqref{esttima-theorem-profile-complex}, \eqref{estima-the-imginary-part}, \eqref{asymp-u-start-near-0-profile-complex}, \eqref{asymp-u-start-near-0-profile-complex-imaginary-part}    by replacing $x$ by  $x- x_j$ and $L^\infty (\mathbb{R}^n)$  by $L^{\infty} (|x - x_j| \leq \epsilon_0),$  for some    $\epsilon_0 >0$.
\end{corollary}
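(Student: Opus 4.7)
The plan is to adapt Merle's construction from \cite{Mercpam92} to our setting, using Theorem \ref{Theorem-profile-complex} as a black-box single-point blowup construction. First I would fix a small parameter $\ep_0 > 0$ with $4\ep_0 < \min_{i\neq j} |x_i - x_j|$, so that the balls $B(x_j, 2\ep_0)$ are pairwise disjoint. For each $j = 1, \ldots, k$, Theorem \ref{Theorem-profile-complex} provides, for any sufficiently small $T$, an initial datum $v^{(j)}(0)$ centered at $x_j$ (obtained by translating the initial datum of Remark \ref{remark-initial-data} by $x_j$) depending on free parameters $(d_1^{(j)}, d_2^{(j)})$, whose associated solution blows up only at $x_j$ at time $T$ with the profiles \eqref{esttima-theorem-profile-complex}--\eqref{asymp-u-start-near-0-profile-complex-imaginary-part}.

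Next I would build a single candidate initial datum by gluing these local pieces. Using a smooth cutoff $\chi_0$ with $\text{supp }\chi_0 \subset [0,2]$ and $\chi_0 \equiv 1$ on $[0,1]$, define
\begin{equation*}
u(0)(x) = \sum_{j=1}^{k} \chi_0\!\left( \frac{|x-x_j|}{\ep_0} \right) v^{(j)}(0)(x) + \left(1 - \sum_{j=1}^{k}\chi_0\!\left( \frac{|x-x_j|}{\ep_0} \right) \right) h(x),
\end{equation*}
where $h \in L^\infty(\R^n,\mathbb{C})$ is a fixed smooth bounded function with $\text{Re}(h) \geq \lambda > 0$. By construction, $u(0) \in L^\infty$ and $\text{Re}(u(0)) \geq \lambda$, so the Cauchy problem (see Appendix \ref{appendix-Cauchy-problem}) provides a unique maximal solution $u$ on some interval $[0,T^*)$.

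The main work is then to show that $T^* = T$ and that blowup occurs exactly at each $x_j$ with the required profile. Following Merle's strategy, I would revisit the proof of Theorem \ref{Theorem-profile-complex} and observe that the reduction to a finite dimensional problem is purely \emph{local} near each blowup point: the only contribution of the rest of space is through heat-kernel convolutions, which produce exponentially small corrections (of order $e^{-\ep_0^2/(4t)}$ or smaller) inside each ball $B(x_j,\ep_0)$. One then applies the reduction-to-finite-dimension argument of Theorem \ref{Theorem-profile-complex} simultaneously at each $x_j$, obtaining $k$ independent finite-dimensional systems for the parameters $(d_1^{(j)}, d_2^{(j)})$. The topological/index-theoretic argument used to solve each such system is unchanged, because the coupling terms are exponentially small in $|\ln T|$ and hence negligible against the polynomial-in-$|\ln T|$ smallness controlling the shrinking set $V_A(s)$.

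The hard part will be verifying that the exponentially small interaction terms indeed fit within the tolerance of the shrinking set $V_A(s)$, in particular on the negative and outer components where the margins are thinnest, and making sure that the real part of the solution remains bounded below by a positive constant on the exterior region so that the Cauchy theory of Appendix \ref{appendix-Cauchy-problem} continues to apply up to time $T$. Once this is established, the local estimates \eqref{esttima-theorem-profile-complex}--\eqref{estima-the-imginary-part} hold in $L^\infty(|x-x_j|\leq \ep_0)$ for each $j$, and the asymptotics \eqref{asymp-u-start-near-0-profile-complex}--\eqref{asymp-u-start-near-0-profile-complex-imaginary-part} for the final profile follow exactly as in item $(ii)$ of Theorem \ref{Theorem-profile-complex}, completing the proof.
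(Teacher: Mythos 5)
Your proposal follows exactly the route the paper intends: the paper gives no detailed proof of this corollary and simply invokes the technique of Merle \cite{Mercpam92}, which is precisely the gluing-of-local-data plus simultaneous finite-dimensional reduction argument you outline (with the interaction between blowup points controlled as exponentially small heat-kernel corrections, negligible against the polynomial-in-$|\ln T|$ tolerances of $V_A(s)$). Your sketch is consistent with, and in fact more explicit than, the paper's treatment.
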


\subsection{The strategy  of  the proof}\label{strategy-of-proof}
 From  the singularity of   the   nonlinear term ($u^p$)  when  $p \notin \mathbb{N}$,  we can not  apply the techniques   we used  in  \cite{DU2017}  when  $p \in \mathbb{N}$ (also used  in \cite{MZdm97}, \cite{NZcpde15}, ...). We   need to modify   this  method.  We see that, although our nonlinear  term in not continuous in general, it is  continuous  in the following  half plane
$$ \{ u  \quad  |   \text{Re} (u) > 0 \}.$$
Relying on  this property, our  problem will   be derived    by using   the techniques  which  were  used   in \cite{DU2017} and  the fine  control of   the positivity of the real part.  We treat this challenge  by relying  on the ideas   of the work of  Merle and Zaag in \cite{MZnon97} (or   the work of   Ghoul, Nguyen and Zaag  in \cite{GNZpre16a}  with    inherited  ideas from \cite{MZnon97}) for the   construction of the initial data. We define a  shrinking set  $S(t)$ (see in Definition  \ref{defini-shrinking-set}) which allows a very fine control of  the positivity  of the real part.     More precisely,  it is procceed    to control our solution on  three regions $ P_1(t), P_2(t)$ and   $P_3(t)$  which are  given in subsection  \ref{subsection-shrinking-set}  and which we recall  here: 

- $P_1(t),$ called   the blowup region, i.e $|x| \leq  K_0 \sqrt{(T-t)|\ln (T-t)|}$:  We control our solution   as a perturbation of the intermadiate   blowup  profiles  (for  $t\in [0,T)$) $f_0$ and $ g_0$ given in   \eqref{esttima-theorem-profile-complex} and  \eqref{estima-the-imginary-part}.

-  $P_2(t),$ called the intermediate region, i.e $\frac{K_0}{4} \sqrt{(T-t) |\ln(T-t)|} \leq  |x| \leq  \epsilon_0$:  In this  region,  we will  control our solution  by control  the rescaled  function $U$ of $u$ (see more  \eqref{def-mathcal-U})  to  approach  $\hat U_{K_0}(\tau)$ (see in  \eqref{solution-ODE-hat-U}), by using   a classical parabolic estimates. Roughly  speaking, we control  our solution    as a  perturbation  of the final profiles  for  $t = T$ given in   \eqref{asymp-u-start-near-0-profile-complex} and  \eqref{asymp-u-start-near-0-profile-complex-imaginary-part}.

- $P_3(t),$ called  the regular region, i.e $|x| \geq  \frac{\epsilon_0}{4}$: In this  region, we  control  the solution  as a perturbation  of initial data ($t=0$). Indeed, $T$ will be chosen small by the end of the proof. 

Fixing some constants involved  in the definition  $S(t)$,   we can  prove that  our problem  will be solved    by the control of  the solution  in  $S(t)$. Moreover, we   prove via a priori estimates in the different regions  $P_1, P_2, P_3$ that  the control is reduced to the  control  of a finite dimensional component of the solution. Finally, we  may apply the techniques in \cite{DU2017} to get our conclusion.

We will  organize  our paper   as follows:

- In Section   \ref{section-approach-formal}:  We  give   a formal approach to  explain  how the profiles we have in Theorem \ref{Theorem-profile-complex} appear naturally. Moreover,   we also   approach  our problem  through  two  independant   directions:  \textit{  Inner expansion}   and  \textit{ Outer expansion}, in order to  show that   our profiles   are reasonable.

- In Section \ref{section-existence-solution}:    We   give a formulation   for   our problem (see equation  \eqref{equation-satisfied-by-q-1-2})   and,    step by step   we give   the  rigorous  proof for Theorem \ref{Theorem-profile-complex}, assuming some technical estimates.

- In Section  \ref{the proof of proposion-reduction-finite-dimensional}, we  prove the techical estimates assumed in   Section \ref{section-existence-solution}.

\section{Derivation of the profile (formal approach)}\label{section-approach-formal}
In this section, we  would like to give  a   formal  approach to our problem which  explains how we derive  the profiles for   the solution of equation  \eqref{equ:problem} given  in Theorem \eqref{Theorem-profile-complex}, as well the   asymptotics of the solution.  In particular,  we would   like  to mention that  the main  difference between  the case   $p \in \mathbb{N}$  and $p \notin  \mathbb{N}$ resides   in the  way we handle      the nonlinear      term   $u^p$.  For that reason, we will give  a  lot of    care     for the estimates  involving    the nonlinear term, and go  quickly while  giving  estimates related  to other terms, kindly refering  the reader to  \cite{DU2017}  where   the case   $p \in \mathbb{N}$ was treated.

\subsection{Modeling  the problem}\label{subsection-pro-L}
In this part, we will give definitions and special symbols important for    our work and  explain how the  functions  $f_0, g_0$ arise as blowup profiles for the solution of equation \eqref{equ:problem} as stated in \eqref{esttima-theorem-profile-complex} and \eqref{estima-the-imginary-part}.      Our aim in this section is to give solid (though formal) hints for the existence of a solution $u(t) = u_1 (t)  + i u_2(t)$ to equation  \eqref{equ:problem}  such that 
\begin{equation}\label{lim-u-t-=+-infty}
\lim_{t \to T} \|u(t) \|_{L^\infty(\mathbb{R}^n)} = +\infty,
\end{equation} 
and  $u$ obeys the profiles in \eqref{esttima-theorem-profile-complex} and  \eqref{estima-the-imginary-part}, for some $T > 0$. As we have pointed out   in the introduction, we  are interested in the case where  
$$ p \notin  \mathbb{N},$$
noting  that in this case, we already  have a difficulty to  properly define the nonlinear  term  $u^p$ as a continuous term. In order to overcome  this difficulty, we will restrict ourselves  to the case where 
\begin{equation}\label{condition-Re-u-geq-0}
\text{Re}(u) >  0.
\end{equation}
Our main challenge in this work will  be to show that \eqref{condition-Re-u-geq-0} is propagated  by the flow, at least  for the  initial data  we are suggesting  (see  Definition \ref{initial-data-bar-u} below). Therefore, under the  condition  \eqref{condition-Re-u-geq-0},    by using equation  \eqref{equ:problem}, we deduce that $u_1,u_2$ solve: 
\begin{equation}\label{equation-satisfied-u_1-u_2}
\left\{   \begin{array}{rcl}
\partial_t u_1 &=& \Delta u_1 + F_1(u_1,u_2),\\
\partial_t u_2 &=& \Delta u_2 + F_2(u_1, u_2). 
\end{array} \right.
\end{equation}
where  $F_1(0,0) = F_2 (0,0) = 0$ and  for all $(u_1, u_2) \neq 0$ we have 
\begin{equation}\label{defi-mathbb-A-1-2}
\left\{   \begin{array}{rcl}
F_1(u_1,u_2) &=& \text{ Re} \left[( u_1 + i u_2)^p\right]  =  |u|^{p} \cos\left[ p \text{ Arg }(u_1,u_2) \right]   ,\\
 F_2(u_1, u_2) &=&  \text{ Im} \left[( u_1 + i u_2)^p\right]  =  |u|^{p} \sin\left[ p \text{ Arg }(u_1,u_2) \right]   ,  
\end{array} \right.
\end{equation}
with  $|u| =( u_1^2 + u_2^2)^{\frac{1}{2}} $  and $\text{ Arg}(u_1,u_2),  u_1 > 0 $ is    defined as follows:   
\begin{eqnarray}\label{argument-u-1-u-2}
\text{ Arg}(u_1, u_2) =  
\arcsin \left[ \frac{u_2}{ \sqrt{u_1^2  + u_2^2}}  \right].
\end{eqnarray} 
Note that, in the case where  $p \in \mathbb{N}$, we had the following simple expressions for $F_1, F_2$
\begin{equation}\label{defi-mathbb-A-1-2-entiere}
\left\{   \begin{array}{rcl}
F_1(u_1,u_2) &=& \text{ Re} \left[( u_1 + i u_2)^p\right]  = \sum_{j=0}^{\left[ \frac{p}{2} \right]}  C^{2j }_p ( -1)^{j} u_1^{p-2j} u_2^{2j}  ,\\
 F_2(u_1, u_2) &=&  \text{ Im} \left[( u_1 + i u_2)^p\right]  = \sum_{j=0}^{\left[ \frac{p-1}{2} \right]}  C^{2j + 1}_p ( -1)^{j} u_1^{p-2j -1} u_2^{2j +1}.
\end{array} \right.
\end{equation}
Of course, both expressions  \eqref{defi-mathbb-A-1-2} and  \eqref{defi-mathbb-A-1-2-entiere}   coincide  when  $p  \in \mathbb{N}$. In fact, we will follow our strategy in \cite{DU2017}  for $p \in \mathbb{N}$ and focus  mainly on how we  handle  the nonlinear  terms, since we have a different expression when $p \notin \mathbb{N}.$

\noindent
  Let us introduce \textit{the similarity-variables} for  $u = u_1 + i u_2$ as follows:
\begin{equation}\label{similarity-variales}
w_1 (y,s)  =  (T-t)^{\frac{1}{p-1}}u_1 (x,t), w_2 (y,s)  =  (T-t)^{\frac{1}{p-1}}u_2 (x,t) ,y = \frac{x}{\sqrt {T- t}} , s = - \ln(T-  t).
\end{equation}
By using  \eqref{equation-satisfied-u_1-u_2},   we   obtain a system satisfied by    $( w_1, w_2),$  for all $y \in \mathbb{R}^n$ and $s \geq  -\ln T $ as follows:
\begin{equation}\label{equation-satisfied-by-w-1-2}
\left\{  \begin{array}{rcl}
\partial_s w_1  & = &   \Delta w_1 - \frac{1}{2} y \cdot \nabla w_1  - \frac{w_1}{p-1}   +  F_1(w_1,w_2), \\
\partial_s w_2 &=& \Delta w_2 - \frac{1}{2} y \cdot \nabla w_2  - \frac{w_2}{p-1} + F_2 (w_1,w_2). 
\end{array} \right.
\end{equation}
Then   note that  studying  the    asymptotics  of  $u_1 + i u_2$ as  $t \to T$ is    equivalent to  studying the  asymptotics of $w_1 + i w_2$  in long time.   We  are first interested in the set of constant solutions  of 
 \eqref{equation-satisfied-by-w-1-2}, denoted by 
 $$\mathcal{S} = \{(0,0)\} \cup  \left\{  \kappa  \left( \cos\left(\frac{2 k \pi }{p-1}\right), \sin\left(\frac{2 k \pi }{p-1} \right)   \right)     \text{ where } \kappa 
 = (p-1)^{-\frac{1}{p-1}},   \text{ and }    k \in \mathbb{N}     \right\}.$$ 
We remark    that    $\mathcal{S}$ is infinity  if  $p$ is not integer.  However,   from  the transformation  \eqref{similarity-variales}, we slightly precise  our goal in \eqref{lim-u-t-=+-infty} by requiring   in addition that
 $$ (w_1, w_2) \to (\kappa, 0)  \text{  as }  s  \to +\infty.$$
Introducing  $w_1 = \kappa + \bar w_1, $ our goal because to get 
$$ (\bar w_1, w_2) \to (0,0) \text{ as } s \to + \infty. $$
From \eqref{equation-satisfied-by-w-1-2}, we deduce  that $\bar w_1, w_2$  satisfy the following system

\begin{equation}\label{system-bar R-varphi}
\left\{   \begin{array}{rcl}
\partial_s \bar w_1 &=& \mathcal{L} \bar w_1  + \bar B_1(\bar w_1, w_2),\\
\partial_s w_2 &=& \mathcal{L} w_2  + \bar B_2 (\bar w_1, w_2).
\end{array} \right.
\end{equation}
where
\begin{eqnarray}
\mathcal{L} &=& \Delta - \frac{1}{2} y \cdot \nabla + Id,\label{define-operator-L}\\
\bar B_1 ( \bar w_1, w_2 )  & = & F_1(\kappa + \bar w_1, w_2)- \kappa^p - \frac{p}{p-1} \bar w_1 , \label{defini-bar-B-1}\\ 
\bar B_2 (\bar w_1, w_2) &=& F_2(\kappa + \bar w_1, w_2) - \frac{p}{p-1} w_2.\label{defini-bar-B-2}
\end{eqnarray}
It is  important to study  the linear operator $\mathcal{L}$  and the asymptotics of $\bar B_1, \bar B_2$ as  $(\bar w_1, w_2) \to (0,0)$ which will appear as quadratic. 

$\bullet $ \textit{ The properties of $\mathcal{L}$:}

We observe that the   operator $\mathcal{L}$  plays an important 
role in our analysis. It is easy to find an analysis space such that $\mathcal{L}$ is
 self-adjoint. Indeed,  $\mathcal{L} $  is self-adjoint in $L^2_\rho(\mathbb{R}^n)$, where $L^2_\rho$ is the weighted space associated to the weight $\rho$ defined by
\begin{equation}\label{def-rho-rho-j}
\rho(y)   =\frac{e^{- \frac{|y|^2}{4}}}{(4 \pi)^{\frac{n}{2}}}  = \prod_{j=1}^n \rho(y_j), \text{ with } \rho(\xi) = \frac{e^{- \frac{|\xi|^2}{4}}}{(4 \pi)^{\frac{1}{2}}},
\end{equation}
and  the spectrum set of $\mathcal{L}$
\begin{equation}\label{spectrum-set}
\textup{spec}(\mathcal{L}) = \displaystyle \left\{1 - \frac m2, m \in \mathbb{N}\right\}.
\end{equation}
Moreover,  we can find  eigenfunctions which  correspond  to each  eigenvalue $ 1 - \frac{m}{2}, m \in \mathbb{N}$: 
\begin{itemize}
\item[-] The  one space dimensional case:  the eigenfunction corresponding to
 the eigenvalue $1 - \frac m2$   is  $h_m$, the rescaled Hermite polynomial given as follows
 \begin{equation}\label{Hermite}
h_m(y)   = \sum_{j=0}^{\left[ \frac{m}{2}\right]} \frac{(-1)^jm! y^{m - 2j}}{j! ( m -2j)!}.
\end{equation}
  In particular, we have the following orthogonality property:
 $$\int_{\mathbb{R}} h_i h_j  \rho dy  =  i! 2^i \delta_{i,j}, \quad \forall (i,j) \in \mathbb{N}^2. $$
\item[-] The higher dimensional case:  $n \geq 2$, the eigenspace  $\mathcal{E}_{m}$, corresponding
 to the eigenvalue $1 - \frac {m}{2}$  is defined as follows:
\begin{equation}\label{eigenspace}
 \mathcal{E}_m   = \left\{ h_{\beta} = h_{\beta_1} \cdots h_{\beta_n}, \text{ for all } \beta \in \mathbb{N}^n, |\beta|  = m , |\beta| = \beta_1 + \cdots +\beta_n  \right\}.
\end{equation}
\end{itemize}
Accordingly,   we can represent  an arbitrary function $r \in  L^{2}_\rho$ as follows
\begin{eqnarray*}
r  &=& \displaystyle \sum_{\beta, \beta \in \mathbb{N}^n} r_\beta h_\beta  (y),\\
\end{eqnarray*}
where: $r_\beta$ is the projection of $r$ on $h_\beta $ for any $ \beta \in \mathbb{R}^n$ which is defined as follows:
\begin{equation}\label{projector-h-beta}
r_\beta =  \mathbb{P}_\beta (r)  =  \int r k_\beta \rho dy, \forall  \beta \in \mathbb{N}^n, 
 \end{equation}
with 
\begin{equation}\label{note-k-beta-hermite}
 k_\beta (y) = \frac{ h_\beta}{\|h_\beta\|^2_{L^2_\rho}}.
\end{equation}
$\bullet $ \textit{ The  asymptotics of $\bar B_1(\bar w_1, w_2), \bar B_2(\bar w_1, w_2)$:}
The following asymptotics   hold: 
 \begin{eqnarray} 
\bar B_1 (\bar w_1, w_2) &=& \frac{p}{2 \kappa} \bar w_1^2  + O (|\bar w_1|^3 + |w_2|^2),\label{asymptotic-bar-B-1-1}\\
\bar B_2( \bar w_1, w_2) &=& \frac{p}{ \kappa} \bar w_1 w_2  + O \left( |\bar w_1|^2 |w_2| \right) + O  \left( |w_2|^3 \right), \label{asymptotic-bar-B-2-1}                                 
\end{eqnarray}
as  $(\bar w_1, w_2) \to (0,0).$  Note that although we have here  the   expressions  of the  nonlinear  terms $F_1, F_2$  which  are different   from  the case  $p \in \mathbb{N}$ (see \eqref{defi-mathbb-A-1-2} and  \eqref{defi-mathbb-A-1-2-entiere}), the expressions  coincide, since we have  $u \sim  \kappa = (p - 1)^{- \frac{1}{p-1}}$   in all case      (see Lemma \ref{asymptotic-bar-B-1-2} below). 
\subsection{Inner expansion}\label{subsection-inner-expan}
 In this part, we study    the asymptotics  of the solution 
 in $L^2_\rho(\mathbb{R}^n).$  Moreover, for simplicity  we suppose that  $n =1$, and we recall that  we aim at  constructing a  solution of
  \eqref{system-bar R-varphi} such that 
  $( \bar w_1,  w_2)  \to (0,0) $.   Note first that  the spectrum  of $\mathcal{L}$  contains  two   positive eigenvalues $1, \frac{1}{2}$, a   neutral  eigenvalue $0$ and all  the other ones are  strictly negative. So, in the representation of the solution in $L^2_\rho,$  it is reasonable to think that    the part corresponding to the negative spectrum  is easily  controlled.  Imposing  a symmetry condition  on the solution   with  respect  of $y$,   it is reasonable to look for  a  solution  $ \bar w_1, w_2  $ of the form:
\begin{eqnarray*}
\bar w_1 & = & \bar w_{1,0} h_0  + \bar w_{1,2} h_2, \\
w_2 &=& w_{2,0} h_0 + w_{2,2} h_2. 
\end{eqnarray*}
From the assumption that   $ (\bar w_1, w_2)  \to (0,0)$,  we see that    $ \bar w_{1,0}, \bar w_{1,2}, w_{2,0} , w_{2,2} \to 0$ as $s \to + \infty$. 
We  see also  that we can understand the asymptotics of the solution $\bar w_1, w_2$ in $L^2_\rho$  from  the study of the asymptotics of $\bar w_{1,0}, \bar w_{1,2}, w_{2,0}, w_{2,2}.$
 We  now  project  equations   \eqref{system-bar R-varphi} on $h_0$ and
  $h_2.$ Using   the asymptotics of $\bar B_1, \bar  B_2$  in  \eqref{asymptotic-bar-B-1-1} and \eqref{asymptotic-bar-B-2-1},  we get the following ODEs for $ \bar w_{1,0}, \bar w_{1,2}, w_{2,0} , w_{2,2}:$
\begin{eqnarray}
\partial_s \bar w_{1,0} &=& \bar w_{1,0} + \frac{p}{2 \kappa} \left( \bar w_{1,0}^2 + 8 \bar w_{1,2}^2\right)+ O (|\bar w_{1,0}|^3 + |\bar w_{1,2}|^3) + O( |w_{2,0}|^2 + |w_{2,2}|^2),\label{ODe-bar w_1-0} \\
\partial_s \bar w_{1,2} &=& \frac{p}{\kappa} \left( \bar w_{1,0} \bar w_{1,2} + 4 \bar w_{1,2}^2\right) + O (|\bar w_{1,0}|^3 + |\bar w_{1,2}|^3) + O( |w_{2,0}|^2 + |w_{2,2}|^2) ,\label{ODe-bar w_1-2}\\
\partial_s w_{2,0} &=&  w_{2,0} +  \frac{p}{\kappa}\left[\bar w_{1,0} w_{2,0} + 8 \bar w_{1,2} w_{2,2} \right] + O ((|\bar w_{1,0}|^2 + |\bar w_{1,2}|^2)(|w_{2,0}|+ |w_{2,2}|)) \label{ODe- w_2-0}\\
&+& O( |w_{2,0}|^3 + |w_{2,2}|^3) ,\nonumber\\
\partial_s w_{2,2} &=&   \frac{p}{\kappa} \left[  \bar w_{1,0} w_{2,2} + \bar w_{1,2} w_{2,0} + 8 \bar w_{1,2} w_{2,2}\right] + O ((|\bar w_{1,0}|^2 + |\bar w_{1,2}|^2)(|w_{2,0}|+ |w_{2,2}|)) \label{ODe- w_2-2}\\
&+& O( |w_{2,0}|^3 + |w_{2,2}|^3).   \nonumber
\end{eqnarray}
Assuming that 
\begin{equation}\label{asumption-bar-w-1-0-lesthan-bar-w-1-2}
\bar w_{1,0}, w_{2,0}, w_{2,2} \ll \bar w_{1,2},
\end{equation}
and 
\begin{equation}\label{asuming-bar-w-1-w-2-2-leq-s^2}
\bar w_{1,0}, w_{2,0}, w_{2,2} \lesssim \frac{1}{s^2}, 
\end{equation}
as   $s \to  + \infty$.  Similarly as   in \cite{DU2017}, where    we have   $p \in \mathbb{N},$    we obtain the  following asymptotics of     $\bar w_{1,0}, \bar w_{1,2}, w_{2,0}, w_{2,2}:$

\begin{eqnarray*}
\bar w_{1,0}   & = &  O\left( \frac{1}{s^2}\right),\\
\bar  w_{1,2}  &  =&  - \frac{\kappa}{ 4 ps}  + O \left(  \frac{\ln s}{s^2} \right),\\
w_{2,0}  &  = &   O \left(  \frac{1}{s^3}\right), \\
w_{2,2}  &  =  &    \frac{c_{2,2}}{s^2} +      O \left(  \frac{ \ln s}{s^3} \right), c_{2,2} \neq 0, 
\end{eqnarray*}
as   $s \to + \infty $ which  satisfiy  the assumption   in   \eqref{asumption-bar-w-1-0-lesthan-bar-w-1-2}  and   \eqref{asuming-bar-w-1-w-2-2-leq-s^2}. Then,  we have

\begin{eqnarray}
w_1&=& \kappa - \frac{\kappa}{4p s } ( y^2 - 2) + O\left(\frac{1}{s^2} \right), \label{asymptotic-w-1}\\
w_2  &=& \frac{ c_{2,2}}{s^2} (y^2 - 2)  + O \left( \frac{\ln s}{s^3}\right)\label{asymptotic-w-2},
\end{eqnarray}
in   $L^2_\rho(\mathbb{R})$  for some  $c_{2,2}$ in $\mathbb{R}^*$. Note that,
by using parabolic  regularity, we can derive  that the asymptotics \eqref{asymptotic-w-1}, \eqref{asymptotic-w-2}   also hold for all $|y| \leq K,$ where $K$ is an arbitrary positive constant.
\subsection{Outer expansion}As for the inner expansion, we here assume that    $n = 1$.  We   see  that asymptotics \eqref{asymptotic-w-1} and  \eqref{asymptotic-w-2}
 can not give us a shape, since they  hold uniformly  on compact sets (where  we only see the constant solutio  $(\kappa, 0)$)  and not in larger sets.  Fortunately,  we observe from  \eqref{asymptotic-w-1} and \eqref{asymptotic-w-2} that the profile may be based on the following variable:    
\begin{equation}\label{the-variable-z-profile}
z  = \frac{y}{\sqrt s}.
\end{equation}
This  motivates us  to look for  solutions  of the form:
\begin{eqnarray*}
w_1(y,s) &=& \sum_{j=0}^{\infty} \frac{R_{1,j} (z)}{s^j}, \\
w_2 (y,s)  &=& \sum_{j=1}^{\infty} \frac{R_{2,j}(z)}{s^j}.
\end{eqnarray*}
  Note that,  our purpose     is  to  construct      a  solution   where  the real part  is positive. So, it is   reasonnable  to  assume that  $w_1 > 0$ and   $R_{1,0} (z)  > 0$ for all  $z \in \mathbb{R}$. Besides that,  we also assume that  $R_{1,j} , R_{2,j}$ are   smooth  and  have  bounded   derivatives.  From the definitions of  $F_1, F_2,$ given in \eqref{defi-mathbb-A-1-2},  we have the following
  \begin{eqnarray*}
\left|  F_1 \left(  \sum_{j=0}^{\infty} \frac{R_{1,j} (z)}{s^j} ,  \sum_{j=1}^{\infty} \frac{R_{2,j}(z)}{s^j}   \right)    -  R_{1,0}^{p}  (z) -  \frac{p  R_{1,0}^{p-1} (z) R_{1,1} (z)}{s}     \right| &\leq &  \frac{C (z)}{s^2},\\
  \left|  F_2 \left(  \sum_{j=0}^{\infty} \frac{R_{1,j} (z)}{s^j} ,  \sum_{j=1}^{\infty} \frac{R_{2,j}(z)}{s^j}   \right)    - \frac{p R_{1,0}^{p-1} (z) R_{2,1}(z)}{s}   \right. \\
  \left.  -  \frac{1}{s^2}  \left(  p R_{1,0}^{p-1} (z) R_{2,2}  + p(p-1) R_{1,0}^{p-2}(z) R_{1,1} (z) R_{2,1} (z) \right) \right| &\leq  & \frac{C(z)}{s^3}.
\end{eqnarray*}
   Thus,  for each   $z\in \mathbb{R},$  by using   system  \eqref{equation-satisfied-by-w-1-2},   taking  $s \to  +\infty,$  we obtain  the following system:
\begin{eqnarray}
0 &=&  - \frac{1}{2} R_{1,0}' (z) \cdot z - \frac{R_{1,0} (z)}{p-1} + R_{1,0}^p (z), \label{equa-R-1-0} \\
0 &=&  - \frac{1}{2} z R_{1,1}'(z) - \frac{R_{1,1}}{p-1} (z) + p R_{1,0}^{p-1} (z)R_{1,1} (z)+ R_{1,0}'' (z)
 + \frac{z R_{1,0}'(z)}{2}, \label{equa-R-1-1}  \\
0 &=&  - \frac{1}{2}  R_{2,1}' (z) \cdot z - \frac{R_{2,1}}{p-1}(z) + p R_{1,0}^{p-1} (z)R_{2,1}(z), \label{equa-R-2-1}\\
0 &=&  - \frac{1}{2}  R_{2,2}'(z). z - \frac{R_{2,2}(z)}{p-1} + p R^{p-1}_{1,0}(z) R_{2,2} (z)+ R''_{2,1}(z) + R_{2,1}(z) + \frac{1}{2} R_{2,1}'(z) \cdot z \label{equa-R-2-2} \\
&+&   p (p-1) R^{p-2}_{1,0} (z)R_{1,1} (z)R_{2,1}(z).\nonumber
\end{eqnarray}
This system  is quite similar to  \cite{DU2017} (where   $p \in \mathbb{N}$), and we can find   the fomulas of  $R_{1,0}, R_{1,1} , R_{2,1}, R_{2,2}$ as follows:
\begin{eqnarray}
R_{1,0} (z)  & =&   \left(   p-1  +  b |z|^2\right)^{-\frac{1}{p-1}},\label{solu-R-0}\\
R_{1,1} (z)&=& \frac{(p-1)}{ 2 p} (p-1 + bz ^2)^{- \frac{p}{p-1}} -  \frac{p-1}{4 p} z^2 \ln (p-1 + b z^2) (p-1 + b z^2)^{- \frac{p}{p-1}}, \label{solu-R-1-1}\\
R_{2,1} (z)  & =&   \frac{z^2}{(p-1 + bz^2)^{\frac{p}{p-1}}},\label{solu-varphi_1}\\
R_{2,2} (z) &=& - 2 (p-1 + b z^2) ^{- \frac{p}{p-1}} + H_{2,2} (z),\label{solu-R-2-2}
\end{eqnarray}
where    $b = \frac{(p-1)^2}{4p}$ and 
\begin{eqnarray*}
H_{2,2} (z)  &=& C_{2,1} (p) z^2 (p-1 + b z^2)^{-\frac{2p-1}{p-1}} + C_{2,3} (p) z^2 \ln (p-1 + b z^2) (p-1 + b z^2)^{-\frac{p}{p-1}} \\
&+& C_{2,3} (p) z^2 \ln (p-1 + b z^2) (p-1 + b z^2)^{-\frac{2p-1}{p-1}}.
\end{eqnarray*}

\subsection{Matching  asymptotics}
 By comparing  the          inner expansion and   the outer expansion,  then     fixing  several constants,    we      have    the   following   profiles  for  $w_1$ and $w_2 $
 \begin{equation}\label{equavalent-w-1-2-Phi-1-2}
\left\{   \begin{array}{rcl}
w_1 (y,s) &\sim &  \Phi_1(y,s),\\
w_2 (y,s) &\sim & \Phi_2(y,s),
\end{array} \right.
\end{equation}
 where 
\begin{eqnarray}
\Phi_1(y,s) &=& \left( p-1 + \frac{(p-1)^2}{4 p} \frac{|y|^2}{s} \right)^{-\frac{1}{p-1}} + \frac{n \kappa}{2 p s},\label{defi-Phi-1}\\
\Phi_2 (y,s) &=&\frac{|y|^2}{s^2} \left( p-1 + \frac{(p-1)^2}{4 p} \frac{|y|^2}{s} \right)^{-\frac{p}{p-1}} -  \frac{2n \kappa}{(p-1) s^2},\label{defi-Phi-2}
\end{eqnarray}
for all $(y,s) \in \mathbb{R}^n \times (0,  + \infty)$.   In this setion,  we will give  a regious  proof for the  existence of a   solution  $(w_1, w_2)$  of equation  \eqref{equation-satisfied-by-w-1-2} where \eqref{equavalent-w-1-2-Phi-1-2} holds. 
\section{Existence of  a blowup  solution  in Theorem  \ref{Theorem-profile-complex}}\label{section-existence-solution}
In Section \ref{section-approach-formal}, we adopted a formal approach in order to justify how the profiles $f_0, g_0$   arise as blowup profiles for the solution of   equation \eqref{equ:problem}, given  in Theorem  \ref{Theorem-profile-complex}.  In this section, we give a rigorous  proof to justify the existence  of a solution approaching those profiles.
\subsection{Formulation of the problem}
 In this subsection, we aim at giving a complete  formulation of   our problem in order  to justify  the  formal  approach which is given in the previous  section. We introduce
 \begin{equation}\label{defini-q-1-2}
 \left\{   \begin{array}{rcl}
 w_1 &=& \Phi_1  + q_1, \\
w_2 &=& \Phi_2  + q_2,
\end{array}  \right.
 \end{equation}
where      $\Phi_1, \Phi_2$ are defined in \eqref{defi-Phi-1} and \eqref{defi-Phi-2} respectively.     Then,  by  using  \eqref{equation-satisfied-by-w-1-2},  we  derive  the following system, satisfied  by  $(q_1, q_2) :$ 
\begin{equation}\label{equation-satisfied-by-q-1-2}
\partial_s \binom{q_1}{q_2} = \left( \begin{matrix}
\mathcal{L} + V  & 0 \\
  0 & \mathcal{L} + V
\end{matrix} \right) \binom{q_1}{q_2} +   \left( \begin{matrix}
 V_{1,1}  &  V_{1,2} \\
 V_{2,1 }& V_{2,2}
\end{matrix} \right)  \binom{q_1 }{q_2}  + \binom{B_1(q_1, q_2)}{B_2(q_1,q_2)} + \binom{R_1 }{R_{2} },
\end{equation}
where linear operator $\mathcal{L}$ is defined   in \eqref{define-operator-L} and:\\ 
\medskip
\noindent
 - The potential functions $V, V_{1,1}, V_{1,2}, V_{2,1}, V_{2,2} $ are defined  as follows 
\begin{eqnarray}
V(y,s) &=&  p \left( \Phi_1^{p- 1}  - \frac{1}{p-1}\right)\label{defini-potentian-V}, \\
V_{1,1} (y,s) & = &   \partial_{u_1} F_1( u_1,  u_2)|_{(u_1,u_2) =  (\Phi_1, \Phi_2)}  - p \Phi_1^{p-1} ,\label{defini-V-1-1} \\
V_{1,2} (y,s) & = &  \partial_{u_2} F_1( u_1,  u_2)|_{(u_1,u_2) =  (\Phi_1, \Phi_2)}, \label{defini-V-1-2} \\
V_{2,1} (y,s) &  = &    \partial_{u_1} F_2( u_1,  u_2)|_{(u_1,u_2) =  (\Phi_1, \Phi_2)}, \label{defini-V-2-1}  \\
V_{2,2} (y,s) & = & \partial_{u_2} F_2( u_1,  u_2)|_{(u_1,u_2) =  (\Phi_1, \Phi_2)} - p \Phi_1^{p-1}. \label{defini-V-2-2}
\end{eqnarray}
\medskip
\noindent
 - The quadratic terms $B_1 (q_1, q_2), B_2 (q_1,q_2)$  are defined as follows:
\begin{eqnarray}
B_1 (q_1,q_2) & = &  F_1 \left(  \Phi_1 + q_1, \Phi_2 + q_2 \right) - F_1(\Phi_1, \Phi_2)  -   \partial_{u_1} F_1( u_1,  u_2)|_{(u_1,u_2) =  (\Phi_1, \Phi_2)}  q_1   \label{defini-quadratic-B-1}\\
&-&  \partial_{u_2} F_1( u_1,  u_2)|_{(u_1,u_2) =  (\Phi_1, \Phi_2)} q_2,\nonumber\\
B_2(q_1, q_2) & = &   F_2 \left(  \Phi_1  + q_1 , \Phi_2 + q_2 \right) - F_2(\Phi_1, \Phi_2) -   \partial_{u_1} F_2( u_1,  u_2)|_{(u_1,u_2) =  (\Phi_1, \Phi_2)} q_1  \nonumber\\
&  -  & \partial_{u_2} F_2( u_1,  u_2)|_{(u_1,u_2) =  (\Phi_1, \Phi_2)}  q_2.\label{defini-term-under-linear-B-2}
\end{eqnarray}

\medskip
\noindent
 -  The rest terms $R_1(y,s), R_2(y,s)$ are defined as follows:
\begin{eqnarray}
R_1 (y,s) &=& \Delta \Phi_1 - \frac{1}{2} y \cdot \nabla \Phi_1 - \frac{\Phi_1}{p-1} + F_1 (\Phi_1, \Phi_2) - \partial_s \Phi_1 , \label{defini-the-rest-term-R-1}\\
R_2 (y,s) &=& \Delta \Phi_2 - \frac{1}{2} y \cdot \nabla \Phi_2  - \frac{\Phi_2}{p-1} + F_2 (\Phi_1, \Phi_2) - \partial_s \Phi_2, \label{defini-the-rest-term-R-2}
\end{eqnarray}
where   $ F_1, F_2$ are defined in  \eqref{defi-mathbb-A-1-2}.

By the  linearization around $\Phi_1, \Phi_2,$  our  problem is reduced to  constructing  a solution $(q_1,q_2)$ of system \eqref{equation-satisfied-by-q-1-2}, satisfying
$$ \|q_1\|_{L^{\infty}(\mathbb{R}^n)} + \|q_2\|_{L^{\infty}(\mathbb{R}^n)} \to 0 \text{ as } s \to +\infty.$$
 Looking at system \eqref{equation-satisfied-by-q-1-2}, we  already know   some of  the   main properties of  the  linear operator $\mathcal{L}$  (see page \pageref{define-operator-L}).  As  for     the  potentials $V_{j,k}$  where $  j,k  \in  \{1,2\},$ they  admit the following asymptotics:
\begin{eqnarray*}
\|V_{1,1} (.,s)\|_{L^\infty}   + \| V_{2,2} (.,s)\|_{L^\infty}   &\leq & \frac{C}{s^2},\\ 
  \| V_{1,2}(.,s)\|_{L^\infty}   + \| V_{2,1}(.,s)\|_{L^\infty}  & \leq  & \frac{C}{s}, \forall  s\geq 1,
\end{eqnarray*}
(see   Lemma \ref{lemmas-potentials}  below). 

\noindent
Regarding  the terms $B_1,B_2$  which  are   quadratic, we have  these  estimates    
\begin{eqnarray*}
\|B_1(q_1,q_2)\|_{L^\infty} &\leq &   \frac{C A^{4}}{s^{\frac{p}{2}}}, \\
\| B_2 (q_1,q_2)\|_{L^\infty} & \leq & \frac{C A^{2}}{s^{ 1 + \min \left(   \frac{p-1}{4}, \frac{1}{2}\right)}},
\end{eqnarray*}
 if $q_1,  q_2$   are   small in some sene (see   Lemma  \ref{lemma-quadratic-term-B-1-2}  below). 
 
 \noindent
 In addition to that, the rest terms $R_1, R_2$ satisfy  the following asymptotics
 \begin{eqnarray*}
\| R_1(.,s)\|_{L^\infty(\mathbb{R}^n)}  & \leq  & \frac{C}{s},\\
\| R_2(.,s)\|_{L^\infty(\mathbb{R}^n)}  & \leq & \frac{C}{s^2},
\end{eqnarray*}
 (see Lemma \ref{lemma-rest-term-R-1-2} below).
 
 \noindent
 As a matter of  fact,  the dynamics of  equation  \eqref{equation-satisfied-by-q-1-2} will mainly depend on the  main  linear operator 
$$ \left(   \begin{matrix}
\mathcal{L} + V  & 0\\
0  & \mathcal{L} + V
\end{matrix}  \right), $$
and  the effects  of the orther terms  will be less important  except on the zero mode  of this equation.   For that reason, we need to understand  the dynamics of  $\mathcal{L} + V$. Since  the spectral properties  of  $\mathcal{L}$ were already  introduced  in Section \ref{subsection-pro-L}, we will focus here on the effect of $V$. 

$i)$ Effect of $V$ inside the blowup region $\{|y| \leq K_0\sqrt s\}$ with $K_0>0:$ It satisfies the following  estimate:
$$  V  \to 0  \text{ in  } L^2_\rho(|y| \leq K_0 \sqrt s ) \text{ as } s \to + \infty,$$
which means that the effect of $V$ will be negligeable with  respect of the effect  of $\mathcal{L},$ except perhaps on the null mode of $\mathcal{L}$ (see item $(ii)$  of Proposition  \ref{prop-dynamic-q-1-2-alpha-beta}   below). 

$ii)$ Effect  of  $V$ outside   the blowup region:   For each  $\epsilon > 0,$ there exist $K_{\epsilon} >0$ and $ s_{\epsilon} >0$ such that
$$ \sup_{\frac{y}{\sqrt s} \geq K_{\epsilon}, s \geq s_{\epsilon}} \left| V(y,s)  - \left(- \frac{p}{p-1} \right) \right| \leq    \epsilon.$$ 
Since $1$ is the biggest eigenvalue of $\mathcal{L}$ (see \eqref{spectrum-set}),    the operator $\mathcal{L}+ V$  behaves as one with with a fully negative spectrum  outside blowup region $\{|y| \geq K_\epsilon\sqrt s\}$, which makes the control of the solution in this region easy.

\medskip
Since the behavior of the potential $V$ inside and outside the blowup region is different,
 we will consider the dynamics of the solution for   $|y| \leq 2K_0\sqrt s$ and for $|y| \geq K_0\sqrt s$ separately for some   $K_0$  to be fixed large.
For  that purpose,     we introduce the following cut-off function
\begin{equation}\label{def-chi}
\chi(y,s)  = \chi_0\left(\frac{|y|}{K_0 \sqrt s} \right),
\end{equation}
where     $\chi_0$ is defined as  a cut-off function:   
\begin{equation}\label{defini-chi-0}
\chi_0 \in C^{\infty}_{0}[0,+\infty),  \chi_0(x) = \left\{  \begin{array}{l}
 1 \quad \text{ for } x  \leq 1,\\
 0 \quad   \text{ for }  x  \geq 2,
\end{array} \right.   \text{ and }   \|\chi_0\|_{L^{\infty}} \leq 1.
\end{equation}
 Hence,  it is  reasonable  to  consider  separately  the solution  in   the blowup region  $\{ |y| \leq 2 K_0 \sqrt s \}$   and  in the  regular region  $\{| y| \geq  K_0 \sqrt s \}$.    More precisely,  let us define the following notation for all functions   $r$ in   $L^\infty$ as follows 
\begin{equation}\label{defini-q-1-1-e}
 r =  r_b  + r_e     \text{ with }  r_b =   \chi  r \text{ and  }   r_e   = (1 - \chi) r.
\end{equation}
 Note  in particular   that $\text{ supp} (r_b) 	\subset \mathbb{B} ( 0, 2 K_0 \sqrt s)$ and  $  \text{ supp} (r_e) 	     \subset  \mathbb{R}^n \setminus   \mathbb{B} ( 0,  K_0 \sqrt s)$.
Besides that,   we  also expand    $r_b $ in $L^2_\rho$    according to the spectrum of $\mathcal{L}$  (see Section \ref{subsection-pro-L} above):
\begin{equation}\label{representation-q-1-L-2-rho}
 r_b (y) =    r_0 + r_1 \cdot y  +  \frac{1}{2} y^{\mathcal{T}}  \cdot  r_2  \cdot y    -  \text{ Tr} \left(  r_2 \right)  + r_- (y) ,
\end{equation}
where  $r_0$ is a scalar, $r_1$ is a vector  in  $\mathbb{R}^n$ and  $r_2$ is a  $n \times n$ matrix  defined by  
 \begin{eqnarray*}
r_0  &  =  &  \int_{\mathbb{R}^n}  r_b \rho (y) d y, \\
r_1  &=&   \int_{\mathbb{R}^n}  r_b \frac{y}{2}  \rho (y) d y, \\
r_2 & =&    \left (  \int_{\mathbb{R}^n}  r_b  \left(  \frac{1}{4} y_j y_k -  \frac{1}{2} \delta_{j,k} \right)  \rho (y) d y \right)_{1 \leq j,k \leq n},\\
\end{eqnarray*}
with   $\text{ Tr}(r_2)$ being  the trace of  matrix $r_2$.
The reader   should  keep  in mind that $r_0, r_1,r_2$ are only the   coordinates of $r_b$, not for  $r$.  Note that $r_m$ is  the projection of  $r_b$ on  the eigenspace  of $\mathcal{L}$ corresponding to the eigenvalue  $\lambda = 1 - \frac{m}{2}.$ Accordingly, $r_-$ is the projection of $r_b$  on the negative part of the spectrum  of $\mathcal{L}.$ As a consequence of \eqref{defini-q-1-1-e}  and \eqref{representation-q-1-L-2-rho}, we see that every $r \in L^\infty (\mathbb{R}^n)$  can be decomposed into $5$ components as follows:
\begin{equation}\label{decom-5-parts}
r = r_b + r_e  = r_0 + r_1 \cdot y + \frac{1}{2} y^\mathcal{T}  \cdot r_2 \cdot y - \text{Tr} (r_2) + r_- + r_e.
\end{equation} 
\subsection{The shrinking set}\label{subsection-shrinking-set}

According to  \eqref{similarity-variales} and \eqref{defini-q-1-2}, our goal is to construct a solution  $(q_1, q_2) $ of system \eqref{equation-satisfied-by-q-1-2} such that they satisfiy the following estimates:
\begin{equation}\label{equa-q-1q-2-infty-to-0}
\|q_1(.,s)\|_{L^\infty}  +  \|q_2(.,s)\|_{L^\infty}  \to 0 \text{ as } s \to +\infty.
\end{equation}

Here, we   aim at  constructing  a shrinking set to $0$.   Then,   the control  of $(q_1,q_2) \to 0,$  will be a consequence of      the  control  of     $(q_1,q_2)$ in this shrinking set.   In addition to that,  we      have   to  control  the solution  $q_1$  so that   
\begin{equation}\label{control-w-1-positive}
w_1 =   q_1  + \Phi_1 > 0, 
\end{equation}
 (this is equivalent to   have   $u_1  > 0$)  and it is  one of the main  difficults  in our analysis.    As a matter of fact,    the shrinking sets which were  constructed   in   \cite{MZdm97}  by  Merle and Zaag or even in   \cite{DU2017},   are  not  sharp enough to ensure \eqref{control-w-1-positive}.    In other words, our  set has to shrink to $0$ as  $s \to  + \infty$ and ensure that the real part   of the  solution to  \eqref{equation-satisfied-by-w-1-2} is always positive.     In fact,   the positivity is the first  thing to be solved.     For the control         of  the positivity  of   the real part,  we     rely on   the ideas,        given by Merle and Zaag in \cite{MZnon97} for the control  of the  solution of the  following  equation:
 \begin{equation}\label{equa-vortex-conected}
 \partial_t  u = \Delta u - \eta \frac{\left| \nabla  u \right|^2}{u}  + |u|^{p-1}u, u \in \mathbb{R}.
\end{equation}  
 In \cite{MZnon97}, the authors  needed  a sharp control of  $u$ and  $| \nabla u |$  near zero, in order to bound  the  term  $\frac{|\nabla u|^2}{u}.$ Here, we will use  their ideas  in order to control $u_1$ near  zero and  ensure  its  positivity. As  in  \cite{MZnon97},  we will  control  the solution differently in 3  overlapping  regions defined  as follows: 
 
 \noindent
   For    $K_0 > 0, \alpha_0 > 0, \epsilon_0 >0,    t \in [0,T), s \in [- \ln T, +\infty),  s = - \ln (T-t)$, we introduce  a cover of $\mathbb{R}^n$  as follows
$$ \mathbb{R}^n   \subset P_1(t) \cup  P_2(t) \cup P_3(t),$$
where  
\begin{eqnarray*}
P_1(t) &=& \{ x |\quad   |x|  \leq  K_0 \sqrt{(T-t)|\ln(T-t)|}\}  = \{x | \quad |y| \leq  K_0  \sqrt s \} = \{ x | \quad |z| \leq K_0\},\\
P_2(t) &=&  \left\{ x | \quad  \frac{K_0}{4} \sqrt{(T-t)|\ln(T-t)|} \leq |x| \leq \epsilon_0  \right\} =  \left\{ x | \quad  \frac{K_0}{4} \sqrt s \leq |y| \leq \epsilon_0 e^{\frac{s}{2}} \right\}\\
&=&  \left\{ x | \quad  \frac{K_0}{4}  \leq |z| \leq \frac{\epsilon_0 }{\sqrt s} e^{\frac{s}{2}}  \right\},\\
P_3(t) &=&  \left\{ x | \quad  |x| \geq  \frac{\epsilon_0}{4}  \right\} =  \left\{ x | \quad   |y| \geq \frac{\epsilon_0 e^{\frac{s}{2}}}{4} \right\} =  \left\{ x | \quad   |z| \geq \frac{\epsilon_0 }{4 \sqrt s} e^{\frac{s}{2}} \right\},
\end{eqnarray*}
with 
$$  y  = \frac{x}{\sqrt{T-t}} \text{ and  }   z = \frac{y}{ \sqrt s} = \frac{x}{ \sqrt{(T-t) |\ln(T-t)|}}.$$

In the following, let us  explain how  we derive  the positivity  condition from the various  estimate we impose   on the solution  in the $3$ regions. Then
\begin{itemize}
\item[$a)$]  In  $P_1(t)$, the  \textit{blowup region}:  In this region, we control the positivity of  $u_1$ by controlling   the positivity  of  $w_1$ (see  the similarity variables given in  \eqref{similarity-variales}).  More precisely,  as we mentioned in Subsection \ref{strategy-of-proof},    $w$  will be controlled   as a pertubation of   the profiles  $\Phi_1, \Phi_2$ (\eqref{defi-Phi-1} and  \eqref{defi-Phi-2}).  By using the positivity of  $\Phi_1 $ and a good estimate of the distance   of  $w_1$ to these profiles, we  may deduce   the positivity of $w_1,$ which  leads to the positivity of  $u_1.$
\item[$b)$] In $P_2(t)$,  the \textit{intermediate  region}:  In this region,  we control  $u$ via  a rescaled  function $U$ of  $u$  as follows: 
\begin{equation}\label{def-mathcal-U}
U (x,\xi, \tau) =  (T - t(x))^{- \frac{1}{p-1}} u( x + \xi \sqrt{T - t(x)}, t(x) + \tau (T - t(x))),
\end{equation}
where $t(x)$ is uniquely defined  for $|x| $ small enough by
\begin{equation}\label{def-t(x)}
|x| =  \frac{K_0}{4} \sqrt{(T -t(x)) \left|\ln(T -t(x))\right|}.
\end{equation}
We also introduce 
\begin{equation}\label{defini-theta-x}
\theta (x)  =   T  - t(x).
\end{equation}
We see that,  on the domain $ (\xi, \tau) \in \mathbb{R}^n \times \left[ - \frac{t(x)}{T -t(x)}, 1 \right)$, $U$ satisfies  the following  equation:
\begin{equation}\label{equation-xi-tau-U}
\partial_\tau U  = \Delta_\xi U + U^p.
\end{equation}
By using   classical parabolic estimates on $U,$ we   can  prove the following the  rescaled  $U$ at time  $\tau(x,t)$,   has a behavior similar  to  $\hat U_{K_0}(\tau(x,t)),$ for all $  |\xi|  \leq \alpha_0  \sqrt{ | \ln (T - t(x) |} $ where 
$$ \tau (x,t)  = \frac{t - t(x)}{T-t(x)},$$
and $\hat U_{K_0}(\tau)$ is unique solution of  the following ODE
\begin{equation}\label{ODE-hat-U}
 \left\{ \begin{array}{rcl}
 \partial_\tau \hat U_{K_0}  &=& \hat U^p_{K_0}(\tau),\\
 \hat U_{K_0}(0) &=& \left( p-1 + \frac{(p-1)^2 K_0^2}{64 p} \right)^{- \frac{1}{p-1}}.
\end{array}  
 \right.
\end{equation}
In particular,  we can    solve        \eqref{ODE-hat-U} with  an explicit solution:
\begin{equation}\label{solution-ODE-hat-U}
\hat U_{K_0}(\tau) = \left(  (p-1) (1 - \tau) + \frac{(p-1)^2K_0^2}{64 p}\right)^{-\frac{1}{p-1}}, \forall \tau \in [0,1).
\end{equation}
Then,     by using the positivity of  $\hat U_{K_0},$   we derive  that  $u_1  > 0,$ in this  region.
\item[$c)$] In  $P_3(t),$ the \textit{regular region}:   We control  the  solution in this region  as a perturbation of the initial data,        thanks  to  the  well-posedness property  of the Cauchy problem for equation \eqref{equ:problem},   to derive that our solution  is close to  the  initial data, 
(in fact,  $T$ will be  taken small enough). Therefore,  if the initial data  is  strictly larger than some constant, we will  derive the positivity of  $u_1$.
\end{itemize}
The above strategy makes  the real part   of our  solution  becomes  positive.  Therefore,  it remains  to control   the solution in order to get
$$ \|q_1 (.,s)\|_{L^\infty}   + \|q_2(.,s)\|_{L^\infty}  \to  +\infty, $$
(see \eqref{defini-q-1-2}).  This  part  is in fact   quite similar  to  the integer case, done in \cite{DU2017}. 

\noindent
From   the above arguments,   we give  in  the following  our definition   of the shrinking set.
\begin{definition}[A shrinking set to 0]\label{defini-shrinking-set} \textit{For all $T> 0, K_0 >0, \alpha_0 >0,  \epsilon_0 >0, A > 0, \delta_0 > 0, \eta_0 >0,  p_1 \in \left(0, \min \left( \frac{p-1}{4}, \frac{1}{2}\right) \right) $ for all $t \in [0, T)$, we define the set $S(T, K_0, \alpha_0, \epsilon_0, A, \delta_0, \eta_0, t) \subset  C( [0,t], L^\infty(\mathbb{R}^n, \mathbb{C}))$ (or $S(t)$ for short)  as follows: $u = u_1 +   i u_2 \in S(t)$ if  the  following  condition hold: 
\begin{itemize}
\item[$(i)$] \textit{Control in the blowup region $P_1(t)$:}   We have $ (q_1, q_2) (s) \in V_{p_1, K_0, A} (s)$ where  $s  =   - \ln (T-t),$ $(q_1, q_2)$      is  defined as in  \eqref{defini-q-1-2}  and     $V_{p_1, K_0,  A} (s) = V_A (s)  \in (L^\infty (\mathbb{R}^n))^2 $  is   the set of  	all function $(q_1, q_2) \in (L^\infty)^2$   such that   the following holds:
\begin{eqnarray*}
|q_{1,0} (s)| \leq \frac{A}{s^2} &\text{ and }&   |q_{2,0} (s)| \leq \frac{A^2}{s^{p_1 + 2}},\\
|q_{1,j} (s)| \leq \frac{A}{s^2} &\text{ and }&   |q_{2,j} (s)| \leq \frac{A^2}{s^{p_1 + 2}}, \forall j \leq n,\\
|q_{1,j,k} (s)| \leq \frac{A^2 \ln s}{s^2} &\text{ and }&   |q_{2,j,k} (s)| \leq \frac{A^5 \ln s}{s^{p_1 + 2}}, \forall j,k \leq n,\\
\left\| \frac{q_{1,-} (y,s)}{1 + |y|^3} \right\|_{L^\infty} \leq \frac{A}{s^{2}} &\text{ and }&  \left\| \frac{q_{2,-} (y,s)}{1 + |y|^{3}} \right\|_{L^\infty} \leq \frac{A^2}{ s^{\frac{p_1 + 5}{2}}},\\
\|q_{1,e} (.,s)\|_{L^\infty} \leq \frac{A^2}{\sqrt s} & \text{ and } &  \|q_{2,e} (.,s)\|_{L^\infty} \leq \frac{A^3}{ s^{\frac{p_1 + 2}{2}}},
\end{eqnarray*}
where    the coordinates of $q_1 $ and $ q_2$  are  introduced in  \eqref{decom-5-parts} with $r= q_1$ or $r=q_2$.
\item[$(ii)$] \textit{Control in the intermediate region $P_2(t)$:} For all $|x| \in \left[ \frac{K_0}{4} \sqrt{(T -t)|\ln(T -t)|}, \epsilon_0 \right],  \tau(x,t) = \frac{t -t(x)}{T -t(x)}$ and $|\xi | \leq \alpha_0 \sqrt{|\ln(T -t(x))|},$ we have
$$ \left|U(x,\xi,\tau(x,t)) - \hat{U}(\tau(x,t)) \right| \leq \delta_0,$$
where   $\hat U_{K_0}$ defined  in \eqref{solution-ODE-hat-U}.
\item[$iii$] \textit{Control in the regular region $P_3(t)$}: For all $|x| \geq \frac{\epsilon_0}{4}$,
$$ \left|  u (x,t) - u(x,0) \right| \leq \eta_0, \forall i =0,1.$$
\end{itemize}
Finally, we also define  the set  $S^*(T, K_0, \alpha_0, \epsilon_0, A, \delta_0, \eta_0)   \subset  C([0,T), L^\infty (\mathbb{R}^n, \mathbb{C}))$ as the set of all  $u  \in   C([0,T), L^\infty (\mathbb{R}^n, \mathbb{C}))$  such that 
$$u \in S(T, K_0, \alpha_0,  \epsilon_0,A, \delta_0, \eta_0, t), \forall  t \in [0,T).$$}
\end{definition}

The following  lemma, we  show the estimates  of the fuction   being   in $V_A(s)$ and  this  lemma is given in  \cite{DU2017}: 
\begin{lemma}\label{lemma-analysis-V-A}
For all $A \geq 1, s \geq 1,$  if we have   $(q_1, q_2) \in V_A (s)$, then the following  estimates  hold:
\begin{itemize}
\item[$(i)$] $\|q_1\|_{L^\infty (\mathbb{R}^n) } \leq \frac{C A^2}{ \sqrt s} \text{ and }  \|q_2\|_{L^\infty(\mathbb{R}^n)} \leq \frac{CA^3}{s^{\frac{p_1 + 2 }{2}}}.$
\item[$(ii)$] 
$$ |q_{1,b} (y) | \leq \frac{CA^2 \ln s}{s^2} (1 + |y|^3), \quad |q_{1,e} (y)| \leq \frac{C A^2}{s^2} (1 + |y|^3) \text{ and } 	 |q_1| \leq  \frac{C A^2 \ln s}{ s^2} (1 + |y|^3),$$
and
$$ |q_{2,b} (y) | \leq \frac{CA }{s^{\frac{p_1 +5}{2}}} (1 + |y|^3), \quad |q_{2,e} (y)| \leq \frac{C A^3}{s^{\frac{p_1 + 5}{2}}} (1 + |y|^3) \text{ and } 	 |q_2| \leq  \frac{C A^3 }{ s^{\frac{p_1 + 5}{2}}} (1 + |y|^3).$$
\item[$(iii)$]  For all $y \in \mathbb{R}^n$ we have
$$   |q_1| \leq  C  \left[   \frac{A}{s^2}(  1 + |y| )  + \frac{A^2 \ln s}{s^2} (1 + |y|^2)  +  \frac{A^2}{s^2} (1  + |y|^3) \right],$$
and 
$$ |q_2| \leq C \left[   \frac{A^2}{s^{p_1 + 2}} (1 + |y|)  +  \frac{A^5 \ln s}{s^{p_1 + 2}} (1  + |y|^2)    +  \frac{A^3}{ s^{\frac{p_1 + 5}{2}}} (1  + |y|^3)      \right] .$$
\end{itemize}
where  $C$ will henceforth  be an  constant which depends only on $K_0.$
\end{lemma}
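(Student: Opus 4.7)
The plan is to exploit the five-term decomposition \eqref{decom-5-parts} applied to $r = q_1$ and $r = q_2$, combining the coordinatewise bounds from Definition \ref{defini-shrinking-set}$(i)$ with the support properties of the cut-off $\chi$ (namely $\mathrm{supp}\, r_b \subset \{|y| \leq 2K_0 \sqrt s\}$ and $\mathrm{supp}\, r_e \subset \{|y| \geq K_0 \sqrt s\}$). I would start with item $(ii)$, since items $(i)$ and $(iii)$ are quickly deduced from it.

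For $q_{1,b}$, I bound each of the five pieces pointwise: the constant $q_{1,0}$ by $A/s^2$, the linear piece by $(A/s^2)|y|$, the quadratic piece $\tfrac{1}{2} y^{\mathcal T}\! \cdot q_{1,2} \cdot y - \mathrm{Tr}(q_{1,2})$ by $C(A^2 \ln s/s^2)(1+|y|^2)$, and the negative part $q_{1,-}$ directly by $(A/s^2)(1+|y|^3)$ using the weighted bound in the set. Summing, $|q_{1,b}(y)| \leq CA^2 (\ln s)/s^2 \,(1+|y|^3)$. For $q_{1,e}$, the key observation is that on its support $|y| \geq K_0 \sqrt s$, hence $1+|y|^3 \geq C s^{3/2}$; combined with $\|q_{1,e}\|_\infty \leq A^2/\sqrt s$ this yields $|q_{1,e}(y)| \leq (CA^2/s^2)(1+|y|^3)$. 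The pointwise bound on $|q_1|$ follows by adding the two. An entirely parallel computation gives the three bounds for $q_2$, with $A^5 \ln s/s^{p_1+2}$ dominating the quadratic contribution (restricted to $|y| \leq 2K_0 \sqrt s$ this is of size $A^5 \ln s/s^{p_1+1}$, still absorbed by $CA^3 (1+|y|^3)/s^{(p_1+5)/2}$ up to the stated constant).

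Item $(i)$ is then immediate: taking the $L^\infty$ norm in item $(ii)$ over the support ball $|y| \leq 2K_0 \sqrt s$ for the $b$-part gives the dominant contribution $CA^2/\sqrt s$ for $q_1$ (from the quadratic or negative part at $|y| \sim \sqrt s$), while $\|q_{1,e}\|_\infty \leq A^2/\sqrt s$ is already in the set; similarly for $q_2$, where the maximum over the five components at $|y| \sim \sqrt s$ gives $CA^3/s^{(p_1+2)/2}$ once we use $A \geq 1$. Finally, for item $(iii)$, rather than lumping linear and quadratic terms into $(1+|y|^3)$, I keep them separate: the constant plus linear part of $q_{1,b}$ yields $C(A/s^2)(1+|y|)$, the quadratic part yields $C(A^2 \ln s/s^2)(1+|y|^2)$, the cubic and $e$-parts yield $C(A^2/s^2)(1+|y|^3)$, and adding produces the stated refined inequality; the $q_2$ bound is analogous. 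The only real subtlety, and the step I would present most carefully, is the transfer of the uniform $L^\infty$ estimate on $q_{1,e}, q_{2,e}$ into a pointwise bound weighted by $(1+|y|^3)$ via the support condition $|y| \geq K_0 \sqrt s$, since this is where the constant $C$ silently depends on $K_0$.
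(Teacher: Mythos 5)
Your overall strategy (componentwise bounds from Definition \ref{defini-shrinking-set}$(i)$ plus the support properties of $\chi$, and the transfer of the $L^\infty$ bounds on $q_{1,e},q_{2,e}$ into $(1+|y|^3)$-weighted bounds via $|y|\geq K_0\sqrt s$) is the natural one, and the whole $q_1$ half of your argument, as well as item $(iii)$ and the $q_{2,e}$ estimate, is correct; note that the paper itself does not reprove this lemma but simply cites Lemma 3.2 of \cite{DU2017}, so there is no written proof to compare against beyond this standard computation. The genuine gap is in your $q_2$ bounds for items $(i)$ and $(ii)$. You claim that the null-mode contribution $\frac{A^5\ln s}{s^{p_1+2}}(1+|y|^2)$ is ``absorbed by $CA^3(1+|y|^3)/s^{(p_1+5)/2}$ up to the stated constant'' on $\{|y|\leq 2K_0\sqrt s\}$. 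This is false: since $p_1<1$ one has $\frac{p_1+5}{2}-(p_1+2)=\frac{1-p_1}{2}>0$, so the right-hand side decays \emph{faster} in $s$ than the left; at $y=0$ the ratio of the two sides is of order $A^2\ln s\, s^{(1-p_1)/2}$, unbounded as $s\to\infty$ even for $A=1$. The same failure already occurs for the zero and first modes: $\frac{A^2}{s^{p_1+2}}$ is not dominated by $\frac{CA^3}{s^{(p_1+5)/2}}$ for large $s$. Likewise in item $(i)$, the sup of the quadratic part over the ball is of order $\frac{A^5\ln s}{s^{p_1+1}}$, and ``using $A\geq 1$'' cannot turn $A^5$ into $A^3$ nor supply the missing factor $s^{-p_1/2}$ with a constant depending only on $K_0$.

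What the componentwise computation actually yields for $q_{2,b}$ is the item-$(iii)$ form, equivalently $|q_{2,b}(y)|\leq \frac{CA^5\ln s}{s^{p_1+2}}(1+|y|^3)$ (for $s$ bounded away from $1$), in exact parallel with the bound $\frac{CA^2\ln s}{s^2}(1+|y|^3)$ you correctly proved for $q_{1,b}$; only the outer part genuinely carries the exponent $\frac{p_1+5}{2}$. In fact, with the set $V_A(s)$ exactly as printed in Definition \ref{defini-shrinking-set}, the $q_2$ bounds of item $(ii)$ cannot hold at all: the admissible size $\frac{A^2}{s^{p_1+2}}$ of $q_{2,0}$ alone exceeds $\frac{CA^3}{s^{(p_1+5)/2}}(1+|y|^3)$ near $y=0$ for $s$ large, so the exponents of either the lemma or the shrinking set must be those of \cite{DU2017} rather than what is reproduced here. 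A correct write-up therefore has to either prove the $\frac{\ln s}{s^{p_1+2}}$ version (and flag the discrepancy with the statement), work with the exponents of \cite{DU2017}, or restrict to $s\geq s_0(A)$; in no case does the absorption step as you wrote it go through, and since items $(i)$--$(ii)$ for $q_2$ are exactly what is invoked later (e.g.\ in \eqref{conclu-g-0-u-2}), this step needs to be repaired rather than glossed.
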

\begin{proof}
See Lemma 3.2, given in \cite{DU2017}.
\end{proof}
 As matter of fact,  if  $ u \in S_A (t) $ then, from  item $(i)$ of  Lemma \ref{lemma-analysis-V-A},  the similarity variables \eqref{similarity-variales} and  \eqref{defini-q-1-2}, we derive the following
\begin{eqnarray}
\left\| (T -t)^{\frac{1}{p-1}} u (.,t)   -   f_0 \left( \frac{.}{\sqrt{ (T-t) |\ln (T-t) |}}\right)\right\|_{L^\infty(\mathbb{R}^n)} &\leq & \frac{CA^2}{ 1 + \sqrt{ |\ln(T-t)|}},\label{conclu-u-f-0}\\
\left\| (T -t)^{\frac{1}{p-1}} |\ln (T-t)| u_2 (.,t)   -   g_0 \left( \frac{.}{\sqrt{ (T-t) |\ln (T-t) |}}\right)\right\|_{L^\infty(\mathbb{R}^n)} &\leq & \frac{CA^3}{ 1 + |\ln (T-t)|^{\frac{p_1}{2}}}.\label{conclu-g-0-u-2}
\end{eqnarray}

We see   in the definition of  $S(t)$ that  there are many parameters, so  the dependence  of the constants on them is very important in our analysis.  We would like to mention that, we use   the notation  $C$ for these constants which depend at most    on $K_0.$ Otherwise, if the constant depends   on $K_0, A_1, A_2,...$ we will write    $C(A_1,A_2,...)$.  

We  now   prove in the following  lemma   the positivity  of $\text{Re}(u)$ at time  $t$ if  $u$ belongs  to $S(t)$ (this is a crucial  estimate  in our argument):
\begin{lemma}[The positivity of the real part of functions trapped in $S(t)$]\label{lemma-positive-real-part} For all $K_0, A \geq 1 $   $ \alpha_0 > 0,  \delta_0 < \frac{\hat U(0)}{2} , \eta_0 < \frac{1}{2},$  there exists  $\epsilon_{1} (K_0) > 0$ such that for all $\epsilon_0 \leq \epsilon_{1}$ there exists  $T_1 (A, K_0, \epsilon_0)$     such that    for all  $ T \leq T_1$ the following holds:   if  $u \in S (T, K_0, \alpha_0,  \epsilon_0,  A, \delta_0, \eta_0, t) $ for all  $ t \in [0, t_1] $ for some   $t_1 \in [0, T),$  and   $\text{ Re}(u(0)) \geq 1$ for all  $|x| \geq  \frac{\epsilon_0}{4}$, then   
 $$  \text{ Re}(u)(x,t) \geq \frac{1}{2},  \forall  x \in \mathbb{R}^n, \forall t \in [0,t_1].$$  
 \end{lemma}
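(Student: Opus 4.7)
I would exploit the cover $\mathbb{R}^n \subset P_1(t)\cup P_2(t)\cup P_3(t)$ introduced in Subsection~\ref{subsection-shrinking-set} and prove the pointwise bound $\mathrm{Re}(u)(x,t)\geq 1/2$ separately in each region, using the three corresponding clauses of Definition~\ref{defini-shrinking-set}. On the regular region $P_3(t)=\{|x|\geq \epsilon_0/4\}$, item~(iii) gives $|u(x,t)-u(x,0)|\leq \eta_0$; taking the real part and combining with the hypothesis $\mathrm{Re}(u_0)\geq 1$ and $\eta_0<1/2$ yields immediately $\mathrm{Re}(u)(x,t)\geq 1-\eta_0\geq 1/2$.

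On the blowup region $P_1(t)=\{|y|\leq K_0\sqrt{s}\}$, I would pass to similarity variables via \eqref{similarity-variales} and decompose $w_1=\Phi_1+q_1$ as in \eqref{defini-q-1-2}. The explicit formula \eqref{defi-Phi-1} produces the lower bound $\Phi_1(y,s)\geq c_0(K_0):=\bigl(p-1+\tfrac{(p-1)^2 K_0^2}{4p}\bigr)^{-1/(p-1)}>0$ on $\{|y|\leq K_0\sqrt{s}\}$, while item~(i) of Lemma~\ref{lemma-analysis-V-A} provides the uniform control $\|q_1\|_{L^\infty}\leq CA^2/\sqrt{s}$. For $s=-\ln(T-t)$ large enough, i.e.\ for $T$ small enough in terms of $A$ and $K_0$, this forces $w_1\geq c_0(K_0)/2$, and therefore $u_1(x,t)=(T-t)^{-1/(p-1)}w_1\geq T^{-1/(p-1)}c_0(K_0)/2$, which exceeds $1/2$ once $T\leq T_1(A,K_0)$ is chosen sufficiently small.

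The delicate case, and the main obstacle, is the intermediate region $P_2(t)$. I would use the rescaling \eqref{def-mathcal-U}, which expresses $u_1(x,t)=(T-t(x))^{1/(p-1)}\,U_1(x,0,\tau(x,t))$. The condition $x\in P_2(t)$ forces $\tau(x,t)\in [0,1)$, and \eqref{solution-ODE-hat-U} shows that $\hat U_{K_0}$ is nondecreasing on that interval; combined with item~(ii) of Definition~\ref{defini-shrinking-set} and the standing assumption $\delta_0<\hat U_{K_0}(0)/2$, this delivers $U_1\geq \hat U_{K_0}(0)/2>0$, hence positivity of $u_1$ throughout $P_2$. To upgrade this qualitative positivity to the quantitative bound $u_1\geq 1/2$, I would rely on the overlaps of the three regions: on $P_1\cap P_2$ the $P_1$ estimate above already gives the required bound, on $P_2\cap P_3$ the $P_3$ estimate does, and on the remaining portion of $P_2$ the defining relation $|x|^2=(K_0^2/16)(T-t(x))|\ln(T-t(x))|$ together with $|x|\leq \epsilon_0$ locates $T-t(x)$ in a window where, after successively choosing $\epsilon_0\leq \epsilon_1(K_0)$ sufficiently small and $T\leq T_1(A,K_0,\epsilon_0)$ even smaller, one has $(T-t(x))^{1/(p-1)}\hat U_{K_0}(0)/2\geq 1/2$. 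The principal difficulty lies precisely in this bookkeeping of overlaps and the successive tuning of $\epsilon_0$ and $T$; once this is carried out, assembling the three pointwise bounds on the cover concludes the proof.
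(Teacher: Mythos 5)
Your three-region strategy is exactly the paper's, and your treatments of $P_1(t)$ and $P_3(t)$ coincide with the paper's proof (in $P_1$: $\Phi_1\geq \bigl(p-1+\tfrac{(p-1)^2K_0^2}{4p}\bigr)^{-\frac{1}{p-1}}$ plus $\|q_1\|_{L^\infty}\leq CA^2/\sqrt s$ and the factor $(T-t)^{-\frac{1}{p-1}}$; in $P_3$: item $(iii)$ of Definition~\ref{defini-shrinking-set} with $\eta_0\leq\frac12$). The genuine problem is the quantitative step in $P_2(t)$: you inverted the scaling relating $U_1$ to $u_1$, and the tuning you propose goes the wrong way. The relation actually used (and needed for item $(ii)$ of Definition~\ref{defini-shrinking-set} to be meaningful) is $(T-t(x))^{\frac{1}{p-1}}u_1(x,t)=U_1(x,0,\tau(x,t))$ — the exponent printed in \eqref{def-mathcal-U} is a sign slip — so that $u_1(x,t)\geq (T-t(x))^{-\frac{1}{p-1}}\,\tfrac12\hat U_{K_0}(0)$, and the bound $u_1\geq\frac12$ comes from the \emph{largeness} of the prefactor: by \eqref{def-t(x)}, $T-t(x)\sim \frac{8|x|^2}{K_0^2|\ln|x||}$ as $x\to 0$ and is increasing in $|x|$, so for all $|x|\leq\epsilon_0$ it is at most $T-t(\epsilon_0)$, which is made small by choosing $\epsilon_0\leq\epsilon_1(K_0)$; then $(T-t(x))^{-\frac{1}{p-1}}\geq 2/\hat U_{K_0}(0)$ on all of $P_2$, with no case distinction.

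Your version writes $u_1=(T-t(x))^{\frac{1}{p-1}}U_1$ and asks for $(T-t(x))^{\frac{1}{p-1}}\hat U_{K_0}(0)/2\geq\frac12$ "after choosing $\epsilon_0$ small and $T$ even smaller." That inequality is unattainable: since $T-t(x)\leq T$ and $\hat U_{K_0}(0)<1$, the left-hand side tends to $0$ as $T\to 0$, so shrinking $T$ (or $\epsilon_0$) makes it strictly worse, never better; the same failure occurs whichever sign convention one adopts for \eqref{def-mathcal-U}, because the needed lower bound $T-t(x)\geq \hat U_{K_0}(0)^{-(p-1)}>p-1$ contradicts $T-t(x)\leq T\ll 1$. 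So the "bookkeeping of overlaps" you flag as the main difficulty is not where the proof lives (and is in fact unnecessary once the scaling is correct, since the $P_2$ estimate holds uniformly on the whole annulus $\frac{K_0}{4}\sqrt{(T-t)|\ln(T-t)|}\leq|x|\leq\epsilon_0$); the essential point you are missing is that positivity in $P_2$ is amplified by the factor $(T-t(x))^{-\frac{1}{p-1}}\to+\infty$ as $|x|\to 0$, controlled through the asymptotics of $\theta(x)=T-t(x)$ and the choice $\epsilon_0\leq\epsilon_1(K_0)$.
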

 \begin{proof}
We write   that $u = u_1 + i u_2,$  with  $\text{ Re} (u) = u_1.$ Then,    we  estimate  $u_1$ on  the 3  regions $P_1 (t), P_2(t)$ and $P_3 (t)$. 

+ \textit{ The estimate in  $P_1 (t)$:}   We use the fact that  $(q_1,q_2) \in V_A (s)$ together with  item  $(i)$  in Lemma \ref{lemma-analysis-V-A}, and the  definition  \eqref{defini-q-1-2}  of  $q_1$  and the definition of  $\Phi_1$  given in \eqref{defi-Phi-1},   to derive the following: for all $|y| \leq K_0 \sqrt s,$     
$$ \left|  w_1(y,s) -  f_0 \left( \frac{y}{\sqrt s}\right)  \right| \leq  \frac{C A^2}{ \sqrt s}.$$
Using the   definition  \eqref{defi-Phi-1} of $\Phi_1,$ we write   for all $|y| \leq K_0 \sqrt s$  
\begin{eqnarray*}
w_1 (y,s) &   \geq &  f_0\left( \frac{y}{\sqrt s}\right)  - \frac{CA^2}{\sqrt s}\\
&  \geq  &   \left(  p-1   + \frac{(p-1)^2}{ 4p} K_0^2\right)^{-\frac{1}{p-1} }   - \frac{CA^2}{\sqrt s},
\end{eqnarray*}
By  definition  \eqref{similarity-variales} of the similarity variables, we    implies that:  for all $|x| \leq K_0 \sqrt{ (T-t) |\ln(T-t)|},$ 
$$ (T-t)^\frac{1}{p-1}  u_1 (x,t) \geq   \left(  p-1   + \frac{(p-1)^2}{ 4p} K_0^2\right)^{-\frac{1}{p-1} }  - \frac{CA^2}{ \sqrt{|\ln (T-t)|}}.$$
Therefore,
$$  u_1(x,t) \geq  (T-t)^{-\frac{1}{p-1}}  \left[    \left(  p-1   + \frac{(p-1)^2}{ 4p} K_0^2\right)^{-\frac{1}{p-1} }  - \frac{CA^2}{ \sqrt{|\ln (T-t)|}}  \right]  \geq \frac{1}{2},$$
provided  that   $T \leq T_{1,1} (K_0, A).$

+ \textit{ The estimate in $P_2 (t)$:}  Since we have  $u \in S (t),$  using  item $(ii)$  in the Definition  \ref{defini-shrinking-set}, we derive that:   for all $x \in \left[ \frac{K_0}{4} \sqrt{ (T-t) |\ln (T-t)|}, \epsilon_0\right]$ 
$$ \left|  U(x, 0, \tau (x,t))  - \hat U_{K_0} (\tau(x,t))   \right|  \leq \delta_0,$$  
where  $\tau(x,t) = \frac{t -t(x)}{T-t(x)} $. In particular, by using the definition  of  $t(x)$ given  in     \eqref{def-t(x)}  and   the fact that   
$$ |x| \geq   \frac{K_0}{4} \sqrt{(T-t) |\ln(T-t)|},$$
we have  $\tau(x,t) \in [0,1).$ Therefore, 
\begin{eqnarray*}
U_1(x, 0, \tau (x,t))  &\geq & \hat U_{K_0} (\tau (x,t)) - \delta_0\\
& \geq  &  \hat U_{K_0} (0) - \delta_0 \\
& \geq & \frac{1}{2} \hat U_{K_0} (0) = \frac{1}{2}\left(  p-1 + \frac{(p-1)^2}{4 p} \frac{K_0^2}{16}\right)^{-\frac{1}{p-1}},
 \end{eqnarray*}
provided  that  $\delta_0 \leq  \frac{1}{2} \hat U_{K_0} (0).$   By  definition  \eqref{def-mathcal-U}  of $U,$ this implies that    
$$ (T-t(x))^{\frac{1}{p-1}} u_1(x, t)  =  U_1 (x, 0, \tau (x,t))  \geq \frac{1}{2}\left(  p-1 + \frac{(p-1)^2}{4 p} \frac{K_0^2}{16}\right)^{-\frac{1}{p-1}}.$$
Using the definition    of  $t(x)$ in \eqref{def-t(x)}  we write  
  $$   T-t(x)  \sim   \frac{8}{K_0^2} \frac{|x|^2}{|\ln|x||}, \text{ as }  |x| \to 0.$$
Therefore, there exists $\epsilon_{1,1}(K_0) > 0$ such that  for all $ \epsilon_0 \leq  \epsilon_{1,1},$  and  for all $|x| \leq \epsilon_0,$  we have 
$$  (T -t(x))^{-\frac{1}{p-1}} \frac{1}{2} \left(  p-1 + \frac{(p-1)^2}{4p} \frac{K_0^2}{16}\right)^{-\frac{1}{p-1}} \geq \frac{1}{2}.$$ 
Then,   we conclude that     for all $|x| \in \left[  \frac{K_0}{4} \sqrt{ (T-t) |\ln(T-t)|},\epsilon_0\right],$ we have 
$$ u_1(x,t)  \geq \frac{1}{2},$$
provided  that  $T \leq  T_{2,1} (\epsilon_0)$.

+ \textit{ The estimate  in $P_3 (t)$:} This is  very easy to derive. Indeed,   item  $(iii)$ of Definition  \ref{defini-shrinking-set},  we have  for all $|x| \geq \frac{\epsilon_0}{4}$
$$ u_1 (x,t)  \geq  \text{ Re} (u)(x,0) - \eta_0  \geq 1 - \frac{1}{2}= \frac{1}{2},$$
provided that  $\eta_0 \leq \frac{1}{2}$.  This  concludes  the proof  of Lemma  \ref{lemma-positive-real-part}.
\end{proof}

Thanks to  Lemma \ref{lemma-positive-real-part},    we can handle the   singularity of the nonlinear term $u^p$ when    our solution is in  $S(T, A, \alpha_0,  \epsilon_0,A, \delta_0, \eta_0).$   In addition to that,  from  item $(i)$ of  Lemma   \ref{lemma-positive-real-part}, \eqref{conclu-u-f-0}  and  \eqref{conclu-g-0-u-2}     our problem  is reduced    to   finding   parameters $T, K_0, \alpha_0,  \epsilon_0,  A,  \delta_0, \eta_0, $ and    constructing   initial data $u(0) \in  L^\infty (\mathbb{R}^n, \mathbb{C}) $  such that the solution $u$ of  equation  \eqref{equ:problem}, exists   on $[0,T)$ and  satisfies 
\begin{equation}\label{purpose-u-t-S-t-0-T}
u  \in  S^* (T, K_0, \alpha_0,  \epsilon_0, A, \delta_0, \eta_0 ).
\end{equation}
\subsection{Preparing  initial data and the existence of a solution trapped in S(t)}
In  this subsection, we   would like  to define   initial data  $u(0)$, which depend  on some parameters to be fine-tuned in order to get a good solution. The following is our definition:
\begin{definition}[Preparing of  initial data]\label{initial-data-bar-u}
\textit{For each $ A \geq 1,  T > 0, $  $d_1= (d_{1,0}, d_{1,1}) \in \mathbb{R}^{1} \times \mathbb{R}^n, $ and  $ d_2 = (d_{2,0},d_{2,1}, d_{2,1}) \in \mathbb{R}^{ 1 + n}  \times \mathbb{R}^{\frac{n(n+1)}{2}},$   we   introduce the following functions defined at  $s_0 = - \ln T:$
\begin{eqnarray*}
\phi_{1,K_0, A,d_1} (y,s_0) &=& \frac{A}{s_0^2} \left(  d_{1,0} + d_{1,1} \cdot  y \right) \chi_0\left(\frac{16 |y|}{ K_0 \sqrt s_0}\right),	 \\
  \phi_{2, K_0, A, d_2} (y,s_0) &= &\left( \frac{A^2}{s_0^{p_1+2}} \left(   d_{2,0} +  d_{2,1} \cdot y \right) + \frac{A^5 \ln s_0}{s^{p_1+2}_0}\left(  y^\mathcal{T} \cdot d_{2,2}\cdot y - \textup{Tr }(d_{2,2}) \right) \right)\chi_0\left(\frac{16 |y|}{ K_0 \sqrt s_0}\right).
\end{eqnarray*}
We also      define    initial data    $ u_{K_0, A,d_1,d_2} (0) =   u_{1, K_0, A, d_1}(0) + i  u_{2, K_0, A,d_2} (0)$    for equation \eqref{equ:problem}    as follows:
\begin{eqnarray}
 u_{1,K_0,  A, d_1} (x,0) & = &  T^{-\frac{1}{p-1}} \left\{   \phi_{1, K_0, A,d_1}\left( \frac{x}{\sqrt{T}}, - \ln T\right)  + \Phi_1 \left(\frac{x}{ \sqrt{T}}, - \ln T  \right)  \right\} \chi_1 \left( x \right)  \label{parti-real-initial} \\
&+&U^*(x) (1  - \chi_1(x))   + 1,\nonumber\\
 u_{2, K_0,  A,  d_2} (x,0) & = &  T^{-\frac{1}{p-1}} \left\{   \phi_{2, K_0, A,d_2} \left( \frac{x}{\sqrt T}, - \ln T\right)  +    \Phi_2 \left( \frac{x}{ \sqrt{T}}, - \ln T  \right)  \right\}  \chi_1 (x ),\label{parti-imaginary-inital}
\end{eqnarray}
where $\Phi_1, \Phi_2$ are defined in \eqref{defi-Phi-1}, \eqref{defi-Phi-2}  and $\chi_1 (x) $ is defined as follows  
\begin{equation}\label{defi-chi-1-x-t-0}
\chi_1 (x)  = \chi_0 \left( \frac{|x|}{  \sqrt{T} |\ln T|}\right),
\end{equation}
  with   $\chi_0$ defined in   \eqref{defini-chi-0},    and $U^*  \in C^1(\mathbb{R}^n \backslash \{0\}, \mathbb{R})$ is defined   for all   $ x \in \mathbb{R}^n, x \neq 0 $
\begin{equation}\label{defini-U^*}
U^*(x) = \left\{  \begin{array}{rcl}
&\left[ \frac{(p-1)^2 |x|^2}{8 p |\ln |x||} \right]^{- \frac{1}{p-1}} & \quad  \text{ if }  |x| \leq C^*, \\[0.3cm]
&\frac{1}{ 1  +   |x|^2 } &   \quad  \text{ if  }   |x| \geq 1,\\[0.3cm]
 &U^*(x) > 0 &  \text{ for all } x \neq 0,
\end{array} \right.
\end{equation} 
 where   $C^*$ is a fixed constant  strictly less than $1$ enough,  and  $U^*$ satisfies the  following property:    for each  $\epsilon_0 \leq  \frac{C^*}{2}$ we have
 \begin{equation}\label{defi-proper-U-*}
 U^*(x) \leq U^*(\epsilon_0), \text{ for all }  |x| \geq \epsilon_0.
 \end{equation}}
\end{definition}
\begin{remark}
Roughly speaking, the critical data we done  here  are  superposition of  two items:
\begin{itemize}
\item[-] $T^{-\frac{1}{p-1}} \left\{  \phi_1  + \Phi_1 \right\}$ in  $P_1 (0)$
\item[-] $U^*$  in  $P_2(0).$
\end{itemize}
The first  form is  well-known  in previous construction problems. As for the second, we borrowed it from   Merle and Zaag in \cite{MZnon97}.  Note that $U^*$ is  the candidate  for the  final profile  of the  real part, as we can see   from own main result in Theorem  \ref{Theorem-profile-complex}.  More crucially, we draw your attention to the fact  that in comparision with  \cite{MZnon97}, we add here  $+1$ to the expression in \eqref{parti-real-initial}, and  this term will allow us to have  the initial condition
$$  \text{Re}(u(0))  \geq  1,$$
which is  essential to make the  nonlinear term  $u^p$ well-defined, and  the Cauchy problem solvable (see Appendix  \ref{appendix-Cauchy-problem}). This  is an important idea of ours. 
\end{remark}
From  the above definition, we  show  in the following lemma  some  rough properties of  the initial data.
\begin{lemma}\label{lemma-intial-rough}    For all  $K_0 \geq 1, A \geq 1, $  $|d_1|_\infty \leq 2,  |d_2|_\infty  \leq   2,$  and  for all $ \epsilon_0 \leq \frac{C^*}{2}$ (where  $C^*$ is introduced in \eqref{defi-proper-U-*}),  there exists $T_{2} (\epsilon_0, K_0, A) > 0$ such that   for all  $T \leq T_2,$   if $ u(0)  = u_{K_0, A,d_1,d_2}(0)$ is defined as in  Definition  \ref{initial-data-bar-u}, then the following holds:
\begin{itemize}
\item[$(i)$] The initial data   belongs to  $L^\infty$ and   satisfies the following
$$ \|u (., 0)\|_{L^\infty(|x| \geq \epsilon_0)}  \leq 1 + \left( \frac{(p-1)^2 |\epsilon_0|^2}{ 8p |\ln \epsilon_0| } \right)^{-\frac{1}{p-1}}.$$
\item[$(ii)$] The real part of the initial data, $\text{Re}(u(0))$  is positive. In particular, 
$$   \text{Re}(u(x,0))  \geq   1,   \forall   x \in \mathbb{R}^n.$$
\end{itemize} 
\end{lemma}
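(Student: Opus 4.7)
The plan is to exploit the cutoff structure of the initial data to split the analysis into two regions: the interior $\{|x| \leq 2\sqrt{T}|\ln T|\}$ where $\chi_1$ may be nonzero, and the exterior $\{|x| \geq 2\sqrt{T}|\ln T|\}$ where $\chi_1$ vanishes identically. By the support property of $\chi_0$ in \eqref{defini-chi-0} and the definition \eqref{defi-chi-1-x-t-0} of $\chi_1$, we have $\chi_1(x) = 0$ whenever $|x| \geq 2\sqrt{T}|\ln T|$. Since $2\sqrt{T}|\ln T| \to 0$ as $T \to 0$, we can impose $T \leq T_2(\epsilon_0)$ small enough so that $2\sqrt{T}|\ln T| \leq \epsilon_0$; then the entire region $\{|x| \geq \epsilon_0\}$ lies in the exterior.

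For item $(i)$, in the exterior region the definition reduces to $u_{1}(x,0) = U^*(x) + 1$ and $u_{2}(x,0) = 0$, so $|u(x,0)| = U^*(x) + 1$. Applying the monotonicity property \eqref{defi-proper-U-*} of $U^*$ (which requires $\epsilon_0 \leq C^*/2$) yields $U^*(x) \leq U^*(\epsilon_0) = \left[\frac{(p-1)^2|\epsilon_0|^2}{8p|\ln \epsilon_0|}\right]^{-1/(p-1)}$ for all $|x| \geq \epsilon_0$, which gives the claimed bound. This step is straightforward once the cutoff is trivialized.

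For item $(ii)$, the exterior case $|x| \geq 2\sqrt{T}|\ln T|$ gives $u_1(x,0) = U^*(x) + 1 \geq 1$ since $U^* > 0$ by \eqref{defini-U^*}. The substantive case is $|x| \leq 2\sqrt{T}|\ln T|$, where one must show that the bracketed term in \eqref{parti-real-initial} is nonnegative; combined with the additive $(+1)$, this yields $u_1(x,0) \geq 1$ since $U^* > 0$ and $0 \leq \chi_1 \leq 1$. Since $T^{-1/(p-1)} > 0$ and $\chi_1 \geq 0$, it suffices to prove $\phi_{1,K_0,A,d_1}(y,s_0) + \Phi_1(y,s_0) \geq 0$ for $y = x/\sqrt{T}$, $s_0 = -\ln T$. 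Outside the support of $\phi_1$ (i.e., $|y| \geq K_0\sqrt{s_0}/8$) the claim is immediate since $\Phi_1 > 0$. On the support $|y| \leq K_0\sqrt{s_0}/8$, the definition \eqref{defi-Phi-1} yields the lower bound
\[
\Phi_1(y,s_0) \geq \left(p-1 + \frac{(p-1)^2 K_0^2}{256 p}\right)^{-\frac{1}{p-1}} =: c(K_0) > 0,
\]
while the constraint $|d_1|_\infty \leq 2$ gives
\[
|\phi_{1,K_0,A,d_1}(y,s_0)| \leq \frac{A}{s_0^2}\left(2 + 2 \cdot \frac{K_0\sqrt{s_0}}{8}\right) \leq \frac{C(K_0) A}{s_0^{3/2}}.
\]
Since $s_0 = -\ln T \to +\infty$ as $T \to 0$, choosing $T \leq T_2(\epsilon_0, K_0, A)$ makes $|\phi_1| \leq c(K_0)/2$, and hence $\phi_1 + \Phi_1 \geq c(K_0)/2 > 0$ on the support of $\chi_1$.

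The main obstacle, such as it is, is the interior analysis for $(ii)$: one must quantify how small $\phi_1$ is relative to the lower bound of $\Phi_1$ on the common support. This is essentially bookkeeping, since $\phi_1$ carries a factor $A/s_0^{3/2}$ that is dwarfed by the $O(1)$ lower bound on $\Phi_1$ once $T$ is chosen small enough in terms of $K_0$ and $A$. The role of the $+1$ term added in \eqref{parti-real-initial} is crucial here: it provides a universal positive buffer so that any small negative contribution from $\phi_1$ (in the degenerate case) cannot make $u_1$ drop below $1$, ensuring Re$(u(0)) \geq 1$ and hence well-posedness of the nonlinear term $u^p$.
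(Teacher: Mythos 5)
Your proposal is correct and follows essentially the same route as the paper: for (i) you shrink $T$ so that the support of $\chi_1$ misses $\{|x|\geq \epsilon_0\}$ and then invoke the monotonicity property \eqref{defi-proper-U-*} of $U^*$, and for (ii) you split according to the support of $\phi_{1,K_0,A,d_1}$, bound $|\phi_1|\leq C(K_0)A/s_0^{3/2}$ there against the $K_0$-dependent positive lower bound on $\Phi_1$, and use $U^*>0$ together with the $+1$ term elsewhere. This matches the paper's argument, with only cosmetic differences in the choice of smallness threshold for $\sqrt{T}|\ln T|$.
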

\begin{proof}

 \item[$(i)$]  It is obvious to see that  the initial data   belongs to  $L^\infty$ with the assumptions in this Lemma. It remains to  prove the estimate in item $(i)$. We now  take  $\epsilon_0  \leq  \frac{C^*}{2},$   and  we  use definition of $ \chi_1 $  in \eqref{defi-chi-1-x-t-0} to deduce that   $\text{supp} (\chi_1)   \subset  \{ |x| \leq  2 \sqrt{T} |\ln T|  \}$.  Moreover, we have
$$ \sqrt{T} |\ln T|  \to 0 \text{ as }  T \to 0.$$ 
Then,  we have
$$ \sqrt{ T} |\ln T | \leq \frac{\epsilon_0}{4},$$
provided  that    $T \leq  T_{2,1} (\epsilon_0)$.
Hence, 
$$\text{supp} (\chi_1)   \subset  \{ |x| \leq  \frac{\epsilon_0}{2} \},$$
  Hence,   it follows the defintion   of    $u(0)$ that:  for all $|x| \geq   \epsilon_0,$ we have 
$$ u(x,0) =    U^* (x)  + 1,  $$
 Using   \eqref{defi-proper-U-*},  our result follows. 

\item[$(ii)$]   We see in the definition  of  $u(0)$ that     we  have   $\text{supp}(\phi_{1, K_0, A, d_1})  \subset  \{ |y|  \leq \frac{K_0}{8} \sqrt{ s_0}  \}   $ and  we have the following   
$$ |\phi_{1, K_0, A, d_1}  \left( \frac{x}{\sqrt T}, - \ln T \right)|_{L^\infty}   \leq    \frac{C A}{ |\ln T|^\frac{3}{2}}. $$
 In addition to that,   in  the region  $\{ |x|  \leq  \frac{K_0}{8} \sqrt{T |\ln T|} \},$ the  function $\Phi_1\left(   \frac{x}{\sqrt T}, - \ln T \right)$   is bounded  from   below  by a positive constant which  depends only on   $K_0$. Therefore,  there exists  $T_{2,2} (A, K_0) > 0$ such that   for all   $T \leq  T_{2,2}$   for all   $|x|  \leq  \frac{K_0}{8} \sqrt{T |\ln T|}  $ we have 
$$ \phi_{1, K_0, A, d_1}  \left( \frac{x}{\sqrt T}, - \ln T \right) +\Phi_1\left(   \frac{x}{\sqrt T}, - \ln T \right)  > 0.$$
 Therefore:    for all   $|x|  \leq  \frac{K_0}{8}  \sqrt{ T |\ln T| },$  we have
$$ \text{ Re}(u(x,0)) \geq 1.$$
Now,  if    $|x| \geq \frac{K_0}{8}  \sqrt{T |\ln T|},$ then we have    $\phi_{1,K_0, A, d_1}(y,s_0) = 0.$ Since  $\Phi_1 (y,s_0) > 0$ from  \eqref{defi-Phi-1} and  $U^*(x) > 0$ from   \eqref{defi-proper-U-*}, we  directly see from  the definition \eqref{parti-real-initial} for $\text{Re}(u(0))$ that
$$   \text{ Re}(u(x,0))  \geq 1.$$
This concludes  the proof of Lemma  \ref{lemma-intial-rough}.
\end{proof}
 Following the above lemma,  we will prove that  there exists  a  domain  $\mathcal{D}_{K_0, A, s_0 },  $ with $s_0 = - \ln T$   such that  for all  $(d_1,d_2) \in  \mathcal{D}_{K_0,A,s_0},$ the initial  $u_{K_0, A,d_1,d_2}(0) $  is  trapped in 
 $$S (T, K_0, \alpha_0,  \epsilon_0, A, \delta_0, \eta_0, 0) = S(0).$$ 
 In particular, we show that the initial data   strictly  satisfies  almost the conditions of $S(0)$  except a few of  the conditions in item $(i)$ of Definition  \ref{defini-shrinking-set}.  More precisely,  these conditions concern   the following modes
   $$(q_{1,0}, (q_{1,j})_{j\leq n}, q_{2,0}, (q_{2,j})_{j \leq n}, (q_{2,j,k})_{j,k \leq n})(s_0).$$   
   The following is our lemma:
\begin{lemma}[Control of  initial data]\label{lemma-control-initial-data}
There exists $K_{3}  \geq 1$   such that  for all each  $K_0 \geq K_{3} , A \geq 1$  and $\delta_1> 0,$ there exists $\alpha_{3} (K_0, \delta_1) $ such that  for all  $\alpha_0   \leq   \alpha_{3},$   there exists  $\epsilon_{3} (K_0, \alpha_0,  \delta_1) > 0  $ such that for all  $\epsilon_0  \leq \epsilon_3 ,  \eta_0 > 0,$ there exists  $T_3(K_0,  \alpha_0,   \epsilon_1, A, \delta_1,\eta_1 )  > 0$  such that   for all  $ T  \leq  T_3$ and $ s_0 =  - \ln T$, there exists $\mathcal{D}_{K_0, A, s_0}  \subset   [-2,2]^{1 + n} \times  [-2,2]^{1+ n} \times   [-2,2]^{\frac{n(n+1)}{2}}$ such that the following holds: if  $u (0) =  u_{K_0, A_0,d_1,d_2} (0)$ (see  Definition  \ref{initial-data-bar-u}), then

\noindent
$(I)$   For all  $(d_1,d_2) \in \mathcal{D}_{K_0, A, s_0},$ we have  $ u(0) \in  S(T, K_0, \alpha_0, \epsilon_0, A, \delta_1,  \eta_0, 0)$. In particular, we have:
\begin{itemize}
\item[$(i)$]  Estimates in  $P_1(0)$:  We have  $ (q_1, q_2) (s_0) \in V_{A} (s_0)$ where  $(q_1, q_2)(s_0) $   are defined  in  \eqref{similarity-variales}  and  \eqref{defini-q-1-2}, satisfy the following estimates:
\begin{eqnarray*}
| q_{1,j,k} (s_0)| &\leq &\frac{A^2 \ln s_0}{2 s_0^2}, \forall  1  \leq j,k \leq n\\
\left\| \frac{q_{1,-} (.,s_0)}{1 + |y|^{3}} \right\|_{L^\infty} \leq \frac{A}{2s^2_0} &\text{ and }&  \left\| \frac{q_{2,-} (.,s_0)}{1 + |y|^{3}} \right\|_{L^\infty} \leq \frac{A^2 }{2s_0^{\frac{p_1 + 5}{2}}},\\
\|q_{1,e}(.,s_0)\|_{L^\infty}    \leq \frac{A^2}{2 \sqrt {s_0}} &\text{ and } &  \| q_{2,e} (.,s_0)\|_{L^\infty} \leq \frac{A^3}{2 s_0^{\frac{p_1 + 2}{2}}}.\\
\end{eqnarray*}
\item[$(ii)$] Estimates in  $P_2(0)$: For all $|x| \in \left[  \frac{K_0}{4} \sqrt{T  |\ln  T |}, \epsilon_0\right],  \tau_0 (x) = \frac{ -  t(x)}{ \theta (x)}$  with $\theta (x) = T -t(x)$  and  $|\xi| \leq  \alpha_0 \sqrt{|\ln (T - t(x))|},$ we have 
$$ |  U(x, \xi, \tau_0(x))   -  \hat U_{K_0} (\tau_0(x))| \leq \delta_{1},$$
\end{itemize}
where  $ U (x, \xi, \tau) $ is defined in  \eqref{def-mathcal-U} and $\hat U_{K_0} (\tau)$ is defined in \eqref{solution-ODE-hat-U}.

\bigskip
$(II)$ There exists  a maping  $\Psi_1$   such that    
\begin{eqnarray*}
 \Psi_1:  \mathbb{R}^{ 1 + n } \times \mathbb{R}^{ 1 + n }  \times \mathbb{R}^{\frac{n(n+1)}{2}}   &\to & \mathbb{R}^{ 1 + n } \times \mathbb{R}^{ 1 + n }  \times \mathbb{R}^{\frac{n(n+1)}{2}}\\
(d_1,  d_2) & \mapsto &  (q_{1,0}, (q_{1,j})_{j\leq n}, q_{2,0}, (q_{2, j})_{j\leq n}, (q_{2,j,k})_{j,k \leq n}  )(s_0) 
\end{eqnarray*} 
is linear, one to one  from $\mathcal{D}_{K_0, A,s_0}$ to   $\hat V_A(s_0),$ where  
 \begin{eqnarray}
\hat V_A(s) = \left[ - \frac{A}{s^2},\frac{A}{s^2} \right]^{1 + n} \times \left[ - \frac{A^2}{s^{p_1 + 2}},\frac{A^2}{s^{p_1 + 2}} \right]^{1 + n} \times \left[ - \frac{A^5 \ln s}{s^{p_1+2}}, \frac{A^5 \ln s}{s^{ p_1+2}}\right]^{\frac{n(n+1)}{2}}.\label{defini-of-hat-V-A}
\end{eqnarray} 
 Moreover, 
$$ \Psi_1 (\partial \mathcal{D}_{K_0, A,s_0}) \subset \partial  \hat V_A(s_0),$$
and 
\begin{equation}\label{degree-Psi_1-neq-0}
\text{deg} \left( \Psi_1 |_{\mathcal{D}_{K_0, A, s_0}} \right) \neq  0.
\end{equation} 
\end{lemma}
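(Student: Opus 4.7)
The overall strategy is to exploit the fact that the initial data $u_{K_0,A,d_1,d_2}(0)$ is constructed as a small perturbation of the profile $\Phi_1 + i\Phi_2$ in the blowup region, plus the final-profile candidate $U^* + 1$ outside. After passing to similarity variables and writing $q_j(y,s_0) = w_j(y,s_0) - \Phi_j(y,s_0)$, I will show that in the blowup region $q_j$ coincides (up to the tiny additive constant $T^{1/(p-1)}$ for $j=1$) with the explicit polynomial-plus-cutoff functions $\phi_{j,K_0,A,d_j}$. Then the Hermite projections needed to define $\Psi_1$ become essentially affine functions of $(d_1,d_2)$, while all the \emph{error} modes ($q_{j,-}$, $q_{j,e}$, and the higher-order modes not hit by $\Psi_1$) are forced to be negligible by the polynomial structure of $\phi_j$ and the exponential decay of the Gaussian weight $\rho$.

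First I would compute $q_1, q_2$ at time $s_0$ directly from \eqref{parti-real-initial}--\eqref{parti-imaginary-inital}. In the region $\{|y| \leq |\ln T|\}$ where $\chi_1 \equiv 1$ I obtain $q_1 = \phi_{1,K_0,A,d_1} + T^{1/(p-1)}$ and $q_2 = \phi_{2,K_0,A,d_2}$; in the exterior $\{|y| \geq 2|\ln T|\}$ I obtain $q_1 = T^{1/(p-1)}(U^*(y\sqrt T)+1) - \Phi_1$ and $q_2 = -\Phi_2$, with a smooth interpolation between. Next I project onto the Hermite basis. Since each $\phi_j$ is, before the cutoff $\chi_0(16|y|/(K_0\sqrt{s_0}))$, a polynomial of degree at most $2$, the cutoff produces only exponentially small corrections in the integrals against $\rho$, so for instance $\mathbb{P}_0(\phi_{1,K_0,A,d_1})(s_0) = A d_{1,0}/s_0^2 + O(e^{-cK_0^2 s_0})$ and analogously for the other low-order components. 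These computations define $\Psi_1$ as an affine map whose linear part is a diagonal matrix with scalar entries of order $A/s_0^2$, $A^2/s_0^{p_1+2}$, $A^5 \ln s_0/s_0^{p_1+2}$, up to exponentially small off-diagonal perturbations.

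For part $(II)$, I set $\mathcal{D}_{K_0,A,s_0} := \Psi_1^{-1}(\hat V_A(s_0))$. Since the linear part of $\Psi_1$ is diagonal modulo exponentially small perturbation, it is invertible for $s_0$ large, and $\mathcal D_{K_0,A,s_0}$ is a small affine deformation of the unit cube, hence sits inside $[-2,2]^{1+n}\times[-2,2]^{1+n}\times[-2,2]^{n(n+1)/2}$. By construction $\Psi_1|_{\mathcal D}$ is a linear homeomorphism onto $\hat V_A(s_0)$ sending boundary to boundary, and its Brouwer degree is the sign of its Jacobian determinant, hence $\pm 1 \neq 0$. The remaining estimates in $(I)(i)$ concerning $q_{1,j,k}$, $q_{j,-}$, $q_{j,e}$ then follow by splitting each quantity into the contribution from $\phi_j$ (exponentially small in $s_0$, from the tail of the cutoff), and the contribution from the transition/exterior region (controlled by $|\Phi_j| + T^{1/(p-1)}|U^*| + T^{1/(p-1)}$, all comfortably below the $V_A$ thresholds for $T$ sufficiently small).

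The most delicate step is $(I)(ii)$, the control of $U$ on $P_2(0)$. For $x$ with $|x| \in [K_0\sqrt{T|\ln T|}/4, \epsilon_0]$, the definition $t(x) + \tau_0(x)(T-t(x)) = 0$ gives $U(x,\xi,\tau_0(x)) = \theta(x)^{-1/(p-1)} u(x + \xi\sqrt{\theta(x)},0)$, and the key matching is that \eqref{def-t(x)} is precisely chosen so that $\Phi_1$ evaluated at the scale corresponding to $|y|=K_0/4$ equals $\hat U_{K_0}(0)$. I would rewrite $U(x,\xi,\tau_0(x))$ in terms of the self-similar variables at the original blowup time $T$, use the explicit form of the initial data, and exploit that on this range $|\xi| \leq \alpha_0\sqrt{|\ln \theta(x)|}$ translates to a small perturbation of the spatial variable at scale $\sqrt{T|\ln T|}$, so that after Taylor expanding $\Phi_1$ and estimating the $\phi_j$, $U^*$ contributions, the discrepancy with $\hat U_{K_0}(\tau_0(x))$ is bounded by $\delta_1$. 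This is the only place where the full nesting of smallness assumptions is needed: choose $K_0$ large, then $\alpha_0$ small given $\delta_1$, then $\epsilon_0$ small given $K_0,\alpha_0,\delta_1$, then $T$ small given all previous parameters, which matches exactly the quantifier order in the statement.
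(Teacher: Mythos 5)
Your skeleton is right, and for parts $(I)(i)$ and $(II)$ your route (explicit Hermite projections of the degree-$\le 2$ polynomials in $\phi_{1,K_0,A,d_1},\phi_{2,K_0,A,d_2}$, an affine map $\Psi_1$ with diagonal-dominant linear part of sizes $A/s_0^2$, $A^2/s_0^{p_1+2}$, $A^5\ln s_0/s_0^{p_1+2}$, the choice $\mathcal{D}_{K_0,A,s_0}:=\Psi_1^{-1}(\hat V_A(s_0))$, and degree $\pm 1$) is exactly the standard argument that the paper does not redo but cites from \cite{DU2017}, \cite{MZnon97}, \cite{TZpre15}. The genuine gap is in your treatment of the exterior modes. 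You propose to bound $q_{1,e}$ in the transition/exterior region by $|\Phi_1|+T^{1/(p-1)}U^*+T^{1/(p-1)}$ and claim this is comfortably below the threshold $A^2/(2\sqrt{s_0})$. It is not: on the transition zone $s_0\le |y|\le 2s_0$ of $\chi_1$, each of $\Phi_1(y,s_0)$ and $T^{\frac{1}{p-1}}U^*(y\sqrt T)$ is of size of order $s_0^{-\frac{1}{p-1}}$, which exceeds $A^2/(2\sqrt{s_0})$ as soon as $p\ge 3$ (and already at $p=3$ when $A=1$). What makes the required estimate true is the near-cancellation $T^{1/(p-1)}U^*(y\sqrt T)-\Phi_1(y,s_0)$ in that zone, of order $s_0^{-\frac{1}{p-1}}\frac{\ln s_0}{s_0}+\frac{1}{s_0}$, i.e.\ precisely the fact that $U^*$ is the final profile matched to $\Phi_1$, which is why the data \eqref{parti-real-initial} interpolates to $U^*$ rather than to $0$. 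Your argument must use this matching, not a triangle inequality. (A smaller imprecision: the cutoff-tail contributions to the weighted bounds on $q_{1,-}$, $q_{1,e}$ are only polynomially small in $s_0$; only the Gaussian projections are exponentially small.)

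The part the paper actually proves in detail, item $(I)(ii)$, is only sketched in your proposal, and the sketch addresses the wrong half of $P_2(0)$: the matching of $\Phi_1$ at the scale $|y|\sim \frac{K_0}{4}\sqrt{s}$ with $\hat U_{K_0}(0)$ only handles $x$ near the inner edge, where $\chi_1\equiv 1$. For most of $\left[\frac{K_0}{4}\sqrt{T|\ln T|},\epsilon_0\right]$ the cutoff $\chi_1$ vanishes, $\tau_0(x)=1-T/\theta(x)$ is close to $1$, and one must show that $\theta(x)^{\frac{1}{p-1}}\left(U^*+1\right)\left(x+\xi\sqrt{\theta(x)}\right)$, together with the rescaled imaginary part, lies within $\delta_1$ of $\hat U_{K_0}(\tau_0(x))$, using $\theta(x)\sim \frac{8|x|^2}{K_0^2|\ln|x||}$ and $\ln\theta(x)\sim 2\ln|x|$. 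This is where the paper's Appendix C does its real work: it splits $U(x,\xi,\tau_0)$ into the scaled-$\Phi_1$ piece, the scaled-$U^*$ piece, the scaled constant, and $U_2$, and proves four estimates on overlapping ranges of $|x|$, with the parameters fixed in the order $K_0$, then $\alpha_0$, then $\epsilon_0$, then $T$. Your quantifier order is correct, but the outer-region comparison of $U^*$ with $\hat U_{K_0}(\tau_0(x))$ — the actual content of the proof — is missing from your plan.
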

\begin{proof}
If  we forget about the terms involving $U^*$ and  the  $+1$ term in our definition  \eqref{parti-real-initial} -  \eqref{parti-imaginary-inital} of initial data, then we are  exactly  in the  framework of the case $p$ integer treated in \cite{DU2017} (see Lemma 3.4 in \cite{DU2017}).  Therefore, when $p$ is not integer, we only need to understand the effect of  $U^*$ and  the $+1$ term in order to complete the proof.  The argument is only technical. For that reason, we leave it to Appendix \ref{appendix-preparation-initial-dada}. 
\end{proof} 
Now, we   give   a key-proposition for our argument. More precisely, in the following proposition, we     prove  the  existence of a solution of equation \eqref{equation-satisfied-by-q-1-2} trapped in the shrinking set:
\begin{proposition}[Existence of a solution trapped in $S^*(T, K_0, \alpha_0, \epsilon_0, A, \delta_0, \eta_0)$]\label{pro-existence-d-1-d-1} We can chose  the parameters    $T, K_0, \alpha_0, \epsilon_0,  A$,  $  \delta_0, \eta_0$  such that  there  exist   $(d_1, d_2) $  such that  the   solution u  of equation \eqref{equ:problem} with initial data  given in Definition \ref{initial-data-bar-u}, exists on  $[0,T)$ and  satisfies 
$$    u  \in S (T,K_0, \alpha_0, \epsilon_0,  A, \delta_0, \eta_0).$$
 \end{proposition}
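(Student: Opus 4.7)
The plan is to follow the two-step Merle--Zaag scheme: first reduce the control of the infinite dimensional problem to a finite dimensional one, then conclude via a topological (Brouwer degree) argument. I first fix the parameters in the order dictated by the lemmas already established: choose $K_0$ large enough so that Lemma \ref{lemma-control-initial-data} applies, then $\alpha_0$, then $\delta_0$ small enough (in particular with $\delta_0 \leq \hat U_{K_0}(0)/2$ so that Lemma \ref{lemma-positive-real-part} applies), then $\epsilon_0 \leq \epsilon_1(K_0)$, then $\eta_0 \leq 1/2$, then $A$ large (to absorb all constants $C$, $C(K_0)$ appearing in the a priori estimates below), and finally $T$ small (equivalently $s_0=-\ln T$ large).

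For each $(d_1,d_2)\in \mathcal{D}_{K_0,A,s_0}$, Lemma \ref{lemma-intial-rough} ensures $\text{Re}(u(0))\geq 1$, so the Cauchy problem in Appendix \ref{appendix-Cauchy-problem} provides a maximal solution; by Lemma \ref{lemma-control-initial-data}, the initial data lies in $S(0)$, with strict inequalities in all but the finitely many modes $(q_{1,0},(q_{1,j})_j,q_{2,0},(q_{2,j})_j,(q_{2,j,k})_{j,k})(s_0)$. Define the exit time
\[
s^*(d_1,d_2) = \sup\{s \geq s_0 : u(\sigma)\in S(\sigma) \text{ for all } \sigma\in[s_0,s]\}.
\]
By Lemma \ref{lemma-positive-real-part}, as long as $u\in S(t)$ we have $\text{Re}(u)\geq 1/2$, hence $u^p$ is smooth, and so the solution can be continued past $t=T(1-e^{-s^*})$ if $s^*<+\infty$. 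The aim is to prove that for some $(d_1,d_2)$ we have $s^*=+\infty$, which is exactly the desired conclusion.

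The heart of the argument is the \emph{reduction to a finite dimensional problem}: assuming by contradiction that $s^*(d_1,d_2)<+\infty$ for every $(d_1,d_2)\in \mathcal{D}_{K_0,A,s_0}$, I would prove that at $s=s^*$ only the finite dimensional components
\[
(q_{1,0},(q_{1,j})_j,q_{2,0},(q_{2,j})_j,(q_{2,j,k})_{j,k})(s^*)
\]
can touch their prescribed bounds, while all remaining components ($q_{1,j,k}$, $q_{1,-}$, $q_{2,-}$, $q_{1,e}$, $q_{2,e}$, and the estimates in $P_2$ and $P_3$) remain strictly interior. This will be obtained from a series of a priori estimates in the three regions $P_1,P_2,P_3$: in $P_1$ one uses the sign of the spectrum of $\mathcal{L}+V$ on the negative subspace together with the estimates on $V$, $V_{j,k}$, $B_1,B_2$ and $R_1,R_2$ recalled after \eqref{equation-satisfied-by-q-1-2}; in $P_2$ one uses classical parabolic estimates on $U$ compared with the ODE solution $\hat U_{K_0}$; in $P_3$ one uses the well-posedness of the Cauchy problem on a fixed time interval (possible since $T$ is small). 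The precise statements needed here are exactly Proposition \ref{prop-dynamic-q-1-2-alpha-beta}, Lemma \ref{lemma-quadratic-term-B-1-2} and the lemmas of the next section, which I assume. The condition $p_1<\min((p-1)/4,1/2)$ enters precisely so that the nonlinear contributions $B_2$ on the negative and outer modes are strictly smaller than the bounds imposed in $V_A(s)$.

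Granted this reduction, at the exit time $s^*$ the vector
\[
\Psi_2(d_1,d_2) := (q_{1,0},(q_{1,j})_j,q_{2,0},(q_{2,j})_j,(q_{2,j,k})_{j,k})(s^*)
\]
lies on the boundary of the finite dimensional rectangle $\hat V_A(s^*)$ defined in \eqref{defini-of-hat-V-A}. The next step, again standard, is to show transversal crossing on $\partial \hat V_A(s)$: the linear part of the ODE governing the modes in question has strictly positive eigenvalues on this finite dimensional space (the $q_{1,0},q_{1,j}$ eigenvalues of $\mathcal{L}$ are $1$ and $1/2$; for the null mode $q_{1,j,k}$ the nontrivial dynamics comes from $V$ as in item $(ii)$ of Proposition \ref{prop-dynamic-q-1-2-alpha-beta}; the analogous argument handles the $q_{2,\cdot}$ components thanks to the cross term involving $w_{2,2}$). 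Transversality yields that the map $(d_1,d_2)\mapsto s^*(d_1,d_2)$ is continuous and that $(d_1,d_2)\mapsto \Psi_2(d_1,d_2)$ is continuous from $\mathcal{D}_{K_0,A,s_0}$ to $\partial \hat V_A(s^*)$ (after rescaling, to $\partial[-1,1]^N$ where $N=2(1+n)+\tfrac{n(n+1)}{2}$). Composing with $\Psi_1^{-1}$ from Lemma \ref{lemma-control-initial-data} produces a continuous retraction of $\mathcal{D}_{K_0,A,s_0}$ onto $\partial \mathcal{D}_{K_0,A,s_0}$ whose restriction to the boundary has nonzero degree by \eqref{degree-Psi_1-neq-0}. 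This contradicts Brouwer's no-retraction theorem, so there exists $(d_1,d_2)$ with $s^*=+\infty$, giving $u\in S^*(T,K_0,\alpha_0,\epsilon_0,A,\delta_0,\eta_0)$.

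The main obstacle is the reduction step and, within it, the a priori control of the quadratic term $B_2(q_2,q_2)$: because $p$ is irrational the expressions \eqref{defi-mathbb-A-1-2} are nonpolynomial and require the sharp positivity estimate $\text{Re}(u)\geq 1/2$ from Lemma \ref{lemma-positive-real-part} to make sense of the Taylor expansion of $F_1,F_2$ around $(\Phi_1,\Phi_2)$ in the whole region. The interplay between ``keeping $\text{Re}(u)>0$'' (handled by the three-region shrinking set) and ``controlling $B_2$ on the negative and outer modes with the prescribed rates'' (handled by the restriction on $p_1$) is what makes the argument delicate; once these are in place, the topological conclusion is classical.
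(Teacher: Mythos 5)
Your proposal is correct and follows essentially the same route as the paper: fix the parameters in the same order, use Lemmas \ref{lemma-intial-rough}, \ref{lemma-control-initial-data} and \ref{lemma-positive-real-part} to start the solution in $S(0)$ with positive real part, admit the finite-dimensional reduction (the content of Proposition \ref{pro-reduction- to-finit-dimensional}, proved later via the a priori estimates in $P_1,P_2,P_3$), and conclude by the transverse-crossing continuity plus the nonzero degree \eqref{degree-Psi_1-neq-0} and an index/no-retraction argument. The only cosmetic difference is that you phrase the contradiction as a retraction of $\mathcal{D}_{K_0,A,s_0}$ onto its boundary, while the paper builds the map $\Gamma$ into $\partial\left([-1,1]^{1+n}\times[-1,1]^{1+n}\times[-1,1]^{\frac{n(n+1)}{2}}\right)$ and invokes index theory; these are equivalent.
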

 \begin{proof}
The proof of  this Proposition  is   given   2 steps:
\begin{itemize}
\item  The first step: We    reduce   our problem  to a finite dimensional one.  In  other words,  we aim at proving that the  control of $u(t)$ in the shrinking set $S(t)$   reduces to  the control of the components 
$$( q_{1,0},  (q_{1,j} )_{j \leq n }  , q_{2,0},  (q_{2,j})_{j\leq n}, (q_{2,j,k} )_{j,k \leq n} )(s)$$ 
 in  $\hat V_A (s),$  defined in \eqref{defini-of-hat-V-A}.
 \item The second step:  We get the  conclusion of  Proposition \ref{pro-existence-d-1-d-1}   by using   a topological argument   in finite dimension.
\end{itemize}

\medskip
- \textit{Step 1: Reduction to a finite dimensional problem:}
Using \textit{a priori estimates},  our problem will be reduced to the control of a finite number of  components. 
\begin{proposition}[Reduction to a  finite dimensional problem]\label{pro-reduction- to-finit-dimensional}
There exist  parameters $K_0, \alpha_0,  \epsilon_0, A, \delta_0, \eta_0$ and  $  T> 0$
 such that the  following holds:
\begin{itemize}
\item[$(a)$]   Assume that    initial data    $u(0) =  u_{K_0, A, d_1,d_2} (0) $ is given  in  Definition \ref{initial-data-bar-u} with $(d_1, d_2) \in \mathcal{D}_{K_0, A, s_0}$
\item[$(b)$] Assume    furthemore    that   the solution  $u$  of equation  \eqref{equ:problem} satisfies:  $u  \in S(T, K_0,  \alpha_0, \epsilon_0,A, \delta_0, \eta_0, t)  $ for all $  t \in [0, t_*],$  for some $t_*  \in [0,T)$   and
$$  u \in  \partial S(T, K_0, \alpha_0, \epsilon_0,  A, \delta_0, \eta_0, t_*).$$
\end{itemize}

  Then, we have: 
\begin{itemize}
\item[$(i)$] (\textit{Reduction to finite dimensions}): It holds that  $( q_{1,0},  (q_{1,j} )_{j \leq n }  , q_{2,0},  (q_{2,j})_{j\leq n}, (q_{2,j,k} )_{j,k \leq n} )(s_*) \in \partial \hat V_A (s_*) ,$ where  $(q_1, q_2) (s)$    are defined  in  \eqref{similarity-variales}  and  \eqref{defini-q-1-2},  $\hat V_A(s)$ is defined as in   \eqref{defini-of-hat-V-A},  and $s_*  = - \ln (T -t_*)$.
\item[$(ii)$] (\textit{Transverse outgoing crossing}): There exists  $\nu_0 > 0$ such that 
\begin{equation}\label{traverse-outgoing crossing}
\forall \nu \in (0,\nu_0), ( q_{1,0},  (q_{1,j} )_{j \leq n }  , q_{2,0},  (q_{2,j})_{j\leq n}, (q_{2,j,k} )_{j,k \leq n} )(s_*+\nu) \notin \hat V_A (s_*+\nu),
\end{equation}
which implies  that there   exists   $\nu_1 > 0$ such that    $u$ exists  on  $[0, t_* +  \nu_1)$  and         for all  $\nu \in (0, \nu_1)   $ 

$$u(t_* + \nu) \notin   S (T, K_0, \alpha_0,  \epsilon_0,  A, \delta_0, \eta_0, t _*+ \nu ).$$
\end{itemize}
\end{proposition}
   The proof of this Lemma     uses  techniques   given  in \cite{MZnon97}   which  were developed      from    \cite{BKnon94} and   \cite{MZdm97} in  the real case.   However,  it is true  that   our shrinking set involves   more conditions    than   the shrinking set  used  in    \cite{BKnon94},  \cite{MZdm97},  \cite{DU2017}.   In fact, the additional  conditions are useful to     ensure that   our solution always stays    positive.  In particular,   the set $V_A(s)$ plays an important role.     Indeed,   as for the integer case  in \cite{DU2017},   only  the nonnegative   modes  
   $ ( q_{1,0},  (q_{1,j} )_{j \leq n }  , q_{2,0},  (q_{2,j})_{j\leq n}, (q_{2,j,k} )_{j,k \leq n} )(s_*)$
may  touch  the boundary  of   $\hat V_A (s_*)$ and   leave  in short time later.      However,  the control     of the sulution with the   positive   real part  is also   our   highlight and   of course	 it is the main difficulty in our work.   This proposition  makes the heart of the paper and needs many steps to be proved. For that reason, we dedicate  a whole  section to its proof (Section \ref{the proof of proposion-reduction-finite-dimensional} below).  Let us admit it here, and       get to  the conclusion of     Proposition \ref{pro-existence-d-1-d-1} in the second step.
 
 \medskip
\textit{ - Step 2: Conclusion of Proposition \ref{pro-existence-d-1-d-1} by a topological argument.}
In this step, we  give  the  proof  of    Proposition \ref{pro-existence-d-1-d-1}  assuming  that Proposition \ref{pro-reduction- to-finit-dimensional} holds. In  fact,   we aim at  proving  the existence of a parameter $(d_1,d_2) \in \mathcal{D}_{K_0, A,s_0}$ such that  the solution  $ u$ of equation \eqref{equ:problem}   with   initial  data $ u_{K_0, A, d_1, d_2}(0)$ (given in Definition \ref{initial-data-bar-u}),  exists  on  $[0,T)$ and satisfies
$$  u  \in  S^*(T, K_0, \alpha_0,  \epsilon_0, A, \delta_0, \eta_0),$$
where   the parameters  will be  suitably chosen.  Our  argument is  analogous  to the   argument of  Merle and Zaag in \cite{MZdm97}.  For that reason,   we only give  a brief proof. Let us fix $T,K_0, \delta_0, \alpha_0, \epsilon_0,   A, \alpha_0, \eta_0 $  such that Lemma  \ref{lemma-control-initial-data}, Proposition \ref{pro-reduction- to-finit-dimensional}  and   Lemma  \ref{lemma-positive-real-part}  hold. Then,  for all  $(d_1, d_2)  \in \mathcal{D}_{K_0, A, s_0}$ and   from   Lemma  \ref{lemma-control-initial-data} we have  the initial  data  
$$ u_{K_0, A, d_1, d_2} (0)  \in     S(T,K_0,\alpha_0,  \epsilon_0, A,   \delta_0,  \eta_0, 0).$$
Thanks to  Lemmas  \ref{lemma-positive-real-part} and   \ref{lemma-control-initial-data},    for each  $(d_1, d_2)  \in \mathcal{D}_{K_0, A, s_0}$ we can define   $t_*(d_1,d_2) \in [0,T)$ as the maximum time  such that    the solution    $u_{d_1,d_2}$  of equation   \eqref{equ:problem},   with  initial data   $u_{K_0, A, d_1, d_2} (0)$     trapped in  $S(T,K_0, \alpha_0, \epsilon_0, A, \delta_0, \eta_0,t)$ for all   $t\in [0,t_*(d_1,d_2)).$  We have the  two  following cases:

\noindent
+ Case 1:  If  there exists   $(d_1,d_2)$ such that   $t_*(d_1,d_2) = T$ then our   problem  is  solved 

\noindent
+  Case 2:     For all  $(d_1,d_2) \in \mathcal{D}_{K_0, A, s_0}, $   we have
$$       t_*(d_1,d_2)    < T. $$ 

\noindent
By  contradiction,  we can  prove that  the  second case   can  not   occur. Indded,    if   it is true, by using the continuity of  the solution  $u$ in time and the definition   of   $t_*=t_* (d_1,d_2),$ we can deduce that    $u  \in \partial S(t_*).$  Using    item  $(i)$ of Proposition \ref{pro-reduction- to-finit-dimensional},      we derive  
$$( q_{1,0},  (q_{1,j} )_{j \leq n }  , q_{2,0},  (q_{2,j})_{j\leq n}, (q_{2,j,k} )_{j,k \leq n} )(s_*) \in \partial \hat V_A(s_*), $$
where   $s_*  =  - \ln (T - t_*).$
\noindent
Then,  the    following mapping $\Gamma$ is well-defined: 
\begin{eqnarray*}
\Gamma : \mathcal{D}_{K_0, A, s_0} & \to & \partial \left(  [-1, 1]^{1 + n} \times [-1, 1]^{1 + n}  \times [-1, 1]^{\frac{n (n+1)}{2}}\right)\\
(d_1,d_1)  &\mapsto  & \left(  \frac{s_*^2}{A}(q_{1,0}, (q_{1,j})_{j\leq n})(s_*), \frac{s_*^{p_1 + 2}}{A^2}  (q_{2,0}, (q_{2,j})_{j\leq n})(s_*),
 \frac{s_*^{p_1+2}}{A^5 \ln s_*}  (q_{2,j,k})_{j,k \leq n }(s_*)\right).
\end{eqnarray*}
Moreover, it       satisfies  the two  following properties:
\begin{itemize}
\item[(i)] $\Gamma$ is continuous from $\mathcal{D}_{K_0, A, s_0}$ to $ \partial \left(   [-1, 1]^{1 + n} \times [-1, 1]^{1 + n}  \times [-1, 1]^{\frac{n (n+1)}{2}}\right).$  This is a consequence  of item $(ii)$ in Proposition \eqref{pro-reduction- to-finit-dimensional}.
\item[(ii)] The degree  of the   restriction $\Gamma \left|\right. _{\partial \mathcal{D}_{A,s_0}}$ is   non zero.  
Indeed, again  by item $(ii)$ in Proposition \ref{pro-reduction- to-finit-dimensional}, we have 
$$ s^* (d_1,d_2) = s_0,$$
in this case. Applying  \eqref{degree-Psi_1-neq-0}, we get the conclusion.
\end{itemize}
In fact, such  a mapping $\Gamma$  can not  exist by Index theorem and  this is a contradiction. Thus,   Proposition \ref{pro-existence-d-1-d-1}  follows, assuming  that  Proposition  \ref{pro-reduction- to-finit-dimensional} holds   (see Section  \ref{the proof of proposion-reduction-finite-dimensional} for the proof of latter).
\end{proof}
\subsection{ The proof of Theorem \ref{Theorem-profile-complex}} 
In this section, we aim at giving the  proof of Theorem \ref{Theorem-profile-complex} by using  Proposition \ref{pro-existence-d-1-d-1}.

\medskip
The  proof of Theorem \ref{Theorem-profile-complex}: Except for the treatment of the nonlinear term, this part is quite similar to what we did  in \cite{DU2017}  when $p$ is integer. Nevertheless, for the reader's convenience, we give the proof here, insisting on the way we handle the nonlinear term.

\noindent
+ \textit{The proof of  item $(i)$ of Theorem \ref{Theorem-profile-complex}:} 
 Using Proposition \ref{pro-existence-d-1-d-1},  there exist   $(d_1, d_2) $ such that  the solution   $u$ of equation  \eqref{equ:problem} with     initial data  $u_{K_0, A, d_1, d_2}(0)$ (given in Definition \ref{initial-data-bar-u}),  exists     on $[0, T)$ and   satisfies:  
 $$ u  \in S^* (T, K_0,  \alpha_0,\epsilon_0, A, \delta_0, \eta_0).$$
 Thanks to  item  $(i)$  in Definition \ref{defini-shrinking-set},  item $(i)$  of Lemma  \ref{lemma-analysis-V-A}, and definition  \eqref{similarity-variales} and   definition \eqref{defini-q-1-2} of  $(w_1,w_2)$ and   $(q_1, q_2)$  we conclude    \eqref{esttima-theorem-profile-complex} and \eqref{estima-the-imginary-part}.  In addition to that we have  $\text{Re}(u) > 0$.     Moreover,  we use again  the  definition of    $V_A(s)$ to conclude the following asymptotics:
 \begin{eqnarray}
 u(0,t)  & \sim &  \kappa (T -t)^{-\frac{1}{p-1}},\label{u-0-t-sim}\\
 u_2(0,t) &\sim  &     - \frac{2n \kappa}{ (p-1)  }  \frac{(T-t)^{-\frac{1}{p-1}}}{ |\ln(T-t)|^2},\label{u-2-0-t-sim}
 \end{eqnarray}
as   $t \to  T$, 
which means  that  $u$ blows up  at time $T$ and   the origin is a blowup point.    Moreover,   the real and   imaginary parts simultaneously  blow up   .  It remains  to prove that  for all $x \neq 0,$  $x$ is not a blowup point of $u$. The following Lemma  allows us  to conclude. 
 \begin{lemma}[No blow-up  under  some  threshold; Giga and Kohn \cite{GKcpam89}]\label{lemma-no-blowup-solution}
For all $C_0 > 0, 0 \leq  T_1 <  T$ and $\sigma>0$ small enough,  there exists $\epsilon_0 (C_0,T, \sigma)> 0$  such that  if   $u(\xi, \tau)$ satisfies the following  estimates    for all $|\xi| \leq   \sigma, \tau \in \left[T_1,T\right)$:
 $$ \left| \partial_\tau u - \Delta u  \right| \leq C_0 |u|^p,$$
 and 
 $$ |u(\xi,\tau)|       \leq \epsilon_0 (1 -\tau)^{-\frac{1}{p-1}}.$$
 Then,  $u$ does not  blow up at $\xi = 0, \tau = T$. 
 \end{lemma}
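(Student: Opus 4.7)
The plan is to transport the statement into similarity variables centered at the candidate blow-up point and exploit the fact that the scale-invariant quantity $|u|(T-\tau)^{1/(p-1)}$ starts uniformly small. Introduce
\begin{equation*}
w(y,s) = (T-\tau)^{\frac{1}{p-1}}\, u\bigl(\xi, \tau\bigr),\qquad y=\frac{\xi}{\sqrt{T-\tau}},\qquad s=-\ln(T-\tau).
\end{equation*}
A direct computation from $|\partial_\tau u - \Delta u|\leq C_0|u|^p$ shows that $w$ satisfies a perturbed version of the similarity equation,
\begin{equation*}
\bigl|\partial_s w - \Delta_y w + \tfrac{1}{2}y\cdot\nabla_y w + \tfrac{w}{p-1}\bigr| \leq C_0 |w|^p,
\end{equation*}
in the truncated cylinder $|y|\leq \sigma e^{s/2}$, $s\geq s_1:=-\ln(T-T_1)$. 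The hypothesis $|u(\xi,\tau)|\leq \epsilon_0(1-\tau)^{-1/(p-1)}$ rewrites as $|w(y,s)|\leq C\epsilon_0$ on that cylinder, for an absolute constant $C$ independent of $\epsilon_0$.

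The heart of the argument is then a smallness propagation / local-energy estimate. I would multiply the equation for $w$ by $w\,\rho(y)\chi(y,s)$, where $\rho$ is the Gaussian weight of \eqref{def-rho-rho-j} and $\chi$ is a smooth spatial cut-off supported in $|y|\leq \sigma e^{s/2}/2$, and integrate. The linear part provides the coercive dissipation coming from $-w/(p-1)$ together with the drift, while the nonlinear contribution is bounded by $C_0\|w\|_{L^\infty}^{p-1}\int w^2\rho\,\chi\,dy$. Choosing $\epsilon_0$ small enough that $C_0(C\epsilon_0)^{p-1}$ is much smaller than $1/(p-1)$, the nonlinear term is absorbed, and we get exponential decay in $s$ of the weighted $L^2$ norm of $w$ on balls of fixed radius. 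The boundary terms produced by differentiating $\chi$ are exponentially small because $\chi$ is supported where $|y|\gtrsim e^{s/2}$ and $\rho$ contains a Gaussian in $y$.

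Converting exponential decay in $L^2_\rho$ to $L^\infty$ on a fixed ball $|y|\leq R$ is then standard via parabolic smoothing applied to the equation for $w$; this gives $\|w(\cdot,s)\|_{L^\infty(|y|\leq R)} \to 0$. Translating back through $u(\xi,\tau)=(T-\tau)^{-1/(p-1)}w(\xi/\sqrt{T-\tau},s)$ and using interior parabolic estimates for the equation $|\partial_\tau u-\Delta u|\leq C_0|u|^p$ (whose right-hand side is now a controlled lower-order term) yields a uniform bound for $u$ in a fixed neighborhood of $(\xi,\tau)=(0,T)$. Hence $\xi=0$ is not a blow-up point.

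The main obstacle I anticipate is the fact that the differential inequality is only assumed on the ball $|\xi|\leq\sigma$, not on all of $\mathbb{R}^n$; consequently in similarity variables the validity cylinder shrinks at scale $e^{s/2}$ from outside. Controlling the cut-off commutators so that they remain negligible compared with the dissipative gain (which scales like $1/(p-1)$, a fixed constant) is the delicate point, and it is precisely why $\sigma$ is required to be small and $\epsilon_0$ is allowed to depend on $\sigma$ and $T$. Once this is handled, the Gaussian weight $\rho$ makes all far-field terms negligible and the argument closes cleanly.
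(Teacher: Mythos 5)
The paper does not actually prove this lemma: it simply invokes Theorem~2.1 of Giga and Kohn \cite{GKcpam89} and remarks that the argument, written there for real solutions, extends to the complex-valued case. Your proposal is therefore a self-contained substitute, and its first half is sound: the passage to similarity variables, the perturbed equation for $w$, the bound $|w|\le C\epsilon_0$ on the cylinder $|y|\le\sigma e^{s/2}$, the absorption of the nonlinear term once $C_0(C\epsilon_0)^{p-1}$ is small compared with $\tfrac1{p-1}$, and the fact that the cut-off commutators are negligible thanks to the Gaussian weight are all correct, and they do give decay of the localized $L^2_\rho$ norm of $w$.

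The genuine gap is in the last step. The energy estimate only gives a rate $e^{-(\frac{1}{p-1}-\delta)s}$ with $\delta\sim C_0(C\epsilon_0)^{p-1}>0$ (and in fact the decay saturates at a level of order $e^{-cR^2}$ coming from the annulus $R\le|y|\le\sigma e^{s/2}$, where you only know $|w|\le C\epsilon_0$); after undoing the scaling this yields $|u|\le C(T-\tau)^{-\delta}$ on the shrinking cylinder $|\xi|\lesssim R\sqrt{T-\tau}$, i.e.\ an improvement of the exponent, not boundedness. Your concluding sentence --- that $\|w(\cdot,s)\|_{L^\infty(|y|\le R)}\to0$ together with ``interior parabolic estimates'' gives a uniform bound on $u$ --- is essentially circular: a type-I bound with a vanishing constant is still exactly the hypothesis of the lemma, and deducing boundedness from it is the lemma's content. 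What is missing is the Giga--Kohn bootstrap: iterate a localized Duhamel (variation-of-constants) representation to upgrade $|u|\le C(T-\tau)^{-\alpha}$ to successively smaller $\alpha$, or, in your framework, note that once $\delta<1/p$ the forcing $|u|^p\lesssim(T-\tau)^{-p\delta}$ is integrable in time so a cut-off Duhamel formula bounds $u$; in addition you must recentre the similarity variables at every $\xi_0$ with $|\xi_0|\le\sigma/2$ (the hypothesis permits this) so as to convert bounds on parabolic cylinders into a bound on a fixed space-time neighborhood of $(0,T)$, which is what ``not a blow-up point'' requires.
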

 \begin{proof}
  See   Theorem   2.1     in Giga and Kohn   \cite{GKcpam89}.        Although  the proof  of  \cite{GKcpam89} was  given in the  real case, it extends naturally  to the complex  valued  case.    
 \end{proof}
 
 \noindent We next use   Lemma  \ref{lemma-no-blowup-solution}  to conclude that $u$ does not blow up at  $x_0 \neq  0.$   Since  from   \eqref{estima-the-imginary-part}, we have  
 $$ (T-t)^{-\frac{1}{p-1}} \|u_2(.,t)\|_{L^\infty}  \leq   \frac{C}{|\ln(T-t)|},$$
   if   $x_0 \neq 0$ we use  \eqref{esttima-theorem-profile-complex}   to deduce the following: 
 \begin{equation}\label{estima-u-x-0-neq-0}
 \sup_{|x - x_0| \leq \frac{|x_0|}{2}}   (T - t)^{\frac{1}{p -1}} | u(x,t) |  \leq  \left|  f_0 \left(   \frac{ \frac{|x_0|}{2}}{  \sqrt{{(T -t )}|\ln (T -t)| } }\right)  \right| + \frac{C}{ \sqrt{ | \ln (T - t)|}} \to 0, \text{ as }  t \to T. 
 \end{equation}
 Applying Lemma  \ref{lemma-no-blowup-solution}  to $u(x - x_0, t),$ with some $\sigma$ small enough such that $\sigma \leq \frac{|x_0|}{2},$ and $T_1$  close enough to $T,$ we see that $u(x - x_0, t)$ does not blow up at time   $T$ and  $x = 0$. Hence, $x_0 $ is not a blow-up point of  $u$. This concludes  the proof of item $(i)$ in Theorem \ref{Theorem-profile-complex}.  
 
 \medskip
+  \textit{The proof of item $(ii)$ of Theorem \ref{Theorem-profile-complex}:}
 Here,  we   use the argument  of Merle in   \cite{Mercpam92}    to deduce  the existence of  $u^* = u_1^* + i u_2^*$   such that   $u(t) \to u^* $  as  $t \to T$ uniformly on compact sets of $\mathbb{R}^n \backslash \{0\}$. In addition to that, we use   the techniques in  Zaag \cite{Zcpam01}, Masmoudi and Zaag  \cite{ MZjfa08}, Tayachi and Zaag \cite{TZpre15} for the proofs of \eqref{asymp-u-start-near-0-profile-complex} and \eqref{asymp-u-start-near-0-profile-complex-imaginary-part}. 
 
\noindent 
 Indeed,   for all   $x_0 \in \mathbb{R}^n , x_0 \neq 0 $, we deduce   from   \eqref{esttima-theorem-profile-complex},  \eqref{estima-the-imginary-part}   that  not only  \eqref{estima-u-x-0-neq-0} holds but also the following is  satisfied:
 \begin{eqnarray}
  \sup_{|x - x_0| \leq \frac{|x_0|}{2}}   (T - t)^{\frac{1}{p -1}} |\ln(T -t)|| u_2(x,t) |  &\leq & \left|  \frac{9|x_0|^2}{4 (T -t ) |\ln (T -t)|    } f_0 ^p\left(  \frac{ \frac{|x_0|}{2}}{  \sqrt{{(T -t )}|\ln (T -t)| } }\right)  \right|\label{estimates-T-t-u-leq-epsilon}  \\
  &+&  \frac{C}{  | \ln (T - t)|^{\frac{p_1}{2}}} \to 0,\nonumber \text{ as } t \to T.
 \end{eqnarray}
  We now consider  $x_0$ such that $|x_0|$ is  small enough,   and $K$ to be fixed later. We   define  $t_0(x_0)$  by 
  \begin{equation}\label{x-0-leq-delta-t-x-0}
  |x_0|   =  K \sqrt{ (T -t_0(x_0)) |\ln (T -t_0(x_0))|} . 
  \end{equation}
Note that  $t_0 (x_0)$  is unique  when  $|x_0|$  is small enough and $t_0 (x_0)\to T$  as  $x_0 \to 0$.  We  introduce  the rescaled functions  $U(x_0, \xi, \tau)$  and   $V_2 (x_0, \xi, \tau)$        as follows:
   \begin{equation}\label{equa-upsilon-xi-tau}
U (x_0, \xi, \tau)  = \left( T - t_0 (x_0)\right)^{\frac{1}{p-1}} u(x,t).
\end{equation}
and  
\begin{equation}\label{defini-V-2-x-0-xi-tau}
V_2 (x_0, \xi, \tau)  = |\ln (T- t_0(x_0))| U_2 (x_0, \xi, \tau),
\end{equation}
where  $U_2 (x_0, \xi, \tau)$   is  defined by
$$ U (x_0, \xi, \tau)  = U_1 (x_0, \xi, \tau) + i U_2 (x_0, \xi, \tau),$$
and  
\begin{equation}\label{relation-x-and-xi-tau}
(x,t) = \big(x_0 + \xi\sqrt{T - t_0(x_0)},t_0(x_0) + \tau (T - t_0(x_0))\big), \text{ and } (\xi, \tau) \in \mathbb{R}^n \times \left[ - \frac{t_0(x_0)}{T - t_0(x_0)}, 1 \right).
\end{equation}
We can see that  with these notations, we derive from item $(i)$ in Theorem \ref{Theorem-profile-complex} the following estimates for initial data at $\tau = 0 $ of $U$ and $V_2$
\begin{eqnarray}
\sup_{|\xi| \leq   |\ln(T - t_0(x_0))|^{\frac{1}{4}}} \left| U(x_0, \xi, 0) - f_0(K_0)\right| & \leq & \frac{C}{ 1 + (|\ln(T - t_0(x_0))|^{\frac{1}{4}})} \to 0 \quad \text{ as } x_0 \to 0,\label{condition-initial-K-0-f-0}\\
\sup_{|\xi| \leq  |\ln(T - t_0(x_0))|^{\frac{1}{4}}} \left| V_2(x_0, \xi, 0)- g_0(K_0)\right| &\leq &\frac{C}{ 1 + (|\ln(T - t_0(x_0))|^{ \gamma_1})} \to 0 \quad \text{ as } x_0 \to 0.\label{condition-initial-K-0-g-0}
\end{eqnarray}
where $f_0 (x), g_0 (x)$ are defined as in  \eqref{defini-f-0} and \eqref{defini-g-0-z} respectively, and  $\gamma_1 = \min \left(  \frac{1}{4} ,   \frac{p_1}{2} \right) $. Moreover, using equations \eqref{equation-satisfied-u_1-u_2}, we derive the following equations for $U, V_2$: for all   $\xi \in \mathbb{R}^n, \tau \in\left[ 0, 1 \right)$ 
\begin{eqnarray}
\partial_\tau U &= & \Delta_\xi  U +  U^p,\label{equa-U-x-0-xi-tau}\\
\partial_\tau V_2 &= & \Delta_\xi V_2 + \left|  \ln (T-t_0 (x_0))\right| F_2 (U_1, U_2), \label{equa-V-2-x-0-xi-tau}
\end{eqnarray}
where   $F_2$ is defined in \eqref{defi-mathbb-A-1-2}.

\noindent
 Besides that,  from  \eqref{estima-u-x-0-neq-0}  and \eqref{equa-U-x-0-xi-tau}, we can apply Lemma  \ref{lemma-no-blowup-solution}   to $U$ when $|\xi| \leq |\ln (T - t_0(x_0))|^{\frac{1}{4}}$  and obtain:
\begin{equation}\label{bound-U-xi-tau-x-0}
\sup_{|\xi| \leq \frac{1}{2}|\ln (T -t_0(x_0))|^\frac{1}{4}, \tau \in [0,1) }  |U (x_0, \xi,\tau)|     \leq C. 
\end{equation}
and we  aim at  proving    for $V_2 (x_0, \xi, \tau)$  that
\begin{equation}\label{bound-V-2-xi-tau-x-0}
\sup_{|\xi| \leq  \frac{1}{16}|\ln (T -t_0(x_0))|^\frac{1}{4}, \tau \in [0,1) }  |V_2(x_0, \xi,\tau)|     \leq C.
\end{equation}
+  \textit{The proof for \eqref{bound-V-2-xi-tau-x-0}:}  We first use \eqref{bound-U-xi-tau-x-0} to derive  the following rough estimate:
\begin{equation}\label{estima-V-2-1-step-0}
\sup_{|\xi| \leq \frac{1}{2} |\ln (T -t_0(x_0))|^\frac{1}{4}, \tau \in [0,1) }  |V_2(x_0, \xi,\tau)|     \leq C |\ln(T -t_0(x_0))|.
\end{equation}
We first introduce $\psi $ a cut-off function $\psi \in C^\infty_0 (\mathbb{R}^n), 0 \leq  \psi \leq 1, supp(\psi ) \subset B(0,1), \psi = 1   $ on $B( 0, \frac{1}{2}).$  Introducing
\begin{equation}\label{U-2-1-psi-1-x-0}
\psi_1 (\xi) = \psi \left(  \frac{2\xi}{  |\ln (T -t_0 (x_0))|^{\frac{1}{4}}} \right) \text{ and  }  V_{2,1} (x_0, \xi, \tau) = \psi_1 (\xi) V_2 (x_0,\xi, \tau).
\end{equation}
Then, we deduce from  \eqref{equa-V-2-x-0-xi-tau}  an  equation satisfied by $V_{2,1}$
\begin{equation}\label{equa-V-2-1-x-0}
\partial_\tau  V_{2,1}  =  \Delta_\xi V_{2,1} - 2 \text{ div} (V_2 \nabla \psi_1)  + V_2 \Delta \psi_1 + |\ln (T-t_0(x_0))|\psi_1 F_2 (U_1,U_2) .
\end{equation}
Hence, we can write $V_{2,1} $ with a integral  equation as follows
\begin{equation}\label{equa-integral-V-2-1}
V_{2,1} (\tau)  =   e^{\Delta \tau} (V_{2,1}(0)) +  \int_0^\tau e^{(\tau - \tau')\Delta} \left(  - 2 \text{ div } (V_2 \nabla \psi_1) + V_2 \Delta\psi_1 +  |\ln(T-t_0(x_0))|  \psi_1 F_2(U_1, U_2))(\tau')  \right) d \tau'.
\end{equation}
Besides that, using \eqref{bound-U-xi-tau-x-0} and \eqref{estima-V-2-1-step-0} and  the fact  that 
\begin{eqnarray*}
| \nabla \psi_1|  \leq  \frac{C}{ | \ln (T -t_0(x_0))|^{\frac{1}{4}}}, 
| \Delta \psi_1|  \leq  \frac{C}{ | \ln (T -t_0(x_0))|^{\frac{1}{2}}},
\end{eqnarray*}
we deduce that
\begin{eqnarray*}
\left|   \int_0^\tau e^{(\tau - \tau')\Delta} \left(  - 2 \text{ div } (V_2 \nabla \psi_1) \right) d\tau' \right| &\leq & C \int_{0}^\tau \frac{\| V_2 \nabla \psi_1\|_{L^\infty} (\tau')}{ \sqrt { \tau - \tau'}}  d\tau'       \leq C |\ln (T - t_0 (x_0))|^{\frac{3}{4}},\\
\left|   \int_0^\tau e^{(\tau - \tau')\Delta} \left(  V_2 (\tau') \Delta \psi_1  \right) d\tau' \right| &\leq & C  \int_0^\tau  \| V_2 \Delta \psi_1\|_{\infty}   (\tau') d\tau' \leq C |\ln (T -t_0 (x_0))|^\frac{1}{2}, \\
\left|   \int_0^\tau e^{(\tau - \tau')\Delta} \left(   \psi_1  |\ln (T-t_0(x_0))| F_2(U_1, U_2)(\tau')   \right) d\tau' \right|   &\leq  & C \int_0^\tau \|  |\ln(T-t_0(x_0))|  \psi_1 F_2 (U_1,U_2)\|_{L^\infty} (\tau') d\tau'.
\end{eqnarray*}
   Since  the last term  in    \eqref{equa-integral-V-2-1}     involves   the nonlinear term $F_2(U_1,U_2),$ we need to handle  it differently  from the case  where $p$ is integer: using  the definition \eqref{defi-mathbb-A-1-2}  of   $F_2,$  and \eqref{bound-U-xi-tau-x-0} and    the fact that   $U_1$  is positive,    we write    from  for all   $|\xi| \leq \frac{1}{2} |\ln(T -t_0(x_0))|^\frac{1}{4}, \tau \in [0,1)$  we have
 \begin{eqnarray*}
 |\psi_1  \ln (T-t_0(x_0))  F_2 (U_1, U_2)(\tau)    |  \leq   C \left( U_1^2  + U_2^2 \right)^\frac{p-1}{2} |\psi_1  \ln (T-t_0(x_0))  U_2(\tau)| \leq   C \|V_{2,1}(\tau) \|_{L^\infty}.
 \end{eqnarray*}
Hence, from  \eqref{equa-integral-V-2-1} and the above estimates, we derive
$$ \| V_{2,1}(\tau)\|_{L^\infty} \leq C | \ln (T -t_0 (x_0)) |^{\frac{3}{4}} +  C \int_0^\tau   \|V_{2,1}(\tau')\|_{L^\infty} d \tau'. $$
Thanks to Gronwall Lemma,  we deduce that 
$$ \|V_{2,1} (\tau)\|_{L^\infty}  \leq C |\ln(T -t_0(x_0))|^{\frac{3}{4}}, \forall  \tau \in [0,1),$$
which yields
\begin{equation}\label{estima-V-2-1-step-1}
\sup_{|\xi| \leq \frac{1}{4} |\ln (T -t_0(x_0))|^\frac{1}{4}, \tau \in [0,1) }  |V_2(x_0, \xi,\tau)|     \leq C |\ln(T -t_0(x_0))|^{\frac{3}{4}}.
\end{equation}
We apply  iteratively for 
$$ V_{2,2} (x_0, \xi, \tau) =   \psi_2 (\xi) V_{2} (x_0,\xi, \tau)  \text{ where }  \psi_2 (\xi)  = \psi \left( \frac{4 \xi}{ |\ln(T -t_0 (x_0))|^{\frac{1}{4}}} \right).$$
Similarly, we  deduce that
$$ \sup_{|\xi| \leq \frac{1}{8} |\ln (T -t_0(x_0))|^\frac{1}{4}, \tau \in [0,1) }  |V_2(x_0, \xi,\tau)|     \leq C |\ln(T -t_0(x_0))|^{\frac{1}{2}}.$$
We apply this  process a  finite   number of  steps to  obtain \eqref{bound-V-2-xi-tau-x-0}.  We now come back to our problem, and aim at proving that:  
\begin{eqnarray}
\sup_{|\xi| \leq \frac{1}{16}  |\ln(T - t_0(x_0))|^{\frac{1}{4}}, \tau \in [0,1)} \left| U (x_0, \xi, \tau) - \hat U_{K_0} (\tau) \right| &\leq & \frac{C}{ 1 + |\ln (T- t_0(x_0) )|^{\gamma_2}}, \label{sup-v-xi-tau-apro-1}\\
\sup_{|\xi| \leq \frac{1}{32}|\ln(T - t_0(x_0))|^{\frac{1}{4}}, \tau \in [0,1)} \left| V_2 (x_0, \xi, \tau) - \hat V_{2,K_0} (\tau) \right| &\leq & \frac{C}{ 1 + |\ln (T- t_0(x_0) )|^{\gamma_3}}, \label{sup-V-2-xi-tau-apro-1}
\end{eqnarray}
where  $\gamma_2, \gamma_3$ are positive small enough  and   $( \hat U_{K_0} , \hat V_{2,K_0}) (\tau)  $  is  the  solution of  the following system:
\begin{eqnarray}
\partial_\tau \hat U_{K_0} &=& \hat U_{K_0}^p,\label{ODE-hat-U-K-0}\\
 \partial_\tau \hat V_{2, K_0} &=& p \hat U_{K_0}^{p-1} \hat V_{2,K_0}\label{ODE-hat-U-K-0}. 
\end{eqnarray}
with initial data  at $\tau = 0$
\begin{eqnarray*}
\hat U_{K_0} (0) &=& f_0 (K_0),\\
\hat V_{2,K_0} (0) &=& g_0 (K_0). 
\end{eqnarray*}
given by 
\begin{eqnarray}
\hat U_{K_0} (\tau) &=& \left(  (p-1) (1 - \tau)  + \frac{(p-1)^2 K_0^2}{4 p}\right)^{-\frac{1}{p-1}}  ,\label{defini-hat-U-K-0-tau}\\
\hat V_{2,K_0} (\tau) &=& K_0 ^2 \left( (p-1) (1 - \tau) + \frac{(p-1)^2 K_0^2}{4 p}\right)^{-\frac{p}{p-1}}.  \label{defini-hat-V-2-K-0-tau}
\end{eqnarray}
for all $\tau \in [0,1)$.  The proof of  is cited to Section 5 of Tayachi and Zaag \cite{TZpre15} and, here  we  will use  \eqref{sup-v-xi-tau-apro-1}   to prove    \eqref{sup-V-2-xi-tau-apro-1}.   For the  reader's  convenience, we give it here.  Let us consider 
\begin{equation}\label{defini-mathcal-V-2}
\mathcal{V}_2 = V_2 - \hat V_{2,K_0} (\tau).
\end{equation}
  Using  \eqref{bound-V-2-xi-tau-x-0}, we deduce the following   
\begin{equation}\label{estima-mathcal-V-2}
\sup_{|\xi| \leq \frac{1}{ 16} | \ln (T - t_0(x_0))|^{\frac{1}{4}}, \tau \in [0,1)} | \mathcal{V}_{2}| \leq C.
\end{equation}
 In addition to that, from   \eqref{equa-V-2-x-0-xi-tau}  we write    an equation  on  $\mathcal{V}_2$ as follows:
\begin{equation}\label{equat-mathcal-V-2}
\partial_\tau \mathcal{V}_2 = \Delta \mathcal{V}_2 +  p \hat U_{K_0}^{p-1} \mathcal{V}_2 +  p (U_1^{p-1}  - \hat U_{K_0}^{p-1} ) V_2 + \mathcal{G}_2(x_0, \xi, \tau),
\end{equation}
where 
$$ \mathcal{G}_2 (x_0, \xi,\tau) =   |\ln(T-t_0(x_0))| \left(  F_2 (U_1,U_2)  -  p  U_1^{p-1}  U_2\right).$$
As for  the last term  in  \eqref{equat-mathcal-V-2}, we need  here to carefully handle  this expression, sine it involves  a nonlinear term, which needs a  treatment  different  from the case where  $p$ is integer. 
From the definition   \eqref{defi-mathbb-A-1-2}  of $F_2,$   we have
\begin{eqnarray*}
\left| F_2 (U_1,U_2)  - p  U_1^{p-1} U_2 \right|  & \leq &  \left|    p U_2 \left( (U_1^2  + U_2^2)^{\frac{p-1}{2}}    - U_1^{p-1} \right) \right| \\
&  + &  \left|  (U_1^2  + U_2^2)^{\frac{p}{2}} \left\{    \sin \left(    p  \arcsin \left(  \frac{U_2}{\sqrt{U_1^2 + U_2^2}} \right)    \right)  - \frac{p U_2}{\sqrt{U_1^2  + U_2^2}}\right\} \right|.
\end{eqnarray*} 
And we   deduce  from  \eqref{bound-V-2-xi-tau-x-0} and     \eqref{sup-v-xi-tau-apro-1}    with  $\epsilon_0 > 0$ small  enough that 
$$  \left| F_2 (U_1,U_2)  - p  U_1^{p-1} U_2 \right|   \leq   C |U_2|^3,$$
 Plugging     the above   estimate  and      using   \eqref{defini-V-2-x-0-xi-tau} and \eqref{bound-V-2-xi-tau-x-0}, we have the following    
\begin{equation}\label{estima-mathcal-G-2}
  \sup_{|\xi| \leq  \frac{1}{16} |\ln(T -t_0)|^{\frac{1}{4}}, \tau \in [0,1) } | \mathcal{G}_2 (x_0, \xi, \tau)| \leq   \frac{C}{  | \ln (T -t_0 (x_0))|^2}.
\end{equation}
Introducing 
$$\bar{\mathcal{V} }_2 = \psi_* (\xi) \mathcal{V}_2,$$
where 
$$ \psi_* = \psi \left( \frac{1 6 \xi}{ |\ln (T- t_0 (x_0))|^{\frac{1}{4}}}\right),$$
and $ \psi  $ is the cut-off function  which has been introduced  above. We also note that $\nabla \psi_*, \Delta \psi_*$ satisfy the following estimates
\begin{equation}\label{estima-psi-nabla-xi-psi-x-0}
\| \nabla_\xi  \psi_* \|_{L^\infty} \leq \frac{C}{ |\ln (T- t_0 (x_0))|^{\frac{1}{4}}}  \text{ and }  \| \Delta_\xi  \psi_*\|_{L^\infty} \leq \frac{C}{ |\ln (T- t_0 (x_0))|^{\frac{1}{2}}}.
\end{equation}
In particular,  $\bar{\mathcal{V}}_2$ satisfies
\begin{equation}\label{euqa-bar-mathcal-V-2}
\partial_\tau   \bar {\mathcal{V}}_2  =  \Delta \bar {\mathcal{V}}_2 + p \hat U_{K_0}^{p-1} (\tau)  \bar {\mathcal{V}}_2   - 2 \text{ div } (\mathcal{V}_2 \nabla \psi_*) + \mathcal{V}_2 \Delta \psi_* + p (U_1^{p-1} - \hat U_{K_0}^{p-1}) \psi_* V_2 + \psi_* \mathcal{G}_2,
\end{equation}

By Duhamel principal,  we derive the following integral  equation 
\begin{equation}\label{duhamel-bar-mathcal-V-2}
\bar{ \mathcal{V}}_2 (\tau)  = e^{\tau \Delta} (\bar{ \mathcal{V}}_2 (\tau)  ) + \int_0^\tau  e^{(\tau - \tau')\Delta} \left( p \hat U_{K_0}^{p-1}   \bar {\mathcal{V}}_2   - 2 \text{ div } (\mathcal{V}_2 \nabla \psi_*) + \mathcal{V}_2 \Delta \psi_* + p (U_1^{p-1} - \hat U_{K_0}^{p-1}) \psi_* V_2 + \psi_* \mathcal{G}_2   \right) (\tau') d\tau'.
\end{equation}
Besides that, we use    \eqref{sup-v-xi-tau-apro-1},  \eqref{defini-hat-U-K-0-tau},   \eqref{estima-mathcal-V-2},   \eqref{estima-psi-nabla-xi-psi-x-0},   \eqref{estima-mathcal-G-2} to derive the  following estimates: for all $\tau \in [0,1)$ 
\begin{eqnarray*}
|\hat U_{K_0} (\tau )|  & \leq & C ,\\
\| \mathcal{V}_2 \nabla \psi_* \|_{L^\infty} (\tau)  & \leq  & \frac{C }{ |\ln (T -t_0(x_0))|^{\frac{1}{4}}},\\
\| \mathcal{V}_2 \Delta \psi_* \|_{L^\infty} (\tau)  & \leq  & \frac{C }{ |\ln (T -t_0(x_0))|^{\frac{1}{2}}},\\
\left\|  \left(U_1^{p-1} - \hat U_{K_0}^{p-1} \right) \psi_*   \right\|_{L^\infty} (\tau)& \leq & \frac{C}{ | \ln(T -t_0(x_0))|^{\gamma_2}},  \\
\| \mathcal{G}_2 \psi_*\|_{L^{\infty}} & \leq & \frac{C}{ |\ln (T -t_0(x_0))|^2 }.
\end{eqnarray*}
where $\gamma_2$ given in \eqref{sup-v-xi-tau-apro-1}. Hence, we  derive from the above estimates that:  for all $ 0 \leq  \tau' < \tau  < 1$
\begin{eqnarray*}
|  e^{(\tau - \tau')\Delta}p \hat U_{K_0}^{p-1}   \bar {\mathcal{V}}_2  (\tau') |  & \leq  &C \|\bar {\mathcal{V}}_2  (\tau') \|,\\
| e^{(\tau - \tau')\Delta} (\text{div} (\mathcal{V}_2 \nabla \psi_*)) | &\leq & C \frac{1}{\sqrt{ \tau - \tau'}}  \frac{1}{| \ln (T-  t_0(x_0))|^{\frac{1}{4}}}  ,\\
|e^{(\tau - \tau')\Delta } ( \mathcal{V}_2 \Delta \psi_*)  |   & \leq &   \frac{C}{|\ln(T - t_0(x_0) )|^{\frac{1}{2}} },\\
|e^{(\tau - \tau')\Delta } ( p (U_1^{p-1} - \hat U_{K_0}^{p-1}) \psi_* V_2   )(\tau')  |   & \leq &   \frac{C}{ |\ln (T -t_0(x_0))|^{\gamma_2}},\\
|e^{(\tau - \tau')\Delta }  (\psi_* \mathcal{G}_2  )(\tau')| &\leq & \frac{C}{|\ln (T - t_0(x_0))|}.
\end{eqnarray*}
   Plugging    into   \eqref{duhamel-bar-mathcal-V-2}, we obtain
$$  \|\bar{\mathcal{V}}_2 (\tau)\|_{L^\infty}  \leq  \frac{C}{ |\ln(T -t_0(x_0))|^{\gamma_3}} +  C \int_{0}^\tau  \|\bar{\mathcal{V}}_2 (\tau')\|_{L^\infty}   d \tau' ,$$
where $\gamma_3 = \min (\frac{1}{4}, \gamma_2)$. Then, thanks to Gronwall inequality, we get
$$\| \bar{ \mathcal{V}}_2\|_{L^\infty} \leq \frac{C}{|\ln(T -t_0(x_0))|^{\gamma_3}}.$$
Hence,  \eqref{sup-V-2-xi-tau-apro-1} follows . Finally, we  easily  find the asymptotics  of $u^*$ and $u_2^*$ as follows, thanks to the definition of $U$ and  $V_2$  and   to estimates \eqref{sup-v-xi-tau-apro-1} and   \eqref{sup-V-2-xi-tau-apro-1}:
  \begin{equation}\label{limit-u-start}
u^* (x_0) = \lim_{t \to T} u(x_0, t) = (T- t_0 (x_0))^{- \frac{1}{p-1}} \lim_{\tau \to 1} U (x_0, 0, \tau) \sim (T- t_0 (x_0))^{- \frac{1}{p-1}}  \left(\frac{(p -  1)^2}{ 4 p} K_0^2 \right)^{- \frac{1}{p-1}},
\end{equation} 
and 
\begin{equation}\label{limit-im-u-*}
u_2^* (0)= \lim_{t \to T} u_2(x_0, t) = \frac{(T- t_0 (x_0))^{- \frac{1}{p-1}}}{|\ln (T- t_0 (x_0))|} \lim_{\tau \to 1} V_2 (x_0, 0, \tau) \sim \frac{(T- t_0 (x_0))^{- \frac{1}{p-1}}}{|\ln (T- t_0 (x_0))|} \left(\frac{(p -  1)^2}{ 4 p} \right)^{- \frac{p}{p-1}} (K_0^2)^{- \frac{1}{p-1}}.
\end{equation}
Using the relation \eqref{x-0-leq-delta-t-x-0}, we find that
\begin{equation}\label{asymp-T-t-0-x-0} 
  T  - t_0(x_0) \sim \frac{|x_0|^2}{ 2 K_0^2 |\ln |x_0||} \text{ and  }\ln(T - t_0(x_0)) \sim 2 \ln (|x_0|), \quad \text{ as } x_0 \to 0.
  \end{equation}
Plugging  \eqref{asymp-T-t-0-x-0}  into   \eqref{limit-u-start} and   \eqref{limit-im-u-*}, we get the conclusion of  item $(ii)$ of Theorem \ref{Theorem-profile-complex}. 

This concludes the proof of Theorem \ref{Theorem-profile-complex} assuming that Proposition \ref{pro-reduction- to-finit-dimensional} holds. Naturally, we need to prove this propostion  on order  to finish the argument. This will be done  in the next section.

 \section{The proof of Proposition \ref{pro-reduction- to-finit-dimensional}}\label{the proof of proposion-reduction-finite-dimensional}
  This section is devoted to the proof of Proposition \ref{pro-reduction- to-finit-dimensional},
 which is considered   as   central   in    our analysis. We would like  to    proceed  into two parts:
 
 +   In the first part,    we     derive     \textit{a priori estimates}  on  $u$  in  every  component $P_j(t)$  where   $j =  1,2 $ or  $3.$
 
 +    In the second part, we  use    the priori  estimates   to    derive    new bounds    which    improve all the bounds  in Definition   \ref{defini-shrinking-set}, except  for   the   non-negative   modes $( q_{1,0},  (q_{1,j} )_{j \leq n }  , q_{2,0},  (q_{2,j})_{j\leq n}, (q_{2,j,k} )_{j,k \leq n} )$. 

\noindent 
 This means that the problem is reduced to the control of these   components, which is the conclusion of item $(i)$ of Proposition \ref{pro-reduction- to-finit-dimensional}. As for  item $(ii)$ of Proposition \ref{pro-reduction- to-finit-dimensional} is just a direct consequence of the dynamics of these modes.

 \subsection{ A priori  estimates  in $P_1(t), P_2 (t)$ and  $P_3(t)$ }
In this section,    we aim at  giving a \textit{ priori estimates}     to  the solution $u(t)$ on  $P_1(t), P_2 (t)$ and  $P_3 (t)$ which  are   important to get the conclusion of Proposition  \ref{pro-reduction- to-finit-dimensional}:

+ \textit{  A priori    estimates  in  $P_1 (t)$:}  Here  we   give  in the following proposition    some  estimates relevant to the region  $P_1 (t):$  
 \begin{proposition}\label{prop-dynamic-q-1-2-alpha-beta} 
For all $A,K_0 \geq 1$ and $\epsilon_0 > 0, \alpha_0 > 0, \delta_0 >0, \eta_0 > 0,$ there exists $T_4 (K_0, A, \epsilon_0) $ such that  for all $ T \leq T_4,$   if  $u$  is   a  solution of equation \eqref{equ:problem}  on $[0,t_1]$ for some   $t_1 \in [0,T)$ and     $u  \in S(T, K_0,\alpha_0,  \epsilon_0, A, \delta_0, \eta_0,t) $ for all $ t \in \left[ 0, t_1\right]$,  then, the  following holds: for all $s_0 \leq  \tau \leq s \leq s_1$ with $s_1 = \ln (T -t_1),$ we have:
\begin{itemize}
\item[$(i)$] (\textit{ODE satisfied  by the positive modes})  For all $j \in \{1,n\}$ we have
\begin{equation}\label{ODE-q-1-0-1}
\left| q_{1,0}' (s) -  q_{1,0} (s) \right| + \left| q_{1,j}' (s) -  \frac{1}{2}  
q_{1,j} (s) \right|  \leq \frac{C}{s^2},\forall j\leq n.
\end{equation}
\begin{equation}\label{ODE-Phi-0-1}
\left| q_{2,0}' (s) -  q_{2,0}(s)  \right| +  \left| q_{2,j}' (s) -  \frac{1}{2}   q_{2,j}(s)  \right|  \leq \frac{C }{s^{p_1 +2}}, \forall j \leq n.
\end{equation}
\item[$(ii)$] (\text{ODE satisfied by the null modes}) For all $j,k \leq n$
\begin{equation}\label{ODE-q-1-2}
 \left| q_{1,j,k}' (s) + \frac{2}{s} q_{1,j,k} (s) \right|  \leq \frac{C A}{s^3},
\end{equation}
\begin{equation}\label{ODE-Phi-2}
\left| q_{2,j,k} '(s) + \frac{2}{s} q_{2,j,k}(s)\right| \leq \frac{C A^2 \ln s}{s^{p_1 + 3}}.
\end{equation}
\item[$(iii)$] (\textit{Control of  the  negative part})
\begin{equation}\label{Estimata-q-1--}
\left\| \frac{q_{1,-}(.,s)}{1 + |y|^{3}}\right\|_{L^\infty} \leq  Ce^{- \frac{s - \tau }{2}} \left\| \frac{q_{1,-}(.,\tau)}{1 + |y|^{3}}\right\|_{L^\infty}  +  C \frac{ e^{- (s - \tau )^2}}{s^{\frac{3}{2}}}  \|q_{1,e}(.,\tau)\|_{L^\infty} + \frac{C (1 + s -\tau)}{s^2}, 
\end{equation}
\begin{equation}\label{Estimat-q-2--}
\left\| \frac{q_{2,-}(.,s)}{1 + |y|^{3}}\right\|_{L^\infty} \leq  Ce^{- \frac{s - \tau }{2}} \left\| \frac{q_{2,-}(.,\tau)}{1 + |y|^{3}}\right\|_{L^\infty}  +  C \frac{ e^{- (s - \tau )^2}}{s^{\frac{3}{2}}}  \|q_{2,e}(.,\tau)\|_{L^\infty} + \frac{C (1 + s -\tau)}{s^{\frac{p_1 + 5}{2}}}.
\end{equation}
\item[$(v)$] (\textit{Control  of the outer part}) 
\begin{equation}\label{outer-Q-e}
\left\| q_{1,e} (.,s) \right\|_{L^\infty} \leq  C e^{- \frac{(s -\tau)}{p} } \|q_{1,e}(.,\tau)\|_{L^\infty}  + C e^{s - \tau }s^{\frac{3}{2}}  \left\| \frac{q_{1,-}(.,\tau)}{ 1 + |y|^3} \right\|_{L^\infty} +  \frac{C (1 + s - \tau)e^{s - \tau}}{\sqrt s},
\end{equation}
\begin{equation}\label{outer-Phi-e}
\left\| q_{2,e} (.,s) \right\|_{L^\infty} \leq  C e^{- \frac{(s -\tau)}{p} } \|q_{2,e}(.,\tau)\|_{L^\infty}  + C e^{s - \tau }s^{\frac{3}{2}}  \left\| \frac{q_{2,-}(.,\tau)}{ 1 + |y|^3} \right\|_{L^\infty} +  \frac{C (1 + s - \tau)e^{s - \tau}}{s^{\frac{p_1 +2}{2}}}.
\end{equation}
\end{itemize} 
\end{proposition}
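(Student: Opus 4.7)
The plan is to write equation \eqref{equation-satisfied-by-q-1-2} in Duhamel form based on the main linear part $\mathcal{L}+V$ (diagonal on the two components $q_1, q_2$), and to treat the small potentials $V_{j,k}$, the quadratic terms $B_1,B_2$ and the forcing terms $R_1,R_2$ as source terms. The source-term bounds recalled just before the statement (namely $\|V_{1,1}\|_\infty+\|V_{2,2}\|_\infty\lesssim s^{-2}$, $\|V_{1,2}\|_\infty+\|V_{2,1}\|_\infty\lesssim s^{-1}$, $\|B_1\|_\infty\lesssim A^{4}s^{-p/2}$, $\|B_2\|_\infty\lesssim A^{2}s^{-1-\min((p-1)/4,1/2)}$, $\|R_1\|_\infty\lesssim s^{-1}$, $\|R_2\|_\infty\lesssim s^{-2}$) are valid on $[0,t_1]$ thanks to the assumption $u\in S(t)$, together with Lemma \ref{lemma-analysis-V-A} and the positivity Lemma \ref{lemma-positive-real-part} (which makes the non-integer nonlinearity $u^p$ smooth along the trajectory). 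Then I project onto the five pieces of the decomposition \eqref{decom-5-parts}.

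\textbf{Positive and null modes (items $(i)$--$(ii)$).} For $|\beta|\le 2$, I multiply each scalar equation by the rescaled Hermite function $k_\beta$ and integrate against $\rho$, using the spectral identity $\mathcal L h_\beta=(1-|\beta|/2)h_\beta$. For $|\beta|=0,1$, the eigenvalues $1$ and $1/2$ produce the leading drift $q_{\cdot,0}'-q_{\cdot,0}$, $q_{\cdot,j}'-\tfrac12 q_{\cdot,j}$, while $\mathbb P_\beta(Vq_\cdot)$ is subleading inside the blowup region (the potential $V$ being of order $s^{-1}$ in $L^2_\rho$); the remaining contributions from $V_{j,k}q$, $B_j$, $R_j$ give the stated $C/s^2$ and $C/s^{p_1+2}$ right-hand sides. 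For $|\beta|=2$ the eigenvalue $0$ of $\mathcal L$ is shifted by the leading-order contribution of $V$, which produces exactly the $-\tfrac{2}{s}q_{\cdot,j,k}$ term; this is a classical computation going back to \cite{BKnon94,MZdm97} and already performed in our integer case \cite{DU2017}. The remainders, estimated through Lemma \ref{lemma-analysis-V-A}, combine with $V_{j,k}q$, $B_j$, $R_j$ into the bounds $CA/s^3$ and $CA^2\ln s/s^{p_1+3}$.

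\textbf{Negative and outer parts (items $(iii)$ and $(v)$).} Here I use the kernel representation of the semigroup generated by $\mathcal L + V$ on each of the two pieces, in the spirit of \cite{MZdm97,DU2017}. The kernel acting on the negative part decays like $e^{-(s-\tau)/2}$ (coming from the first strictly negative eigenvalue $-1/2$), while on the outer part it decays like $e^{-(s-\tau)/p}$ (coming from $V\to -p/(p-1)$ and the fact that $1$ is the top of $\mathrm{spec}(\mathcal{L})$). The cross-terms $e^{(s-\tau)}s^{3/2}\|\cdot\|$ and $e^{-(s-\tau)^2}/s^{3/2}\|\cdot\|$ arise through the cut-off $\chi$ defined in \eqref{def-chi}, whose time derivative and Laplacian are controlled in the usual way. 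Adding the Duhamel contribution of $V_{j,k}q$, $B_j(q_1,q_2)$ and $R_j$ then yields the inhomogeneous terms $C(1+s-\tau)/s^2$, $C(1+s-\tau)/s^{(p_1+5)/2}$, $C(1+s-\tau)e^{s-\tau}/\sqrt s$ and $C(1+s-\tau)e^{s-\tau}/s^{(p_1+2)/2}$.

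\textbf{Main obstacle.} The delicate point is the control of the quadratic term $B_2(q_1,q_2)$ acting on the imaginary component $q_2$, precisely in its projections onto the negative and outer parts. Since Lemma \ref{lemma-quadratic-term-B-1-2} only gives $\|B_2\|_\infty\lesssim A^{2}s^{-1-\min((p-1)/4,1/2)}$, it is exactly the assumption $p_1<\min((p-1)/4,1/2)$ that ensures this contribution fits below the targets $A^2 s^{-(p_1+5)/2}$ and $A^3 s^{-(p_1+2)/2}$ required in \eqref{Estimat-q-2--} and \eqref{outer-Phi-e}; the analogous accounting for $B_1$ is much easier since $A^4 s^{-p/2}$ is negligible against the corresponding bounds for $q_1$. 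The rest is essentially a careful but routine adaptation of the argument in \cite{DU2017}, the only genuinely new ingredient being the use of Lemma \ref{lemma-positive-real-part} to guarantee smoothness of $F_1,F_2$ along the trajectory, so that all projections and differentiations performed on $B_1, B_2$ are justified for non-integer $p$.
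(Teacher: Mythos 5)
Your proposal follows essentially the same route as the paper: the paper's proof simply observes that $u\in S(t)$ gives $(q_1,q_2)(s)\in V_A(s)$ and, via Lemma \ref{lemma-positive-real-part}, the positivity $q_1+\Phi_1\geq \frac12 e^{-s/(p-1)}$ making the non-integer nonlinearity well-behaved, then invokes Lemmas \ref{lemmas-potentials}, \ref{lemma-quadratic-term-B-1-2} and \ref{lemma-rest-term-R-1-2} to show the potentials, quadratic and rest terms behave as in the integer case, and refers to the projection/semigroup argument of Lemma 4.2 in \cite{DU2017} for the dynamics of $\mathcal L+V$ — exactly the skeleton you describe, including the role of $p_1<\min\left(\frac{p-1}{4},\frac12\right)$ in absorbing $B_2$ into the $q_2$ bounds. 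So the proposal is correct and matches the paper's argument.
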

\begin{proof}
 By using the fact that    $u(t)  \in S(T, K_0, \alpha_0, \epsilon_0,  A, \delta_0, \eta_0,t) $ for all $ t \in \left[0,  t_1\right],$ we  derive   by the definition that  $(q_1,q_2)(s) \in V_A(s)$ for all $s \in [s_0, s_1]$ and $(q_1,q_2)(s)$ satisfies equation \eqref{equation-satisfied-by-q-1-2}. In addition to that, we deduce also  the fact that  $ q_1 (s) +  \Phi_1 (s)  \geq \frac{e^{-\frac{s}{p-1}}}{2}$ for all $s \in [s_0, s_1]$ (see Lemma  \ref{lemma-positive-real-part}). Although   the  potential  terms  $V_{j,k}$, the quadratic terms $B_1, B_2$ and the rest terms $R_1, R_2$ (see equation \eqref{equation-satisfied-by-q-1-2})  are different from the case where  $p$  is integer, they  behavior  as  in that case (see Lemmas \ref{lemmas-potentials}, \ref{lemma-quadratic-term-B-1-2}, \ref{lemma-rest-term-R-1-2} below).    Hence, the result   is   derived      from   the projection of equation \eqref{equation-satisfied-by-q-1-2} and the dynamics of  the operator $\mathcal{L} + V$.  For   that reason,  we  kindly   refer    the  the reader  to  the proof of Lemma 4.2   given in \cite{DU2017}  for the case  where  $p$ is  integer.   
\end{proof}

+ \textit{ A priori estimates   in  $P_2(t)$:}

In this step, we     aim at  proving the     following lemma    which gives     a priori estimates  on     $u$   in $P_2(t)$. The following is our main result:
\begin{lemma}\label{lema-priori-estima-P-2} 
 For all  $K_0 \geq 1 , \delta_1 \leq 1, \xi_0 \geq 1,  \Lambda_5 > 0, \lambda_5 > 0,$     the following  holds: If   $U (\xi, \tau)$  a solution of equation    \eqref{equa-U-x-0-xi-tau}, for all  $\xi$ and   $\tau \in [\tau_1, \tau_2]$ with  $0 \leq \tau_1 \leq  \tau_2  \leq 1,$  such that   for all $ \tau \in [ \tau_1, \tau_2]$  and for all  $\xi  \in [ -2  \xi_0, 2 \xi_0],$ we have
 \begin{equation}\label{condition-priori-P-2-t}
   |  U (\xi, \tau) |  \leq \Lambda_5    \text{ and }    \text{ Re}\left( U(\xi, \tau )\right)  \geq   \lambda_5     \text{  and   }     \left|  U(\xi, \tau_1 )   -  \hat  U_{K_0} (\tau_1)    \right|  \leq \delta_1,
\end{equation}   
then,  there exists  $\epsilon = \epsilon(K_0,   \Lambda_5, \lambda_5,     \delta_1, \xi_0)$  such  that   for all  $\xi \in [- \xi_0, \xi_0] $ and for all $\tau \in [\tau_1, \tau_2]$ we have 
$$ \left|   U (\xi, \tau)   - \hat U (\tau)\right|\leq  \epsilon, $$ 
where   $\hat U_{K_0}(\tau)$ is given  \eqref{solution-ODE-hat-U}. in  particular,  $\epsilon(K_0,     \Lambda_5, \lambda_5,   \delta_1, \xi_0) \to  0 $ as $( \delta_1, \xi_0) \to (0, + \infty)$.
\end{lemma}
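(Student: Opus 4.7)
The plan is to argue by contradiction via compactness, using uniqueness of bounded solutions to the limiting Cauchy problem. Suppose the conclusion fails: then, holding $K_0, \Lambda_5, \lambda_5$ fixed, there exist $\epsilon_0 > 0$, sequences $\delta_1^{(n)} \to 0$ and $\xi_0^{(n)} \to +\infty$, times $0 \leq \tau_1^{(n)} \leq \tau_2^{(n)} \leq 1$, solutions $U^{(n)}$ of \eqref{equa-U-x-0-xi-tau} on $[-2\xi_0^{(n)}, 2\xi_0^{(n)}] \times [\tau_1^{(n)}, \tau_2^{(n)}]$ obeying the bounds in \eqref{condition-priori-P-2-t} with $\delta_1 = \delta_1^{(n)}$, together with points $\tilde\xi^{(n)} \in [-\xi_0^{(n)}, \xi_0^{(n)}]$ and $\tilde\tau^{(n)} \in [\tau_1^{(n)}, \tau_2^{(n)}]$ such that $|U^{(n)}(\tilde\xi^{(n)}, \tilde\tau^{(n)}) - \hat U_{K_0}(\tilde\tau^{(n)})| \geq \epsilon_0$.

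I would first translate by setting $W^{(n)}(\xi, \tau) = U^{(n)}(\xi + \tilde\xi^{(n)}, \tau)$; since $\tilde\xi^{(n)} \in [-\xi_0^{(n)}, \xi_0^{(n)}]$, the translated domain contains $[-\xi_0^{(n)}, \xi_0^{(n)}] \times [\tau_1^{(n)}, \tau_2^{(n)}]$, which exhausts $\mathbb{R}^n \times [\tau_1^\infty, \tau_2^\infty]$ after extracting a subsequence so that $(\tau_1^{(n)}, \tilde\tau^{(n)}, \tau_2^{(n)}) \to (\tau_1^\infty, \tilde\tau^\infty, \tau_2^\infty)$. Because $|W^{(n)}| \leq \Lambda_5$ and $\mathrm{Re}(W^{(n)}) \geq \lambda_5 > 0$, the principal branch $z \mapsto z^p$ is holomorphic and Lipschitz on the compact region containing the range of $W^{(n)}$. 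Standard interior parabolic regularity then produces uniform $C^{2,\alpha}_{\mathrm{loc}}$ bounds on compact subsets of $\mathbb{R}^n \times (\tau_1^\infty, \tau_2^\infty]$, and a diagonal extraction yields $W^{(n)} \to W^\infty$ locally uniformly, with $W^\infty$ solving $\partial_\tau W^\infty = \Delta W^\infty + (W^\infty)^p$.

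By the hypothesis $\|W^{(n)}(\cdot,\tau_1^{(n)}) - \hat U_{K_0}(\tau_1^{(n)})\|_{L^\infty} \leq \delta_1^{(n)} \to 0$, the initial trace of $W^\infty$ at $\tau = \tau_1^\infty$ is the constant $\hat U_{K_0}(\tau_1^\infty)$. Since bounded solutions of this Cauchy problem on $\mathbb{R}^n$ with nonlinearity Lipschitz on the range $\{|z| \leq \Lambda_5,\, \mathrm{Re}(z) \geq \lambda_5\}$ are unique (Gronwall on the Duhamel representation), we must have $W^\infty(\xi, \tau) \equiv \hat U_{K_0}(\tau)$. Evaluating at $(0, \tilde\tau^\infty)$ contradicts $|W^{(n)}(0, \tilde\tau^{(n)}) - \hat U_{K_0}(\tilde\tau^{(n)})| \geq \epsilon_0$; the degenerate case $\tilde\tau^\infty = \tau_1^\infty$ is handled directly by combining $\delta_1^{(n)} \to 0$ with the uniform $L^\infty$ bound on $\partial_\tau U^{(n)}$ obtained from the equation.

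The main obstacle I anticipate is the uniform passage to the limit up to the initial slice: one only has $L^\infty$ control of the initial data, so Schauder theory does not apply up to $\tau = \tau_1^\infty$. I would circumvent this by first establishing locally uniform convergence on $[\tau_1^\infty + \eta, \tau_2^\infty]$ for every $\eta > 0$ and separately controlling a thin boundary layer $[\tau_1^{(n)}, \tau_1^{(n)} + \eta]$ via the integrated form of the equation and the uniform bound on the nonlinearity, then letting $\eta \to 0$. This argument relies essentially on the positivity condition $\mathrm{Re}(U) \geq \lambda_5$, which keeps us at a safe distance from the singularity of $z^p$ and enables the use of classical parabolic theory despite $p$ being non-integer.
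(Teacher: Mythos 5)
Your strategy --- compactness and contradiction, followed by identification of the limit through uniqueness of bounded solutions of the limiting Cauchy problem --- is genuinely different from the paper's proof, which is a short direct estimate: one sets $V_1=\psi_1\,(U-\hat U_{K_0})$ with a cut-off $\psi_1$ supported in $\{|\xi|\le 2\xi_0\}$, writes the Duhamel formula \eqref{equa-intergral-equa-U}, bounds the commutator terms by $C(\Lambda_5)/\xi_0$ and the nonlinear difference by $C(K_0,\Lambda_5,\lambda_5)\,\|V_1\|_{L^\infty}$ (this is where $\text{Re}\,U\ge\lambda_5$ is used, exactly as in your argument), and concludes by Gronwall that $|U-\hat U_{K_0}|\le C(K_0,\Lambda_5,\lambda_5)\bigl(\delta_1+\xi_0^{-1}\bigr)$ on $[-\xi_0,\xi_0]\times[\tau_1,\tau_2]$. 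That gives an explicit, quantitative modulus $\epsilon$, uniformly in $\tau_1,\tau_2$, with no extraction of subsequences; your scheme, if completed, would only give a qualitative modulus, which is still enough for the way the lemma is used later.

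However, as written your argument has a genuine gap at the initial slice, and it sits precisely where the paper's proof does its work. From the hypotheses you only control $U^{(n)}$ in $L^\infty$ at $\tau=\tau_1^{(n)}$, so interior parabolic estimates for $\Delta U^{(n)}$ (hence for $\partial_\tau U^{(n)}$) degenerate like $(\tau-\tau_1^{(n)})^{-1}$ as $\tau\downarrow\tau_1^{(n)}$; there is no uniform $L^\infty$ bound on $\partial_\tau U^{(n)}$ up to the initial time, contrary to what you invoke for the degenerate case $\tilde\tau^\infty=\tau_1^\infty$ (which also covers the collapse $\tau_2^\infty=\tau_1^\infty$, where the interior compactness step is empty). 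The same issue undermines the identification of the initial trace of $W^\infty$ with the constant $\hat U_{K_0}(\tau_1^\infty)$: since your convergence is only locally uniform on $\mathbb{R}^n\times(\tau_1^\infty,\tau_2^\infty]$, you need a uniform-in-$n$ continuity estimate at the initial time before the uniqueness/Gronwall step can bite. Your proposed repair, controlling a thin layer $[\tau_1^{(n)},\tau_1^{(n)}+\eta]$ by ``the integrated form of the equation,'' cannot be applied as stated either: $U^{(n)}$ solves \eqref{equa-U-x-0-xi-tau} only on $\{|\xi|\le 2\xi_0^{(n)}\}$ and you have no boundary information, so there is no global Duhamel representation; you must first localize with a spatial cut-off and estimate the resulting divergence and commutator terms of size $O(1/\xi_0^{(n)})$ --- that is, run exactly the localized Duhamel--Gronwall computation of the paper on the layer. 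Once that computation is available it already proves the lemma on all of $[\tau_1,\tau_2]$, and the compactness scaffolding becomes superfluous; so the skeleton is repairable, but the repair is essentially the paper's direct proof.
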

\begin{proof}
We  introduce  $\psi $ as   a  cut-off  function  in $C^\infty_0 (\mathbb{R})$  which    satisfies  the   following: 
$$    \psi(x) = 0 \text{ if } |x| \geq 2 ,  |\psi(x)| \leq 1  \text{ for  all } x \text{ and }  \psi(x)  = 1  \text{ for all } |x| \leq 1, $$  
and we also define  $\psi_1 $  as follows
$$  \psi_1 (\xi)  = \psi \left(\frac{|\xi|}{\xi_0} \right).$$
Then,    we have $\psi_1  \in C^\infty_0 (\mathbb{R}^n),$ and        $\text{supp}(\psi_1) \subset   \{  |\xi|  \text{ such that }  |\xi| \leq  2 \xi_0\}$ and  $\psi_1 (\xi) = 1$ for all  $|\xi|  \leq \xi_0.$
In addtition  to  that,   we let    
$$V_1 (\xi, \tau) =  \psi _1(\xi) \left( U (\xi, \tau) -  \hat U_{K_0} (\tau) \right),  \forall  \tau \in [\tau_1, \tau_2] , \xi   \in \mathbb{R}^n.$$
Thanks to equation \eqref{equa-U-x-0-xi-tau},  we derive  that  $V_1$ satisfies the following equation:
\begin{equation}\label{equa-V-1-priori-P-2}
\partial_\tau V_1   = \Delta_\xi V_1   -   2 \text{ div } ( U \nabla \psi_1 )  +   U \Delta \psi_1   +  \psi_1 (\xi)\left( U^p  - \hat U^p  \right).
\end{equation}
Therefore,    we  can  write  $V_1 (\xi, \tau)$ under  the  following  intergral equation  
\begin{equation}\label{equa-intergral-equa-U}
V_1(\tau )  =  e^{( \tau - \tau_1) \Delta} (V_1(\tau_1))    + \int_{\tau_1}^{\tau} e^{ (\tau- \tau') \Delta} \left(  -   2 \text{ div } ( U \nabla \psi_1 )  +   U \Delta \psi_1   +  \psi_1\left( U^p  - \hat U^p  \right) \right)(\tau') d \tau'.
\end{equation}
 In addition to that, we have the  following  fact from  \eqref{condition-priori-P-2-t} (in particular the estimate   $\text{Re}(U(\xi, \tau))  \geq \lambda_5$ in   \eqref{condition-priori-P-2-t}  is  crucial for the $4^{th}$ term   in \eqref{equa-intergral-equa-U}):  for all $\tau \in  [\tau_1, \tau_2]$
\begin{eqnarray*}
\| V_1(\tau_1)  \|_{L^\infty}  &\leq & \delta_1,\\
\left\|    U \nabla \psi_1   \right\|_{L^\infty}(\tau)   &\leq & \frac{C(\Lambda_5)}{ \xi_0},\\
 \left\|     U \Delta  \psi_1 \right\|_{L^\infty}(\tau)  & \leq &   \frac{C(\Lambda_5)}{ \xi_0^2},\\
 \left\|   \psi_1 (U^p - \hat U^p)    \right\|_{L^\infty}(\tau) &  \leq  & C(K_0, \Lambda_5, \lambda_5) \| V_1\|_{L^\infty} (\tau),
\end{eqnarray*}
which yields  when    $ \tau_1  \leq  \tau' <   \tau \leq  \tau_2,$
\begin{eqnarray*}
\left\|  e^{(\tau - \tau_1 ) \Delta} (V_1(\tau_1)) \right\|  &\leq &  \delta_1,\\
\left\|   e^{(\tau - \tau')\Delta} (\text{div } ( U \nabla \psi_1  )(\tau')) \right\|_{L^\infty}  &\leq & \frac{C(\Lambda_5)}{ \xi_0} \frac{1}{ \sqrt{\tau - \tau'}},\\
 \left\|   e^{(\tau - \tau')\Delta} (  U \Delta  \psi_1(\tau') ) \right\|_{L^\infty}  & \leq &   \frac{C(\Lambda_5)}{ \xi_0^2},\\
 \left\|   e^{(\tau - \tau')\Delta} (\psi_1 (U^p - \hat U^p)(\tau')  )  \right\|_{L^\infty} &  \leq  & C(K_0, \Lambda_5, \lambda_5) \| V_1\|_{L^\infty} (\tau').
\end{eqnarray*}
Plugging  into  \eqref{equa-intergral-equa-U}, we have    for all  $\tau \in [\tau_1, \tau_2]$ 
$$ \left\| V_1 (\tau)\right\|_{L^\infty} \leq  C(K_0, \Lambda_5, \lambda_5) \left(  \delta_1  +   \frac{1}{\xi_0}  \right)  + C(K_0, \Lambda_5, \lambda_5)\int_{\tau_1}^\tau   \left\|  V_1 (\tau')\right\|_{L^\infty} d \tau' .$$
Thanks to Gronwall lemma, we obtain  the following 
$$ \| V_1 (\tau)\|_{L^\infty}  \leq  C(K_0, \Lambda_5, \lambda_5)\left( \delta_1  + \frac{1}{ \xi_0} \right) , \forall  \tau \in [\tau_1, \tau_2].$$
Since $V_1(\tau)   =  U(\tau)  -  \hat U(\tau)$ for all  $\xi \in [- \xi_0,  \xi_0]$ and for all  $\tau \in [\tau_1, \tau_2],$   this concludes  our lemma.  
\end{proof}

+ \textit{ A proiori estimates in   $P_3(t)$:} We aim at  proving the following lemma which gives   a priori estimates  on  $u$ in  $P_3(t).$
\begin{lemma}[A priori estimates in  $P_3(t)$]\label{lemma-control-P-3}
For all $K_0 \geq 1,   A \geq 1,    \eta > 0, \epsilon_0 > 0, \sigma \geq 1 $   and $|d_1|_{\infty} , |d_2|_{\infty} \leq 2 ,$   there exists $T_{6}( K_0, A,  \epsilon_0,\eta, \sigma ) >0, $ such that  for all  $T \leq  T_6$  the following holds:  if $u$ is a solution of  equation \eqref{equ:problem}  for all $t \in [0, t_*]$  for  some $t_* \in [0, T)$  with  the initial data $u(0)  = u_{K_0, A, d_1,d_2} (0)$  (see Definition \ref{initial-data-bar-u})  and  
\begin{equation}\label{condition-priori-P-3}
|u(x,t)|  \leq   \sigma, \forall    |x| \in   \left[\frac{\epsilon_0}{8}, + \infty \right),  t\in [0,t_*],
\end{equation}
then,   
$$ | u(x,t) - u(x,0)| \leq \eta, \forall |x| \geq \frac{\epsilon_0}{4}, t \in [0, t_*]. $$
\end{lemma}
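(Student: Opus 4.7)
The strategy is to use Duhamel's formula together with a spatial cut-off that separates the near-origin region (where $u$ may be very large) from the exterior $\{|y| \geq \epsilon_0/8\}$, on which the hypothesis $|u| \leq \sigma$ gives direct control. The decisive mechanism is the Gaussian decay of the heat kernel at spatial separation $\sim \epsilon_0$ over time $t \leq T$ small: a source supported in $\{|y| \leq 3\epsilon_0/16\}$ evaluated at $|x| \geq \epsilon_0/4$ contributes a factor $e^{-c\epsilon_0^2/T}$, which beats any polynomial-in-$1/T$ growth of $\|u(t)\|_{L^\infty}$.

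I would begin by setting up the data. For $T$ small enough that $2\sqrt{T}|\ln T| \leq \epsilon_0/16$, the cut-off $\chi_1$ in Definition \ref{initial-data-bar-u} is supported in $\{|x| \leq \epsilon_0/16\}$, so $u(x,0) = U^*(x) + 1$ for all $|x| \geq \epsilon_0/16$, with bounded $C^2$-norm depending on $\epsilon_0$. Globally $\|u(0)\|_{L^\infty} \leq C(K_0,A)\, T^{-1/(p-1)}$ by inspection of Definition \ref{initial-data-bar-u}. A preliminary but crucial step is the uniform bound
\begin{equation*}
M := \sup_{t \in [0, t_*]} \|u(\cdot, t)\|_{L^\infty(\mathbb{R}^n)} \leq \tilde C(K_0, A)\, T^{-1/(p-1)},
\end{equation*}
obtained from the Cauchy theory of Appendix \ref{appendix-Cauchy-problem} on subintervals of length $\sim M_0^{-(p-1)} \sim T$, iterated by using the exterior hypothesis $|u| \leq \sigma$ on $P_3$ to continue past each step.

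Next, fix a smooth cut-off $\psi \in C_c^\infty(\mathbb{R}^n)$ with $\psi \equiv 1$ on $\{|y| \leq \epsilon_0/8\}$ and $\psi \equiv 0$ on $\{|y| \geq 3\epsilon_0/16\}$, and write
\begin{equation*}
u(x,t) - u(x,0) = (e^{t\Delta} - I) u(\cdot, 0)(x) + \int_0^t e^{(t - s)\Delta}\bigl(u(\cdot, s)^p\bigr)(x)\, ds,
\end{equation*}
decomposing each integrand as $f = \psi f + (1-\psi) f$. The outer pieces $(1-\psi)u(\cdot,0)$ and $(1-\psi)u(\cdot, s)^p$ are pointwise bounded by $C(\epsilon_0)$ and $\sigma^p$ respectively (on their support $|y| \geq \epsilon_0/8$ the initial datum equals $U^* + 1$ with bounded Laplacian, and $|u| \leq \sigma$); the identity $e^{t\Delta} g - g = \int_0^t e^{\tau\Delta}\Delta g\, d\tau$ combined with the $L^\infty$ contraction of $e^{\tau\Delta}$ gives outer contributions bounded by $CT$, hence $\leq \eta/2$ for $T$ small. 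For the inner pieces, supported in $\{|y| \leq 3\epsilon_0/16\}$ and evaluated at $|x| \geq \epsilon_0/4$, we have $|x-y| \geq \epsilon_0/16$, so
\begin{equation*}
|e^{t\Delta} h(x)| \leq C(n, \epsilon_0)\,\|h\|_{L^\infty}\, t^{-n/2} e^{-\epsilon_0^2/(1024 t)} \leq C(n, \epsilon_0)\,\|h\|_{L^\infty}\, e^{-\epsilon_0^2/(2048 T)}
\end{equation*}
uniformly in $t \in (0, T]$ for $T$ small (the polynomial factor being absorbed by half the exponent). Applied with $\|h\|_{L^\infty} \leq M$ or $M^p$ and integrated in time, the total inner contribution is bounded by $C (M + M^p)\, e^{-\epsilon_0^2/(2048 T)} \leq C\, T^{-p/(p-1)} e^{-\epsilon_0^2/(2048 T)}$, which tends to $0$ as $T \to 0$ and is therefore $\leq \eta/2$ for $T \leq T_6$ small enough. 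Summing the outer and inner bounds yields $|u(x,t) - u(x,0)| \leq \eta$.

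The main obstacle I anticipate is establishing the uniform $L^\infty$ bound $M$ across all of $[0, t_*]$: the local Cauchy theory only gives such a bound on subintervals of length $\sim T$, and extending it to $[0, t_*]$ with $t_*$ possibly close to $T$ requires a continuation argument that partially feeds back the very heat-kernel/exterior-bound estimate asserted by this lemma (in the actual application inside Proposition \ref{pro-reduction- to-finit-dimensional}, one may equivalently invoke Lemma \ref{lemma-analysis-V-A} together with the trapping $u \in S(t)$ to get $\|u(t)\|_{L^\infty} \leq C(T-t)^{-1/(p-1)}$ on the relevant range, which suffices since $M^p (T-t)$ integrated against the Gaussian still loses to $e^{-c/T}$). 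Once $M$ is pinned down, the remainder is a direct application of the Gaussian-decay estimates above.
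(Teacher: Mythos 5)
Your overall architecture (Duhamel plus a spatial cut-off plus smallness of $T$) is in the right spirit, but there is a genuine gap: your argument needs a global-in-space $L^\infty$ bound $M$ on $u(\cdot,t)$ over all of $[0,t_*]$ in order to control the inner piece $\psi\,u^p$ before hitting it with the Gaussian factor, and no such bound is available from the hypotheses of the lemma. The statement only assumes the exterior bound \eqref{condition-priori-P-3} and the choice of initial data in Definition \ref{initial-data-bar-u}; nothing is assumed about $u$ on $\{|x|<\epsilon_0/8\}$. Moreover, the preliminary bound you propose, $M\le \tilde C(K_0,A)\,T^{-1/(p-1)}$, is false for the solutions this lemma is designed for: they blow up at time $T$, so $\|u(t_*)\|_{L^\infty}\sim \kappa (T-t_*)^{-1/(p-1)}$, which is arbitrarily larger than $T^{-1/(p-1)}$ as $t_*\to T$; correspondingly, the iterated local Cauchy argument shrinks its time step at each doubling and cannot cover $[0,t_*]$ for $t_*$ close to $T$. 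Your fallback (invoke the trapping $u\in S(t)$ to get $\|u(t)\|_{L^\infty}\le C(A)(T-t)^{-1/(p-1)}$, then use $T-s\ge t-s$ so that $(t-s)^{-p/(p-1)-n/2}e^{-c\epsilon_0^2/(t-s)}$ stays bounded by $e^{-c'\epsilon_0^2/T}$) would indeed close the estimate, but it imports a hypothesis that is not in the statement of the lemma, which is meant as a standalone a priori estimate under the exterior bound alone.

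The paper's proof shows that this interior information is never needed, by localizing the solution before applying Duhamel: set $v=\psi_{\epsilon_0}u$ with $\psi_{\epsilon_0}=0$ on $\{|x|\le\epsilon_0/8\}$ and $\psi_{\epsilon_0}=1$ on $\{|x|\ge\epsilon_0/4\}$. The equation for $v$ has sources $-2\,\mathrm{div}(u\nabla\psi_{\epsilon_0})$, $u\Delta\psi_{\epsilon_0}$ and $\psi_{\epsilon_0}u^p$, all supported in $\{|x|\ge\epsilon_0/8\}$ where \eqref{condition-priori-P-3} gives $|u|\le\sigma$; hence every source is bounded by $C(\sigma,\epsilon_0)$, the Duhamel integral is $O(T)$, and $\|e^{t\Delta}v(0)-v(0)\|_{L^\infty}\to0$ since $v(0)=\psi_{\epsilon_0}(U^*+1)$ has bounded Laplacian for $T$ small. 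No Gaussian-separation estimate and no bound on $u$ in the blow-up region are required. If you want to keep your route, you must either add the trapping hypothesis to the statement or, better, cut off the solution first as the paper does.
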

\begin{proof}
We  introduce   $\psi ,$ a  cut-off  function  in $C^\infty (\mathbb{R})$  defined as follows
$$    \psi(r) = 0 \text{ if } |r| \leq \frac{1}{2}, \quad   \psi(r)  = 1  \text{ for all } |r| \geq 1 \text{ and }  |\psi(r)| \leq 1  \text{ for  all } r,  $$  
and  we also introduce $\psi_{\epsilon_0} \in C^\infty (\mathbb{R}^n)$ as follows 
$$  \psi_{\epsilon_0} (x)   =  \psi \left(  \frac{4|x|}{\epsilon_0}\right).$$
Then,  $\psi_{\epsilon_0} \in  C^\infty(\mathbb{R}^n),$  and     $ \psi_{\epsilon_0}(x)  =   1   $ for all  $|x| \geq  \frac{\epsilon_0}{ 4}$ and $\psi_{\epsilon_0} = 0$ for all $|x| \leq \frac{\epsilon_0}{8}$. We define   as well 
$$   v =    \psi_{\epsilon_0} u .$$
Thanks to equation \eqref{equ:problem},   we derive an    equation  satisfied by $v$
\begin{equation}\label{equa-u-1}
\partial_t v  =  \Delta v      - 2 \text{ div} (u\nabla \psi_{\epsilon_0} ) +  u \Delta   \psi_{\epsilon_0}  + \psi_{\epsilon_0} u^p   =  \Delta v   -  2 \text{div} \left( u \nabla \psi_{\epsilon_0}   \right)  +   G ( u),
 \end{equation}
where 
$$  G( u)   =   u \Delta   \psi_{\epsilon_0}  + \psi_{\epsilon_0} u^p. $$
 Using    \eqref{condition-priori-P-3}, we get 
$$ \| G(t,u(t))\|_{L^\infty(\mathbb{R}^n)}   \leq C(\sigma, \epsilon_0), \forall  t \in [0, t_*].$$
By Duhamel formula, we derive    
\begin{equation}\label{duhamel-u-t-0-t-*}
v(t) =  e^{t\Delta} (v (0))   +  \int_{0}^t e^{(t-s) \Delta} (G(s,u(s))) ds, 
\end{equation}
which yields
$$ v (t) - v (0)   =   e^{t \Delta} (v (0)) - v(0) +  \int_{0}^t e^{(t-s) \Delta} (G(s,u(s))) ds. $$
Thus,  
\begin{eqnarray*}
\| v (t) -  v (0)\|_{L^\infty (\mathbb{R}^n)}  \leq   \|e^{t \Delta} (v (0)) - v(0) \|_{L^\infty}  + \left\| \int_{0}^t e^{(t-s) \Delta} (G(s,u(s))) ds \right\|_{L^\infty}.
\end{eqnarray*}
In addition to that,   if    $T \leq   T_{6,1} (\epsilon_0),$ we have   $\chi_1 (x) =  0,   $ for all  $|x| \geq  \frac{\epsilon_0}{8},$  where   $\chi_1  $ defined   in \eqref{defi-proper-U-*} is involved   in  Definition   \ref{defini-shrinking-set} of  initial data $u(0)$. As a matter of fact,      from the definition of   $u(0), $ we deduce   from  this fact   that
$$ v(0)  =  \psi_{\epsilon_0} \left( U^*  + 1 \right).$$
 Since $\Delta v(0) \in L^\infty(\mathbb{R}^n),$ it follows that  
$$ \left\|  e^{t\Delta} (v(0))   - v(0)\right\|_{L^\infty(\mathbb{R}^n)}  \to  0  \text{ as } t \to  0.$$
Besides that,  we have also
$$ \left\| \int_{0}^t e^{(t-s) \Delta} (G(s,u(s))) ds \right\|_{L^\infty(\mathbb{R}^n)}   \to 0 \text{ as }  t \to 0.$$
Therefore,  for all  $t \in [t_0, t_*]$ we have
$$   \| v (t) -  v (0)\|_{L^\infty (\mathbb{R}^n)}    \leq  \eta,$$
provided that  $T \leq  T_{6,2} (K_0, A, \epsilon_0, \eta, \sigma)$. This   concludes our lemma.
\end{proof}
 Finally, we need the following Lemma to get the conclusion  of  our proof: 
\begin{lemma}\label{lemma-implies-Propo-esti-P-3} There exists $ K_{7} \geq 1$ such that  
for all $K_0   \geq K_{7}, A \geq 1,$ and   $   \delta_1 > 0,$    there exists    $\alpha_{7} (K_0,  A, \delta_1) > 0$ such that  for all $\alpha_0 \leq \alpha_{7},$  there  exists  $\epsilon_{7} (K_0, \alpha_0, A, \delta_1) > 0$  such that for all $\epsilon_0 \leq \epsilon_{7}$ there exist $\delta_7  (\delta_1)> 0,      T_{7} (K_0, \epsilon_0, A, \delta_1) > 0,$  $\eta_{7} (K_0, \epsilon_0, A) >0$  such that  for all  $\delta_0 \leq \delta_7,   \eta_0 \leq \eta_{7}$ and  for all $T   \leq T_7 $      if  $u \in S (T, K_0, \alpha_0, \epsilon_0, A, \delta_0, \eta_0, t)$ for all  $t \in [0, t_*] ,$  for some  $t_* \in [0,T),$  then  the following  holds:
$$\text{ whenever } |x| \in \left[  \frac{K_0}{4} \sqrt{ (T -t_*) |\ln(T -t_*)|}, \epsilon_0 \right]$$
\begin{itemize}
\item[$(i)$]    For all $|\xi| \leq 2 \alpha_0 \sqrt{| \ln(T- t(x)) |}   $ and for all 
$$ \tau   \in \left[ \max \left( 0,   \frac{ - t(x)}{ T  - t(x)}  \right),  \frac{t_*  - t(x)}{T - t(x)} \right],$$
if   $U(x, \xi, \tau) $ satisfies equation \eqref{equa-U-x-0-xi-tau},  then  
$$   | U(x, \xi, \tau)|  \leq C^*_7(p)  \text{ and } \text{ Re}\left(  U(\xi, \tau)\right)  \geq C_7^{**}(K_0,p),$$
where  $U(\xi, \tau)$ is defined  as in  \eqref{def-mathcal-U},    $t(x)$ is defined in  \eqref{def-t(x)}, and  $C^*_7$   depends only on the   parameter $p$ and  $C^{**}_7(K_0, p)$ depends  on  the parameters $K_0$ and $p$. 
\item[$(ii)$]  For all $|\xi| \leq  2 \alpha_0 \sqrt{|\ln(T - t(x))|},$     if  we define 
\begin{equation}\label{condition-initial-data-tau-0-x}
\tau_0 (x) =  \max \left( 0, \frac{ - t(x)}{T - t(x)} \right),
\end{equation}
then, we have 
$$ | U (x, \xi, \tau_0) - \hat U_{K_0} (\tau_0)| \leq  \delta_1.$$
\end{itemize}
\end{lemma}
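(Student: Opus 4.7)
My proof plan translates the control of $u$ in the shrinking set $S(t)$ into the self-similar variables centered at $x$. The key geometric observation is: for $\alpha_0$ small relative to $K_0$ (say $\alpha_0 \leq K_0/16$), every $\xi$ in the range $|\xi| \leq 2\alpha_0 \sqrt{|\ln(T - t(x))|}$ yields a shifted point $y = x + \xi \sqrt{T - t(x)}$ with $|y| \in [|x|/2, 3|x|/2]$. In particular $|y|$ is comparable to $|x|$, and by the defining relation \eqref{def-t(x)} the rescaled time $t(\cdot)$ is only perturbed by a bounded multiplicative factor, so a change of base point $x \mapsto x' \approx y$ can be carried out while keeping control of the ratio $(T - t(x))/(T - t(x'))$.

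For part $(ii)$, I would split on the sign of $t(x)$. When $t(x) \geq 0$, we have $\tau_0(x) = 0$ and the relevant real time is $t(x)$; the hypothesis $u \in S(t(x))$ gives $(q_1, q_2)(\cdot, s(x)) \in V_A(s(x))$ with $s(x) = -\ln(T - t(x))$, and Lemma \ref{lemma-analysis-V-A} controls $(q_1, q_2)$ in $L^\infty$. Writing $U(x, \xi, 0) = (\Phi_1 + q_1 + i\Phi_2 + iq_2)(\eta, s(x))$ with $\eta = \frac{K_0}{4}\sqrt{s(x)}\,\frac{x}{|x|} + \xi$, and expanding $\Phi_1(\eta, s(x)) = f_0(\eta/\sqrt{s(x)}) + n\kappa/(2p\,s(x))$, the Lipschitz bound on $f_0$ yields
\[
\bigl|\Phi_1(\eta, s(x)) - f_0(K_0/4)\bigr| \leq \|\nabla f_0\|_\infty \cdot 2\alpha_0 + \frac{n\kappa}{2p\,s(x)}.
\]
Since $\hat U_{K_0}(0) = f_0(K_0/4)$ by \eqref{solution-ODE-hat-U} and $|\Phi_2(\eta, s(x))| = O(1/s(x))$, combining all error terms gives $|U(x, \xi, 0) - \hat U_{K_0}(0)| \leq \delta_1$ provided $\alpha_0$ is small in terms of $\delta_1, K_0$ and $T$ is small in terms of $A, K_0, \delta_1$. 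When $t(x) < 0$, the real time is $0$ and $\tau_0(x) > 0$; I would substitute the explicit formula for $u_{K_0, A, d_1, d_2}(0)$ from Definition \ref{initial-data-bar-u} into $U(x, \xi, \tau_0)$ and compare algebraically with \eqref{solution-ODE-hat-U}, using the boundedness $|d_1|_\infty, |d_2|_\infty \leq 2$ and the smallness of $\phi_1, \phi_2$ to obtain the same bound.

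For part $(i)$, on the restricted range $|\xi| \leq \alpha_0 \sqrt{|\ln(T - t(x))|}$, item $(ii)$ of $S(t)$ applied with base point $x$ directly yields $|U(x, \xi, \tau) - \hat U_{K_0}(\tau)| \leq \delta_0$ for every $t \in [\max(0, t(x)), t_*]$, from which $|U| \leq \hat U_{K_0}(1) + \delta_0 \leq C^*_7(p)$ (uniform in $K_0 \geq K_7$ since $\hat U_{K_0}(1) = ((p-1)^2 K_0^2/(64p))^{-1/(p-1)}$ is decreasing in $K_0$) and $\text{Re}(U) \geq \hat U_{K_0}(0) - \delta_0 \geq \tfrac{1}{2}\bigl(p - 1 + \tfrac{(p-1)^2 K_0^2}{64p}\bigr)^{-1/(p-1)} =: C^{**}_7(K_0, p)$, choosing $\delta_0 \leq \hat U_{K_0}(0)/2$. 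For the extended range $\alpha_0\sqrt{|\ln(T-t(x))|} \leq |\xi| \leq 2\alpha_0\sqrt{|\ln(T-t(x))|}$, I would re-base at a point $x' \approx y$ in the $P_2$ admissible range and exploit the scaling identity $U(x, \xi, \tau) = ((T - t(x))/(T - t(x')))^{1/(p-1)} U(x', 0, \tau')$ with $\tau' = (t - t(x'))/(T - t(x'))$; the scaling ratio is uniformly bounded by the opening comparison $|x'| \in [|x|/2, 3|x|/2]$, and item $(ii)$ of $S(t)$ at base $x'$ gives bounds of the same type. The hard part will be handling the borderline case $|y| > \epsilon_0$ (which can occur when $|x|$ is close to $\epsilon_0$), for which I would fall back on item $(iii)$ of the shrinking set combined with the positivity from Lemma \ref{lemma-positive-real-part} and the bounded-below value of $(T - t(x))^{1/(p-1)}$ in that regime to close the argument.
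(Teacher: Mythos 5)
Your part $(ii)$ is essentially the paper's argument: for $t(x)\geq 0$ the paper likewise works at time $t(x)$, uses the blowup-region control (item $(i)$ of Definition \ref{defini-shrinking-set} via Lemma \ref{lemma-analysis-V-A}) to absorb $q_1,q_2$, and bounds the profile shift caused by $|\xi|\leq 2\alpha_0\sqrt{|\ln\theta(x)|}$; for $t(x)<0$ it simply quotes item $(ii)$ of Lemma \ref{lemma-control-initial-data} (proved in Appendix C), which is the computation you propose to redo inline. Part $(i)$, however, has a genuine gap. Your case split is organized by the size of $|\xi|$, but the relevant split is by the location of the shifted point $X=x+\xi\sqrt{T-t(x)}$ relative to $P_1(t),P_2(t),P_3(t)$ at the \emph{current} time $t$, and your extended-range argument misses the case $X\in P_1(t)$, i.e. $|X|\leq \frac{K_0}{4}\sqrt{(T-t)|\ln(T-t)|}$. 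This case really occurs: at $t$ close to $t(x)$ the $P_1$-radius equals $|x|$, while an inward-pointing $\xi$ in the extended range gives $|X|<|x|$, no matter how small $\alpha_0$ is. There the re-basing identity with $U(x',0,\tau')$ forces $x'=X$, which is not $P_2$-admissible, and replacing $x'$ by a nearby admissible point reintroduces a shift $\xi'$ of size up to $2\alpha_0\sqrt{|\ln\theta(x')|}$, i.e. the same problem. The paper's Case 1 handles this by using the $V_A$/profile estimate \eqref{conclu-u-f-0} at time $t$ (which bounds $(T-t)^{\frac1{p-1}}u(X,t)$ near $f_0$) and then converting to the required $(T-t(x))^{\frac1{p-1}}$ normalization through the comparison $T-t\leq T-t(x)\lesssim (1-\delta)^{-2}(T-t)$, obtained from the monotonicity and asymptotics of $\theta$. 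This conversion step is absent from your plan and is the key missing idea.

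A second, quantitative issue is your fallback for $|X|\geq\epsilon_0$. Bounding $\mathrm{Re}(U)=\theta(x)^{\frac1{p-1}}\mathrm{Re}(u(X,t))$ below by $\tfrac12\,\theta(x)^{\frac1{p-1}}$ via Lemma \ref{lemma-positive-real-part} only yields a constant depending on $\epsilon_0$ (since in this regime $\theta(x)$ is of order $\epsilon_0^2/|\ln\epsilon_0|$), whereas the lemma claims $C_7^{**}=C_7^{**}(K_0,p)$, and this independence of $\epsilon_0$ is what is used later when $\lambda_5=C_7^{**}$ is fed into Lemma \ref{lema-priori-estima-P-2}. The paper's Case 3 instead uses item $(iii)$ of $S(t)$ together with the explicit profile $U^*$ in the initial data: since $u(X,0)=U^*(X)+1$ with $U^*(X)=\left[\frac{(p-1)^2|X|^2}{8p|\ln|X||}\right]^{-\frac1{p-1}}$ and $\theta(x)\sim \frac{C|x|^2}{K_0^2|\ln|x||}$ with $|X|$ comparable to $|x|$, the product $\theta(x)^{\frac1{p-1}}U^*(X)$ is bounded below by a constant depending only on $K_0$ and $p$. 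So your outline would need both the $P_1$-case argument and this sharper $P_3$-case lower bound to actually prove the statement as quantified.
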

\begin{proof}
  The idea of the proof relies on the argument  in Lemma 2.6, given in  \cite{MZnon97}.

\noindent
+ \textit{The proof of  item $(i)$:}  We aim at proving that for all $|x| \in  \left[  \frac{K_0}{4} \sqrt{(T-t_*) |\ln (T-t_*)|}, \epsilon_0\right],$   $|\xi| \leq  2 \alpha_0  \sqrt{|\ln (T  - t(x))|} $ and $t \in \left[ \max(0, t(x)), t_* \right],$ we have 
\begin{equation}\label{estima-U-xi-tau-hat-u-bound-1-2}
  |U (x, \xi, \tau(x,t))|  \leq     C^*_7,
\end{equation}
and 
\begin{equation}\label{proof-bounded-below-real-part}
\text{Re}  \left(  U(\xi, \tau) \right) \geq   C_7^{**},
\end{equation}
where $\tau (x,t) = \frac{t - t (x) }{T- t(x)}$ and $C_7^*, C_7^{**}> 0$.    Let  us   introduce    a parameter  $\delta  > 0 $ to be  fixed  later  in our proof,    small enough (note that  $\delta$  has nothing to do with the parameters  $\delta_0, \delta_1$  in the statement of our lemma).  We observe that if   we have  $ \alpha_0 \leq \alpha_{1,7} (K_0, \delta)  $ for some  $\alpha_{1,7} > 0$ and  small enough, then for all $|\xi|  \leq  2 \alpha_0 \sqrt{|\ln (T- t(x))|},$ we have 
\begin{equation}\label{x+xi-1-delta-1+delta}
(1 - \delta) |x| \leq |x + \xi \sqrt{ T - t(x)}| \leq  (1+ \delta) |x|.  
\end{equation}
We also recall the definition of rescaled function   $U(x, \xi,  \tau (x,t))$  as follows
$$  U(x, \xi,  \tau ) = \left( T - t(x) \right)^{\frac{1}{p-1}} u(x + \xi \sqrt{T - t(x)}, t(x) + \tau (T-t(x)) ).   $$
Introducing    $X =  x +  \xi \sqrt{T - t(x)},$ we write 
$$ U(x, \xi, \tau (x,t) )  = (T- t(x))^{\frac{1}{p-1}}  u (X,t).$$
We here  consider 3 cases:

+ \textit{ Case 1:}  We consider  the case where 
$$|X|  \leq  \frac{K_0}{4} \sqrt{ (T-t) |\ln (T-t)|}.$$
Using  the fact that  $u \in S(t)$,  in particular item $(i)$ of  Definition \ref{defini-shrinking-set}, we see that Lemma  \ref{lemma-analysis-V-A} and    \eqref{conclu-u-f-0} hold, hence
$$ \left|(T - t)^{\frac{1}{p-1}}  u(X, t)  - f_0 \left( \frac{X}{ \sqrt{(T-t)|\ln (T-t)|}}\right) \right| \leq  \frac{C A^3}{ \sqrt{1 + |\ln (T- t)|}}.$$

\noindent
Then,  we derive  the following 
\begin{eqnarray}
| U(x, \xi, \tau (x,t))| &\leq &\left( \frac{T-t}{T-t(x)}\right)^{-\frac{1}{p-1}} \left( f_0\left( 0\right)   +  \frac{C A^3}{ \sqrt{1 + |\ln (T- t)|}}   \right) \nonumber\\
& = & \left( \frac{T-t}{T-t(x)}\right)^{-\frac{1}{p-1}} \left( \kappa+  \frac{C A^3}{ \sqrt{1 + |\ln (T- t)|}}   \right) ,\label{estima-U-xi-leq-T-1-t(x)-inside-blowup}\\
\text{Re} (U(x, \xi, \tau(x,t))) & \geq  &\left( \frac{T-t}{T-t(x)}\right)^{-\frac{1}{p-1}} \left( f_0\left( 0\right)   - \frac{C A^3}{ \sqrt{1 + |\ln (T- t)|}}   \right) \nonumber\\
& = & \left( \frac{T-t}{T-t(x)}\right)^{-\frac{1}{p-1}} \left( \kappa -  \frac{C A^3}{ \sqrt{1 + |\ln (T- t)|}}   \right).\label{estima-U-1-xi-leq-T-1-t(x)-inside-blowup}
\end{eqnarray}

\noindent
Besides that,  we deduce  the following   from   \eqref{x+xi-1-delta-1+delta}   and   the fact that  $|X| \leq  \frac{K_0}{4} \sqrt{ (T-t)|\ln (T-t)|}:$ 
$$|x| \leq  \frac{K_0}{ 4(1 - \delta)} \sqrt{(T-t) |\ln(T-t)|}.$$
In addition to that, we have    that the function $T - t(x)$ is an increasing function if  $|x|$ small enough. Therefore,  
\begin{equation}\label{estima-Ttheta-leq-theta-K_0-4-T-t}
T-t(x) \leq  T- t\left( \frac{K_0}{4 (1 - \delta)} \sqrt{ (T-t) |\ln (T- t)|}\right) .
\end{equation}
As a matter of fact, we have the following asymptotics of function  $\theta (x) = T -t (x), $ 
\begin{equation}\label{asymptotic of-theta-x}
\ln \theta (x) \sim  2\ln |x|  \text{ and  } \theta (x) \sim \frac{8}{K_0^2} \frac{|x|^2}{ |\ln |x||} \text{ as } |x| \to 0.
 \end{equation}
 Plugging    \eqref{asymptotic of-theta-x} in  \eqref{estima-Ttheta-leq-theta-K_0-4-T-t}, we obtain the following
$$ T-t(x) \leq  T- t\left( \frac{K_0}{4 (1 - \delta)} \sqrt{ (T-t) |\ln (T- t)|}\right) \sim \frac{8 K_0^2 (T-t) |\ln(T-t)|}{K_0^2 16 (1-\delta)^2 \frac{1}{2} |\ln(T-t)|}  = \frac{(T-t)}{(1-\delta)^2}.$$
In particular,  from   $t \in  [\max(0, t(x)), t_*],$  we have the following 
$$  T- t(x)  \geq   T- t.$$
Plugging  into  \eqref{estima-U-xi-leq-T-1-t(x)-inside-blowup}  and  \eqref{estima-U-1-xi-leq-T-1-t(x)-inside-blowup},  we obtain
$$ |U(x, \xi, \tau)| \leq   C^*_{1,7}(p, \delta),$$
and 
$$\text{Re}(U(x, \xi, \tau(x,t)))   \geq  C^{**}_{1,7}(p,\delta),$$
provided that  $\delta$  is small enough,    $K_0 \geq  K_{1,7} (\delta)$ which is large enough and  $T \leq T_{1,7}(K_0, A)$.   Note that   $ C^*_{1,7}(p, \delta)   $ and   $C^{**}_7(p, \delta)$   depend  on $\delta$ and  $p$,  in particular,    $C^*_{1,7}(\delta, p)$ is bounded when   $\delta \to 0$.
 
+  \textit{ The second case: } We consider the case  where 
 $$ |X| \in \left[ \frac{K_0}{4} \sqrt{(T-t) |\ln (T-t)|}, \epsilon_0 \right].$$
By  using  the definition of  $U(x, \xi, \tau(x,t)),$ we deduce that 
 $$ U(x, \xi, \tau (x,t))  = \left( \frac{T-t(x)}{T- t(X)}\right)^{\frac{1}{p-1}} U(X,0, \tau(X,t)).$$
 However,  using  the fact that  $u \in S (t)$, in particular   item $(ii)$ of Definition  \ref{defini-shrinking-set},  we have
 $$ | U(X,0, \tau (X,t) )| \leq  \delta_0 + \hat U(1).  $$
 In addition to that,  we use   \eqref{x+xi-1-delta-1+delta}, the definition of $t(x)$ and  the fact that  $|X| \geq \frac{K_0}{4} \sqrt{(T-t) |\ln (T-t)|}   $  to derive  the following 
 $$  1 \leq  \frac{T-t(x)}{T- t(X)} \leq 2, $$
 provided that   $\delta$ small enough. Therefore, we have 
 $$ \left|U(x, \xi, \tau (x,t))\right| \leq  2^{\frac{1}{p-1}} \left(\delta_0 + \hat  U_{K_0}(1)   \right) \leq  \frac{1}{2},$$
 and
 $$  \text{Re}(U(x, \xi, \tau(x,t))) \geq    \hat U_{K_0} (0) - \delta_0  \geq  \frac{1}{2}\hat U_{K_0}(0),$$
 provided that   $ \delta_0 \leq  \frac{1}{2}\hat U_{K_0}(0) $  and $K_0 \geq  K_{2,7}.$    
 
+   \textit{ The third  case:} We consider  the case where  $ |X|   \geq \epsilon_0.$  Using the fact that $u\in S(t),$ in particular   item  $(iii)$ of Definition \ref{defini-shrinking-set}, we have
\begin{eqnarray*}
 |U(x, \xi, \tau (x,t))| &=&  \left( T - t(x)\right)^{\frac{1}{p-1}} |u(X,t)| \leq   \left( T - t(x)\right)^{\frac{1}{p-1}}  ( |u(X,0)|  + \eta_0 ),\\
 \text{Re}\left( U(x, \xi, \tau(x,t))\right) & = &  \left( T - t(x)\right)^{\frac{1}{p-1}} \text{Re}(u(X,t)) \geq      \left( T - t(x)\right)^{\frac{1}{p-1}} \left(  \text{Re}(u(X,0)) - \eta_0   \right).
\end{eqnarray*}
Using  the definition  \eqref{parti-real-initial}, we have  for all $|X| \geq \epsilon_0$
$$u(X,0)  =  U^*(X) + 1 ,$$
provided that $T \leq  T_{2,7}(\epsilon_0).$
 In addition to that, we have the following fact
 \begin{eqnarray*}
 T- t(x) &\sim &  \frac{ 16  |x|^2}{K_0^2|\ln|x||}, \label{equaivalent-T-t-x}\\
 u(X,0)  &\sim &  U^*(X)   =  \left[ \frac{(p-1)^2 |x|^2}{8 p |\ln |x||} \right]^{- \frac{1}{p-1}},
 \end{eqnarray*}
 as  $(X,x) \to (0,0)$,   and  in particular,   from  \eqref{x+xi-1-delta-1+delta}, we have
 $$      (1 - \delta)|x|  \leq |X|  \leq  (1 + \delta) |x| .$$
Therefore, we have  
\begin{eqnarray*}
 \left| U (x, \xi, \tau (x,t))\right|  \leq C^*_{2,7}(\delta),\\
 \text{Re}(U(x, \xi, \tau(x,t)))  \geq   C^{**}_{2,7} (K_0, \delta),
\end{eqnarray*}
  provided that   $K_0 \geq  K_{3,7},  \eta_0 \leq  \eta_{1,7}(\delta)$ and   $\delta$ is small. 
We conclude item $(i)$.
 
 \noindent
 \textit{The  proof  of item $(ii)$:} We  aim at  proving   that  for all  $|\xi| \leq  2 \alpha_0 \sqrt{ |\ln \theta (x)|}$ and   $\tau_0 (x)  = \max \left(0,  - \frac{t(x)}{ \theta (x)} \right),$ we have 
 \begin{equation}\label{estiam-U-hat-U-tau-0-max}
 \left|   U(x, \xi, \tau_0(x))    - \hat U_{K_0} (\tau_0(x))  \right| \leq  \delta_1.
 \end{equation}
 Considering 2 cases  for  the proof  of  \eqref{estiam-U-hat-U-tau-0-max}:
 
 + Case 1: We consider the case where 
 $$|x| \leq    \frac{K_0}{4} \sqrt{ T |\ln T|},$$
 then, we deduce from the defintion of $t(x)$ given by \eqref{def-t(x)} that $t(x)  \leq 0$. Thus,   by definition    \eqref{condition-initial-data-tau-0-x}, we have 
 $$ \tau_0 (x)   = \frac{-t(x)}{ \theta (x)}.$$
 Therefore,   \eqref{estiam-U-hat-U-tau-0-max}  directly  follows  item $(ii)$ of Lemma  \ref{lemma-control-initial-data} with   $K_0 \geq K_{4,7},   \alpha_0 \leq \alpha_{3,7},   \epsilon_0 \leq \epsilon_{3,7}$ (see in Lemma \ref{lemma-control-initial-data})
 
 + Case  2: We consider the case  where  
 $$ |x| \geq  \frac{K_0}{4} \sqrt{ T |\ln T|},$$
 which  yields  $t(x) \geq 0$. Thus,  by definition    \eqref{condition-initial-data-tau-0-x}, we have   $$\tau_0(x)  =  0.$$ 
  We let $X   = x + \xi \sqrt{ \theta (x)}.$  Accorrding to the  definitions  of $U, \hat U_{K_0}$ which are given  by  \eqref{def-mathcal-U} and  \eqref{solution-ODE-hat-U}, we write
 \begin{eqnarray*}
 \left|  U (x, \xi, 0)   - \hat U_{K_0} (0)  \right| &=&  \left|  \theta^{-\frac{1}{p-1}} (x) u\left(X, t(x) \right)  -  \left(  (p-1)  +  \frac{(p-1)^2}{4p}   \frac{K_0^2}{ 16}\right)^{-\frac{1}{p-1}}  \right| \\
 & = & \left|   \theta^{-\frac{1}{p-1}} (x) u\left(X, t(x) \right)   -  \left(  (p-1)  +  \frac{(p-1)^2}{4p}   \frac{|X|^2}{  \theta (x) |\ln \theta (x)| }\right)^{-\frac{1}{p-1}}      \right.\\
 &+  & \left.   \left(  (p-1)  +  \frac{(p-1)^2}{4p}   \frac{|X|^2}{  \theta (x) |\ln \theta (x)| }\right)^{-\frac{1}{p-1}} - \left(  (p-1)  +  \frac{(p-1)^2}{4p}   \frac{K_0^2}{ 16}\right)^{-\frac{1}{p-1}}     \right|\\
 & \leq  & (I) +  (II),
 \end{eqnarray*}
 where $\theta (x)  =  T-t(x),$ and  
 \begin{eqnarray*}
 (I) &=&  \left|    \theta^{-\frac{1}{p-1}} (x) u\left(X, t(x) \right)   -  \left(  (p-1)  +  \frac{(p-1)^2}{4p}   \frac{|X|^2}{  \theta (X) |\ln \theta (X)| }\right)^{-\frac{1}{p-1}}         \right|,\\
 (II) & = &  \left|   \left(  (p-1)  +  \frac{(p-1)^2}{4p}   \frac{|X|^2}{  \theta (X) |\ln \theta (X)| }\right)^{-\frac{1}{p-1}} - \left(  (p-1)  +  \frac{(p-1)^2}{4p}   \frac{K_0^2}{ 16}\right)^{-\frac{1}{p-1}}  \right| .
 \end{eqnarray*}
 Since
 $$  |X|  \leq (1 + \delta) |x|   \leq   \frac{(1  +\delta ) K_0}{4}  \sqrt{  (T- t(x)) |\ln (T-t(x))|}   \leq K_0 \sqrt{(T-t(x)) |\ln (T-t(x))|},$$
 Using item    $(i)$ of  Definition  \ref{defini-shrinking-set}, taking    $ t = t(x), $ we write 
 $$  (I)  \leq  \frac{C(K_0) A^2}{  \sqrt{|\ln (T-t(x))|}}  \leq   \frac{C(K_0) A^2}{ \sqrt{|\ln T|}} \leq \frac{\delta_1}{2},$$
 provided that  $T \leq T_{4,7} (K_0, A, \delta_1)$.  Besides that,   from  \eqref{x+xi-1-delta-1+delta}  we have
 $$  (1  - \delta)^2 \frac{K_0^2}{ 16} \leq   \frac{|X|^2}{ \theta (X) \left|\ln \theta (X) \right|} \leq (1 + \delta)^2 \frac{K_0^2}{16}.$$
This yields
$$ (II) \leq   \frac{\delta_1}{2},$$
provided  that   $\delta$ is  small enough.  Then,  \eqref{estiam-U-hat-U-tau-0-max} follows.  Finally, we fix  $\delta > 0$ small enough and we conclude our lemma.
\end{proof}
\subsection{The conclusion of  Proposition   \ref{pro-reduction- to-finit-dimensional}}
It this subsection,   we  would like   to     conclude the    proof of  Proposition \ref{pro-reduction- to-finit-dimensional}. As   we  mentioned earlier,   in the   analysis of   the shrinking set   $S(t),$ the heart     is   the set   $V_A(s)$ (see   item  $(i)$   of Defintion  \ref{defini-shrinking-set}   of  $S(t)$). So, let us  first  give  an important argument    related   the analysis of  $V_A(s)$; the reduction  to finite dimensions.    More precisely,        we prove that   if the  solution   $(q_1, q_2)  $ of equation  \eqref{equation-satisfied-by-q-1-2}     satisfies $(q_1, q_2) (s) \in V_A(s) $ for all  $s \in [s_0, s_*]$ and      $(q_1, q_2) (s_*)  \in  \partial V_A (s_*)   $ for some $ s_* \in [s_0, +\infty)$ with $s_0 = - \ln T,$    then,  we    can  directly derive  that
$$ ( q_{1,0},  (q_{1,j} )_{j \leq n }  , q_{2,0},  (q_{2,j})_{j\leq n}, (q_{2,j,k} )_{j,k \leq n} )(s_*) \in \hat \partial  V_A (s_*), $$
where  $\hat V_A (s_*)$ is defined in \eqref{defini-of-hat-V-A}. After that,    we will    use   the  dynamic of  these  modes to derive   that they will  leave    $\hat  V_A$  after that.  The  following is our statement
\begin{proposition}[A reduction to finite  dimensional problem]\label{control-q(s)-V-A-s-1-2}
There exists $A_8 \geq 1, K_8 \geq 1$ such that for all  $ A \geq A_8, K_0 \geq  K_8$, there exists $s_8(A, K_0)\geq 1$ such that for all $s_0 \geq s_8(A, K_0)$, we have the following properties: If the following conditions hold:
\begin{itemize}
\item[$a)$]  We   take    the initial data  $(q_1,q_2)(s_0) $    are     defined     by  $ u_{A, K_0,d_1, d_2} (0)$       with   $s_0 = - \ln T$  (see  Definition \ref{initial-data-bar-u},  \eqref{similarity-variales}    and \eqref{defini-q-1-2})    and    $(d_0,d_1) \in \mathcal{D}_{K_0, A,s_0}$  (see  in Lemma \eqref{lemma-control-initial-data}).
\item[$b)$] For all $s \in [s_0,s_1],$   the solution $(q_1,q_2) $ of  equation \eqref{equation-satisfied-by-q-1-2} satisfies:  $(q_1,q_2)(s) \in V_A(s)$ and  $q_1(s) + \Phi_1 (s) \geq \frac{1}{2} e^{- \frac{s}{p-1}}$.
\end{itemize}

Then, for all $s \in [s_0,s_1]$, we have
\begin{eqnarray}
 \forall i,j \in \{1, \cdots, n\}, \quad |q_{2,i,j}(s)| & \leq & \frac{A^2 \ln s}{2 s^2}, \label{conq_1-2} \\
\left\| \frac{q_{1,-}(.,s)}{1 + |y|^3}\right\|_{L^{\infty}} &\leq &  \frac{A}{2 s^{2}}, \quad \|q_{1,e}(s)\|_{L^{\infty}} \leq  \frac{A^2}{2 \sqrt s}, \label{conq-q-1--and-e}\\
\left\| \frac{q_{2,-}(.,s)}{1 + |y|^3}\right\|_{L^{\infty}}  &\leq &  \frac{A^2}{2 s^{\frac{p_1 + 5}{2}}},\quad \|q_{2,e}(s)\|_{L^{\infty}} \leq  \frac{A^3}{2 s^{\frac{p_1 + 2}{2}}}. \label{conq-q-2--and-e}
\end{eqnarray}
\end{proposition}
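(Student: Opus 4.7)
The plan is to run a Merle--Zaag bootstrap: assuming $(q_1,q_2)(s)\in V_A(s)$ throughout $[s_0,s_1]$, I would combine the a priori dynamical estimates of Proposition \ref{prop-dynamic-q-1-2-alpha-beta} with the initial smallness from Lemma \ref{lemma-control-initial-data} to halve the bound on each infinite-dimensional component, leaving only the non-negative finite-dimensional modes possibly saturated. The four bounds in \eqref{conq_1-2}--\eqref{conq-q-2--and-e} are treated separately according to the spectral nature of the underlying subspace.

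For the null mode (targeting $q_{1,j,k}$ in \eqref{conq_1-2}), I would integrate the ODE \eqref{ODE-q-1-2} with integrating factor $s^2$, giving
\[
|q_{1,j,k}(s)|\leq \frac{s_0^2}{s^2}\,|q_{1,j,k}(s_0)| + \frac{CA}{s^2}\ln(s/s_0).
\]
Since Lemma \ref{lemma-control-initial-data} provides $|q_{1,j,k}(s_0)|\leq \tfrac{A^2\ln s_0}{2s_0^2}$, the first term is already bounded by $\tfrac{A^2\ln s_0}{2s^2}$, and the logarithmic increment $\tfrac{CA}{s^2}\ln(s/s_0)$ is absorbed into the remaining headroom $\tfrac{A^2}{2s^2}[\ln s-\ln s_0]$ once $A$ is large in terms of the universal constant $C=C(K_0)$.

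For the negative parts $q_{1,-}, q_{2,-}$, I would apply \eqref{Estimata-q-1--}--\eqref{Estimat-q-2--} with $\tau=s_0$. The exponential factor $e^{-(s-s_0)/2}$ damps the initial contribution furnished by Lemma \ref{lemma-control-initial-data}, the Gaussian $e^{-(s-s_0)^2}$ annihilates the term involving $q_{\cdot,e}(s_0)$, and the explicit source term $\tfrac{C(1+s-s_0)}{s^2}$ (respectively $\tfrac{C(1+s-s_0)}{s^{(p_1+5)/2}}$) fits under $\tfrac{A}{2s^2}$ (resp.\ $\tfrac{A^2}{2s^{(p_1+5)/2}}$) once $A$ is large. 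The constraint $p_1<\min(\tfrac{p-1}{4},\tfrac{1}{2})$ from the statement of Theorem \ref{Theorem-profile-complex} is precisely what forces the quadratic source coming from $B_2$ to respect the target decay rate, via Lemma \ref{lemma-quadratic-term-B-1-2}.

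The outer-part bounds form the main technical obstacle, since \eqref{outer-Q-e}--\eqref{outer-Phi-e} contain amplifying factors $e^{s-\tau}$ that forbid taking $\tau=s_0$ when $s-s_0$ is large. The remedy is to iterate the estimates on short windows of length one: applied with $\tau=s-1$, the prefactor $e^{-(s-\tau)/p}=e^{-1/p}<1$ acts as a strict contraction against the bootstrap bound $\|q_{1,e}(\cdot,s-1)\|_{L^\infty}\leq \tfrac{A^2}{\sqrt{s-1}}$, the middle term feeds in the improved bound on $q_{1,-}(s-1)$ just obtained, and the leftover source behaves like $\tfrac{C}{\sqrt s}$, which is of lower order than $\tfrac{A^2}{2\sqrt s}$; an identical argument using \eqref{outer-Phi-e} gives the $q_{2,e}$ bound. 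Closing the chain then amounts to a careful bookkeeping of how the $K_0$-dependent universal constants $C$ get absorbed by the $A$-hierarchy $A\ll A^2\ll A^3\ll A^5$ built into $V_A(s)$, which is achievable once $A\geq A_8(K_0)$ and $s_0\geq s_8(A,K_0)$ are chosen large enough.
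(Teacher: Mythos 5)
Your overall route --- project equation \eqref{equation-satisfied-by-q-1-2}, integrate the ODEs of Proposition \ref{prop-dynamic-q-1-2-alpha-beta} for the null modes, and use its integral estimates for the negative and outer parts together with Lemma \ref{lemma-control-initial-data}, closing constants through the hierarchy in $A$ --- is exactly the proof the paper intends (it simply cites Proposition 4.4 of \cite{DU2017}, which follows \cite{MZdm97}), and your null-mode step is correct, including the reading of \eqref{conq_1-2} as a bound on $q_{1,i,j}$ (the $q_{2,i,j}$ belong to the finite-dimensional component). Two of the remaining steps fail as written, however. For the negative parts you apply \eqref{Estimata-q-1--}--\eqref{Estimat-q-2--} with $\tau=s_0$ for every $s\in[s_0,s_1]$ and claim the source $\frac{C(1+s-s_0)}{s^2}$ fits under $\frac{A}{2s^2}$ once $A$ is large. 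This cannot hold uniformly: $s_1=-\ln(T-t_1)$ is arbitrary, so $s-s_0$ is unbounded, and once $s-s_0\gtrsim A$ the source is of size $C/s$, which dominates $\frac{A}{2s^2}$. The argument must be run in two regimes, as in \cite{MZdm97,DU2017}: for $s-s_0\le\lambda$ take $\tau=s_0$, and for $s-s_0\ge\lambda$ take $\tau=s-\lambda$, use the bootstrap hypothesis $(q_1,q_2)(\tau)\in V_A(\tau)$, choose $\lambda$ so that $Ce^{-\lambda/2}\le\frac{1}{4}$, and then $A$ so large that $C(1+\lambda)\le\frac{A}{4}$. Note also that in the short-time regime the exponential and Gaussian prefactors are of order one, so the bounds of Lemma \ref{lemma-control-initial-data} taken at face value do not close the estimate either: for instance $C s_0^{-3/2}\|q_{1,e}(s_0)\|_{L^\infty}\le \frac{CA^2}{2s_0^{2}}$ exceeds the target $\frac{A}{2s^2}$ for large $A$; one needs the finer fact, coming from the construction of the initial data, that the negative and outer components at $s_0$ are small with constants independent of $A$.

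The outer part has the same defect in a different guise. On windows of length one the relevant factor in \eqref{outer-Q-e} is not $e^{-1/p}$ but $Ce^{-1/p}$ with $C=C(K_0)\ge 1$, and since $e^{-1/p}$ is close to $1$ (indeed $e^{-1/p}\to 1$ as $p\to\infty$) this is in general not a contraction at all. One must instead take a window of length $\lambda$ with $Ce^{-\lambda/p}\le\frac{1}{4}$ (so $\lambda$ of order $p\ln(4C)$), after which the amplification $e^{\lambda}$ in the other terms of \eqref{outer-Q-e}--\eqref{outer-Phi-e} is beaten by $A$: for example $Ce^{\lambda}s^{3/2}\frac{A}{(s-\lambda)^2}\le\frac{A^2}{4\sqrt s}$ and $C(1+\lambda)e^{\lambda}\frac{1}{\sqrt s}\le\frac{A^2}{4\sqrt s}$ once $A$ is large enough, while the case $s\le s_0+\lambda$ is handled with $\tau=s_0$ and the ($A$-independent) initial bounds. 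No genuine iteration is needed: a single application of the a priori estimate with $\tau=\max(s_0,s-\lambda)$ suffices. With these two corrections --- the $\lambda$-window dichotomy for both $q_{1,-},q_{2,-}$ and $q_{1,e},q_{2,e}$, and the sharper smallness of the prepared data --- your sketch becomes the argument of \cite{DU2017} that the paper invokes.
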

\begin{proof}
 The proof   is  quite    similar   to  Proposition 4.4  in \cite{DU2017}. Indeed,  the proof is a consequence    of  Proposition \ref{prop-dynamic-q-1-2-alpha-beta},   exactly  as in  \cite{DU2017}. Thus, we omit  the proof and refer  the reader to  \cite{DU2017}.  
\end{proof}
Here,  we  give the  conclusion     of the proof of    Proposition \ref{pro-reduction- to-finit-dimensional}: 

\textit{ Conclusion of the  proof of  Proposition \ref{pro-reduction- to-finit-dimensional}:} We first   choose    the parameters $K_0, A,  \alpha_0,   \epsilon_0,   \delta_0, \delta_1,  \eta_0, \eta$  and  $ T > 0$  such that  all      the above Lemmas and Propositions which are necessary  to  the proof, are satisfied. In particular,  we also  note that     the parameters $\delta_1 $and $\eta $ which are   introduced   in Lemma \ref{lemma-control-initial-data}  and Lemma  \ref{lemma-control-P-3},   will be   small enough ( $\delta_1   \ll \delta_0       $ and   $\eta \ll \eta_0 $).  Finally, we fix    the constant $T$  small enough,   depending  on all the above parameters, then  we conclude our Proposition.   We   now  assume  the solution $u$ of equation  \eqref{equ:problem} with  initial data  $u_{K_0, A,d_1,d_2} (0)$,  defined  in   Definition \ref{initial-data-bar-u}, satisfies the following  
$$u \in S(T,K_0, \alpha_0, \epsilon_0,  A, \delta_0, \eta_0, t)= S(t), $$
 for all  $ t \in [0, t_*]$ for some $t_* \in [0, T)$ and 
$$ u \in \partial  S(t_*).$$ 
We aim at  proving that 
\begin{equation}\label{q-1-2-V-A-s-*}
 (q_1,q_2) (s_*) \in  \partial  V_A (s_*),
\end{equation}
where $s_* = \ln (T-t_*)$. Indeed, by contradition, we suppose that  \eqref{q-1-2-V-A-s-*}  is not true, then, by using Definition   \ref{defini-shrinking-set}   of $S(t),$    we  derive the following:   

$(I)$  Either,    there exist $x_*, \xi_*$  which  satisfy
\begin{eqnarray*}
|x_*| &\in & \left[ \frac{K_0}{4} \sqrt{ (T-t_*) |\ln(T-t_*)|}, \epsilon_0  \right],\\
|\xi_*| & \leq & \alpha_0 \sqrt{ |\ln (T-t(x_*))|}.
\end{eqnarray*}
and 
$$   |  U (x_*, \xi_*, \tau (x_*, t_*))  - \hat U  (\tau (x_*, \tau_*))| = \delta_0.$$

$(II)$  Or,  there exists  $x^*$  such that    $    |x^*| \geq \frac{\epsilon_0}{4} $     and 
$$ | u(x^*, t_*)  -  u (x^*, 0)| = \eta_0.$$
We would  like to  prove that    $(I)$  and $(II)$ can not occur. Indeed,   if  the first case occurs, then, letting    $\tau_0(x_*)   =  \max \left(   - \frac{t(x_*)}{ \theta(x_*)}, 0\right)$,  it follows  from Lemma  \ref{lemma-implies-Propo-esti-P-3}  that:   For all   $|\xi|   \leq   2 \alpha_0  \sqrt{  |\ln (T-t(x_*))|},$  we have   
$$   \left|    U(x_*, \xi, \tau_0(x_*))   -  \hat U (\tau_0(x_*))  \right|   \leq \delta_1,$$
and  for all  $\tau  \in \left[  \max \left( - \frac{t(x_*)}{T-t(x_*)},  \frac{t_*-t_(x_*)}{T-t(x_*)}\right)  \right] ,$ we have 
\begin{eqnarray*}
|U(x_*,\xi, \tau(x_*) )|    & \leq & C^*_7,\\
 \text{Re}(U(x_*,\xi, \tau(x_*) ))   &\geq   & C^{**}_7,
\end{eqnarray*}
where   $C^*_7, C^{**}_7$ are given in  Lemma \ref{lemma-implies-Propo-esti-P-3}.
Then,  we apply   Lemma  \ref{lema-priori-estima-P-2}, with   $\xi_0   =  \alpha_0 \sqrt{|\ln(T-t(x_*))|}, \tau_1 = \tau_0(x_*), \tau_2  =    \frac{t_* - t(x_*)}{T-t(x_*)},  \lambda_5 =  C^{**}_7$ and  $\Lambda_5 = C^*_7,$ to  derive that:    for all   $\xi \in [-\xi_0, \xi_0]$
$$ \left|   U(x_*,   \xi,   \tau (x_*, t_*))   - \hat U (\tau (x_*, t_*)) \right|  \leq C(K_0, \Lambda_5   \lambda_5, \delta_1, \xi_0),$$
where  $C(K_0, \Lambda_5, \lambda_5, \delta_1, \xi_0) \to 0$ as  $(\delta_1, \xi_0) \to (0, +\infty)$.  Taking    $(\delta_1, \xi_0)  \to (0, +\infty),$ (note that  $\xi_0 \to  + \infty$  as $\epsilon_0 \to 0$),    we write
$$  \left|  U(x_*,   \xi_*,   \tau (x_*, t_*))   - \hat U (\tau (x_*, t_*))     \right| \leq   \frac{\delta_0}{2},$$
this  is   a  contradiction.

 If   $(II)$  occurs,    we  have     for all  $ |x| \in \left[ \frac{\epsilon_0}{8}, + \infty \right)$
 $$ |u(x,t)|  \leq   C( \epsilon_0, A, \delta_0, \eta_0), \forall  t \in [0, t_*].$$
 Indeed,  we consider  the two  following cases:
 
 + The case where  $|x| \geq  \frac{\epsilon_0}{4},$ using   item  $(iii)$  if the definition of  $S(t),$ we derive the following
 $$|u(x,t)|  \leq  |u(x,0)|  + \eta_0 \leq  C(A, \eta_0), \forall  t \in [0, t_*].$$
 
 + The case where   $|x| \in \left[  \frac{\epsilon_0}{8}, \frac{\epsilon_0}{4} \right],$ using  item  $(ii)$  in the definition of  $S(t),$ we have
$$  |u(x,t)| \leq   C(\delta_0) \left(  T-t(x)\right)^{-\frac{1}{p-1}}   \leq  C(\epsilon_0, \delta_0), \forall t \in [0, t_*].$$ 
\noindent
Then,  we apply Lemma \ref{lemma-control-P-3} with  $\eta  \leq \frac{\eta_0}{2}$ and   $\sigma=C( \epsilon_0, A, \delta_0, \eta_0),  $ to derive the following  
$$  | u(x^*, t_*)  -  u (x^*, 0)| \leq  \frac{\eta_0}{2}.$$
Therefore, $(II)$  can not occurs.  Thus, \eqref{q-1-2-V-A-s-*} follows.  In addition to that,  from   \eqref{q-1-2-V-A-s-*},    Proposition   \ref{prop-dynamic-q-1-2-alpha-beta} and        Lemma \ref{control-q(s)-V-A-s-1-2},  we  conclude  the proof of item  $(i)$ of Proposition   \ref{pro-reduction- to-finit-dimensional}.  Since,   item  $(ii)$  follows from  item  $(i)$    (see  for instance the proof of  Proposition 3.6, given  in \cite{DU2017}). This concludes  the proof  of  Proposition \ref{pro-reduction- to-finit-dimensional}.

\appendix

\section{ Cauchy problem for  equation \eqref{equ:problem}}\label{appendix-Cauchy-problem}
In this section,  we giva a proof to a local  Cauchy problem in time. 
\begin{lemma}[A local Cauchy problem for a  complex heat equation]\label{local-Chauchy-exitece} Let $u_0$  be   any function  in $L^\infty \left( \mathbb{R}^n, \mathbb{C} \right)$ such that  
\begin{equation}\label{condition-Re-u_0-geq-lambda}
 \text{ Re}(u_0(x)) \geq \lambda, \forall  x \in \mathbb{R}^n, 
\end{equation}
for some constant  $\lambda > 0$. Then, there exists   $T_1> 0 $ such that  equation \eqref{equ:problem} with initial data $u_0,$  has   a unique  solution   on  $\left(0,T_1\right].$ Moreover, $u \in C\left( \left(0,T_1\right], L^\infty (\mathbb{R}^n)\right)$  and 
$$\text{ Re}(u(t))  \geq \frac{\lambda}{2}, \forall  (t,x) \in [0,T_1] \times  \mathbb{R}^n.$$
\end{lemma}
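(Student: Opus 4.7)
The plan is to solve the Cauchy problem by a standard Banach fixed-point argument on the Duhamel integral formulation
$$ u(t) = e^{t\Delta} u_0 + \int_0^t e^{(t-s)\Delta} u(s)^p\, ds, $$
the only nontrivial point being a careful definition of the nonlinear term $u^p$ when $p \notin \mathbb{N}$ and a careful propagation of the lower bound on $\text{Re}(u)$. First I would define $u^p$ on the right half-plane by the principal branch: for any $z \in \mathbb{C}$ with $\text{Re}(z) > 0$, set $z^p := \exp(p\,\text{Log}(z))$, where $\text{Log}$ is holomorphic on $\mathbb{C}\setminus(-\infty,0]$. For $z$ in the closed region $\Omega_{\lambda,M} := \{z \in \mathbb{C} : \text{Re}(z) \geq \lambda/2,\ |z| \leq M\}$, the map $z \mapsto z^p$ is holomorphic with $|(z^p)'| = p|z|^{p-1} \leq pM^{p-1}$, so by the mean-value inequality applied along the straight segment joining two points of $\Omega_{\lambda,M}$ (which is contained in $\Omega_{\lambda,M}$ since the region is convex), one obtains
$$ |z_1^p - z_2^p| \leq pM^{p-1}\, |z_1 - z_2|, \qquad \forall\, z_1, z_2 \in \Omega_{\lambda,M}. $$

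Next I would set $M := \|u_0\|_{L^\infty} + \lambda/2$ and work in the complete metric space
$$ X_T := \bigl\{\, u \in C([0,T], L^\infty(\mathbb{R}^n,\mathbb{C})) \,:\, \|u(t) - u_0\|_{L^\infty} \leq \lambda/2 \text{ for all } t \in [0,T]\,\bigr\}, $$
endowed with the sup-in-time $L^\infty$ norm. Any $u \in X_T$ satisfies $\text{Re}(u(x,t)) \geq \lambda/2$ and $|u(x,t)| \leq M$ pointwise, so $u(t)^p$ is unambiguously defined and belongs to $L^\infty$ by the discussion above. Define the map $\Phi(u)(t) := e^{t\Delta} u_0 + \int_0^t e^{(t-s)\Delta} u(s)^p\, ds$. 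Using that the heat semigroup is a contraction on $L^\infty$ and that $\text{Re}(e^{t\Delta} u_0) = e^{t\Delta}(\text{Re}\, u_0) \geq \lambda$ (since the Gaussian kernel is positive with unit mass), I would estimate
$$ \|\Phi(u)(t) - u_0\|_{L^\infty} \leq \|e^{t\Delta} u_0 - u_0\|_{L^\infty} + T\, M^p, $$
and the first term goes to $0$ as $t\to 0$ by strong continuity of $e^{t\Delta}$ at the initial data (for $u_0 \in L^\infty$, continuity holds in the weak-$*$ sense, but uniform convergence of $e^{t\Delta} u_0 - u_0$ to $0$ as $t \to 0$ holds pointwise and in $L^\infty$ if one reformulates the fixed point slightly, as explained below); so $\Phi(X_T) \subset X_T$ for $T = T_1$ small enough. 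The Lipschitz bound gives
$$ \|\Phi(u) - \Phi(v)\|_{C([0,T],L^\infty)} \leq T\, p\, M^{p-1}\, \|u-v\|_{C([0,T],L^\infty)}, $$
which is a strict contraction for $T_1$ small enough. Banach's theorem then produces a unique fixed point $u$ on $[0,T_1]$, which by construction satisfies $\text{Re}(u(t,x)) \geq \lambda/2$ everywhere and $u \in C([0,T_1], L^\infty)$.

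The one point requiring minor care is the $L^\infty$-continuity of $e^{t\Delta}u_0$ at $t=0$: since $u_0$ is only in $L^\infty$, we do not have $\|e^{t\Delta}u_0 - u_0\|_{L^\infty} \to 0$ in general. The standard workaround is to replace the set $X_T$ by
$$ \tilde X_T := \bigl\{\, u \in L^\infty([0,T]\times\mathbb{R}^n,\mathbb{C}) \,:\, \|u(t) - e^{t\Delta}u_0\|_{L^\infty} \leq \lambda/2 \text{ for a.e. } t \in [0,T]\,\bigr\}, $$
which guarantees $\text{Re}(u) \geq \lambda/2$ directly (since $\text{Re}(e^{t\Delta}u_0) \geq \lambda$), and run the contraction in $L^\infty([0,T]\times\mathbb{R}^n)$. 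The time continuity of the fixed point is then recovered \emph{a posteriori} from the Duhamel representation and the fact that $t \mapsto \int_0^t e^{(t-s)\Delta}u(s)^p\,ds$ is continuous in $L^\infty$ because the integrand is bounded in $L^\infty$ uniformly in $s$. The main obstacle, as I see it, is purely bookkeeping around this continuity issue and around the multivalued nature of $u^p$; once the convex region $\Omega_{\lambda,M}$ is identified as the right domain for $z \mapsto z^p$, the proof becomes a routine Banach fixed point exercise.
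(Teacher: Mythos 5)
Your proposal is correct and follows essentially the same route as the paper: a Banach fixed-point argument on the Duhamel formulation, carried out in a set that simultaneously enforces an $L^\infty$ bound and the lower bound $\text{Re}(u)\geq \lambda/2$ so that $u^p$ (principal branch, locally Lipschitz on the convex region $\{\text{Re}(z)\geq\lambda/2,\ |z|\leq M\}$) is well defined. The only cosmetic difference is that the paper centers its invariant ball at $0$ with radius $2\|u_0\|_{L^\infty}$ and works directly in $C((0,T_1],L^\infty)$, thereby sidestepping the $t=0$ strong-continuity issue that you handle by centering the set at $e^{t\Delta}u_0$ and recovering continuity a posteriori.
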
 
 \begin{proof}
 The proof  relies on a  fixed-point  argument. Indeed, we consider  the  space
 $$ X =  C\left((0,T_1], L^\infty(\mathbb{R}^n, \mathbb{C}) \right).$$
 It is easy to check that  $X$ is an Banach space with the following norm
 $$ \| u \|_X = \sup_{t \in  (0,T_1]} \| u (t)\|_{L^\infty} , \forall u = (u(t))_{t\in (0,T_1]} \in X.$$
We also introduce   the closed set    $B^+_{\lambda}\left( 0, 2 \|u_0\|_{L^\infty} \right) \subset X$  defined as follows
$$ B^+_{\lambda}\left( 0, 2 \|u_0\|_{L^\infty} \right) = \left\{ u \in X \text{ such that }  \|u \|_X   \leq  2 \|u_0\|_{L^\infty}   \right\} \cap \left \{  u \in X |  \forall  t \in (0,T_1],  \text{Re}(u(t,x))  \geq  \frac{\lambda}{2}  \text{ a. e}    \right\}  $$
 Let  $\mathcal{Y}$  be   the following mapping
$$  \mathcal{Y}:   B^+_{\lambda}\left( 0, 2 \|u_0\|_{L^\infty} \right) \to X ,$$
where  $\mathcal{Y}(u) =  (\mathcal{Y}(u)(t))_{t\in (0,T_1]}$   is  defined  by 
\begin{equation}\label{equation-u_0-maping-mathcal-Y}
\mathcal{Y} (u) (t)=  e^{t \Delta } (u_0) + \int_{0}^t e^{(t-s) \Delta } (u^p(s)) d s.
\end{equation}
Note that,  when    $u  \in B^+_{\lambda}\left( 0, 2 \|u_0\|_{L^\infty} \right),$  $u^p$  is well  defined  as in        \eqref{defi-mathbb-A-1-2} and   \eqref{argument-u-1-u-2}.
We  claim  that  there exists $T^*= T^* (\|u_0\|_{L^\infty}, \lambda) > 0$ such that for all   $0< T_1 \leq T^*$,  the   following   assertion  hold:
\begin{itemize} 
\item[$(i)$]      The mapping is reflexive on  $B^+_{\lambda}\left( 0, 2 \|u_0\|_{L^\infty} \right),$  meaning     that 
$$\mathcal{Y} :  B^+_{\lambda}\left( 0, 2 \|u_0\|_{L^\infty} \right)   \to   B^+_{\lambda}\left( 0, 2 \|u_0\|_{L^\infty} \right) .$$
\item[$(ii)$]   The mapping  $\mathcal{Y}$   is  a contraction mapping  on $B^+_{\lambda}\left( 0, 2 \|u_0\|_{L^\infty} \right)$ :
$$   \| \mathcal{Y} (u_1)  -  \mathcal{Y}(u_2)\|_X     \leq   \frac{1}{2} \|u_1 - u_2\|_{X},$$
for all  $u_1,u_2  \in   B^+_{\lambda}\left(0,  2 \|u_0\|_{L^\infty} \right) $.
\end{itemize} 
\textit{The proof for $(i)$:} By observe that,   by using  the regular  property  of  operator $e^{t \Delta},$  we conclude  that   $\mathcal{Y}(u) \in C\left(  (0,T_1], L^\infty(\mathbb{R}^n, \mathbb{C})  \cap  C(\mathbb{R}^n, \mathbb{C})   \right).$     Besides that,  for all  $u \in  B^+_{\lambda}\left( 0, 2 \|u_0\|_{L^\infty} \right) $  we derive from \eqref{equation-u_0-maping-mathcal-Y} that for all $t \in (0,T_1]$
\begin{eqnarray*}
\|  \mathcal{Y} (u) (t) \|_{L^\infty} &=& \left\|  e^{t \Delta}  (u_0)   +   \int_{0}^t e^{(t-s) \Delta } (u^p(s))ds \right\|_{L^\infty}\\
&\leq &  \left\|  e^{t \Delta}   (u_0)  \right\|_{L^\infty}  +    \left\| \int_{0}^t e^{(t-s) \Delta } (u^p(s)) ds \right\|_{L^\infty}\\
&\leq &  \|u_0\|_{L^\infty}  +  t 2^p\|u_0\|_{L^\infty}^p    .
\end{eqnarray*}
Hence,  if we take  $T_1 \leq  \frac{1}{ 2^p \|u_0\|^{p-1}_{L^\infty}} $   then  we have  
$$  \|\mathcal{Y}(u)\|_X  =     \sup_{t\in (0,T_1]} \|  \mathcal{Y} (u)  \|_{L^\infty}  \leq   2 \| u_0\|_{L^\infty}.$$
  Now, let us   note   from    \eqref{condition-Re-u_0-geq-lambda} that 
  $$  \text{Re}\left(e^{t \Delta} (u_0) \right)  =  e^{t \Delta} \left(  \text{Re}(u_0) \right) \geq  e^{t \Delta} \left(  \lambda \right)   =  \lambda.$$
Therefore,    from   \eqref{equation-u_0-maping-mathcal-Y}    for all $(t,x) \in (0,T_1] \times  \mathbb{R}^n$
$$  \text{Re} (\mathcal{Y}(u)(t,x))  \geq  \lambda  -   \left\|  \int_{0}^t e^{(t-\tau)\Delta}(u^p)(\tau ) d \tau\right\|_{L^\infty}.$$
Note that,   
$$\left\|   \int_{0}^t e^{(t-\tau)\Delta}(u^p)(\tau ) d \tau \right\|_{L^\infty}  \leq   t 2^p \|u_0\|_{L^\infty}^p.$$
So,   if  $T_1  \leq  \frac{\lambda}{2^{p+1} \|u_0\|_{L^\infty}},$ then  for all $t \in (0, T_1] \times  \mathbb{R}^n$
$$\text{Re} (\mathcal{Y}(u)(t,x)) \geq  \frac{\lambda}{2}.$$
Therefore, 
$$ \mathcal{Y} (u) \in  B^+_{\lambda}\left(0,  2 \|u_0\|_{L^\infty} \right)  .$$

\textit{The proof  of   $(ii)$:} We first recall that the function  $G(u) = u^p, u \in \mathbb{C} $ is analytic on  
$$\left\{ u \in \mathbb{C} \text{ such that }   \text{ Re}(u) \geq \frac{\lambda}{ 2}\right\}.$$
Then,  there exists $C_2( \|u_0\|_{L^\infty}, \lambda) >0$ such that  
\begin{eqnarray*}
\| \mathcal{Y} (u_1)  -  \mathcal{Y}(u_2) \|_X   &=& \sup_{t\in (0,T_1]}  \left\| \int_0^{t} e^{(t-s)\Delta} \left(u_1^p - u_2^p \right)(s) ds  \right\|_{L^\infty}\\
&\leq & T_1C_2 \sup_{t \in  (0,T_1]} \|u_1 - u_2  \|_{L^\infty}.
\end{eqnarray*}
 Then,  if we   impose
  $$ T_1  \leq    \frac{1}{2C_2},$$
  $(ii)$ follows.
  
\noindent  
We now     choose    $T^* =  \min \left(   \frac{1}{2^p \|u_0\|_{L^\infty}^{p-1}},  \frac{\lambda}{2^{p+1} \|u_0\|_{L^\infty}^{p}} , \frac{1}{2 C_2}\right)$. Then,   for all  $T_1 \leq T^*,$  item  $(i)$ and $ (ii)$  hold.   Thanks  to  a Banach fixed-point argument,  there  exists a unique  $u \in B^+_{\lambda}\left(0,  2 \|u_0\|_{L^\infty} \right) $ such that 
  $$\mathcal{Y}(u) (t) = u(t),  \forall t \in (0,T_1],$$ 
  and we easily check that  $u(t)$ satisfies equation \eqref{equ:problem} for all  $ (0,T_1]$ with $u(0) = u_0$. Moreover,  from  the definition  of  $B^+_{\lambda}\left(0,  2 \|u_0\|_{L^\infty} \right) $ we have 
  $$ \text{ Re}(u)(t,x) \geq   \frac{\lambda}{2}.$$
  This  concludes  the proof of Lemma \ref{local-Chauchy-exitece}.
\end{proof}

 \section{Some Taylor expansions}\label{Taylor-expansions-terms}
In this   section appendix,   we  state and prove several  technical  and  straightforward results needed in our paper.

 \noindent

\begin{lemma}[Asymptotics of $\bar B_1, \bar B_2 $]\label{asymptotic-bar-B-1-2}
We consider  $\bar B_1 (\bar w_1, w_2)$ as in \eqref{defini-bar-B-1}, \eqref{defini-bar-B-2}. Then, the following holds:
 \begin{eqnarray}
\bar B_1 (\bar w_1, w_2) &=& \frac{p}{2 \kappa} \bar w^2_1  + O (|\bar w_1|^3 + |w_2|^2)\label{asymptotic-bar-B-1},\\
\bar B_2( \bar w_1, w_2) &=& \frac{p}{ \kappa} \bar w_1 w_2  + O \left( |\bar w_1|^2 |w_2| \right) + O  \left( |w_2|^3 \right).\label{asymptotic-bar-B-2}                                      
\end{eqnarray}
as $(\bar w_1, w_2) \to (0,0)$.
\end{lemma}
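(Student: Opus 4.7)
The plan is a direct complex Taylor expansion around the constant $\kappa > 0$, combined with a parity argument that is essential for the imaginary part. Set $v = \bar w_1 + i w_2$. Since $\kappa = (p-1)^{-1/(p-1)} > 0$, the principal branch of $z\mapsto z^p$ is holomorphic on the disk $\{|z-\kappa|<\kappa\}$, and for $|v|$ sufficiently small the binomial series gives
\begin{equation*}
(\kappa + v)^p = \kappa^p + p\kappa^{p-1} v + \tfrac{p(p-1)}{2}\kappa^{p-2} v^2 + R(v), \qquad |R(v)| \leq C|v|^3.
\end{equation*}
Using $\kappa^{p-1} = 1/(p-1)$ one simplifies $p\kappa^{p-1} = p/(p-1)$ and $\tfrac{p(p-1)}{2}\kappa^{p-2} = p/(2\kappa)$. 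Since $F_1 + iF_2 = (\kappa+v)^p$ whenever $\text{Re}(\kappa+v)>0$, separating real and imaginary parts via $v^2 = (\bar w_1^2 - w_2^2) + 2i\bar w_1 w_2$ yields
\begin{equation*}
F_1(\kappa + \bar w_1, w_2) = \kappa^p + \tfrac{p}{p-1}\bar w_1 + \tfrac{p}{2\kappa}(\bar w_1^2 - w_2^2) + \text{Re}\, R(v),
\end{equation*}
\begin{equation*}
F_2(\kappa + \bar w_1, w_2) = \tfrac{p}{p-1}w_2 + \tfrac{p}{\kappa}\bar w_1 w_2 + \text{Im}\, R(v).
\end{equation*}

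For $\bar B_1$, subtracting $\kappa^p + \tfrac{p}{p-1}\bar w_1$ according to \eqref{defini-bar-B-1} gives $\bar B_1 = \tfrac{p}{2\kappa}\bar w_1^2 - \tfrac{p}{2\kappa} w_2^2 + \text{Re}\, R(v)$. The $w_2^2$ term is trivially $O(|w_2|^2)$, and the bound $|R(v)| \leq C(|\bar w_1| + |w_2|)^3$ together with $|w_2|^3 \leq |w_2|^2$ for $|w_2|$ small folds the remainder into $O(|\bar w_1|^3 + |w_2|^2)$, proving \eqref{asymptotic-bar-B-1}.

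For $\bar B_2$, the naive estimate $|\text{Im}\, R(v)| \leq C|v|^3$ would produce a spurious $|\bar w_1|^3$ contribution that is not of the desired form — this is the only delicate point of the proof. The way out is the observation that $F_2$ is odd in $w_2$: indeed, $\overline{(\kappa + v)^p} = (\kappa + \bar v)^p$ on the right half-plane, so $F_2(\kappa+\bar w_1, -w_2) = -F_2(\kappa+\bar w_1, w_2)$. Equivalently, in the binomial expansion $v^k = \sum_{j=0}^k \binom{k}{j}\bar w_1^{k-j}(iw_2)^j$, only the terms with $j$ odd contribute to $\text{Im}(v^k)$; hence
\begin{equation*}
|\text{Im}(v^k)| \leq 2^k\, |w_2|\,(|\bar w_1| + |w_2|)^{k-1}, \qquad k\geq 1.
\end{equation*}
Thus every monomial appearing in $\text{Im}\, R(v)$ for $k\geq 3$ has the form $|\bar w_1|^a |w_2|^b$ with $b\geq 1$ and $a+b\geq 3$, and an elementary AM--GM argument ($|\bar w_1|^a |w_2|^b \leq |\bar w_1|^2 |w_2| + |w_2|^3$ whenever $|\bar w_1|, |w_2| \leq 1$, $b\geq 1$, $a+b\geq 3$) shows that the entire remainder satisfies $\text{Im}\, R(v) = O(|\bar w_1|^2 |w_2|) + O(|w_2|^3)$. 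This completes \eqref{asymptotic-bar-B-2}.

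The hard step is the parity/bookkeeping argument in Step~4; once the factor of $w_2$ is pulled out of each term of $\text{Im}(v^k)$, the rest is routine. Everything else is a straightforward evaluation of derivatives of $z^p$ at $z=\kappa$, which is well defined precisely because we have restricted to the half-plane $\text{Re}(u) > 0$ (guaranteed by our shrinking set $S(t)$ in Definition~\ref{defini-shrinking-set}).
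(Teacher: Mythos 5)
Your proof is correct, and it follows a genuinely different route from the paper's. You exploit holomorphy: identifying $F_1+iF_2$ with the principal branch of $(\kappa+v)^p$, $v=\bar w_1+iw_2$, you read off both \eqref{asymptotic-bar-B-1} and \eqref{asymptotic-bar-B-2} from the single complex Taylor expansion $\kappa^p+\frac{p}{p-1}v+\frac{p}{2\kappa}v^2+R(v)$, and you correctly isolate the one delicate point, namely that the crude bound $|\mathrm{Im}\,R(v)|\le C|v|^3$ would leave a spurious $|\bar w_1|^3$ in $\bar B_2$; you recover the needed factor of $w_2$ from the conjugation symmetry $\overline{(\kappa+v)^p}=(\kappa+\bar v)^p$ (oddness of $F_2$ in $w_2$, i.e. only odd powers of $iw_2$ contribute to $\mathrm{Im}(v^k)$), followed by an elementary absorption of the monomials $|\bar w_1|^a|w_2|^b$, $b\ge1$, $a+b\ge3$. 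The paper instead proves \eqref{asymptotic-bar-B-2} by working directly with the real trigonometric representation \eqref{defi-mathbb-A-1-2}: it writes $\bar B_2=\bigl((\kappa+\bar w_1)^2+w_2^2\bigr)^{p/2}\sin\bigl[p\arcsin\bigl(w_2/\sqrt{(\kappa+\bar w_1)^2+w_2^2}\bigr)\bigr]-\frac{p}{p-1}w_2$, splits off the piece where $\sin(p\arcsin x)$ is replaced by $px$ (estimated by $|\sin(px)-px|\le C|x|^3$, hence $O(|w_2|^3)$), and then Taylor-expands the remaining modulus factor $\bigl((\kappa+\bar w_1)^2+w_2^2\bigr)^{(p-1)/2}$ in the real variables; in that decomposition the explicit factor $w_2$ is built in from the start, so no parity bookkeeping is required. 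Your approach buys a unified, structurally transparent treatment of both expansions and explains why no $|\bar w_1|^3$ term can appear in the imaginary part; the paper's approach stays entirely within the real formulas for $F_1,F_2$ that are reused verbatim in the later estimates of the potentials $V_{j,k}$ and the quadratic terms $B_1,B_2$, which is why that computation is the one the paper carries out. One point to keep explicit in your write-up: the term-by-term bound with the $2^k$ factors requires the binomial series to converge absolutely, e.g. $2(|\bar w_1|+|w_2|)<\kappa$, which is harmless since the statement is an asymptotic as $(\bar w_1,w_2)\to(0,0)$.
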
 
\begin{proof}
The proof  of   \eqref{asymptotic-bar-B-1}  is  quite the same as  the proof of   \eqref{asymptotic-bar-B-2}.  So, we only  prove   \eqref{asymptotic-bar-B-2}, hoping the reader will have  no problem to check   \eqref{asymptotic-bar-B-1}  if necessary.   Since,  $\kappa = (p-1)^{-\frac{1}{p-1}} > 0,$ we derive   $\kappa + \bar w_1 > 0$ when  $\bar w_1$ is near $0$,  so we can   write   $B_2 ( \bar w_1, w_2)$ as follows 
$$ \bar B_2 (\bar w_1, w_2)   =  \left(  (\kappa + \bar w_1)^2  + w_2^2     \right)^{\frac{p}{2}} \sin \left[   p \arcsin \left(   \frac{w_2}{ \sqrt{  (\kappa + \bar w_1)^2   + w_2^2 }} \right)  \right] - \frac{p}{p-1} w_2   ,$$
as $\bar w_1  \to 0 .$  Thus,  
\begin{eqnarray*}
B_2 (\bar w_1, w_2) & = &  \left(  (\kappa + \bar w_1)^2  + w_2^2     \right)^{\frac{p}{2}}   \frac{p w_2}{ \sqrt{  (\kappa + \bar w_1)^2   + w_2^2 }} -  \frac{p}{p-1} w_2\\
& +& \left(  (\kappa + \bar w_1)^2  + w_2^2     \right)^{\frac{p}{2}}  \left\{    \sin \left[   p \arcsin \left(   \frac{w_2}{ \sqrt{  (\kappa + \bar w_1)^2   + w_2^2 }} \right)  \right] -  \frac{p w_2}{ \sqrt{  (\kappa + \bar w_1)^2   + w_2^2 }}  \right\}\\
& = &  \left(  (\kappa + \bar w_1)^2  + w_2^2     \right)^{\frac{p-1}{2}}  p w_2 -  \frac{p}{p-1} w_2 \\
& +& \left(  (\kappa + \bar w_1)^2  + w_2^2     \right)^{\frac{p}{2}}  \left\{    \sin \left[   p \arcsin \left(   \frac{w_2}{ \sqrt{  (\kappa + \bar w_1)^2   + w_2^2 }} \right)  \right] -  \frac{p w_2}{ \sqrt{  (\kappa + \bar w_1)^2   + w_2^2 }}  \right\}\\
&=& (I)  + (II).
\end{eqnarray*}
In addtion to that, we have the fact 
\begin{eqnarray*}
  \sin (p x)         - p x  =  O(|x|^3),\\
\frac{w_2}{ \sqrt{ (\kappa + \bar w_1)^2  + w_2^2}}     =  O(|w_2|),
\end{eqnarray*}
as   $x \to 0$ and  $(\bar w_1, w_2) \to (0,0)$.
Plugging these  estimates in $(II),$ we ontain 
\begin{eqnarray*}
 (II)   = O (|w_2|^3)\label{estima(II)-lema-A-bar-B-2}.
\end{eqnarray*}
as  $(\bar w_1, w_2) \to (0,0)$. For $(I),$ we use a  Taylor expansion   for $ ((\kappa + \bar w_1)^2  + w_2^2)$, around $(\bar w_1, w_2) =  (0,0):$  
$$   ( (\kappa  + \bar w_1)^2 + w_2^2)^{\frac{p}{2}} =  \frac{1}{p-1}  + \frac{(p-1)  }{ \kappa (p-1)} \bar w_1    + O (|\bar w-1|^2)  + O(|w_2|^2).$$
Plugging this in  $(I),$ we derive the following:
$$ (I)  =  \frac{p}{\kappa}  \bar w_1 w_2  + O(|\bar w_1|^2 w_2) + O(|w_2|^3),$$
as $(\bar w_1, w_2) \to (0,0).$ From the estimates of $(I)$ and $(II),$ we conclude the Lemma.
\end{proof}
IN the following lemma,   we aim at  giving  bounds  on the    principal potential $V$ and the   potentials $V_{i,j}$:
\begin{lemma}[The potential functions $V$ and  $ V_{j,k} $ with $ j,k \in \{1,n\}$]\label{lemmas-potentials}  We consider  $ V , V_{1,1},V_{1,2},V_{2,1} $ and $ V_{2,2}$    defined   in \eqref{defini-potentian-V}  and  \eqref{defini-V-1-1} - \eqref{defini-V-2-2}. Then, the following holds:

$(i)$ For all $s \geq 1 $ and  $y \in \mathbb{R}^n$,  we have  $| V(y,s)| \leq C,$ 
   \begin{equation}\label{esti-y-R-n-V-1-1-and-22}
 \left| V (y,s)\right|  \leq \frac{C (1 + |y|^2)}{s},
 \end{equation}
 and 
 \begin{equation}\label{the-potential-V-inside-blowup-region}
	V(y,s) = - \frac{(|y|^2 -2 n  )}{4 s} + \tilde V(y,s),
\end{equation}
where $\tilde V$ satisfies 
\begin{equation}\label{defini-tilde V.}
|\tilde V (y,s)| \leq C \frac{(1 + |y|^4)}{s^2}, \forall s \geq 1, |y| \leq 2 K_0 \sqrt s. 
\end{equation}
 
 $(ii)$  The  potential functions  $V_{j,k}$ with $j,k \in \{ 1,2\}$ satisfy the following
 \begin{eqnarray*}
 \|V_{1,1} \|_{L^\infty}  + \| V_{2,2}\|_{L^\infty}  &\leq &  \frac{C}{s^2},\\
  \|V_{1,2} \|_{L^\infty}  + \| V_{2,1}\|_{L^\infty}  &\leq &  \frac{C}{s},\\
 \left|  V_{1,1} (y,s) \right|  + \left| V_{2,2} (y,s) \right| &\leq & \frac{C (1 + |y|^4)}{s^4},\\ 
 \left|  V_{1,2} (y,s) \right|  + \left| V_{2,1} (y,s) \right| &\leq & \frac{C (1 + |y|^2)}{s^2},
 \end{eqnarray*}
for all $s \geq 1$ and  $y \in \mathbb{R}^n.$
\end{lemma}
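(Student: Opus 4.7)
The argument splits naturally along items $(i)$ and $(ii)$. For item $(i)$, my plan is to Taylor-expand $\Phi_1^{p-1}$ around $\kappa^{p-1} = 1/(p-1)$. Writing $z = |y|^2/s$ and $b = (p-1)^2/(4p)$, the identity $(p-1+bz)^{-1/(p-1)} = \kappa(1 + bz/(p-1))^{-1/(p-1)}$ expands as $\kappa - \kappa z/(4p) + O(z^2)$ for small $z$. Combined with the additive correction $n\kappa/(2ps)$ in the definition of $\Phi_1$, this yields $\Phi_1 - \kappa = -\kappa(|y|^2 - 2n)/(4ps) + O((1+|y|^4)/s^2)$ in the regime $|y| \leq 2K_0\sqrt{s}$. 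Composing with a Taylor expansion of $u \mapsto u^{p-1}$ at $u=\kappa$ and multiplying by $p$ gives the decomposition $V = -(|y|^2-2n)/(4s) + \tilde{V}$ with the claimed bound on $\tilde{V}$. The uniform bound $|V| \leq C$ then follows from $0 < \Phi_1 \leq \kappa + n\kappa/(2p)$, while $|V| \leq C(1+|y|^2)/s$ follows by combining the Taylor estimate on $\{|y|^2 \leq s\}$ with the trivial bound on $\{|y|^2 \geq s\}$, where $(1+|y|^2)/s \geq 1$.

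For item $(ii)$, the key observation is that the holomorphicity of $u \mapsto u^p$ yields Cauchy--Riemann-type relations $\partial_{u_1} F_1 = \partial_{u_2} F_2$ and $\partial_{u_2} F_1 = -\partial_{u_1} F_2$. Consequently $V_{1,1} = V_{2,2}$ and $V_{1,2} = -V_{2,1}$, so it suffices to estimate $V_{1,1}$ and $V_{1,2}$. Since $\Phi_1 > 0$, I would use the polar factorization $(\Phi_1 + i\Phi_2)^{p-1} = \Phi_1^{p-1}(1+i\Phi_2/\Phi_1)^{p-1}$ and Taylor-expand the analytic function $(1+ix)^{p-1} = 1 + (p-1)ix - \tfrac{(p-1)(p-2)}{2}x^2 + O(x^3)$ in the small parameter $x = \Phi_2/\Phi_1$. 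Separating real and imaginary parts gives $V_{1,1} = -\tfrac{p(p-1)(p-2)}{2}\Phi_1^{p-3}\Phi_2^2 + O(\Phi_1^{p-5}\Phi_2^4)$ and $V_{1,2} = -p(p-1)\Phi_1^{p-2}\Phi_2 + O(\Phi_1^{p-4}\Phi_2^3)$. The pointwise bounds $|V_{1,1}| \leq C(1+|y|^4)/s^4$ and $|V_{1,2}| \leq C(1+|y|^2)/s^2$ then follow in the regime of bounded $z$ by using $\Phi_1 \sim \kappa$ together with $|\Phi_2| \leq C(1+|y|^2)/s^2$.

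The main obstacle will be validating the Taylor expansion uniformly in $y$ and deriving the global $L^\infty$ estimates, which requires showing $\Phi_2/\Phi_1$ is uniformly small and controlling $\Phi_1^{p-3}$ when $\Phi_1$ itself is small (unavoidable since $p-3$ may be negative). I would handle this by computing asymptotics at large $z$: from the definitions, $\Phi_1 \sim c_1 z^{-1/(p-1)}$ and $\Phi_2 \sim c_2 s^{-1} z^{-1/(p-1)}$, so $\Phi_2/\Phi_1 = O(1/s)$ uniformly in $y$, making the expansion valid everywhere. Moreover, the product $\Phi_1^{p-3}\Phi_2^2$ behaves like $s^{-2}z^{-1}$ at large $z$, bounded by $C/s^2$ since $z \geq 1$, while for bounded $z$ it is $O(1/s^4) \leq C/s^2$. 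An identical regime split handles $\Phi_1^{p-2}\Phi_2 \sim s^{-1}z^{-1}$ to obtain $\|V_{1,2}\|_\infty \leq C/s$, completing the argument.
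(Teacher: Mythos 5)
Your proof is correct, and for item $(ii)$ it takes a genuinely cleaner route than the paper's. The paper does not prove item $(i)$ at all (it cites Lemma B.1 of \cite{NZens16}), whereas you supply the standard Taylor expansion of $\Phi_1^{p-1}$ around $\kappa^{p-1}$; your coefficient $-(|y|^2-2n)/(4s)$ and the regime split at $|y|^2\sim s$ are exactly right. For item $(ii)$ the paper stays in the real formulation \eqref{defi-mathbb-A-1-2}: it writes $V_{1,1}$ (and $V_{2,2}$) in terms of $\cos\left[p\arcsin\left(\Phi_2/\sqrt{\Phi_1^2+\Phi_2^2}\right)\right]$ and $\sin\left[p\arcsin\left(\cdot\right)\right]$, splits into the three pieces $V_{1,1,1}+V_{1,1,2}+V_{1,1,3}$, and estimates each with $\|\Phi_2/\Phi_1\|_{L^\infty}\le C/s$, $|\cos(p\arcsin x)-1|\le Cx^2$, $|\sin(p\arcsin x)-x|\le C|x|^3$, plus the same split between $|y|\le 2K_0\sqrt s$ (where $\Phi_1$ is bounded below and $|\Phi_2|\le C(1+|y|^2)/s^2$) and $|y|\ge K_0\sqrt s$ (where the weighted pointwise bounds follow from the sup bounds). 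You instead use holomorphy of $u\mapsto u^p$ on $\{\mathrm{Re}\,u>0\}$, so the Cauchy--Riemann relations give $V_{1,1}=V_{2,2}$ and $V_{1,2}=-V_{2,1}$, reducing everything to one complex expansion of $p\Phi_1^{p-1}(1+i\Phi_2/\Phi_1)^{p-1}$; this halves the bookkeeping and makes the structure ($V_{1,1}$ quadratic in $\Phi_2/\Phi_1$, $V_{1,2}$ linear) transparent, while the paper's version avoids invoking complex analyticity and only uses elementary trigonometric inequalities. One small inaccuracy in your write-up: the large-$z$ asymptotics $\Phi_1\sim c_1z^{-1/(p-1)}$, $\Phi_2\sim c_2 s^{-1}z^{-1/(p-1)}$ ignore the additive constants $\frac{n\kappa}{2ps}$ and $-\frac{2n\kappa}{(p-1)s^2}$ in \eqref{defi-Phi-1}--\eqref{defi-Phi-2}, which dominate once $z\gtrsim s^{p-1}$ (there $\Phi_2$ is in fact negative). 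This is harmless because the ratio of those constants is again $O(1/s)$, so $|\Phi_2/\Phi_1|\le C/s$ holds uniformly in $y$ (the paper's \eqref{inequa-Phi-2-Phi-1}); the cleanest repair, and effectively what the paper does, is to bound $\Phi_1^{p-3}\Phi_2^2=\Phi_1^{p-1}(\Phi_2/\Phi_1)^2\le C/s^2$ and $\Phi_1^{p-2}|\Phi_2|=\Phi_1^{p-1}|\Phi_2/\Phi_1|\le C/s$ directly from this ratio bound and the boundedness of $\Phi_1$, rather than through those intermediate asymptotics.
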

\begin{proof}  We  note  that    the proof of    $(i)$   was given  in   Lemma B.1,  page 1270   in \cite{NZens16}. So, it  remains  to  prove  item $(ii)$. Moreover,  the technique for these estimates is the  same, so we only  give the  proof to the following estimates:
\begin{eqnarray}
 \|V_{1,1} \|_{L^\infty}  + \| V_{2,2}\|_{L^\infty}  &\leq &  \frac{C}{s^2}, \label{estima-norm-V-1-1-2_2-s_2}\\
  \left|  V_{1,1} (y,s) \right|  + \left| V_{2,2} (y,s) \right| &\leq & \frac{C (1 + |y|^4)}{s^4}.\label{estimate-absol-V-1-1-V-2-2}
\end{eqnarray} 

+ \textit{The proof of \eqref{estima-norm-V-1-1-2_2-s_2}:} We recall the expression  of  $V_{1,1}$ and $V_{2,2}:$
\begin{eqnarray*}
V_{1,1} &=&  \partial_{u_1} F_1( u_1, u_2) |_{(u_1,u_2) =  (\Phi_1, \Phi_2)}  -  p  \Phi_1^{p-1},\\
V_{2,2} & = & \partial_{u_2} F_2 (u_1,u_2) |_{(u_1,u_2) = (\Phi_1, \Phi_2) }  - p \Phi_1^{p-1}, 
\end{eqnarray*}
where  $\Phi_1, \Phi_2$ are   given  by \eqref{defini-V-1-1} and \eqref{defini-V-2-2}.  Hence, we can  rewrite $V_{1,1} $ and  $V_{2,2}$ as follows 
\begin{eqnarray*}
V_{1,1} & =&    p (u_1^2 + u_2^2)^{\frac{p}{2}}  \left( u_1 \cos \left[  p  \arcsin \left(  \frac{\Phi_2}{\sqrt{ \Phi_1^2  + \Phi_2^2}} \right)\right]    -  u_2 \sin \left[  p  \arcsin \left(  \frac{\Phi_2}{\sqrt{ \Phi_1^2  + \Phi_2^2}} \right)\right]      \right) \\
& -&  p \Phi_1^{p-1},\label{formular-exactl-V-1-1}\\
V_{2,2} & =  & p (u_1^2 + u_2^2)^{\frac{p}{2}}  \left( u_1 \cos \left[  p  \arcsin \left(  \frac{\Phi_2}{\sqrt{ \Phi_1^2  + \Phi_2^2}} \right)\right]      \right) + u_2 \sin \left[  p  \arcsin \left(  \frac{\Phi_2}{\sqrt{ \Phi_1^2  + \Phi_2^2}} \right)\right]    \label{formular-exactl-V-2-2} \\
& -&  p \Phi_1^{p-1},\\
 \end{eqnarray*}
 We first estimate to $V_{1,1}$,    from  the above   equalities,    we decompose $V_{1,1}$ into the following  
 \begin{eqnarray}
 V_{1,1} =    V_{1,1,1} + V_{1,1,2}  + V_{1,1,3},\label{decom-pose-V-1-1}
 \end{eqnarray}
 where 
 \begin{eqnarray*}
 V_{1,1,1}  &=&    p \left(  \Phi_1^2     +     \Phi_2^2 \right)^{\frac{p-2}{2}}  \Phi_1 -  p \Phi_1^{p-1},\\
 V_{1,1,2} & =&   p \left(  \Phi_1^2     +     \Phi_2^2 \right)^{\frac{p-2}{2}} \Phi_1 \left(    \cos\left[  p  \arcsin \left(  \frac{\Phi_2}{\sqrt{ \Phi_1^2  + \Phi_2^2}} \right)\right]  - 1 \right),\\
V_{1,1,3} & = &  - p (\Phi_1^2 + \Phi_2^2)^{\frac{p-2}{2}}  \Phi_2  \sin \left[  p  \arcsin \left(  \frac{\Phi_2}{\sqrt{ \Phi_1^2  + \Phi_2^2}} \right)\right]. 
 \end{eqnarray*}
 As matter of fact,  from the  definitions of  $\Phi_1, \Phi_2 ,$  we have the following
\begin{eqnarray}
\left\|   \frac{\Phi_2(.,s)}{\Phi_1(.,s)} \right\|_{L^\infty} &\leq & \frac{C }{s}, \label{inequa-Phi-2-Phi-1}\\
\| \Phi_1 (.,s)\|_{L^\infty}  & \leq &  C \label{norn-Phi-1-bound},\\
\| \Phi_2 (.,s)\|_{L^\infty}  & \leq  &  \frac{C}{s}, \label{norm-Phi-2-bound}  
\end{eqnarray} 
    for all $s \geq 1$ and 
\begin{eqnarray}
 \left|    \cos ( p  \arcsin x) -  1   \right| \leq  C |x|^2, \label{inequa-cos-p-arcsin}\\
 \left|   \sin  (p \arcsin x)    - x  \right|  \leq  C|x|^3,\label{inequa-sin-p-arcsin}
\end{eqnarray}
 for all  $|x| \leq  1$. By using  \eqref{inequa-Phi-2-Phi-1}, \eqref{norn-Phi-1-bound}, \eqref{norm-Phi-2-bound},  \eqref{inequa-cos-p-arcsin} and  \eqref{inequa-sin-p-arcsin}, we  get the following bound for   $V_{1,1,2}$ and $V_{1,1,3}$
 \begin{equation}\label{norm-V-1-1-2-3-infi-s-2}
 \| V_{1,1,2} (.,s)\|_{L^\infty}   +   \| V_{1,1,3} (.,s)\|_{L^\infty} \leq    \frac{C}{s^2}.
 \end{equation}
 For  $V_{1,1,1},$ using  \eqref{inequa-Phi-2-Phi-1}, we derive
 \begin{eqnarray*}
 | V_{1,1,1} |  =   \left|  p \Phi_1^{p-1}   \left(     \left(   1  +   \frac{\Phi_2^2}{\Phi_1^2}    \right) ^{\frac{p-2}{2}}- 1  \right)  \right|    \leq  \frac{C}{s^2}.
 \end{eqnarray*}
 This  gives the following  
 $$ \|V_{1,1} (.,s)\|_{L^\infty}  \leq \frac{C}{s^2}.$$
 We  can apply the technique to   $V_{2,2}$  to get a similar estimate as follows
 $$  \| V_{2,2}(.,s)\|_{L^\infty}   \leq \frac{C}{s^2}.$$
Then,   \eqref{estima-norm-V-1-1-2_2-s_2} follows.
  
  + \textit{The proof of     \eqref{estimate-absol-V-1-1-V-2-2}:} We can see that  on the domain  $\{ |y|   \geq K_0   \sqrt{s} \}$   we have  
  $$   \frac{1 + |y|^4}{s^4}  \geq \frac{C}{s^2},$$
  and   in particular,   we    have \eqref{estima-norm-V-1-1-2_2-s_2}.   Thus,     for all   $|y|  \geq  K_0 \sqrt s. $
  $$   |V_{1,1} (y,s)|   + |V_{  2,2}  (y,s) |   \leq  \frac{C   (|y|^4  + 1)}{s^4}.$$
  Therefore, it is sufficient to  give the estimate   on the domain  $\{ |y|  \leq 2 K_0 \sqrt s\}$.    On this domain,    we have the following: there existes   $C (K_0) > 0$ such that  
  $$    \frac{1}{C}   \leq     \Phi_1 (y,s)    \leq     C.$$
    In addition to that,    using  the definition  of  $\Phi_2$   given by  \eqref{defi-Phi-1}, we  derive the following  
 \begin{equation}\label{estima-Phi-2-y-s}
 \left|   \Phi_2 (y,s)\right| \leq   C \frac{( |y|^2  + 1 )}{s^2}, \forall  (y,s) \in \mathbb{R}^n \times [1, + \infty).
\end{equation}  
Then,  from  \eqref{decom-pose-V-1-1}    we have 
\begin{eqnarray*}
\left|   V_{1,1,2}  (y,s)    \right|   & \leq  &   |\Phi_2 (y,s)|^2    \leq  C  \frac{ (1 +  |y|^4)}{s^4},\\
\left| V_{1,1,3} (y,s)\right| & \leq &    \left|   \Phi_2 (y,s)\right|^2 \leq     C  \frac{ (1 +  |y|^4)}{s^4}.  
\end{eqnarray*}
We now estimate  $V_{1,1,1}$,   thanks to   a Taylor  expansion  of  $( \Phi_1^2  + \Phi_2^2)^{\frac{p-2}{2}}     ,$ around    $\Phi_2$ 
$$ \left|   (\Phi_1^2 + \Phi_2^2)^{\frac{p-2}{2}}      -    \Phi_1^{p-2}   \right|  \leq   C |  \Phi_2 |^2 .$$
This   directly   yields   
$$  \left|   V_{1,1,1}(y,s) \right|   \leq   C(K_0)  | \Phi_2|^2   \leq   C   \frac{(1 + |y|^4)}{s^4}. $$    
So,      
$$\left|     V_{1,1} (y,s) \right|    \leq  C  \frac{(1 +  |y|^4)}{s^4} , \forall  y  \in \mathbb{R}^n.$$
Moreover,  we can   proceed similarly      for  $V_{2,2}$, and   get 
$$   |  V_{2,2}  (y,s) |    \leq      C  \frac{(1 +  |y|^4)}{s^4} \forall   y \in   \mathbb{R}^n.$$
Thus,   \eqref{estimate-absol-V-1-1-V-2-2}  follows.
\end{proof}

Now, we give some estimates on the nonlinear terms $B_1(q_1, q_2)$ and   $B_2(q_1, q_2)$

\begin{lemma}[The terms $B_1 (q_1,q_2)$ and $B_2 (q_1,q_2)$]\label{lemma-quadratic-term-B-1-2} We consider $B_1 (q_1,q_2),B_2 (q_1,q_2)$  as defined  in \eqref{defini-quadratic-B-1} and  \eqref{defini-term-under-linear-B-2},  respectively. For all $A \geq 1,$ there exists $s_9 (A) \geq 1$ such that for all $s_0 \geq s_{9} (A),$   if  $(q_1,q_2) (s) \in V_A(s)$ and  $q_1 (s) + \Phi_1 (s) \geq \frac{1}{2} e^{-\frac{s}{p-1}}$ for all  $s \in [s_0, s_1]$, then, the  following holds: for all  $s \in [s_0, s_1],$
\begin{eqnarray}
\left| \chi (y,s)  B_1 (q_1,q_2)   \right| &\leq &  C \left( |q_1|^2 + |q_2|^2  \right),\label{estimate-B-1-q-1-2-inside-blowup}\\
| \chi (y,s) B_2 (q_1,q_2)| & \leq  & C   \left(  \frac{|q_1|^2}{s} +  |q_1| |q_2|   +  |q_2|^2  \right),\label{estimate-B-2-q-1-2-inside-blowup}\\
\|B_1(q_1,q_2)\|_{L^\infty} &\leq &   \frac{C A^{4}}{s^{\frac{p}{2}}},\label{estimate-B-1-q-1-2-allspace} \\
\| B_2 (q_1,q_2)\|_{L^\infty} & \leq & \frac{C A^{2}}{s^{  1 +   \min\left(  \frac{p-1}{4}, \frac{1}{2}\right)   }},\label{estimate-B-2-q-1-2-allspace}
\end{eqnarray}
where  $\chi(y,s)$ is defined  as in \eqref{def-chi}.
\end{lemma}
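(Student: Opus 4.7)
The proof naturally splits according to the position in $\mathbb{R}^n$: on $\operatorname{supp}(\chi) \subset \{|y|\leq 2K_0\sqrt{s}\}$, the profile $\Phi_1$ is bounded above and below by positive constants depending on $K_0$, while $|\Phi_2| = O(1/s)$; outside this region $\Phi_1$ and $\Phi_2$ become small while the positivity hypothesis $\Phi_1+q_1 \geq \tfrac{1}{2}e^{-s/(p-1)}$ still keeps $w = \Phi + q$ inside the half-plane $\{u_1>0\}$, where $F_1, F_2$ are real-analytic. Because $p\notin\mathbb{N}$, we cannot use the polynomial expansion of \cite{DU2017}; instead, we exploit the analyticity of $F_1 + iF_2 = u^p$ on $\{u_1 > 0\}$ together with the smallness of the various components of $q$ supplied by Lemma \ref{lemma-analysis-V-A}.

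For the inner pointwise bounds \eqref{estimate-B-1-q-1-2-inside-blowup}--\eqref{estimate-B-2-q-1-2-inside-blowup}, I would first note that on $\operatorname{supp}(\chi)$ and for $s_0$ large enough, the uniform bound $\|q_1\|_{L^\infty}\leq CA^2/\sqrt{s}$ guarantees that the whole segment $\{(\Phi_1+tq_1,\Phi_2+tq_2):t\in[0,1]\}$ stays in the region $\{u_1\geq c(K_0)/2\}$. Then Taylor's theorem at second order with integral remainder reduces everything to bounding the Hessians of $F_1, F_2$ along that segment. A direct computation using $(u_1+iu_2)^{p-2} = \Phi_1^{p-2}(1 + i\Phi_2/\Phi_1)^{p-2}$ and $\Phi_2/\Phi_1 = O(1/s)$ gives
\[
\partial_{u_1}^2 F_1,\ \partial_{u_1u_2} F_1,\ \partial_{u_2}^2 F_1 = O(1), \qquad \partial_{u_1}^2 F_2,\ \partial_{u_2}^2 F_2 = O(1/s),\quad \partial_{u_1u_2}F_2 = O(1),
\]
yielding \eqref{estimate-B-1-q-1-2-inside-blowup} and the crucial prefactor $1/s$ in front of $|q_1|^2$ in \eqref{estimate-B-2-q-1-2-inside-blowup}. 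This mirrors the integer-case analysis of \cite{DU2017}, with a Hessian expansion replacing the binomial expansion.

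For the global bounds \eqref{estimate-B-1-q-1-2-allspace}--\eqref{estimate-B-2-q-1-2-allspace}, on $\operatorname{supp}(\chi)$ I would simply combine Step 1 with the $L^\infty$ bounds of Lemma \ref{lemma-analysis-V-A}. In the outer region $\{|y|\geq K_0\sqrt{s}\}$, where $\Phi_1,\Phi_2$ are small, no small prefactor is available from the profile; there I would estimate $B_j$ term by term using $|F_j(w)| \leq |w|^p$, $|F_j(\Phi)| \leq |\Phi|^p$, and $|\partial F_j(\Phi)| \leq C|\Phi|^{p-1}$, together with $\|q\|_{L^\infty}\leq CA^2/\sqrt{s}$ and the positivity condition to keep $F_j$ well-defined along the evaluation segment. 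The dominant outer contribution is $|w|^p \leq C(|\Phi|+|q|)^p \leq CA^{2p}/s^{p/2}$, which, combined with the inner bound $CA^4/s$, yields \eqref{estimate-B-1-q-1-2-allspace} after absorbing powers of $A$ using $A\geq 1$. For $B_2$, the asymmetric smallness of the two components (namely $\|q_2\|_{L^\infty}\leq CA^3/s^{(p_1+2)/2}$ is much smaller than $\|q_1\|_{L^\infty}$), combined with the refined outer estimate $|F_2(w)| \lesssim |w_2|\,|w|^{p-1}$ coming from $F_2 = \operatorname{Im}(u^p)$ and the smallness of $w_2$, leads to \eqref{estimate-B-2-q-1-2-allspace}. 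The exponent $1+\min((p-1)/4,1/2)$ arises as the worst rate among the different regimes, and precisely explains the hypothesis $p_1 < \min((p-1)/4,1/2)$ needed for $V_A(s)$ to be preserved by the flow.

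The main obstacle is the non-smoothness of $u\mapsto u^p$ at $u_1 = 0$: the positivity lower bound $w_1 \geq \tfrac{1}{2}e^{-s/(p-1)}$ is exponentially small, so it is too weak to control negative powers $|w|^{p-2}$ uniformly. This forces the careful distinction between the inner region (where $\Phi_1$ supplies a uniform lower bound and Taylor expansion is sharp) and the outer region (where one must fall back on $|F_j(w)| \lesssim |w|^p$ with $|w|$ small thanks to the smallness of $q$). The interplay of these two regimes, together with the $\Phi_2/\Phi_1 = O(1/s)$ structure responsible for the improved $|q_1|^2/s$ term in $B_2$, produces the specific exponents $p/2$ and $1+\min((p-1)/4,1/2)$ appearing in the statement.
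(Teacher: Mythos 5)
Your treatment of the inner estimates \eqref{estimate-B-1-q-1-2-inside-blowup}--\eqref{estimate-B-2-q-1-2-inside-blowup} is essentially the paper's: on $\mathrm{supp}\,\chi$ one has $\Phi_1\geq c(K_0)>0$ and $|\Phi_2|+|q_2|=O(1/s)$, so a Taylor expansion of $F_1,F_2$ around $(\Phi_1,\Phi_2)$ (the paper pushes it to higher order, you use a second-order integral remainder, which is fine) together with $\partial_{u_1}^2F_2=p(p-1)\,\mathrm{Im}\,(u^{p-2})=O(1/s)$ along the segment gives the factor $|q_1|^2/s$. That part is correct.

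The genuine gap is in the global bounds \eqref{estimate-B-1-q-1-2-allspace}--\eqref{estimate-B-2-q-1-2-allspace}. First, your premise that ``$\Phi_1,\Phi_2$ are small'' on $\{|y|\geq K_0\sqrt{s}\}$ is false: for $|y|$ of order a few multiples of $\sqrt{s}$ (where $\chi=0$ but $q_1=q_{1,e}$ may be of size $A^2/\sqrt{s}$), $\Phi_1$ is still of order a constant depending on $K_0$, so the term-by-term bound $|F_j(w)|+|F_j(\Phi)|+|\partial F_j(\Phi)||q|$ only yields $O(1)$, not $O(s^{-p/2})$; likewise your step $|w|^p\leq C(|\Phi|+|q|)^p\leq CA^{2p}/s^{p/2}$ is not valid there. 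Second, and more fundamentally, a triangle-inequality estimate cannot reach \eqref{estimate-B-2-q-1-2-allspace}: the subtracted linear piece $p\Phi_1^{p-1}q_2$ alone has size up to $CA^3 s^{-1-p_1/2}$, which already exceeds $CA^2 s^{-1-\min(\frac{p-1}{4},\frac12)}$ since $p_1<\min\bigl(\frac{p-1}{4},\frac12\bigr)$; the cancellation inside $B_2$ must be exploited everywhere, including where $|w|$ is small and, for $p<3$, the Hessian $\sim|w|^{p-3}$ degenerates so that naive Taylor also fails. The paper's proof handles exactly this by decomposing $B_2$ into the explicit pieces $B_{2,1},\dots,B_{2,6}$ and by the global H\"older-type inequality \eqref{estima-Phi-1-2-q-1-2-Phi-1-2},
\begin{equation*}
\Bigl|\bigl((\Phi_1+q_1)^2+(\Phi_2+q_2)^2\bigr)^{\frac{p-1}{2}}-\bigl(\Phi_1^2+\Phi_2^2\bigr)^{\frac{p-1}{2}}\Bigr|\;\leq\;C\,|Z|^{\min\left(\frac{p-1}{2},1\right)},\qquad Z=2q_1\Phi_1+2q_2\Phi_2+q_1^2+q_2^2,
\end{equation*}
which requires no lower bound on $\Phi$ and, applied with $\|Z\|_{L^\infty}\leq CA^2/\sqrt{s}$ and multiplied by the extra $O(1/s)$ smallness of $\Phi_2$ (and of $q_2$), is precisely what produces the exponent $1+\min\bigl(\frac{p-1}{4},\frac12\bigr)$ -- your proposal never identifies this mechanism, only ``the worst rate among regimes''. (A smaller internal inconsistency: your inner bound $CA^4/s$ does not imply the stated $CA^4/s^{p/2}$ when $p>2$, nor does $A^{2p}$ reduce to $A^4$ by ``absorbing powers of $A$''.) Without an argument of the above type that survives the regime $\Phi_1\sim\|q_1\|_{L^\infty}$, the global estimates are not proved.
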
           
\begin{proof}
We first  would like to note that  the condition   $ q_1 (s) + \Phi_1 (s) \geq \frac{1}{2} e^{-\frac{s}{p-1}}  $ is to ensure that the real part  $w_1 =  q_1 (s) + \Phi_1 (s) > 0$. Then,   \eqref{condition-Re-u-geq-0} holds   and  $F_1, F_2$ which  iare involved  in  the definition of $B_1, B_2$, are well-defined (see \eqref{defi-mathbb-A-1-2}).      For the proof of Lemma  \ref{lemma-quadratic-term-B-1-2},  we only   prove  for  \eqref{estimate-B-2-q-1-2-inside-blowup} and \eqref{estimate-B-2-q-1-2-allspace}, because the other ones follow  similarly.

+ \textit{The proof for \eqref{estimate-B-2-q-1-2-inside-blowup}:}  Using the  fact that   the support of $\chi (y,s)$ is  $\{  |y| \leq 2 K_0 \sqrt s \},$      it is enough  to prove    \eqref{estimate-B-2-q-1-2-inside-blowup}     for all  $\{  |y| \leq 2 K_0 \sqrt s \}$.  Since  we have  $(q_1,q_2) \in V_A (s),$ we derive from  item  $(ii)$ of Lemma  \ref{lemma-analysis-V-A} and the  definition of  $\Phi_1, \Phi_2$ that
\begin{eqnarray*}
   \frac{1}{C } \leq   q_1   + \Phi_1    \leq C ,  \quad  \left|   q_2  + \Phi_2 \right| \leq \frac{C}{s}.
\end{eqnarray*}
and
 \begin{equation}\label{bound-q-1-2-inside)blowup}
 |q_1| \leq  \frac{C A }{  \sqrt s},   \quad  |q_2| \leq   \frac{CA^2}{s^{\frac{p_1 + 2}{2}}}, \forall |y| \leq  2 K_0 \sqrt s.
 \end{equation}
In addition to that, we write  $B_2 (q_1, q_2)$ as follows: 
\begin{eqnarray*}
B_2(q_1, q_2) & = &   F_2 \left(  \Phi_1 + q_1, \Phi_2 + q_2 \right) - F_2(\Phi_1, \Phi_2) -  \partial_{u_1} F_2(q_1 + \Phi_1, q_2 + \Phi_2)q_1 \\
&  -  &  \partial_{u_2} F_2(q_1 + \Phi_1, q_2 + \Phi_2) q_2.
\end{eqnarray*}
where 
$$  F_2 (u_1,u_2) =    \left(  u_1^2  + u_2^2   \right)^{\frac{p}{2}}   \sin \left[ p \arcsin\left(  \frac{u_2}{ \sqrt{u_1^2 + u_2^2}} \right)\right].$$
Using  a   Taylor expansion  for  the function  $F_2 (q_1 + \Phi_1, q_2  + \Phi_2)$ at $(q_1, q_2) = (0,0),$  we derive the following  
\begin{eqnarray*}
F_2 ( q_1 + \Phi_1,  q_2 + \Phi_2)  &=&  \sum_{j  +  k \leq 4}\frac{1}{j! k!} \partial^{j+k}_{q_1^jq_2^k} ( F_2 (q_1 + \Phi_1, q_2 + \Phi_2 )) \left|_{(q_1,q_2) = (0,0)} \right.    q_1^j q_2^k  +   \\
&+&  \sum_{j + k  = 5}   G_{j,k} (q_1, q_2)  q_1^j q_2^k,
\end{eqnarray*}
where  
$$    G_{j,k}  (q_1, q_2)  = \frac{5}{j! k!}  \int_{0}^1  (1 -t)^4 \partial^5_{q_1^j q_2^k} (F_2  (  \Phi_1 +  tq_1 , \Phi_2 + t q_2   )) dt.$$
In particular, we have 
$$ | G_{j,k} (q_1,q_2) | \leq C , \forall   j + k = 4.$$
As a matter of fact, we have 
 \begin{eqnarray}
 \partial^{j+k}_{q_1^jq_2^k} ( F_2 (q_1 + \Phi_1, q_2 + \Phi_2 )) \left|_{(q_1,q_2) = (0,0)} \right.  =  \partial^{j+k}_{u_1^j u_2^k} F_2 (u_1,u_2) \left|_{(u_1,u_2) =  (0,0)} \right.
 \end{eqnarray}
Therefore, from   \eqref{bound-q-1-2-inside)blowup}, we have
\begin{eqnarray*}
& & \left|   F_2 (q_1 + \Phi_1, q_2 + \Phi_2) -   \sum_{j  +  k \leq 3}\frac{1}{j! k!} \partial^{j+k}_{u_1^j
u_2^k}  F_2 ( u_1, u_2 ) \left|_{(u_1,u_2) = (\Phi_1, \Phi_2)} \right.    q_1^j q_2^k    \right|  \\
& \leq  &C    \sum_{j=0}^5| q_1^j q_2^{5-j}|   \leq  C \left( \frac{|q_1|^2}{s} +  |q_1| |q_2|   +  |q_2|^2  \right).
\end{eqnarray*}
In addition to  that, we have the following fact,
\begin{eqnarray*}
| \partial_{u_1^j u_2^k}^{j+k} F_2  (u_1, u_2)\left|_{(u_1, u_2) = (\Phi_1, \Phi_2)} \right.  | \leq C, \forall  j+ k \leq 3,
\end{eqnarray*} 
and for all $ 1 \leq j \leq 4,$ we have
$$ \left| \partial_{u_1^j }^j F_2  (u_1, u_2)\right|_{(u_1, u_2) = (\Phi_1, \Phi_2)}  \leq \frac{C}{s}.$$
This   concludes    \eqref{estimate-B-2-q-1-2-inside-blowup}.

\textit{  The proof   of  \eqref{estimate-B-2-q-1-2-allspace}:}  We rewrite  $B_2(q_1, q_2)$  explicitly  as follows:
\begin{eqnarray*}
B_2 (q_1, q_2)  &=&   \left( (q_1  + \Phi_1)^2  + (q_2 + \Phi_2)^2 \right)^{\frac{p}{2}} \sin \left[  p \arcsin \left(  \frac{q_2 + \Phi_2}{  \sqrt{ (q_1 + \Phi_1)^2  + (q_2 + \Phi_2)^2}}\right)  \right] \\
& - &  (\Phi_1^2   + \Phi_2^2)^{\frac{p}{2}}  \sin \left[  p  \arcsin \left(  \frac{\Phi_2}{ \sqrt{ \Phi_1^2 + \Phi_2^2}}\right)\right]\\
&- & p\left( \Phi_1^2 + \Phi_2^2\right)^{\frac{p-2}{2}} \left(  \Phi_1  \sin \left[  p  \arcsin \left(  \frac{\Phi_2}{ \sqrt{ \Phi_1^2 + \Phi_2^2}}\right)\right]  - \Phi_2 \cos \left[  p  \arcsin \left(  \frac{\Phi_2}{ \sqrt{ \Phi_1^2 + \Phi_2^2}}\right)\right]\right)  q_1\\
& - & p\left( \Phi_1^2 + \Phi_2^2\right)^{\frac{p-2}{2}} \left(  \Phi_2  \sin \left[  p  \arcsin \left(  \frac{\Phi_2}{ \sqrt{ \Phi_1^2 + \Phi_2^2}}\right)\right]  + \Phi_1 \cos \left[  p  \arcsin \left(  \frac{\Phi_2}{ \sqrt{ \Phi_1^2 + \Phi_2^2}}\right)\right]\right)  q_2.
\end{eqnarray*}
Then, we decompose $B_2 (q_1, q_2)$  as follows:
\begin{eqnarray*}
B_2 (q_1, q_2) =  B_{2,1} (q_1, q_2)  + B_{2,2} (q_1, q_2)   + B_{2,3} (q_1,q_2) + B_{2,4} (q_1,q_2)  + B_{2,5} (q_1,q_2)   + B_{2,6} (q_1,q_2),
\end{eqnarray*}
where 
\begin{eqnarray}
B_{2,1} (q_1,q_2) & =& p ( q_2 + \Phi_2) \left( (q_1  + \Phi_1)^2  + (q_2 + \Phi_2)^2 \right)^{\frac{p-1}{2}}     - p (\Phi_1^2 + \Phi_2^2)^{\frac{p-1}{2}} \Phi_2 \label{defini-B-2-1-app} \\
& - &   p \left( \Phi_1^2 + \Phi_2^2 \right)^{\frac{p-2}{2}} \Phi_1 q_2,      \nonumber\\
B_{2,2} (q_1,q_2)& =&    ((q_1 + \Phi_1)^2 + (q_2 + \Phi_2)^2)^{\frac{p}{2}} \left\{ \sin \left[  p \arcsin \left(  \frac{q_2 + \Phi_2}{  \sqrt{ (q_1 + \Phi_1)^2  + (q_2 + \Phi_2)^2}}\right)  \right]  \right.\nonumber\\
& -  & \left.   \frac{p (q_2 + \Phi_2)}{  \sqrt{ (q_1 + \Phi_1)^2  + (q_2 + \Phi_2)^2}}  \right\},\label{defini-B-2-2-app} \\
B_{2,3} (q_1,q_2)  & = &   \left(  \Phi_1^2 + \Phi_2^2\right)^{\frac{p}{2}} \left(  \frac{p \Phi_2}{ \sqrt{\Phi_1^2 + \Phi_2^2}}   -   \sin \left[  p  \arcsin \left(  \frac{\Phi_2}{ \sqrt{ \Phi_1^2 + \Phi_2^2}}\right)\right]   \right), \label{defi-B-2-3-app}\\
B_{2,4} (q_1,q_2) & =&   p (\Phi_1^2 + \Phi_2^2)^{\frac{p-2}{2}} \Phi_1 \left( 1-  \cos \left[  p  \arcsin \left(  \frac{\Phi_2}{ \sqrt{ \Phi_1^2 + \Phi_2^2}}\right)\right] \right) q_2,\label{defi-B-2-4-app}\\
B_{2,5} (q_1,q_2) & =& p\left( \Phi_1^2 + \Phi_2^2\right)^{\frac{p-2}{2}} \left\{ \Phi_2 \cos \left[  p  \arcsin \left(  \frac{\Phi_2}{ \sqrt{ \Phi_1^2 + \Phi_2^2}}\right)\right]  \right. \nonumber\\
&-& \left.  \Phi_1  \sin \left[  p  \arcsin \left(  \frac{\Phi_2}{ \sqrt{ \Phi_1^2 + \Phi_2^2}}\right)\right] \right\} q_1 \label{defi-B-2-6-app} ,\\
B_{2,6} (q_1,q_2) & =&  - p (\Phi_1^2 + \Phi_2^2)^{\frac{p-2}{2}}   \Phi_2 \sin \left[  p  \arcsin \left(  \frac{\Phi_2}{ \sqrt{ \Phi_1^2 + \Phi_2^2}}\right)\right] q_2. \label{defi-B-2-7}
\end{eqnarray} 
we  prove  that: for all  $ y \in \mathbb{R}^n$:
$$  |B_{2,j} (q_1 ,q_2)| \leq  \frac{C A^{2}}{s^{1+  \min\left( \frac{p-1}{4}, \frac{1}{2}\right)}}, \forall  j = 1,...,6.$$ 
We now aim at  an  estimate    on  $B_{2,1} (q_1,q_2)$:     We first need to prove the following:   
\begin{equation}\label{estima-Phi-1-2-q-1-2-Phi-1-2}
\left|     \left(   (\Phi_1  +  q_1)^2  + ( \Phi_2  + q_2)^2 \right)^{\frac{p-1}{2}} - (\Phi_1^2  + \Phi_2^2)^{\frac{p-1}{2}}  \right|  \leq      C  \left|Z \right|^{\min \left(  \frac{p-1}{2},1\right)}, 
\end{equation}   
where   
$$ | Z|   =  2 q_1 \Phi_1  + 2 q_2 \Phi_2  +  q_1^2  +  q_2^2.$$
Note that    $Z$  is bounded.  On   the other hand,      we have   $\left(\Phi_1  +  q_1)^2  + ( \Phi_2  + q_2)^2 \right)^{\frac{p-1}{2}}   =  (\Phi_1^2 + \Phi_2^2 + Z)^{\frac{p-1}{2}}.$ Then,  if   $\frac{p-1}{2}  \geq 1$, using  a Taylor expansion     of  the   function  $( \Phi_1^2  + \Phi_2^2 + Z)^{\frac{p-1}{2}} $ around   $Z_0 = 0$ (note that   $\Phi_1^2  +  \Phi_2^2$ is     uniformly bounded),  we obtain   the following: 
$$     \left|     \left(   (\Phi_1  +  q_1)^2  + ( \Phi_2  + q_2)^2 \right)^{\frac{p-1}{2}} - (\Phi_1^2  + \Phi_2^2)^{\frac{p-1}{2}}  \right|  \leq      C  \left|Z \right|,  $$
which yields    \eqref{estima-Phi-1-2-q-1-2-Phi-1-2}.  If   $\frac{p-1}{2}   <1,  $  then,  we have 
\begin{eqnarray*}
 \left|     \left(   (\Phi_1  +  q_1)^2  + ( \Phi_2  + q_2)^2 \right)^{\frac{p-1}{2}} - (\Phi_1^2  + \Phi_2^2)^{\frac{p-1}{2}}  \right|  =    \left(   \Phi_1^2  + \Phi_2^2\right)^{\frac{p-1}{2}} \left|   \left( 1 +   \xi  \right)^{\frac{p-1}{2}}     - 1 \right|,
\end{eqnarray*}
where    
$$\xi    =  \frac{Z}{   \Phi_1^2  + \Phi_2^2}. $$
In particular, we have    $\xi   \geq   -1$.
 In addition to  that, we have the following fact:   for all  $\xi  \geq   -1$
\begin{equation}\label{pro-1+xi-p-1-2-leq-xi}
\left|   \left( 1 +   \xi  \right)^{\frac{p-1}{2}}     - 1 \right|  \leq  C \left| \xi  \right|^{\frac{p-1}{2}}
\end{equation}
Therefore,   \eqref{pro-1+xi-p-1-2-leq-xi}   gives   the following
\begin{eqnarray*}
 \left|     \left(   (\Phi_1  +  q_1)^2  + ( \Phi_2  + q_2)^2 \right)^{\frac{p-1}{2}} - (\Phi_1^2  + \Phi_2^2)^{\frac{p-1}{2}}  \right|   \leq C  \left(   \Phi_1^2  + \Phi_2^2\right)^{\frac{p-1}{2}}   \left| \frac{Z}{\Phi_1^2 +  \Phi_2^2} \right|^{\frac{p-1}{2}}  \leq   C\left|Z   \right|^{\frac{p-1}{2}}.
\end{eqnarray*}
Then,       \eqref{estima-Phi-1-2-q-1-2-Phi-1-2} follows.  Using  $(q_1,q_2) (s) \in V_A(s)$  and   $Z =   2 \Phi_1 q_1  + 2 \Phi_2 q_2 +  q_1^2  + q_2^2,$ we write
$$   \|Z\|_{L^\infty}   \leq   \frac{CA^2}{  \sqrt s  }, \forall   s \geq  1.   $$
So,  we deduce from      \eqref{estima-Phi-1-2-q-1-2-Phi-1-2} that
\begin{equation}\label{part-1-B-2-1}
\|  p \Phi_2   \left(  ((\Phi_1   + q_1)^2  + (\Phi_2   + q_2)^2)^{\frac{p-1}{2}}\right)     - p \Phi_2 (\Phi_1^2  + \Phi_2^2)^{\frac{p-1}{2}}  \|_{L^\infty} \leq   \frac{C A^2}{s^{1  +    \min  \left( \frac{p-1}{4}, \frac{1}{2} \right)}}.
\end{equation}
Using   \eqref{estima-Phi-1-2-q-1-2-Phi-1-2},  we have    the following 
\begin{equation}\label{Phi-1-q-1-p-1-2-p-2-1}
\left\|     \left(  (\Phi_1 + q_1)^2  + (\Phi_2  + q_2)^2 \right)^{\frac{p-1}{2}}   -  (\Phi_1^2  + \Phi_2^2)^{\frac{p-2}{2}}  \Phi_1  \right\|_{L^\infty}   \leq    \frac{CA^2}{s^{\min \left(\frac{p-1}{4},   \frac{1}{2} \right)}}.
\end{equation}
  Indeed,  we have
  \begin{eqnarray*}
  \left|    \left(  (\Phi_1 + q_1)^2  + (\Phi_2  + q_2)^2 \right)^{\frac{p-1}{2}}   -  (\Phi_1^2  + \Phi_2^2)^{\frac{p-2}{2}}  \Phi_1       \right|   &\leq &  \left|   \left(  (\Phi_1 + q_1)^2  + (\Phi_2  + q_2)^2 \right)^{\frac{p-1}{2}}   -  (\Phi_1^2  + \Phi_2^2)^{\frac{p-1}{2}} \right|\\
  &  +  & \left|   (\Phi_1^2  + \Phi_2^2)^{\frac{p-1}{2}}   -    (\Phi_1^2  + \Phi_2^2)^{\frac{p-2}{2}}  \Phi_1  \right|\\
  & \leq   &  \frac{C A^2}{s^{\frac{ \min \left(\frac{p-1}{2},   1  \right)}{2}}}   +   \frac{C}{s^2}.
  \end{eqnarray*}
Then,   \eqref{Phi-1-q-1-p-1-2-p-2-1} holds.

 On the  other hand, using     \eqref{Phi-1-q-1-p-1-2-p-2-1}     and the  following  
$$ \|q_2 (.,s)\|_{L^\infty}  \leq   \frac{C A^3}{s^{\frac{p_1 + 2}{ 2}}} ,  p_1  >  0,$$
we conclude that
\begin{equation}\label{result-norm-q-2-Phi-1-2-p-1}
 \left\|    p q_2    \left(  (\Phi_1 + q_1)^2  + (\Phi_2  + q_2)^2 \right)^{\frac{p-1}{2}}   -  (\Phi_1^2  + \Phi_2^2)^{\frac{p-2}{2}}  \Phi_1   \right\|_{L^\infty}     \leq   \frac{C A^2}{s^{1+   \min \left(\frac{p-1}{4},   \frac{1}{2} \right)}},
\end{equation}
provided that   $s \geq  s_{1,9} (A)$.  From    \eqref{part-1-B-2-1}  and  \eqref{result-norm-q-2-Phi-1-2-p-1},     we have    
\begin{equation}\label{norm-B-1}
\| B_{2,1}  (q_1,q_2)\|_{L^\infty} \leq   \frac{CA^2}{ s^{1+   \min \left(\frac{p-1}{4},   \frac{1}{2}  \right)}}.
\end{equation}
We next give a bound   to  $B_{2,2} (q_1,q_2):$  Using      the following fact 
$$ \left|    \sin   (p \arcsin x)       -   x\right| \leq C |x|^3, \forall  |x| \leq  1,$$
we  derive the following
\begin{eqnarray*}
& &\left|  \sin \left[   p  \arcsin \left(  \frac{q_2 + \Phi_2}{  \sqrt{ (q_1 + \Phi_1)^2  + (q_2 + \Phi_2)^2}}\right)  \right]  -     \frac{p (q_2 + \Phi_2)}{  \sqrt{ (q_1 + \Phi_1)^2  + (q_2 + \Phi_2)^2}}   \right|  \\
& \leq   &   C \frac{| (q_2  + \Phi_2)|^3}{ ( (q_1 + \Phi_1)^2  + (q_2 + \Phi_2)^2)^{\frac{3}{2}}}.
\end{eqnarray*}
Plugging the above estimate  into  $B_2 (q_1, q_2),$ we deduce the following
$$ | B_{2,2} (q_1,q_2) |  \leq C  \left(   (q_1  +  \Phi_1)^2   + (\Phi_2 + q_2)^2  \right)^{\frac{p-3}{2}}  \left|      q_2  + \Phi_2 \right|^3,$$
which yields 
\begin{eqnarray*}
\left| B_{2,2}  (q_1,q_2) \right|   \leq  C | q_2 + \Phi_2  |^{\min  \left(  p, 3\right)},
\end{eqnarray*}
Using    $(q_1, q_2)  \in V_A (s),$     it gives the following
$$   |  q_2 +  \Phi_2|   \leq   \frac{C }{s},$$
  provided that    $ s \geq   s_{2,9} (A)$. Then, 
  \begin{equation}\label{norm-B-2-2}
  \|B_{2,2}(q_1, q_2)\|_{L^\infty}   \leq   \frac{C}{s^{\min  \left(  p, 3\right)}}.
\end{equation}    
It is similar  to  estimate to   $B_{2,3}  (q_1,q_2)$   
\begin{equation}\label{norm-B-2-3}
\|  B_{2,3}(q_1,q_2)\|_{L^\infty}   \leq   \frac{C}{ s^3}.
\end{equation}
We estimate to   $B_{2,4} (q_1,q_2)$,  using the following   
$$    \left|     1  - \cos (p  \arcsin x) \right|  \leq   C |x|^2, \forall   |x|  \leq   1,$$
we write 
$$ |  B_{2,4} (q_1, q_2) |  \leq   C    \left\|  \frac{\Phi_2}{\Phi_1} \right\|^2_{L^\infty}  \| q_2 \|_{L^\infty}  \leq   \frac{C  A^3}{ s^{3}}.$$
Then, we derive that 
\begin{equation}\label{norm-B-2-4}
\|B_{2,4}  (q_1, q_2)\|_{L^\infty}    \leq  \frac{C A^3 }{s^3}.
\end{equation}
We also    estimate to   $B_{2,5}, B_{2,6}$  as follows:
\begin{eqnarray}
\|  B_{2,5}  (q_1, q_2) \|_{L^\infty}   \leq   \frac{CA^2}{s^{\frac{3}{2}}} \label{norm-B-2-5},\\
\|   B_{2,6} (q_1, q_2)\|_{L^\infty}  \leq   \frac{CA^3}{s^2}.\label{norm-B-2-6}
\end{eqnarray}
Thus,   from   \eqref{norm-B-1},  \eqref{norm-B-2-2}, \eqref{norm-B-2-3}, \eqref{norm-B-2-4}, \eqref{norm-B-2-5} and    \eqref{norm-B-2-6},     we  conlude    \eqref{estimate-B-2-q-1-2-allspace}, provided that   $s  \geq  s_{3,9} (A)$.
\end{proof}

In the following Lemma, we aim at giving  estimates  to  the rest terms  $R_1, R_2:$ 
 \begin{lemma}[The rest terms $R_1, R_2$]\label{lemma-rest-term-R-1-2} 
For all $s \geq 1,$ we consider  $R_1, R_2$  defined  in \eqref{defini-the-rest-term-R-1} and \eqref{defini-the-rest-term-R-2}. Then,  
\begin{itemize}
\item[$(i)$]  For all $s \geq 1$ and $y \in \mathbb{R}^n$
\begin{eqnarray*}
R_1 (y,s) &= & \frac{c_{1,p}}{ s^2}   +  \tilde R_1 (y,s),\\
 R_2 (y,s)  &=&  \frac{c_{2,p}}{s^3}  + \tilde R_2 (y,s),
\end{eqnarray*}
where $c_{1,p} $and  $ c_{2,p}$ are constants depended on $p$ and   $\tilde  R_1, \tilde  R_2$ satisfy
\begin{eqnarray*}
|\tilde R_1 (y,s) | & \leq &  \frac{C (1 + |y|^4)}{s^3},\\
| \tilde R_2 (y,s) | & \leq &  \frac{C (1 + |y|^6)}{s^4},
\end{eqnarray*}
for all  $|y| \leq  2 K_0 \sqrt s.$
\item[$(ii)$] Moreover, we have for all $s \geq 1$
\begin{eqnarray*}
\| R_1(.,s)\|_{L^\infty(\mathbb{R}^n)}  & \leq  & \frac{C}{s},\\
\| R_2(.,s)\|_{L^\infty(\mathbb{R}^n)}  & \leq & \frac{C}{s^2}.
\end{eqnarray*}
\end{itemize}
\end{lemma}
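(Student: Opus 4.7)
The plan is to substitute the explicit form of $\Phi_1, \Phi_2$ from \eqref{defi-Phi-1}-\eqref{defi-Phi-2} into the definitions of $R_1, R_2$ and expand in inverse powers of $s$ with coefficients depending on the self-similar variable $z = y/\sqrt{s}$. Writing $\Phi_1 = F(|z|) + \frac{n\kappa}{2ps}$ with $F(r) = (p-1 + br^2)^{-1/(p-1)}$ and $b = (p-1)^2/(4p)$, we convert $y$-derivatives into $z$-derivatives and compute
\begin{eqnarray*}
\Delta_y \Phi_1 - \tfrac{1}{2} y\cdot \nabla_y \Phi_1 - \partial_s \Phi_1
= -\tfrac{1}{2} z \cdot \nabla_z F + \tfrac{1}{s}\bigl[\Delta_z F + \tfrac{1}{2} z\cdot \nabla_z F\bigr] + \tfrac{n\kappa}{2ps^2}.
\end{eqnarray*}
The profile equation \eqref{equa-R-1-0} for $F$, namely $-\tfrac{1}{2}z\cdot\nabla_z F - \tfrac{F}{p-1} + F^p = 0$, then kills the $O(1)$ contributions after adding $-\tfrac{\Phi_1}{p-1} + F_1(\Phi_1,\Phi_2)$.

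Next I would isolate the $O(1/s)$ remainder. Taylor-expanding $F_1(\Phi_1,\Phi_2) = \Phi_1^p + O(\Phi_2^2)$ in the small perturbation $\tfrac{n\kappa}{2ps}$ gives $F_1(\Phi_1,\Phi_2) = F^p + \tfrac{n\kappa}{2s}F^{p-1} + O(1/s^2)$. Combining with $-\tfrac{n\kappa}{2ps(p-1)}$ and the term $\tfrac{1}{s}[\Delta_z F + \tfrac{1}{2} z\cdot\nabla_z F]$ yields
\begin{eqnarray*}
R_1(y,s) = \tfrac{1}{s}\bigl[\Delta_z F + \tfrac{1}{2} z\cdot\nabla_z F\bigr] + \tfrac{n\kappa}{2s}\bigl(F^{p-1} - \tfrac{1}{p(p-1)}\bigr) + \tfrac{n\kappa}{2ps^2} + O\bigl((1{+}|z|^4)/s^2\bigr).
\end{eqnarray*}
Using $F^{p-1}(0) = 1/(p-1)$ and $\Delta_z F(0) = -n\kappa/(2p)$, the two $1/s$ pieces also cancel at $z=0$ (this is precisely the reason for the corrector $\frac{n\kappa}{2ps}$ in $\Phi_1$). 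A direct computation of $R_1(0,s)$ produces the constant $c_{1,p} = \frac{n(n+4)\kappa}{8p}$. For $z\neq 0$ with $|y|\le 2K_0\sqrt s$, one Taylor-expands $F^{p-1}(z) - 1/(p-1) = -b|z|^2/[(p-1)(p-1+b|z|^2)]$ and a similar polynomial bound for $\Delta_z F + \tfrac{1}{2} z\cdot \nabla_z F + n\kappa/(2p)$; since $|z|^2/s = |y|^2/s^2$, each $O(1/s)\cdot O(|z|^2)$ term is actually $O(|y|^2/s^3)$, which fits the claimed bound $|\tilde R_1|\le C(1+|y|^4)/s^3$. The analogous (but longer) calculation for $R_2$ uses $F_2(u_1,u_2) = p\,u_1^{p-1}u_2 + O(u_1^{p-3}u_2^3)$ and the identities \eqref{equa-R-2-1}-\eqref{equa-R-2-2}; the leading order involves $1/s^3$ instead of $1/s^2$ because $\Phi_2$ itself starts at $1/s$.

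For item $(ii)$, the $L^\infty$ bounds on all of $\mathbb{R}^n$ will follow from explicit global estimates: $F(z)$ and its derivatives decay like $|z|^{-2/(p-1)}$, $|z|^{-2/(p-1)-1}$, etc., while $\Phi_2$ behaves like $|z|^2 F(z)^p \sim |z|^{-2/(p-1)}$. The critical observation is that polynomial growth factors in $z$ appearing in $\Delta \Phi_i, \,y\cdot \nabla \Phi_i,\, \Phi_i/(p-1),\, F_i(\Phi_1,\Phi_2),$ and $\partial_s \Phi_i$ are always dominated by the decay of $F^{p-1}$ or the algebraic gain from powers of $s$, so that $\|R_1(\cdot,s)\|_{L^\infty}\le C/s$ and $\|R_2(\cdot,s)\|_{L^\infty}\le C/s^2$ follow by elementary case analysis.

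The main obstacle is the bookkeeping for $R_2$: the required cancellations are more delicate because $\Phi_2$ already contains the subleading corrector $-2n\kappa/((p-1)s^2)$, and the linearization of $F_2$ around $(\Phi_1,\Phi_2)$ couples to the corrector in $\Phi_1$. One must therefore carefully keep track of three species of terms simultaneously (from $\Phi_1$ corrector, from $\Phi_2$ corrector, and from $F_2$ Taylor remainder) and verify that the $1/s$ and $1/s^2$ contributions organize themselves into $c_{2,p}/s^3 + \tilde R_2$; this is a computation analogous to the one in \cite{DU2017} for integer $p$, so I would proceed by mirroring that scheme while using the analytic version \eqref{defi-mathbb-A-1-2} of $F_2$ in place of the polynomial expansion \eqref{defi-mathbb-A-1-2-entiere}.
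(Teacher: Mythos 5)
Your overall route (substitute \eqref{defi-Phi-1}--\eqref{defi-Phi-2}, expand in powers of $1/s$ with $z=y/\sqrt s$, use the profile ODEs and the analytic form \eqref{defi-mathbb-A-1-2} of $F_1,F_2$) is the same as the paper's, and your value of $c_{1,p}$ at $y=0$ is consistent. However, there is a genuine gap in the error bookkeeping for item $(i)$. After the $O(1)$ cancellation you are left with a term $\frac{1}{s}G(z)$, where $G(z)=\Delta_z F+\frac12 z\cdot\nabla_z F+\frac{n\kappa}{2}\bigl(F^{p-1}-\frac{1}{p(p-1)}\bigr)$, and you argue that since $G(0)=0$ and $G(z)=O(|z|^2)$, ``each $O(1/s)\cdot O(|z|^2)$ term is actually $O(|y|^2/s^3)$.'' That scaling identity is false: $|z|^2/s=|y|^2/s^2$, so what you get this way is only $|R_1-c_{1,p}/s^2|\le C(1+|y|^2)/s^2$, which does not imply the claimed bound $|\tilde R_1|\le C(1+|y|^4)/s^3$ (at $|y|\sim 1$ the term $1/s^2$ dominates $1/s^3$). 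The estimate is true, but for a stronger reason that must be checked: the coefficient of $|z|^2$ in $G$ vanishes identically (the $r^2$-contributions of $\Delta_z F+\frac12 z\cdot\nabla_z F$ and of the corrector terms $\frac{n\kappa}{2}F^{p-1}-\frac{n\kappa}{2p(p-1)}$ cancel exactly, which is precisely what the corrector $\frac{n\kappa}{2ps}$ in $\Phi_1$ is designed to achieve), so that $G(z)=O(|z|^4)$ and $\frac1s G(z)=O(|y|^4/s^3)$ on $|y|\le 2K_0\sqrt s$. The same issue is even more serious for $R_2$, where the dangerous $|y|^2/s^3$ and $|y|^4/s^3$ terms produced by the five pieces $\Delta\Phi_2$, $-\frac12 y\cdot\nabla\Phi_2$, $-\Phi_2/(p-1)$, $p\Phi_1^{p-1}\Phi_2$, $-\partial_s\Phi_2$ must be shown to cancel when summed; this is exactly the content of the paper's expansions \eqref{Taylor-expan-Delta-Phi_2}--\eqref{taylor-expansion-partial-s-Phi-2}, and you defer it to ``mirroring \cite{DU2017}'' without identifying the cancellation.

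Two smaller points. For item $(ii)$, ``elementary case analysis'' on the decay of $F$ only yields $\|R_2\|_{L^\infty}\le C/s$: to reach $C/s^2$ you must use the profile equation \eqref{equa-R-2-1} globally (the paper writes $\Phi_2=\frac1s R_{2,1}(z)-\frac{2n\kappa}{(p-1)s^2}$ so the order-$1/s$ terms cancel for all $y$), and you must bound the nonlinear remainder $F_2(\Phi_1,\Phi_2)-p\Phi_1^{p-1}\Phi_2$ on all of $\mathbb{R}^n$; since $p\notin\mathbb{N}$ this is done through the trigonometric form of $F_2$ together with $\|\Phi_2/\Phi_1\|_{L^\infty}\le C/s$ and $\bigl|\sin(p\arcsin x)-px\bigr|\le C|x|^3$, not through a Taylor expansion $F_2=pu_1^{p-1}u_2+O(u_1^{p-3}u_2^3)$, which degenerates where $\Phi_1$ is small (large $|y|$).
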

\begin{proof}
   The proof for $R_1$   is quite the same  as the proof   for    $R_2 $.  For that  reason, we only give the proof of  the estimates   on $R_2$.  This  means that,  we need to  prove the following estimates:
\begin{equation}\label{estimate-R-2-inside}
R_2 (y,s)  = -  \frac{n(n+4) \kappa}{(p-1)s^3}  + \tilde R_2 (y,s),
\end{equation}
with 
$$ | \tilde R_2 (y,s) |  \leq    \frac{C (1 + |y|^6)}{s^4},  \forall   |y|  \leq  2 K_0 \sqrt s.$$
and  
\begin{equation}\label{estima-all-spa-R-2}
\| R_2(.,s)\|_{L^\infty}  \leq \frac{C}{s^2}.
\end{equation}
  We recall the definition of $R_2(y,s)$:
$$  R_2 (y,s) = \Delta \Phi_2 - \frac{1}{2} y \cdot \nabla \Phi_2  - \frac{\Phi_2}{p-1} + F_2 (\Phi_1, \Phi_2) - \partial_s \Phi_2, $$
Then, we can rewrite  $R_2$ as follows
$$R_2 (y,s) =  \Delta \Phi_2 - \frac{1}{2} y \cdot \nabla \Phi_2  - \frac{\Phi_2}{p-1} + p \Phi_1^{p-1} \Phi_2 - \partial_s \Phi_2  + R_2^* (y,s),$$
where 
\begin{eqnarray*}
R_2^*(y,s) &=&  \left( \Phi_1^2   + \Phi_2^2 \right)^{\frac{p}{2}}  \sin \left[ p  \arcsin  \left( \frac{\Phi_2}{\sqrt{ \Phi_1^2   + \Phi_2^2}}  \right)\right]      - p \Phi_1^{p-1} \Phi_2.
\end{eqnarray*}
\noindent
 Using the defintions of  $\Phi_1, \Phi_2$  given in  \eqref{defi-Phi-1} and \eqref{defi-Phi-2},  we obtain the following: 
\begin{eqnarray*}
\left| R_2^* (y,s) \right|  & \leq &  \left|    \left( \Phi_1^2   + \Phi_2^2 \right)^{\frac{p}{2}} \left\{  \sin \left[ p  \arcsin  \left( \frac{\Phi_2}{\sqrt{ \Phi_1^2   + \Phi_2^2}}  \right)\right]    -p \frac{\Phi_2}{\sqrt{ \Phi_1^2   + \Phi_2^2}}    \right\}   \right|  \\
& +&  \left|   p \Phi_2 (   (\Phi_1^2  + \Phi_2^2)^{\frac{p-1}{2}}    -  \Phi_1^{p-1}  )  \right|.
\end{eqnarray*}   
It is similar to  the proofs of  estimations   given in     the proof  of Lemma  \ref{lemma-quadratic-term-B-1-2}, we can prove the following
 
\begin{eqnarray*}
|R_2^* (y,s) | & \leq  &\frac{C ( 1 + | y|^6)}{s^4}, \quad \forall  |y| \leq 2 K_0 \sqrt s,\\
& \text{ and } &\\
\| R_2^*(.,s)\|_{L^\infty } & \leq  &\frac{C}{s^2}.
\end{eqnarray*} 
In addition to that, we introduce $\bar  R_2$ as follows: 
$$ \bar R_2 (y,s) =   \Delta \Phi_2 - \frac{1}{2} y \cdot \nabla \Phi_2  - \frac{\Phi_2}{p-1} + p \Phi_1^{p-1} \Phi_2 - \partial_s \Phi_2.$$
Then,   we  aim at proving the following:
\begin{eqnarray}
 \left| \bar  R_2 (y,s)  + \frac{n(n+4)\kappa }{(p-1)s^3}\right|  &\leq & \frac{C (1 + |y|^6)}{s^4}, \text{ for all } |y| \leq  2 K_0 \sqrt s \label{estimates-bar-R-2-y-2-K-s}\\
 \| \bar R_2 (.,s)\|_{L^{\infty} (\mathbb{R}^n)} & \leq  & \frac{C}{s^2}.\label{estimates-all-sapce-bar-R-2}
\end{eqnarray}
\noindent \textit{+ The proof  of \eqref{estimates-bar-R-2-y-2-K-s}:}
We first aim at  expanding  $\Delta \Phi_2$ in  a polynomial in  $y$  of order less than $4$ via  the  Taylor expansion. Indeed,   $\Delta \Phi_2$ is given by
\begin{eqnarray*}
 \Delta \Phi_2 &=&  \frac{2 n }{s^2} \left( p-1 + \frac{(p-1)^2 |y|^2}{4p s} \right)^{-\frac{p}{p-1}} -  \frac{(p-1)|y|^2}{s^3} \left( p-1 + \frac{(p-1)^2}{4 p} \frac{|y|^2}{s}\right)^{- \frac{2p-1}{p-1}} \\
 & -& \frac{(n+ 2) (p-1)|y|^2}{2 s^3} \left( p-1 + \frac{(p-1)^2}{4 p} \frac{|y|^2}{s}\right)^{- \frac{2p-1}{p-1}}  + \frac{(2p-1)(p-1)^2 |y|^4}{4 p s^4} \left( p-1 + \frac{(p-1)^2}{4 p} \frac{|y|^2}{s}\right)^{- \frac{3p-2}{p-1}}. 
\end{eqnarray*}
Besides that,   we  make a   Taylor expansion  in the  variable  $z  =  \frac{|y|}{\sqrt s}$  for   $\left(  p-1  + \frac{(p-1)^2 }{4 p} \frac{|y|^2}{s} \right)^{- \frac{p}{p-1}}$   when   $|z| \leq 2 K$, and we get
$$  \left| \left( p-1 + \frac{(p-1)^2 |y|^2}{4p s} \right)^{-\frac{p}{p-1}}  - \frac{\kappa}{p-1}+ \frac{\kappa}{4 (p-1) } \frac{|y|^2}{s} \right|    \leq  \frac{C (1 + |y|^4)}{s^2}, \forall |y| \leq 2 K \sqrt s.$$
which yields 
$$  \left|  \frac{2 n }{s^2} \left( p-1 + \frac{(p-1)^2 |y|^2}{4p s} \right)^{-\frac{p}{p-1}} - \frac{2 n \kappa}{ (p-1)s^2}  +  \frac{n \kappa |y|^2}{2 (p-1)s^3} \right| \leq     \frac{C (1 + |y|^4)}{s^4} \leq  \frac{C (1 + |y|^6)}{s^4},  \quad  \forall |y|  \leq 2 K \sqrt{s}.$$
It is similar to   estimate   the other termes     in $\Delta \Phi_2$  as  the above. Finally, we obtain
\begin{equation}\label{Taylor-expan-Delta-Phi_2}
\left|   \Delta \Phi_2  -  \frac{2 n \kappa}{(p-1)s^2} +  \frac{n \kappa |y|^2}{ (p-1)s^3} +  2 \frac{k|y|^2}{(p-1)s^3} \right|   \leq   \frac{C ( 1 + |y|^6) }{s^4} , \forall |y| \leq 2 K \sqrt s.
\end{equation}
As we did  for $\Delta \Phi_2$,  we    estimate  similarly  the   other termes in $\bar R_2$: for all $|y| \leq 2 K \sqrt s$ 
\begin{eqnarray}
\left| - \frac{1}{2} y \cdot \nabla \Phi_2   +  \frac{\kappa |y|^2}{(p-1)s^2}  -  \frac{\kappa |y|^4}{4 (p-1) s^3} -  \frac{\kappa |y|^4}{4 (p-1)s^3} \right|  & \leq &  \frac{C( 1 + |y|^6)}{s^4}, \label{taylor-expan-1-2y-nabla-Phi-2}\\
\left| - \frac{\Phi_2}{p-1} + \frac{\kappa  |y|^2}{(p-1)^2 s^2}  -   \frac{\kappa |y|^4 }{ 4 (p-1)^2 s^3} -  \frac{2 n \kappa }{(p-1)^2 s^2} \right|  & \leq  &\frac{C( 1 + |y|^6)}{s^4}, \label{taylor-expan-1-p-1-Phi_2}\\
\left| p \Phi_1^{p-1}\Phi_2  -  \frac{p \kappa |y|^2}{ (p-1)^2 s^2}  + \frac{(2p -1) \kappa |y|^4}{ 4 (p-1)^2s^3} -  \frac{n \kappa |y|^2}{(p-1)s^3} + \frac{2 p n \kappa}{(p-1)^2 s^2} + \frac{n^2 \kappa}{(p-1)s^3}  \right|& \leq &  \frac{C( 1 + |y|^6)}{s^4},  \label{taylor-expan-1-p-Phi-1-p-1-Phi-2}\\
\left| - \partial_s \Phi_2  -  \frac{2 \kappa |y|^2}{(p-1)s^3} +  \frac{4 n \kappa }{ (p-1)s^3}  \right| &\leq &   \frac{C ( 1 + |y|^6) }{s^4}. \label{taylor-expansion-partial-s-Phi-2}
\end{eqnarray}
Thus,  we use  \eqref{Taylor-expan-Delta-Phi_2}, \eqref{taylor-expan-1-2y-nabla-Phi-2}, \eqref{taylor-expan-1-p-1-Phi_2}, \eqref{taylor-expan-1-p-Phi-1-p-1-Phi-2} and  \eqref{taylor-expansion-partial-s-Phi-2}  to deduce the following
$$ \left| \bar R_2 (y,s)  +  \frac{n(n+4) \kappa}{(p-1)s^3}  \right| \leq  \frac{C(1 + |y|^6)}{s^4},  \quad \forall  |y| \leq 2 K \sqrt s,$$
and    \eqref{estimates-bar-R-2-y-2-K-s} follows
 
\noindent \textit{+ The proof \eqref{estimates-all-sapce-bar-R-2}:}   We rewrite  $\Phi_1, \Phi_2$ as follows
 $$  \Phi_1 (y,s) = R_{1,0} (z)  + \frac{n \kappa}{2 p s}  \text{ and  } \Phi_2 (y,s)  = \frac{1}{s} R_{2,1} (z) - \frac{2n \kappa }{(p-1)s^2} \text{ where } z = \frac{y}{\sqrt s}, $$ 
 where   $R_{1,0}$ and  $R_{2,1}$ are defined  in \eqref{solu-R-0} and \eqref{solu-varphi_1},  respectively. In addition to that, we rewrite $\bar R_2$ in termes of  $R_{1,0}$ and  $R_{2,1}$, and we note that  $R_{1,0}$ and  $R_{2,1}$ satisfy \eqref{equa-R-1-0} and  \eqref{equa-R-2-1}.   Then, it follows that
 $$ |\bar R_2 (y,s)| \leq \frac{C}{s^2}, \forall y \in \mathbb{R}^n.$$ 
 Hence, \eqref{estimates-all-sapce-bar-R-2} follows. This  concludes the proof of  this Lemma.
\end{proof}
\section{ Preparation  of initial data}\label{appendix-preparation-initial-dada}
 Here,   we here    give the    proof of  Lemma  \ref{lemma-control-initial-data}. We can    see  that        part  $(II)$   directly follows from    item  $(i)$ of part  $(II)$.     The techniques  of the proof  are  given in       \cite{MZnon97}  and  \cite{TZpre15}.  Although   those papers  are written in the real-valued case, unlike ours, where we handle the complex-valued case, we reduce in fact to the real case, for  the real and the imaginary parts.   In addition to that,  the set  $\mathcal{D}_{K_0, A, s_0}$ is  the product of  two parts,  the first one  depends  only on   $d_1,$   and the other  one depends only   on  $d_2$.   Moreover,   the  real part  is  almost the  same  as   the initial data in the Vortex  model  in \cite{MZnon97},   except  for  the new term $1$, but this term  is very small  after   changing to similarity variabl: $e^{-\frac{s}{p-1}}.$   In fact, handling the imaginary part is easier than handling the real part. For those  reasons,  we kindly     refer  the reader  to  Lemma  2.4 in  \cite{MZnon97} and  Proposition 4.5 in \cite{TZpre15} for the proof   of item  $(i)$ of $(I)$ and $(II)$.  So, we  only   prove   that      the initial data satisfies item $(ii)$ in  definition of  $S(0)$ (the item  $(iii)$  is  obvious).  
 
\noindent 
 Let us  consider  $T > 0, K_0, \alpha_0, \epsilon_0,  \delta_1 $ which will be   suitably  chosen later, then we will prove that  for all  $|x| \in  \left[  \frac{K_0}{4} \sqrt{T|\ln T|} , \epsilon_0 \right]$ and $|\xi|  \leq  2 \alpha_0 \sqrt{|\ln (T - t(x))|}$ and   $\tau_0(x) = - \frac{t(x)}{ T-t(x)},$  we have
 \begin{equation}\label{want-to-pro-intial-P-2}
 \left|   U(x, \xi, \tau_0(x))   - \hat U(\tau_0(x))\right| \leq \delta_1.
 \end{equation}
 We now introduce some  neccessary notations for our proof, 
 \begin{equation}\label{notation-t-t-0-control-inital-P-2}
 \theta_0  =  T  , \quad  r(0) =  \frac{K_0}{4} \sqrt{\theta_0 |\ln(\theta_0)|}  \text{ and }  R(0) = \theta_0^{\frac{1}{2}} |\ln \theta_0|^{\frac{p}{2}}.
\end{equation}  
 Then, we have the following asymptotics:
 \begin{eqnarray}
 \theta (r(0)) \sim \theta_0 &,&    \theta \left( R(0)\right) \sim \frac{16}{K_0^2} \theta_0 |\ln \theta_0|,  \quad \theta \left( 2R(0)\right) \sim \frac{64}{K_0^2} \theta_0 |\ln \theta_0|^{p-1},\label{asymptotic-R-t-0}\\
   \ln \theta(r(0)) &\sim &  \ln \theta (R(0))   \sim \ln \theta (2 R(T)) \label{asymp-ln-r-equi-ln-R-0}. 
  \end{eqnarray}
In addition to  that, if $\alpha_0  \leq \frac{K_0}{16}$ and $\epsilon_0 \leq \frac{2}{3} C^*,$ where  $C^*$ is introduced in    \eqref{defi-proper-U-*}, then, from the definition      \eqref{def-t(x)} and  $|x| \in   \left[  r(0), \epsilon_0 \right]   ,$ and for all $|\xi| \leq 2 \alpha_0  \sqrt{ |\ln \theta (x)|}, $ with $\theta (x) =  T -t (x)$, we have 
$$ \left|\xi  \sqrt{\theta (x)}  \right| \leq \frac{1}{2} |x|,$$
which yields 
\begin{equation}\label{estima-r-t-0-C-a-1-p}
\frac{r(0)}{2} \leq \frac{|x|}{2} = |x|  - \frac{|x|}{2}  \leq | x + \xi \sqrt{ \theta (x)}| \leq  \frac{3}{2} |x| \leq \frac{3}{2} \epsilon_0 \leq   C^*.
\end{equation}  
 Hence, using  \eqref{def-mathcal-U}, \eqref{initial-data-bar-u}  and  definition of $ \chi_1$ and $|\xi | \leq  2 \alpha_0 \sqrt{\theta (x)}$  wa have 
 $$  U (x, \xi,   \tau_0)    = U_1 (x, \xi,   \tau_0)    + i U_2 (x, \xi, \tau_0) ,$$
where 
\begin{eqnarray*}
U_1 (x, \xi, \tau_0) &= &   (I)  \chi_1 ( x + \xi \sqrt{\theta (x)})  +  (II) (1  - \chi_1 (x + \xi \sqrt{ \theta (x)}))    +  (III), \\[0.2cm]
 (I)   & =&  \left(  \frac{\theta(x)}{ \theta_0}\right)^{ \frac{1}{p-1}}  \Phi_1 \left(  \frac{x + \xi \sqrt{\theta (x)}}{ \sqrt{ T}},   |\ln (T )|\right), \\[0.2cm]
 (II) & = &   \left(  \theta (x)\right)^{ \frac{1}{p-1}}  U^*\left(  x + \xi \sqrt{ \theta (x)} \right),   \\[0.2cm]
 (III) & =& \left( \theta (x)\right)^{\frac{1}{p-1}}  \\[0.2cm] 
U_2 (x, \tau, \tau_0)  &=&   \left(  \frac{\theta (x)}{ \theta_0}\right)^{\frac{1}{p-1}}  \Phi_2 \left(  \frac{x + \xi \sqrt{\theta (x)}}{ \sqrt{ T-t_0}} , |\ln (T -t_0)|\right),
\end{eqnarray*} 
The conclusion of \eqref{want-to-pro-intial-P-2}  follows from  the  4 following estimates:
\begin{eqnarray}
\left|   (I )  -  \hat U (\tau_0) \right| & \leq &    \frac{\delta_1}{4},  \text{ for all } |x| \in \left[ r(0), 2 \frac{ 100}{99} R(0) \right] \text{ and  for all } |\xi| \leq  2 \alpha_0 \sqrt{ \theta (x)}  , \label{esitma-(I)-hat-U-tau-0}\\[0.2cm]
\left|    (II)   -  \hat U (\tau_0)   \right| & \leq &      \frac{ \delta_1}{4}, \text{ for all } |x| \in \left[ \frac{99}{100} r(0) , \epsilon_0 \right]   \text{ and  for all } |\xi| \leq  2 \alpha_0 \sqrt{ \theta (x)},\label{estiam-(II)-hat-U-tau-1} \\
 |(III)| & \leq &   \frac{\delta_1}{4}, \text{  for all  } |x| \in [r(0), \epsilon_0]   \text{ and  for all } |\xi| \leq  2 \alpha_0 \sqrt{ \theta (x)}, \label{estima-(III)-hat-U-tau_0} \\
 |  U_2 (x, \xi, \tau_0) | & \leq  & \frac{\delta_1}{4}, \text{ for all }  |x| \in \left[ r(0),  2 \frac{ 100}{99} R(0) \right]  \text{ and  for all } |\xi| \leq  2 \alpha_0 \sqrt{ \theta (x)}.\label{estima-U-2-delta-1-4}
\end{eqnarray}

It is very easy to estimate for   \eqref{estima-(III)-hat-U-tau_0}    for $\epsilon_0$ small enough.

We  now estimate  \eqref{estima-U-2-delta-1-4}: We rewrite  $U_2 (x, \xi, \tau_0)$   by using  \eqref{parti-imaginary-inital} as follows: 

\begin{eqnarray*}
|U_2(x, \xi, \tau_0)| &=& U_2 \left(x, \xi,  \frac{- t(x)}{T - t(x)} \right)\\
& =&  \left(  \frac{\theta_0}{ \theta (x)}\right)^{- \frac{1}{p-1}}   \frac{| x + \xi \sqrt{ \theta (x)} |^2}{  T |\ln T|}   \left( p-1 +  \frac{| x + \xi \sqrt{ \theta (x)} |^2}{  T |\ln T|}    \right)^{-\frac{p}{p-1}}  \frac{1}{ |\ln T|}\\
& \leq & \frac{C}{ |\ln  T|}  \left(   (p-1) \frac{\theta_0}{ \theta (x)}   +    \frac{(p-1)^2}{4 p }  \frac{| x +  \xi \sqrt{x}|^2}{ \theta (x) |\ln(\theta_0)|}   \right)^{-\frac{1}{p-1}}.
\end{eqnarray*}
In addition to that, for all $|x| \in \left[ r(0), 2 \frac{100}{99}   R(0)\right]$ and  $|\xi| \leq 2 \alpha_0 \sqrt{ \theta (x)},$ we have 
$$ \frac{| x +  \xi \sqrt{x}|^2}{ \theta (x) |\ln(\theta_0)|}  \geq  \frac{1}{C K_0^2}, $$
which yields 
$$ |U_2 (x, \xi, \tau_0)| \leq   \frac{C K_0^{\frac{2}{p-1}}}{ |\ln  T|} \leq  \frac{\delta_1}{4},$$
if $T \leq  T_{1,3} (K_0, \delta_1, \alpha_0)$  and  for all $|x| \in \left[ r(0), 2 \frac{100}{99}   R(0)\right]$.

The estimate of   \eqref{esitma-(I)-hat-U-tau-0}:  We  derive from the definition of  $\Phi_1$ in \eqref{defi-Phi-1} and the  definition  of  $\hat U (\tau)$ in \eqref{ODE-hat-U-K-0}   that
\begin{eqnarray*}
\left|   (I)    -  \hat U \left( \frac{ -  t(x)}{ \theta (x)} \right)\right| &=&  \left|  \left(  (p-1) \left( \frac{\theta_0}{ \theta (x)}\right) +    \frac{(p-1)^2}{4 p}   \frac{\left| x + \xi \sqrt{ \theta (x)}\right|^2}{ \theta (x) |\ln \theta_0|} \right)^{-\frac{1}{p-1}}  \right.\\ 
& - & \left.   \left(  (p-1) \left( \frac{\theta_0}{ \theta (x)}\right) +    \frac{(p-1)^2}{4 p}  \frac{K_0^2}{16}   \right)^{-\frac{1}{p-1}}   \right|
\end{eqnarray*}
In addition to that,   from   \eqref{def-t(x)}, we have      
\begin{equation}\label{estima-|x+xi-|leq-1+2alp-}
(1  - 2 \alpha_0)^2  \frac{K_0^2}{ 16}   \frac{|\ln \theta (x)|}{ |\ln \theta_0|}  \leq     \frac{\left| x + \xi \sqrt{\theta (x)}\right|^2}{ \theta (x) |\ln \theta_0|}  \leq   (1 + 2 \alpha_0)^2  \frac{K_0^2}{ 16}  \frac{|\ln \theta (x)|}{ |\ln \theta_0|},  \forall  |\xi| \leq 2 \alpha_0 \sqrt{\theta (x)}.
\end{equation}  
Using  the  monotonicity of $\theta (x),$ we have for all  $|x| \in \left[ r(0),  2 \frac{100}{99}   R(0)\right]$   
$$   \frac{|\ln r(0)|}{ |\ln \theta_0|}  \leq    \frac{|\ln \theta (x)| }{ |\ln \theta_0|}   \leq  \frac{|\ln R(0)|}{ |\ln \theta_0|}.$$
Thanks to   \eqref{asymptotic-R-t-0}, we have
\begin{equation}\label{asym-theta-x-theta-0}
\frac{|\ln \theta (x)|}{  |\ln \theta_0|} \sim 1 \text{ as }  T \to 0.
\end{equation}
This  yields 

\begin{eqnarray*}
\left|   (I)   -  \hat U \left( \frac{ - t(x)}{ \theta (x)}  \right)\right| \leq C (K_0) \left|   \frac{|x + \xi \sqrt{\theta (x)}|^2}{  \theta (x) |\ln \theta_0|}   - \frac{K_0^2}{16}\right| \to 0  
\end{eqnarray*}
uniformly for all  $ |x| \in \left[  r(0 ),  2 \frac{100}{99}  R(0)\right], |\xi | \leq  2 \alpha_0 \sqrt{\theta (x)}$ as  $\alpha_0  \to 0 $ and $ T \to 0$. Hence, there exists $\alpha_{2,3}(K_0, \delta_1)$ and  $T_{2,3}(K_0, \delta_1)$ such that 
$$ \left|   (I)   -  \hat U \left( \frac{ - t(x)}{ \theta (x)}  \right)\right| \leq  \frac{\delta_1}{4},$$
for all  $ |x| \in \left[  r(0 ),  2 \frac{100}{99}  R(0)\right], |\xi | \leq  2 \alpha_0 \sqrt{\theta (x)}$ provided that  $\alpha_0 \leq \alpha_{2,3}$ and $T \leq T_{2,3}$. This  concludes the proof of     \eqref{esitma-(I)-hat-U-tau-0}.

The estimate of   \eqref{estiam-(II)-hat-U-tau-1}:  Let  $|x| \in \left[  \frac{99}{100} R(0), \epsilon_0   \right] .$ We use    the    definition  of  $U^*$  to rewrite  $(II)$ as follows
\begin{eqnarray*}
(II)   &=&   \left(   \frac{(p-1)^2}{ 8p }   \frac{ \left|  x + \xi \sqrt{\theta (x)}  \right|^2}{ \theta (x) |\ln (x + \xi \sqrt{\theta (x)})|}    \right)^{- \frac{1}{ p-1}}  =  \left( \frac{(p-1)^2}{8 p} \frac{\left|  \frac{K_0 }{4} \sqrt{ |\ln \theta (x)| }  +  \xi \right|^2}{   |\ln  (x + \xi \sqrt{\theta (x)})|}  \right)^{-\frac{1}{p-1}}\\
& = &  \left(        \frac{ (p-1)^2 K_0^2}{ 64 }   +   \frac{(p-1)^2}{ 8p} \left( \frac{\left|  \frac{K_0 }{4} \sqrt{ |\ln \theta (x)| }  +  \xi \right|^2}{   |\ln  (x + \xi \sqrt{\theta (x)})|}  - \frac{K_0^2}{8} \right)  \right)^{-\frac{1}{p-1}}.
\end{eqnarray*}
Then, 
\begin{eqnarray*}
\left|  (II)  - \hat U \left(   \frac{t_0  - t(x)}{ \theta (x)}\right)  \right|  &=&  \left|    \left(        \frac{ (p-1)^2 K_0^2}{ 64 }   +   \frac{(p-1)^2}{ 8p} \left( \frac{\left|  \frac{K_0 }{4} \sqrt{ |\ln \theta (x)| }  +  \xi \right|^2}{   |\ln  (x + \xi \sqrt{\theta (x)})|}  - \frac{K_0^2}{8} \right)  \right)^{-\frac{1}{p-1}} \right. \\
&  - & \left.     \left(  \frac{(p-1)^2 K_0^2}{ 64  p}   + (p-1) \frac{\theta_0}{ \theta (x)} \right)^{- \frac{1}{p-1}}  \right| \\
&\leq & C (K_0)   (  (II_1 ) +  (II_2)  ),
\end{eqnarray*}
where  
\begin{eqnarray*}
(II_1)  &=&  \left|   \frac{\left| \frac{K_0  }{4} \sqrt{ |\ln \theta (x)|}   + \xi  \right|^2}{ |\ln (x + \xi \sqrt{\theta (x)})|}   - \frac{K_0^2}{ 8}  \right| ,\\
(II_2) & = &   (p-1) \frac{\theta_0}{\theta (x)}.
 \end{eqnarray*}
 Let us give a bound to $(II_1)$: Because  $|\xi|  \leq  2 \alpha_0 \sqrt{ \ln \theta (x)},$ we have
 \begin{eqnarray*}
 |(II_1)| &\leq & \left| \frac{\left| \frac{K_0  }{4} \sqrt{ |\ln \theta (x)|}   + 2 \alpha_0 \sqrt{ \ln \theta (x)}  \right|^2}{ |\ln |x +  2 \alpha_0 \sqrt{ \theta (x)| \ln \theta (x)|} ||}   - \frac{K_0^2}{ 8}    \right|\\
 & =&  \left|  \frac{\ln \theta (x)}{ |\ln |x +   \frac{\alpha _0K_0|x|}{2}||   } \left(\frac{K_0}{4}  + 2 \alpha_0 \right)^2 -     \frac{K_0^2}{8} \right|.
\end{eqnarray*}   
Using  the fact that 
$$   \ln \theta (x)  = \ln (T-  t(x) )   \sim  2 \ln |x| ,$$
and  
$$  | \ln (|x +   2 \alpha_0 \sqrt{\theta(x) \ln \theta (x)}| )  |   =  | \ln | x  +  \frac{K_0}{2} |x|  |   |  \sim   
|\ln |x||, $$
as  $|x| \to 0,$ we derive that,  there exists  $\alpha_{3,3} (K_0, \delta_1)$ such that  for all  $\alpha_0 \leq  \alpha_{3,3},$  there exists    $\epsilon_{3,3} (K_0, \alpha_0,  \delta_1 )$ such that  for all $\epsilon_0 \leq \epsilon_{3,3},$ for all  $x \in   \left[   \frac{99}{100} R(0), \epsilon_0      \right]$  and  for all  $|\xi| \leq  2 \alpha_0 \sqrt{ |\ln \theta (x)|},$  we obtain
$$   |( II_1 )|  \leq  \frac{\delta_1}{2}. $$
It remains  to bound $(II_2).$  From   \eqref{asymptotic-R-t-0},   the fact that   $|x| \geq  \frac{99}{100} R(0)$ and the   monotonicity    of   $\theta (x)$,  we have 
$$ | (II_2) |  \leq \left|  \frac{\theta(0)}{ \theta (R(0))}  \right|  \leq   C |\ln \theta(0) |^{-(p-1)} \leq \frac{\delta_1}{2},$$
provided  that  $T \leq  T_{4,3} (K_0, \delta_1).$ This gives    \eqref{want-to-pro-intial-P-2}, and  concludes the proof  of Lemma \ref{lemma-control-initial-data}.
 \bibliographystyle{alpha}
\bibliography{mybib} 

\vspace{1cm}
\quad \quad \textbf{Address}
Paris 13  University, Institute Galil\'ee, Laboratory of Analysis, Geometry and Applications, CNRS UMR 7539, 95302, 99 avenue J.B Cl\'ement, 93430 Villetaneuse, France  

\quad \quad \texttt{e-mail: duong@math.univ-paris13.fr}

\end{document}